\documentclass[11pt]{article}
\usepackage[normalem]{ulem}

\usepackage[title, titletoc]{appendix}

\usepackage[margin=1in]{geometry}
\usepackage{enumerate}
\usepackage{amsfonts}
\usepackage{amsmath}
\usepackage{amssymb}
\usepackage{mathrsfs}
\usepackage{bbm}
\usepackage{amsthm}
\usepackage{xcolor}
\usepackage{multirow}
\usepackage{array}
\usepackage{indentfirst}
\usepackage{cite}
\usepackage{verbatim}

\theoremstyle{definition}
\newtheorem{theorem}{Theorem}[section]
\newtheorem{example}[theorem]{Example}
\newtheorem{definition}[theorem]{Definition}
\newtheorem{corollary}[theorem]{Corollary}
\newtheorem{proposition}[theorem]{Proposition}
\newtheorem{lemma}[theorem]{Lemma}
\newtheorem{remark}[theorem]{Remark}

\numberwithin{equation}{section}
\newtheorem{assumption}[theorem]{Assumption}

\newcounter{cnstcnt}
\newcommand{\newconstant}{
    \refstepcounter{cnstcnt}
    \ensuremath{c_{\thecnstcnt}}}
\newcommand{\oldconstant}[1]{\ensuremath{c_{\ref{#1}}}}

\newcommand{\bpar}[1]{\big( #1 \big)}
\newcommand{\Bpar}[1]{\Big( #1 \Big)}
\newcommand{\bsq}[1]{\big[ #1 \big]}
\newcommand{\Bsq}[1]{\Big[ #1 \Big]}
\newcommand{\lan}[1]{\langle #1 \rangle}
\usepackage{hyperref}
\hypersetup{
    colorlinks = true,
    linkcolor = blue,
    filecolor = blue,      
    urlcolor = blue,
    citecolor = blue
}
\newcommand{\R}{\mathbb{R}}

\usepackage{enumerate}
\usepackage{enumitem}

\makeatletter
\newcommand{\opnorm}[1]{\vert\kern-0.25ex\vert\kern-0.25ex\vert #1 
    \vert\kern-0.25ex\vert\kern-0.25ex\vert}
\makeatother

\makeatletter
\def\@tocline#1#2#3#4#5#6#7{\relax
  \ifnum #1>\c@tocdepth 
  \else
    \par \addpenalty\@secpenalty\addvspace{#2}%
    \begingroup \hyphenpenalty\@M
    \@ifempty{#4}{%
      \@tempdima\csname r@tocindent\number#1\endcsname\relax
    }{%
      \@tempdima#4\relax
    }%
    \parindent\z@ \leftskip#3\relax \advance\leftskip\@tempdima\relax
    \rightskip\@pnumwidth plus4em \parfillskip-\@pnumwidth
    #5\leavevmode\hskip-\@tempdima
      \ifcase #1
       \or\or \hskip 1em \or \hskip 2em \else \hskip 3em \fi%
      #6\nobreak\relax
    \hfill\hbox to\@pnumwidth{\@tocpagenum{#7}}\par
    \nobreak
    \endgroup
  \fi}
\makeatother

\allowdisplaybreaks

\usepackage{makecell}

\title{Complexity of the $p$-spin Hamiltonian \\with a Non-Rotationally Invariant Potential}
\author{Wei-Kuo Chen\thanks{Email: wkchen@umn.edu. Partly supported by NSF grants DMS-1752184 and DMS-2246715 and Simons Foundation grant 1027727 } \and  Te-Lun Lu \thanks{Email: lu000646@umn.edu.}  \and Arnab Sen \thanks{Email: arnab@umn.edu. Partly supported by Simons Foundation MP-TSM-00002716}} 
\date{\today}

\begin{document}

\maketitle

\begin{abstract}
    We investigate the complexity of the Hamiltonian in the pure $p$-spin spin glass model accompanied with a polynomial-type potential on $\mathbb{R}^N$. In this Hamiltonian,  the Gaussian field is anisotropic, and the potential lacks rotational invariance. Our main result derives the logarithmic limit for the expected number of critical points in terms of a variational formula. As a consequence, by identifying the critical location of the phase transition from our representation, we provide an upper bound for the ground state energy of the model.
\end{abstract}

\tableofcontents

\section{Introduction and Main Results}\label{sec:introduction}

\noindent The study of complexity for the Gaussian Hamiltonians is one of the central topics in spin glass theory. By analyzing the growth rate of the number of critical or saddle points, it provides a quantitative framework to describe the energy landscape of the systems. This line of research was initialized from the breakthrough papers \cite{Fyodorov07,Fyodorov04} that utilized the Kac-Rice formula to investigate the complexity of isotropic Gaussian fields on $\R^N$ with rotationally invariant potentials. Since then, the Kac-Rice formula has become a standard tool in the study of the complexity of spin glasses and has led to some major advances in recent years, see, e.g., \cite{Auffinger13,auffinger2013random,subag2017complexity}.

In this work, we aim to investigate the complexity of a anisotropic Gaussian field defined on $\mathbb{R}^N$ accompanied with a non-rotationally invariant potential. More precisely, fixing an integer $p\geq 2$, we are interested in the following objective, for any $N\geq 1,$
\begin{equation}\label{Equation:Hamiltonian}
    H_N(\sigma) = \frac{1}{N^{(p-1)/2}} \sum_{i_1,\dots,i_p=1}^N g_{i_1,\dots,i_p} \sigma_{i_1}\cdots\sigma_{i_p} - \sum_{i = 1}^N V(\sigma_i)
\end{equation}
for $\sigma = (\sigma_1,\dots,\sigma_N) \in \mathbb{R}^N$, where $g_{i_1,\ldots,i_p}$'s are i.i.d.\ standard normal and $V$ is a real-valued function defined on $\mathbb{R}.$
One of the main reasons for considering this Gaussian field was motivated by the study of $\ell^q$-Grothendieck problem \cite{grothendieck1956resume}, i.e.,
\begin{align}\label{deterministicGrothendieck_Origin}
    \frac{1}{N}\max_{\sigma \in \mathbb{R}^N, \|\sigma\|_q^q \leq N} \langle A_N \sigma, \sigma \rangle,
\end{align}
where $\|\cdot\|_q$ denotes the standard $\ell^q$-norm on $\mathbb{R}^N$ for $2<q<\infty$ and $A_N$ is a deterministic $N \times N$ matrix. This is a fundamental optimization problem that has drawn significant interest from analysis, combinatorics, computer science, and probability, see, e.g., \cite{lindenstrauss1968absolutely, khot2011grothendieck}. While several existing literature (see \cite{charikar2004maximizing, fiedler1973algebraic}) have attempted to approximate \eqref{deterministicGrothendieck_Origin} up to some constant orders from algorithmic perspectives, it has been shown in a recent work \cite{chen2023ℓ} that after normalization, the Gaussian $\ell^q$-Grothendieck problem, i.e., the entries of $A_N$ are i.i.d.\ centered normal with variance $1/N$, admits a Parisi-type variational representation as $N$ tends to infinity. In the analysis of \cite{chen2023ℓ} (see also \cite{dominguez2022ℓ}), one of the crucial steps relied on considering a relaxation of \eqref{deterministicGrothendieck_Origin}, namely, for any $\lambda>0,$
\begin{align}\label{deterministicGrothendieck}
 \mbox{\rm GP}_{N,q}(\lambda):= \frac{1}{N}\max_{\sigma \in \mathbb{R}^N} \Bigl(\langle A_N \sigma, \sigma \rangle-\lambda\sum_{i=1}^N|\sigma_i|^q\Bigr).
\end{align}
and the key observation that for any $N\geq 2$ and $\lambda > 0,$ \eqref{deterministicGrothendieck_Origin} is indeed equal to $C(q,\lambda)\mbox{\rm GP}_{N,q}(\lambda)^{1-2/q}$ for some universal constant $C(q,\lambda)$. Based on this relation, the work \cite{chen2023ℓ} achieved the exact limit of \eqref{deterministicGrothendieck} by deriving the Parisi formula for \eqref{deterministicGrothendieck}. In light of the latter, it gives rise to the generalized Gaussian field \eqref{Equation:Hamiltonian}.

To prepare for the statement of our main results, we introduce some notation and assumptions. Set the number of critical points of $H_N$ with energy inside a certain range by
\[\mathrm{Crt}_N(B) = \bigl|\{\sigma \in \mathbb{R}^N \mid \nabla H_N(\sigma) = \underline{0},\, H_N(\sigma) \in B\}\bigr|\]
for any Borel set $B\subseteq \mathbb{R}.$
Denote by $\mathcal{P}(\mathbb{R})$ the space of all probability measures on $\mathbb{R}$ and by $\mathcal{P}_s(\mathbb{R})$ for $0 < s < \infty$ the collection of $\mu\in \mathcal{P}(\mathbb{R})$ with $m_s(\mu):=\int_{\mathbb{R}}|x|^s\mu(dx)<\infty$. Also, denote by $\mu_{\text{Norm}}$ and $\mu_{\text{sc}}$ the standard Gaussian measure and the semicircle measure on $[-2,2]$, respectively. The following is our assumption on $V.$

\begin{assumption}\label{ass1} $V \colon \mathbb{R} \to [0, \infty)$ is an even $C^2$ function and the following conditions hold:
\begin{enumerate}[label = ($\dagger.$\arabic*)]
    \item\label{Condition:V} There exist reals $q_1,q_2$ with $p < q_1 \leq q_2$ and $\newconstant\label{Constant:Bound} > 0$ such that for all $x \in \mathbb{R}$,
    \[\oldconstant{Constant:Bound}^{-1}(|x|^{q_1} + |x|^{q_2}) \leq V(x), xV'(x), x^2V''(x) \leq \oldconstant{Constant:Bound}(|x|^{q_1} + |x|^{q_2}).\]
    \item\label{Condition:V'} There exist some real $q > p$ such that 
    \begin{equation}\label{Equation:V}
        xV'(x) \geq qV(x) \quad \text{for all} \quad x > 0.
    \end{equation}
    \item\label{Condition:V''} There exist some real $q > p$ such that
    \begin{equation}\label{Equation:V'}
        xV''(x) \geq (q - 1)V'(x) \quad \text{for all} \quad x > 0.
    \end{equation}
\end{enumerate}
The two $q$'s in the last two conditions do not need to be the same.
\end{assumption}

\begin{remark}
    \rm  The lower bound $xV'(x)$ in \ref{Condition:V} forces the critical points to be trapped inside a finite ball with respect to the averaged $\ell_{q_2}$-distance. To see this, let  $\opnorm{x}_s:=N^{-1/s}\|x\|_s$ for any $s>0.$  If $\sigma$ is a critical point, then  $\langle \sigma,\nabla H_N(\sigma)\rangle=0,$ hence 
    \[p^{-1}\lan{\sigma, V'(\sigma)}=N^{-(p-1)/2}\sum_{i_1,\ldots,i_p=1}^Ng_{i_1,\ldots,i_p}\sigma_{i_1}\cdots\sigma_{i_p}\leq N C_{N,p} \opnorm{\sigma}_2^p\leq N C_{N,p} \opnorm{\sigma}_{q_2}^p\]
 where
     \begin{align*}
        C_{N,p}:=\frac{1}{N} \max_{\opnorm{x}_2\leq 1} \frac{1}{N^{(p-1)/2}}\sum_{i_1,\ldots,i_p=1}^Ng_{i_1,\ldots,i_p}x_{i_1}\cdots x_{i_p},
    \end{align*}
which is the ground state energy density of the spherical pure $p$-spin model. It is almost surely bounded (in fact convergent) as $N\to\infty$, see \cite{chen2017parisi}.

    On the other hand, from \ref{Condition:V}, it follows  that $ \lan{\sigma, V'(\sigma)}  \ge  \sum_{i=1}^N c_1^{-1} |\sigma_i|^{q_2} = c_1^{-1} N    \opnorm{\sigma}_{q_2}^{q_2}$. Combining the two bounds yields
 $\opnorm{\sigma}_{q_2}^{q_2}\leq pc_1 C_{N,p} \opnorm{\sigma}_{q_2}^p
    $ and hence equivalently, 
    \[ \opnorm{\sigma}_{q_2}\leq (pc_1C_{N,p})^{1/(q_2-p)}.\]
    \end{remark}

For all $u \geq 0$, we set
\begin{equation}\label{Equation:D(u)}
    \mathfrak{D}(u) = \Big\{\mu \in \mathcal{P}(\mathbb{R}) \,\Big\vert\, \mathbb{E}_\mu\big[p^{-1}XV'(X) - V(X)\big] \geq u\Big\},
\end{equation}
where $X$ is distributed according to $\mu$ and $\mathbb{E}_\mu$ is the expectation with respect to the same measure. For $0\leq t<\infty$, let $g_t:\R\to \R$ be defined as
\begin{equation}
    g_t(x) = \begin{cases}
       \frac{ t^{2 - p}V''(x)}{\sqrt{p(p - 1)}}, & \mbox{$x\in \R$ when $t>0$},\\
        0, & \mbox{$x\in \R$ when $t=0$.}
    \end{cases}
\end{equation}
Consider the functional $\varphi$ on $[0, \infty) \times \mathcal{P}_{2q_2 - 2}(\mathbb{R})$ defined by \begin{align*}
    \varphi(t, \mu) &= \frac{t^{-2p}}{2p^2}\Big((p - 1)\mathbb{E}_\mu\big[XV'(X)\big]^2 - pt^2\mathbb{E}_\mu\big[V'(X)^2\big]\Big) +\int_{\mathbb{R}} \log|\lambda| \big((g_t)_\ast\mu \boxplus \mu_{\text{sc}}\big)(d\lambda)
\end{align*}
for $(t,\mu)\in (0,\infty)\times \mathcal{P}_{2q_2-2}(\R)$ and $\varphi(0,\cdot)\equiv-1/2$ on $\mathcal{P}_{2q_2-2}(\mathbb{R}),$ where $(g_t)_*\mu:=\mu\circ g_t^{-1}$ is the push-forward measure  of $\mu$ respect to $g_t$ and $\boxplus$ is the additive free convolution (see Appendix \ref{Section:Free_Convolution}). Finally, let KL be the Kullback-Leibler divergence. Set the functional $\mathcal{I}$ on $[0, \infty) \times \mathcal{P}_{2q_2 - 2}(\mathbb{R})$ by
\begin{align}
    \mathcal{I}(t, \mu) &= \frac{1}{2}\log(p - 1) + \frac{1}{2} + \varphi(t, \mu) - \text{KL}(\mu \,\| \,\mu_{\text{Norm}}) - \frac{1}{2}(1 - t^2 + 2\log t). \label{Equation:Funtional_with_t}
    \end{align}
    When $t=\sqrt{m_2(\mu)},$ we adapt the abbreviation, 
\begin{equation}\label{Equation:Funtional_no_t}
    \mathcal{I}(\mu) = \mathcal{I}\big(\sqrt{m_2(\mu)}, \mu\big).
\end{equation}
Our main result below establishes the limit for the logarithmic expected number of critical points when the energy level is at least $u\geq 0.$

\begin{theorem}\label{Theorem:Main_Theorem}
   Let $p\geq 2$ be an integer. Assume that $V$ satisfies Assumption \ref{ass1}. For all $u \geq 0$, we have the variational formula,
    \begin{equation}\label{Equation:Main}
        \Sigma(u) = \lim_{N \to \infty}\dfrac{1}{N}\log \mathbb{E}[\text{Crt}_N([Nu, \infty))] = \sup\{\mathcal{I}(\mu) \mid \mu \in \mathcal{P}_{2q_2 - 2}(\mathbb{R}) \cap \mathfrak{D}(u)\}.
    \end{equation}
    Moreover, 
    \[\lim_{N \to \infty}\dfrac{1}{N}\log \mathbb{E}[\text{Crt}_N(\mathbb{R})] = \lim_{N \to \infty}\dfrac{1}{N}\log \mathbb{E}[\text{Crt}_N([0, \infty))] = \sup\{\mathcal{I}(\mu) \mid \mu \in \mathcal{P}_{2q_2 - 2}(\mathbb{R})\}.\]
\end{theorem}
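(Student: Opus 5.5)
We apply the Kac--Rice formula to $H_N$ on $\mathbb{R}^N$:
\[
\mathbb{E}\,\mathrm{Crt}_N([Nu,\infty))=\int_{\mathbb{R}^N}\mathbb{E}\Bigl[\bigl|\det\nabla^2H_N(\sigma)\bigr|\,\mathbf 1\{H_N(\sigma)\ge Nu\}\,\Big|\,\nabla H_N(\sigma)=0\Bigr]\,\varphi_{\nabla H_N(\sigma)}(0)\,d\sigma .
\]
Write $G_N$ for the Gaussian part. Its covariance is $N(\langle\sigma,\sigma'\rangle/N)^p$, so the law of $(G_N,\nabla G_N,\nabla^2G_N)$ at $\sigma$ depends only on $t=\opnorm{\sigma}_2$ and on the direction $\sigma/|\sigma|$.

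\emph{Step 1: the Gaussian computation.} Conditioning on $\nabla H_N(\sigma)=0$ means $\nabla G_N(\sigma)=V'(\sigma)$, where $V'$ acts coordinatewise. The radial component $\langle\nabla G_N,\sigma\rangle=pG_N$ is tied to the energy. Hence on this event
\[
H_N(\sigma)=p^{-1}\langle\sigma,V'(\sigma)\rangle-\sum_i V(\sigma_i)=N\,\mathbb{E}_{L_\sigma}\bigl[p^{-1}XV'(X)-V(X)\bigr],
\]
with $L_\sigma=N^{-1}\sum_i\delta_{\sigma_i}$ the empirical measure. So the energy constraint becomes $L_\sigma\in\mathfrak D(u)$. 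The gradient density at $0$, namely the Gaussian density of $\nabla G_N$ with variance $pt^{2p-2}$ in tangential directions and $p^2t^{2p-2}$ radially, evaluated at $V'(\sigma)$, gives exactly the first term of $\varphi$. It also contributes the $-\tfrac12\log$-type normalisations. Conditionally, the Hessian is
\[
\sqrt{p(p-1)}\,t^{p-2}\Bigl(\sqrt{\tfrac{N-1}{N}}\,\mathrm{GOE}_{N-1}\oplus(\text{rank-}O(1)\text{ terms})\Bigr)-\operatorname{diag}\bigl(V''(\sigma_i)\bigr).
\]
After factoring out $\sqrt{p(p-1)}\,t^{p-2}$, this is $\mathrm{GOE}-\operatorname{diag}(g_t(\sigma_i))$. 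Its empirical spectral measure therefore converges to $(g_t)_*L_\sigma\boxplus\mu_{\mathrm{sc}}$ (using the evenness of $V$ and the symmetry of GOE to absorb the sign). This produces the $\log|\lambda|$ term, together with the prefactor $\frac12\log(p(p-1))+(p-2)\log t$. The latter combines with the Gaussian normalisation into the constants and the $t$-terms in $\mathcal I$.

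\emph{Step 2: the determinant.} We need
\[
\tfrac1N\log\mathbb{E}\bigl|\det(\mathrm{GOE}-D)\bigr|=\int\log|\lambda|\,(\nu_D\boxplus\mu_{\mathrm{sc}})(d\lambda)+o(1),
\]
uniformly over diagonal $D$ whose empirical measure lies in compact sets of the relevant Wasserstein topology. For the upper bound we use the large deviation concentration of the spectral measure (Ben Arous--Guionnet type) plus a cutoff for $\log|\lambda|$ near $0$ and $\infty$. For the lower bound we use concentration of the truncated log-determinant. The finite-rank perturbations do not matter at exponential scale.

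\emph{Step 3: Laplace over $\sigma$.} The integrand now takes the form $\exp\bigl(N F(L_\sigma)+o(N)\bigr)$. Integrating over $\mathbb{R}^N$ against Lebesgue measure, we rewrite $d\sigma=(2\pi)^{N/2}e^{|\sigma|^2/2}\,\mu_{\mathrm{Norm}}^{\otimes N}(d\sigma)$. Sanov's theorem, in a strengthened topology that controls $m_{2q_2-2}$, then turns this into $\sup_\mu\{F(\mu)+\tfrac12 m_2(\mu)+\tfrac12\log(2\pi)-\mathrm{KL}(\mu\|\mu_{\mathrm{Norm}})\}$. Collecting constants gives $\mathcal I(\mu)$ with $t^2=m_2(\mu)$, restricted to $\mathfrak D(u)$.

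For the upper bound we must (a) restrict to a compact set of measures, and (b) control the lack of continuity of $\mu\mapsto\mathbb{E}_\mu[XV'(X)]^2$ and $\mathbb{E}_\mu[V'(X)^2]$. For (a), the Gaussian density factor is $\exp\bigl(-cN\,m_{2q_2-2}(L_\sigma)t^{-2p+2}\bigr)$, which beats the volume growth since $q_2>p$; the Remark's $\ell^{q_2}$-confinement heuristic gives exponential tightness. For (b), we use exponential equivalence with truncated $V$. Conditions (\ref{Condition:V'}) and (\ref{Condition:V''}) enter in two places. First, they make the constraint set $\mathfrak D(u)$ behave well: closure equals the closure of its interior, so the upper and lower Sanov bounds match. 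Second, they ensure that $p^{-1}xV'-V\ge (q/p-1)V\ge0$, which gives the statement for $u=0$ and for $\mathbb{R}$: every critical point has energy $\ge0$, so $\mathrm{Crt}_N(\mathbb{R})=\mathrm{Crt}_N([0,\infty))$ and $\mathfrak D(0)$ is everything. The region $t\to0$ is handled separately; this is where $\varphi(0,\cdot)=-1/2$ comes from.

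\emph{Main obstacle.} I expect the hardest step to be Step 2 combined with the uniform Laplace argument. This needs uniform, exponentially precise control of $\mathbb{E}|\det|$ for GOE plus an unbounded diagonal depending on $\sigma$, near $t=0$ and for large coordinates. I would first try to truncate $V''$ at level $M$. The error would be bounded using Hadamard/Weyl bounds $|\det(A-D)|\le\prod_i(\|A\|+|D_{ii}|)$, whose log-average is controlled by $m_{q_2-2}$, and we would then let $M\to\infty$.
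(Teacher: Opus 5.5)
Your overall architecture matches the paper's: Kac--Rice, deterministic conditional energy, conditional Hessian equal to GOE $+\operatorname{diag}(g_t)$ $+$ low rank, a free-convolution log-determinant, Laplace over empirical measures after an $\opnorm{\cdot}_{2q_2-2}$ cutoff, and a scaling argument to pass from $\mathfrak D_+(u)$ to $\mathfrak D(u)$. Your Cartesian rewriting $d\sigma=(2\pi)^{N/2}e^{\|\sigma\|_2^2/2}\mu_{\text{Norm}}^{\otimes N}(d\sigma)$ also correctly reproduces $\mathcal I(\mu)$ with $t^2=m_2(\mu)$. However, two load-bearing steps rest on tools that cannot deliver what you claim.

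\emph{First gap: small eigenvalues.} This concerns the lower bound on $\mathbb{E}|\det|$ in Step 2, and equally your assertions that the rank-one/projection terms and the cut of $V''$ at level $K$ cost only $e^{o(N)}$. All of these require ruling out eigenvalues accumulating near $0$, uniformly in the deterministic diagonal.
\begin{itemize}
\item Concentration of $\int\log_\eta\,d\mu_H$ says nothing about $\int(\log|\lambda|-\log_\eta(\lambda))\,d\mu_H$.
\item The Hadamard/Weyl bound you propose only controls $|\det|$ from above. It therefore cannot show that raising the $O(K^{-2/(q_2-2)}N)$ large diagonal entries, or adding the rank-one term, does not push eigenvalues through $0$.
\end{itemize}
The missing input is a Wegner estimate $\mathbb{E}|\mathrm{Spec}(A+G_N)\cap I|\le CN|I|$ with $C$ independent of $A$ (Theorem \ref{Theorem:GOE_Wegner_Estimate}). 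The paper uses it in two places. It proves $\frac1N\log\mathbb{E}|\det H_N|=\int\log|\lambda|\,\mathbb{E}\mu_{H_N}(d\lambda)+o(1)$ (Theorem \ref{Theorem:Main_Asymptote_Unbounded}). Combined with the rank inequality for $d_{\text{KS}}$ and a moment bound on the tail, it also justifies the truncation (Proposition \ref{Proposition:Main_Truncation}).

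\emph{Second gap: the topology in Step 3.} For Gaussian samples, Sanov's theorem holds in $\mathscr W_s$ only for $s<2$, since it needs $\int e^{\lambda|x|^s}\mu_{\text{Norm}}(dx)<\infty$ for every $\lambda>0$. By contrast, continuity of $\phi_1$ and of the functional defining $\mathfrak D(u)$ needs $s\ge q_2>2$, and continuity of $\phi_2$ needs $s\ge 2q_2-2$.

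Truncating $V$ is not an exponential equivalence either. A single coordinate of size $(N\epsilon)^{1/(2q_2-2)}$ moves $\frac1N\sum_iV'(g_i)^2$ by order $\epsilon$, at a probability cost of only $\exp(-c(N\epsilon)^{1/(q_2-1)})=e^{-o(N)}$.

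The missing idea, which is the paper's Challenge 2, is to let the weight $e^{-N\phi_2}$ itself supply the tightness. The paper sandwiches $\opnorm{g}_2$ in thin shells and absorbs $e^{-\psi(t,x/a)}$ into the base measure. This uses monotonicity of $\psi$ in $|x|$ and, via \ref{Condition:V''}, in $t$. The result is a joint LDP for $(m_2(L),\nu,T)$ in $\mathscr W_s$ for every $s<2q_2-2$.

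Within your Cartesian framework there is an alternative fix:
\begin{itemize}
\item For the upper bound, work in the weak topology on the weakly closed set $\{m_{2q_2-2}\le M,\ m_2\ge\delta^2\}\cap\mathfrak D(u)$. There $-\phi_2$ is upper semicontinuous and the remaining terms are continuous by uniform integrability, so Sanov with an upper semicontinuous Varadhan bound applies.
\item For the lower bound, change measure to $\mu^{\otimes N}$ and use the law of large numbers in $\mathscr W_{2q_2-2}$.
\end{itemize}
Either way, some such argument has to replace the one you wrote.
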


The following proposition ensures the finiteness of \eqref{Equation:Main}.

\begin{proposition}\label{Proposition:Finite_of_I}
    If $V$ satisfies Assumption \ref{ass1}, then the supremum in \eqref{Equation:Main} is finite for all $u\geq 0.$
\end{proposition}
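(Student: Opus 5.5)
Since $\mathfrak{D}(u)\subseteq\mathfrak{D}(0)$, the supremum is nonincreasing in $u$. It therefore suffices to show $\sup\{\mathcal{I}(\mu)\mid \mu\in\mathcal{P}_{2q_2-2}(\mathbb{R})\}<\infty$. The plan is to bound each term of $\mathcal{I}(\sqrt{m_2(\mu)},\mu)$ from above by quantities controlled by moments of $\mu$. We then show that the negative terms, namely $-\mathrm{KL}(\mu\|\mu_{\rm Norm})$ and the term $-\frac{t^{-2p+2}}{2p}\mathbb{E}_\mu[V'(X)^2]$, dominate. Throughout write $t=\sqrt{m_2(\mu)}$. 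The case $t=0$ is $\mu=\delta_0$, where $\mathcal{I}$ is an explicit finite constant, so assume $t>0$.

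\emph{Step 1: the log-potential term.} For probability measures $\nu_1,\nu_2$ one has $\int\log|\lambda|\,(\nu_1\boxplus\nu_2)(d\lambda)\le \frac12\log m_2(\nu_1\boxplus\nu_2)$ by Jensen, since $\log|\lambda|\le \frac12\log(1+\lambda^2)$. Also $m_2(\nu_1\boxplus\mu_{\rm sc})= m_2(\nu_1)+1$ when $\nu_1$ has finite second moment; more generally the second moment is bounded by $2m_2(\nu_1)+2$. With $\nu_1=(g_t)_*\mu$ this gives
\[
\int\log|\lambda|\,((g_t)_*\mu\boxplus\mu_{\rm sc})(d\lambda)\le \tfrac12\log\Bigl(2+\tfrac{2t^{4-2p}}{p(p-1)}\mathbb{E}_\mu[V''(X)^2]\Bigr).
\]
By \ref{Condition:V}, $x^2V''(x)\le c_1(|x|^{q_1}+|x|^{q_2})$. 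Comparing with $xV'(x)\ge c_1^{-1}(|x|^{q_1}+|x|^{q_2})$ gives $|V''(x)|\lesssim |V'(x)|/|x|$. This is still awkward near $0$. Instead I would simply use $\log(1+a)\le a$ after rescaling: the right side is at most $C+\frac12\log(1+t^{4-2p}\mathbb{E}[V''^2])$. This is logarithmic in moments, so it will be absorbed in Step 3. Note that $\mathbb{E}[V''(X)^2]\lesssim \mathbb{E}[|X|^{2q_1-4}+|X|^{2q_2-4}]$, which is finite for $\mu\in\mathcal{P}_{2q_2-2}$.

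\emph{Step 2: the quadratic term.} By Cauchy--Schwarz, $\mathbb{E}[XV'(X)]^2\le t^2\,\mathbb{E}[V'(X)^2]$, so
\[
\frac{t^{-2p}}{2p^2}\Bigl((p-1)\mathbb{E}[XV']^2-pt^2\mathbb{E}[V'^2]\Bigr)\le -\frac{t^{2-2p}}{2p^2}\mathbb{E}_\mu[V'(X)^2]\le 0 .
\]
This is the key coercive term. We also have $-\mathrm{KL}(\mu\|\mu_{\rm Norm})\le h(\mu)+\frac12\log(2\pi)+\frac{t^2}{2}$, where $h$ is the differential entropy, and $h(\mu)\le\frac12\log(2\pi e t^2)$. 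Hence $-\mathrm{KL}-\frac12(1-t^2+2\log t)\le C+\frac{t^2}{2}+\frac{t^2}{2}$. Actually $-\mathrm{KL}\le 0$ trivially, so the remaining explicit terms contribute $\le C+\frac{t^2}{2}-\log t$.

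\emph{Step 3: balancing (main obstacle).} Using $V'(x)x\ge c_1^{-1}|x|^{q_2}$ and Jensen, $\mathbb{E}[V'^2]\ge \mathbb{E}[XV']^2/t^2\ge c\,t^{2q_2-2}$, since $\mathbb{E}|X|^{q_2}\ge t^{q_2}$ for $q_2\ge2$. Thus the Step 2 term is at most $-c\,t^{2q_2-2p}$. Because $q_2>p$, this beats $\frac{t^2}{2}$ as $t\to\infty$ only when $2q_2-2p>2$. Otherwise I would instead use the KL term, via $-\mathrm{KL}\le h(\mu)+\frac12\log 2\pi+\frac{t^2}{2}$ combined with the full $-\frac12(1-t^2)$. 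On closer inspection the signs give $+t^2/2$ net, so one genuinely needs $-\mathrm{KL}$. The point is that $\mathrm{KL}(\mu\|\mu_{\rm Norm})\ge \frac{t^2}{2}-\frac12\log(2\pi e t^2)-\frac12\log 2\pi+\ldots$, by the Gaussian maximal entropy bound. This cancels the $t^2/2$, leaving only logarithmic terms in $t$. Near $t\to 0$, $-\log t$ is compensated by the entropy term $-\log t$ in the KL lower bound with opposite sign. The remaining term is the Step 1 piece $\frac12\log(t^{4-2p}\mathbb{E}V''^2)$, and here I must check that it cannot diverge as $t\to0$. Using $|V''(x)|\lesssim |x|^{q_1-2}$ for small $|x|$ (valid since $q_1>2$) together with the Step 2 coercive term $-c\,t^{2-2p}\mathbb{E}|V'|^2$, the log of a moment is dominated by a negative multiple of that moment itself. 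Controlling the mixed regime where $\mu$ has heavy tails but small $t$ is the delicate part. There I would bound $\mathbb{E}V''^2$ by $\mathbb{E}V'^2$-type moments via $x^2V''\lesssim xV'$, applied separately on $\{|x|\le1\}$ and $\{|x|>1\}$, and conclude using $\log a\le a$.
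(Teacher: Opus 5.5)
Your skeleton matches the paper's. Cauchy--Schwarz gives the coercive term $-\frac{t^{2-2p}}{2p^2}\mathbb{E}_\mu[V'(X)^2]$, and Jensen bounds the log-potential by $\frac12\log\bigl(1+\frac{t^{4-2p}}{p(p-1)}\mathbb{E}_\mu[V''(X)^2]\bigr)$. Your back-and-forth about $t^2/2$ and $\log t$ in Steps 2--3 contains sign errors and is unnecessary. With $t^2=m_2(\mu)$ there is the exact identity $\mathrm{KL}(\mu\,\|\,\mu_{\text{Norm}})+\frac12(1-t^2+2\log t)=\mathrm{KL}(\mu\,\|\,\mathcal{N}(0,t^2))\ge 0$, so those terms contribute at most $0$.

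The genuine gap is the step you flag as delicate and leave open. You must show that $t^{4-2p}\mathbb{E}_\mu[V''(X)^2]$ is controlled by the coercive quantity $t^{2-2p}\mathbb{E}_\mu[V'(X)^2]$ uniformly in $\mu$. Your phrase ``the log of a moment is dominated by a negative multiple of that moment itself'' presupposes exactly this. The two sides carry powers of $t$ differing by $t^2=m_2(\mu)$, so no pointwise comparison such as $x^2V''\lesssim xV'$, split at $|x|=1$ or not, can close it.

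In fact the split as you describe it fails. Bounding the $\{|x|\le1\}$ part of $\mathbb{E}V''^2$ by a constant leaves $\frac12\log(1+Ct^{4-2p})$. Along $\mu=\mathcal{N}(0,t^2)$ with $t\downarrow0$ and $p\ge3$, this grows like $(p-2)\log(1/t)$. Meanwhile the coercive term is $O(t^{2q_1-2p})\to0$ and the KL part vanishes. So your upper bound diverges even though $\mathcal{I}$ stays bounded along this family. Keeping $|x|^{2q_1-4}$ on $\{|x|\le 1\}$ instead just leaves you comparing $t^{4-2p}m_{2q_1-4}(\mu)$ with $t^{2-2p}m_{2q_1-2}(\mu)$, which is the same unresolved issue.

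The missing ingredient is a moment (correlation) inequality, which is what the paper uses. By \ref{Condition:V}, $V''(x)^2\le 2c_1^2(|x|^{2q_1-4}+|x|^{2q_2-4})$ and $V'(x)^2\ge c_1^{-2}(|x|^{2q_1-2}+|x|^{2q_2-2})$. For $Y=X^2$, the maps $y\mapsto y$ and $y\mapsto y^{q_i-2}$ are both nondecreasing. Chebyshev's association (FKG) inequality then gives $\mathbb{E}[Y^{q_i-1}]\ge\mathbb{E}[Y]\,\mathbb{E}[Y^{q_i-2}]$, i.e.\ $t^2m_{2q_i-4}(\mu)\le m_{2q_i-2}(\mu)$; H\"older/Lyapunov gives the same.

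Now set $x=t^{2-2p}\bigl(m_{2q_1-2}(\mu)+m_{2q_2-2}(\mu)\bigr)$. You get $\varphi(t,\mu)\le -\frac{x}{2c_1^2p^2}+\frac12\log\bigl(1+\frac{2c_1^2}{p(p-1)}x\bigr)$, which is bounded above on $x\ge0$. This single scalar reduction handles small $t$, large $t$ and heavy tails at once, with no case analysis.

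Finally, finiteness also requires the supremum to be $>-\infty$, which you never address. It suffices to note that $S_t(\mu_{\text{Norm}})\in\mathfrak{D}(u)$ for $t$ large and that $\mathcal{I}$ is finite there.
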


Similar to previous literature on the analysis of complexity, the expression \eqref{Equation:Main} is established through the Kac-Rice formula \eqref{Equation:Main} that states that the expected complexity can be written as a high-dimensional integral of the Hessian matrix of $H_N$ against the density of the critical points. In view of this, the measures $\mu$ in \eqref{Equation:Main} can be heuristically interpreted as the empirical distributions induced by the unit vectors $\sigma/\|\sigma\|_2$ associated to the critical points $\sigma$, while the functional $\mathcal{I}$ is resulted from a number of highly nontrivial large deviation argument mainly for treating the Hessian determinant term. 

As a consequence of Theorem \ref{Theorem:Main_Theorem},  we can asymptotically bound the ground state energy of $H_N$, $$
u_N = \frac{1}{N}\sup_{\sigma \in \mathbb{R}^N} H_N(\sigma),
$$
 from above by the smallest energy level so that the complexity formula \eqref{Equation:Main} is negative.

\begin{proposition}\label{Proposition:Critical_Level}
   Let $p \geq 2$ be an integer. If $V$ satisfies Assumption \ref{ass1}, then the constant
   \begin{align*}
       u_c := \inf\{u \geq 0 \mid \Sigma(u) < 0\}
   \end{align*}
   satisfies
   \begin{equation}\label{Equation:Critical_Level_1}
       0 < u_c < \infty
   \end{equation}
    and in probability,
    \begin{equation}\label{Equation:Critical_Level_2}
        \limsup_{N \to \infty} u_N \leq u_c.
    \end{equation}
\end{proposition}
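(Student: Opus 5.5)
We prove Proposition \ref{Proposition:Critical_Level} in three parts: $u_c<\infty$, $u_c>0$, and the ground-state bound \eqref{Equation:Critical_Level_2}. The basic tools are Theorem \ref{Theorem:Main_Theorem}, Proposition \ref{Proposition:Finite_of_I}, and the observation that $u\mapsto\Sigma(u)$ is nonincreasing, since the sets $\mathfrak{D}(u)$ are decreasing in $u$.

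\textbf{Finiteness of $u_c$.} It suffices to find $u$ with $\Sigma(u)<0$; we use a first-moment argument. Take $\mu\in\mathfrak{D}(u)$ and set $t=\sqrt{m_2(\mu)}$. By \ref{Condition:V} and \ref{Condition:V'}, $\mathbb{E}_\mu[p^{-1}XV'(X)-V(X)]\ge (q/p-1)\mathbb{E}_\mu V(X)$, and this expectation is comparable to $\mathbb{E}_\mu|X|^{q_1}+\mathbb{E}_\mu|X|^{q_2}$. Hence $u$ large forces $\mathbb{E}_\mu|X|^{q_2}\gtrsim u$.

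We then bound each term of $\mathcal{I}(\mu)$:
\begin{itemize}
\item The KL term gives $-\mathrm{KL}(\mu\|\mu_{\text{Norm}})\le \tfrac12 m_2(\mu)+C$ after using the entropy bound, but it is better to keep it as is.
\item In $\varphi$, the term $-\tfrac{t^{-2p}}{2p}t^2\mathbb{E}_\mu[V'(X)^2]$ competes with $\tfrac{(p-1)t^{-2p}}{2p^2}\mathbb{E}_\mu[XV'(X)]^2$. By Cauchy--Schwarz, $\mathbb{E}[XV']^2\le t^2\mathbb{E}[V'^2]$, so the first part of $\varphi$ is at most $-\tfrac{t^{2-2p}}{2p^2}\mathbb{E}[V'^2]\le 0$.
\item The log-potential term is at most $\log$ of the second moment of $(g_t)_*\mu\boxplus\mu_{\text{sc}}$, giving $\le \tfrac12\log\bigl(1+t^{4-2p}\mathbb{E}_\mu V''(X)^2/(p(p-1))\bigr)$, which grows only logarithmically.
\end{itemize}
Since $t^2/2$ appears with a plus sign in $\mathcal{I}$, we must control $t^2$ by the negative term $-\tfrac{t^{2-2p}}{2p^2}\mathbb{E}V'^2$ together with $-\mathrm{KL}$. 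The key step is to show that $\mathcal{I}(\mu)\to-\infty$ uniformly as $\mathbb{E}_\mu V(X)\to\infty$. I expect this to essentially repeat the estimates in the proof of Proposition \ref{Proposition:Finite_of_I}, now tracking the dependence on $u$. For $\mathrm{KL}$, use the Donsker--Varadhan bound $\mathrm{KL}(\mu\|\mu_{\text{Norm}})\ge \mathbb{E}_\mu f-\log\mathbb{E}_{\text{Norm}}e^f$ with $f(x)=x^2/2-\epsilon|x|^{q_2}\cdot(\text{scale})$. This is the main obstacle: balancing the $t^2$ growth against the $q_2>2$ moments.

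\textbf{Positivity of $u_c$.} It suffices to show $\Sigma(u)\ge0$ for some $u>0$. By the second statement of Theorem \ref{Theorem:Main_Theorem}, and since $\mathbb{E}\,\mathrm{Crt}_N([0,\infty))\ge1$ (the global maximum exists by coercivity, and $H_N(0)=0$ gives $\max H_N\ge0$), we get $\Sigma(0)\ge 0$. For strict positivity of $u_c$ we need continuity at $0$. One route is to exhibit a measure: take $\mu=\mu_{\text{Norm}}$ scaled by a small $s$. Then $u=\mathbb{E}_\mu[p^{-1}XV'-V]>0$ is of order $s^{q_1}$, and we would compute $\mathcal{I}(\mu_s)$ and show it tends to $\sup\mathcal{I}\ge0$. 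Better still, use the global maximum directly: with high probability $u_N\ge c>0$, because taking $\sigma=s\,v$, with $v$ the spherical maximizer, gives $H_N/N\approx C s^p-O(s^{q_1})>0$ for small $s$. The maximizer is then a critical point with energy $\ge Nc$, so $\mathbb{E}\,\mathrm{Crt}_N([Nc,\infty))\ge 1-o(1)$ and $\Sigma(c)\ge0$. By monotonicity $\Sigma(u)\ge 0$ for $u\le c$, hence $u_c\ge c>0$.

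\textbf{The bound \eqref{Equation:Critical_Level_2}.} For $u>u_c$, monotonicity gives $\Sigma(u)\le\Sigma(u')<0$ for some $u'\in(u_c,u)$. Since the global maximizer is a critical point, Markov's inequality gives
\[
\mathbb{P}(u_N\ge u)\le\mathbb{E}\,\mathrm{Crt}_N([Nu,\infty))=e^{N(\Sigma(u)+o(1))}\to0 .
\]
Hence $\limsup_N u_N\le u_c$ in probability.
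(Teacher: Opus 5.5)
Your bound \eqref{Equation:Critical_Level_2} (Markov's inequality plus monotonicity of $\Sigma$) is exactly the paper's argument. Your proof of $u_c<\infty$, however, is not complete. You reduce it to showing $\sup_{\mathfrak{D}(u)}\mathcal{I}\to-\infty$, leave that step open, and name the $+t^2/2$ term as the main obstacle. That obstacle disappears once you use the exact identity, valid for $t^2=m_2(\mu)$,
\[
-\mathrm{KL}(\mu\,\|\,\mu_{\text{Norm}})-\tfrac12\bigl(1-t^2+2\log t\bigr)=-\mathrm{KL}\bigl(\mu\,\|\,S_t(\mu_{\text{Norm}})\bigr)\le 0 .
\]
In Donsker--Varadhan terms, the right test function is $f(x)=\tfrac12(1-t^{-2})x^2$, not one involving $|x|^{q_2}$. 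The argument then closes as follows:
\begin{enumerate}
\item By the identity, $\mathcal{I}(\mu)\le\frac12\log(p-1)+\frac12+w(x)$, with $w$ and $x=t^{2-2p}(m_{2q_1-2}(\mu)+m_{2q_2-2}(\mu))$ as in the proof of Proposition \ref{Proposition:Finite_of_I}.
\item Lyapunov's inequality gives $x\ge m_{2q_2-2}(\mu)^{(q_2-p)/(q_2-1)}$.
\item For $\mu\in\mathfrak{D}(u)$, using $V\ge0$, you get $u\le c_1p^{-1}(m_{q_1}(\mu)+m_{q_2}(\mu))$. Hence $m_{2q_2-2}(\mu)\to\infty$, and so $\Sigma(u)\to-\infty$, as $u\to\infty$.
\end{enumerate}
You do need the $-\mathrm{KL}$ term here. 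Bounding it by $0$ and absorbing $t^2/2$ into $\varphi$ fails for $q_2<p+1$, because $-t^{2-2p}\mathbb{E}_\mu V'(X)^2$ is only of order $-t^{2(q_2-p)}$ along scaled Gaussians.

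The paper takes a shorter route that bypasses the variational formula. For large $u$ one has $\Omega(u)\subseteq\{\opnorm{\sigma}_{2q_2-2}\ge M\}$. The a priori Kac--Rice tail bound \eqref{Equation:Rough_Upper_Bound} of Lemma \ref{Lemma:General_Truncation}, with $w_N=\mathbb{E}|\det M_{N-1}|$, then directly gives $\frac1N\log\mathbb{E}\,\mathrm{Crt}_N([Nu,\infty))\le -M'$.

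For $u_c>0$, your argument is: the global maximizer near $\sigma=sv$ has energy $\ge Nc$, so $\mathbb{E}\,\mathrm{Crt}_N([Nc,\infty))\ge1-o(1)$, then apply monotonicity. This is the paper's $p=2$ argument used for every $p$. For $p\ge3$ the paper argues variationally instead, showing $\mathcal{I}(S_t(\mu_{\text{Norm}}))\to\frac12\log(p-1)>0$ as $t\to0^+$. That limit degenerates to $0$ when $p=2$, so your first, abandoned idea could not give strict positivity there.

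Your uniform route works, but it needs two inputs you did not state:
\begin{enumerate}
\item The limiting spherical ground-state energy must be strictly positive.
\item Your ``$O(s^{q_1})$'' term is really $c_1(s^{q_1}\opnorm{v}_{q_1}^{q_1}+s^{q_2}\opnorm{v}_{q_2}^{q_2})$, and this must stay bounded. It fails for localized $v$, e.g.\ $\opnorm{\sqrt N e_1}_{q_2}^{q_2}=N^{q_2/2-1}$.
\end{enumerate}
The second point follows from the $O(N)$-invariance of the law of the $p$-spin field, which makes the maximizer uniform on $\sqrt N\mathbb{S}^{N-1}$. This is the same fact the paper uses for $p=2$ via the Haar-distributed top eigenvector. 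Alternatively, restrict to $\sigma\in s\{\pm1\}^N$, where the potential term is exactly $NV(s)$, and use positivity of the Ising $p$-spin ground state instead.
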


\begin{example} Let $p\geq 2$ be a fixed integer and $q_1,q_2$ be two reals with $p<q_1<q_2.$ Consider distinct real numbers $q_1\leq r_1,\ldots,r_k\leq q_2$ with $r_1=q_1$ and $r_k=q_2$ and an arbitrary sequence of positive constants $c_1,\ldots,c_k$. Then
    \[V(x) = \sum_{i=1}^k c_i|x|^{q_i} \]
    satisfies Assumption \ref{ass1}.
    We can also allow some negative coefficients in the middle, such as, $V(x) = x^4 - x^5 + x^6$. Then Assumption \ref{ass1} is satisfied with the choice, $p = 2$, $q = 3$, $q_1 = 4$, $q_2 = 6$, and the constant $\oldconstant{Constant:Bound} = 2$.
\end{example}

\begin{table}[h]
    \centering
    \renewcommand{\arraystretch}{1.3}
    \begin{tabular}{|c|c|c|c|}
        \hline
        Paper & Space & Gaussian Field $X_N(\sigma)$ & $U_N(\sigma)$ \\ \hline
        \cite{Fyodorov04} & $\mathbb{R}^N$ & Isotropic & $\mu\|\sigma\|_2^2$, $\mu \in \mathbb{R}$ \\ \hline
        \cite{Fyodorov07} & $\mathbb{R}^N$ & Isotropic & $V(\opnorm{\sigma}_2^2)$, $V$ is strictly increasing, convex \\ \hline
        \cite{auffinger2013random} & $\sqrt{N}\mathbb{S}^{N - 1}$ & Pure $p$-spin (Isotropic) & $0$ \\ \hline
        \cite{Auffinger13} & $\sqrt{N}\mathbb{S}^{N - 1}$ & Mixed $p$-spin (Isotropic) & $0$ \\ \hline
        \cite{auffingerZeng2023complexity} & $B_N \subseteq \mathbb{R}^N$ & Isotropic Increments & $\mu\|\sigma\|_2^2$, $\mu \in \mathbb{R}$ \\ \hline
        \cite{ben2024landscape} & $\mathbb{R}^N$ & Isotropic & $\lan{D_N\sigma, \sigma}$, $D_N$ is positive semidefinite \\ \hline
        \cite{belius2024determinant} & $[-1, 1]^N$ & $2$-spin (Anisotropic) & $\sum_{i = 1}^N \sigma_i + \text{Ent}(\sigma) + N(1 - \opnorm{\sigma}_2^2)^2$ \\ \hline  
        Ours & $\mathbb{R}^N$ & Pure $p$-spin (Anisotropic) & $-\sum_{i = 1}^N V(\sigma_i)$, $V \geq 0$, even, polynomial-like  \\ \hline
    \end{tabular}
    \caption{Hamitonian $H_N(\sigma) = X_N(\sigma) + U_N(\sigma)$, where $X_N(\sigma) = H_N(\sigma) - \mathbb{E}[H_N(\sigma)]$ is a centered Gaussian field. $\mathbb{S}^{N - 1} = \{\sigma \in \mathbb{R}^N \mid \|\sigma\|_2 = 1\}$ stands for the sphere. The sequence of domain $B_N \subseteq \mathbb{R}^N$ in \cite{auffingerZeng2023complexity} is required to satisfy certain exponential grwoth condition (see \cite[Eq (1.3), (1.4)]{auffingerZeng2023complexity}) and $\text{Ent}(\sigma)$ is sum of the coin tossing entropy computed in the coordinates of $\sigma$. In the case where $X_N(\sigma)$ is isotropic, both $H_N(\sigma)$ and $\nabla^2 H_N(\sigma)$ are independent of $\nabla H_N(\sigma)$. Furthermore, their distribution will not depend on the spatial location $\sigma$.}
    \label{Table:Paper_Review}
\end{table}

We close this section with a discussion on the novelty of our work. As mentioned before, the study of spin glass complexity began from the works \cite{Fyodorov04, Fyodorov07} that analyzed the complexity of isotropic Gaussian fields on $\mathbb{R}^N$ with rotationally invariant potentials. Subsequently, \cite{auffinger2013random, Auffinger13} studied the complexity of spherical pure and mixed $p$-spin models in the absence of an external field. Notably, following these results, the concentration of the complexity was established in the spherical pure $p$-spin model in \cite{subag2017complexity}. More recently, the study of complexity was pushed forward to the Gaussian fields with isotropic increments together with a rotationally invariant quadratic potential, see \cite{auffingerZeng2023complexity}. Beyond rotationally invariant settings, \cite{ben2024landscape} studies Hamiltonians with soft spins under an anisotropic quadratic potential. See Table \ref{Table:Paper_Review} for the comparison. In spirit, our consideration of \eqref{Equation:Hamiltonian} under Assumption \ref{ass1} may be viewed as a generalization of \cite{Fyodorov04, Fyodorov07, ben2024landscape}, in which the quadratic confinement is replaced by polynomial-type potentials. Consequently, the associated Gaussian fields no longer possess rotational invariance. This change breaks rotational invariance and leads to qualitatively new features in the Kac-Rice analysis. In particular, the conditional law of the Hessian is no longer a simple shifted GOE ensemble that is independent of the spatial location, but rather a GOE matrix coupled to a configuration-dependent diagonal component (together with additional perturbative terms). As a result, one must control the determinant contribution together with the $\sigma$-dependent exponential weights in the Kac-Rice integrand uniformly over $\sigma\in\Omega(u)$, instead of reducing the first moment to a volume computation as in the rotationally invariant setting. These difficulties and the strategy to overcome them are discussed in detail in Section~\ref{sec:sketchofproof}.

Finally, while we anticipate that aspects of our approach extend to more general classes of potentials, including multi-well landscapes, Assumption~\ref{ass1} appears necessary for our arguments. At a technical level, it provides the monotonicity and tail control needed for truncation and for the uniform estimates underlying both the determinant asymptotics and the large deviation analysis.

\section{Notation and Structure}

\subsection{Notations}

\noindent This section collects the notation used throughout the paper.
\begin{itemize}
    \item For $x, y \in \mathbb{R}$, we denote $x \wedge y = \min\{x, y\}$ and $x \vee y = \max\{x, y\}$.
    \item For any function $f \colon \mathbb{R} \to \mathbb{R}$, define the vector-valued function by
    $f(\sigma) = (f(\sigma_1), \dots, f(\sigma_N)) \in \mathbb{R}^N$ for any $\sigma\in \R^N.$
    We also denote by $\underline{0}, \underline{1} \in \mathbb{R}^N$ the vector $\underline{0} = (0, \dots, 0)$ and $\underline{1} = (1, \dots, 1)$. Let $\lan{\cdot, \cdot}$ be the standard inner product on $\mathbb{R}^N$. For $1\leq s<\infty,$ denote by $\|\sigma\|_s$ the usual $\ell_s$-norm and by $\opnorm{\sigma}_s$ the averaged $\ell_s$-norm, namely $\opnorm{\sigma}_s=\|\sigma\|_s/N^{1/s}.$ Set also $\min(\sigma) = \min\{|\sigma_i| \mid 1 \leq i \leq N\}.$
    \item For a $N \times N$ matrix $A$, denote by $\text{Spec}(A) = \{\lambda_i(A)\}_{i = 1}^N$ the spectrum of $A$ and set $\|A\|_{\text{op}}$ the operator norm. When $A$ is symmetric ensuring $\text{Spec}(A) \subseteq \mathbb{R}$, we order the eigenvalues by $\lambda_1(A) \leq \cdots \leq \lambda_N(A)$ and set the empirical measure
    $\mu_A = N^{-1}\sum_{i = 1}^N \delta_{\lambda_i(A)} \in \mathcal{P}(\mathbb{R}).$
    We also denote by $G_N$ the $N \times N$ Gaussian Orthogonal Ensemble (GOE).
    \item For $\mu \in \mathcal{P}(\mathbb{R})$, define $m_\infty(\mu) = \sup\{|x| \mid x \in \text{supp}(\mu)\}$. Denote by $F_\mu(x) = \mu((-\infty, x])$ the cumulative distribution function of $\mu$. Moreover, if $\mu$ is absolutely continuous with respect to Lebesgue measure, set $f_\mu(x) = F_\mu'(x)$ to be the probability density function. We also define the dilation $S_\alpha(\mu) = \mu \circ (\alpha^{-1} \times \cdot )$.
    \item For probability measures $\mu, \nu \in \mathcal{P}(\mathbb{R})$, we define the L\'{e}vy distance, the Kolmogorov-Smirnov distance, the bounded Lipschitz distance, and the Kullback-Leibler divergence by
    \begin{align*}
        d_{\text{L}}(\mu, \nu) &= \inf\{\epsilon > 0 \mid F_\mu(x - \epsilon) - \epsilon \leq F_\nu(x) \leq F_\mu(x + \epsilon) + \epsilon\}, \\
        d_{\text{KS}}(\mu, \nu) &= \sup\{|F_\mu(x) - F_\nu(x)| \mid x \in \mathbb{R}\}, \\
        d_{\text{BL}}(\mu, \nu) &= \sup\Big\{\int_{\mathbb{R}} f(x)(\mu - \nu)(dx) \,\Big\vert\, \|f\|_{\text{Lip}} \leq 1, \|f\|_\infty \leq 1 \Big\}, \\
        \text{KL}(\mu \,\|\, \nu) &= \int_{\mathbb{R}} \log\Bpar{\frac{d\mu}{d\nu}(x)} \mu(dx).
    \end{align*}
    We also define the $s$-Wasserstein distance ($s \geq 1$) to be
    \[W_s(\mu, \nu) = \Big(\inf_{\gamma \in \Pi(\mu, \nu)} \int_{\mathbb{R}^2} |x - y|^s \,\gamma(d(x, y))\Big)^{1/s},\]
    where the infimum is taken over all couplings $\Pi(\mu, \nu)$ (measures on $\mathbb{R}^2$ with marginal $\mu$ and $\nu$). For $s = 1$, we have the well-known Kantorovich-Rubinstein duality:
    \[W_1(\mu, \nu) = \sup\Big\{\int_{\mathbb{R}} f(x)(\mu - \nu)(dx) \,\Big\vert\, \|f\|_{\text{Lip}} \leq 1\Big\}.\]
    See \cite[Chapter 6]{villani2008optimal} for a review on some basic properties of the Wasserstein distance.
    \item Denote the set of Borel measures on $\mathbb{R}$ by $\mathcal{M}(\mathbb{R})$.
    \item Denote by $\mathscr{T}$ the standard topology on $\mathbb{R}$ (and its subspaces). Denote by $\mathscr{T}_{\text{weak}}$ the weak topology on $\mathcal{M}(\mathbb{R})$ (and its subspaces); $\mathscr{W}_s$ the $s$-Wasserstein-topology ($s \geq 1$), the topology induced from the metric $W_s$, on $\mathcal{P}_s(\mathbb{R})$.
    \item Denote the surface area of $\mathbb{S}^{N - 1}$ by
    $S_{N - 1} = 2\pi^{N/2}/\Gamma(N/2).$
\end{itemize}

\subsection{Sketch of Proof: New Challenges}\label{sec:sketchofproof}

\noindent
A general technique for studying the complexity of critical points is the Kac-Rice formula (see \cite[Theorem 11.2.1]{adler2009random} for details). Roughly speaking, if $H_N$ is a sufficiently regular Gaussian field defined on a compact set $C \subseteq \mathbb{R}^N$, then for any measurable set $B \subseteq \mathbb{R}$,
\begin{equation}\label{Equation:Kac_Rice}
    \begin{split}
        &\mathbb{E}\Bigl[\bigl|\{\sigma \in C \mid \nabla H_N(\sigma) = \underline{0},\; H_N(\sigma)\in B\}\bigr|\Bigr] \\
        &= \int_C 
        \mathbb{E}\Bigl[
            \bigl|\det \nabla^2 H_N(\sigma)\bigr|
            \,\mathbbm{1}_{\{H_N(\sigma)\in B\}}
            \,\Big|\, \nabla H_N(\sigma) = \underline{0}
        \Bigr]
        f_{\nabla H_N(\sigma)}(\underline{0})\, d\sigma .
    \end{split}
\end{equation}
This approach is standard and serves as the starting point for the derivation of complexity formulas in
\cite{Fyodorov04, Fyodorov07, auffinger2013random, Auffinger13, auffingerZeng2023complexity, ben2024landscape}.
In our setting, however, the presence of a anisotropic Gaussian field together with a non-rotationally invariant potential introduces substantial additional technical challenges.

Recall that in the isotropic case, the centered Gaussian process
$X_N(\sigma)=H_N(\sigma)-\mathbb{E}[H_N(\sigma)]$ has covariance profile
\[\mathbb{E}[X_N(\sigma)X_N(\tau)]
= N \varrho \Bigl(\frac{\|\sigma-\tau\|_2^2}{2N}\Bigr),
\qquad \forall\sigma,\tau\in C\]
for some function $\varrho$. In this situation, $H_N(\sigma)$ and $\nabla^2 H_N(\sigma)$ are independent of $\nabla H_N(\sigma)$, and moreover their joint distribution does not depend on the spatial location
$\sigma$. As a consequence, the high-dimensional integral in \eqref{Equation:Kac_Rice} reduces to the
computation of the volume of $C$. This is the case in
\cite{Fyodorov04, Fyodorov07, auffinger2013random, Auffinger13, ben2024landscape}.

In contrast, this reduction no longer holds in our model, and one must analyze an additional layer of integration before obtaining the asymptotic complexity formula. A further difficulty arises from the fact that our potential $-V_N(\sigma)=\mathbb{E}[H_N(\sigma)]$ is not rotationally invariant. Consequently, unlike in
\cite{auffinger2013random, Auffinger13, auffingerZeng2023complexity}, where the Hessian behaves as a GOE matrix with an identity shift and one may apply the determinant reduction trick of \cite[Lemma~3.3]{auffinger2013random}, such a representation is unavailable in our setting.

In our model, conditioning on $\nabla H_N(\sigma)=0$ renders the Hamiltonian $H_N(\sigma)$ deterministic, as a consequence of the homogeneity of the underlying Gaussian field. Moreover, under this conditioning, the Hessian $\nabla^2 H_N(\sigma)$ is distributed as a GOE matrix plus a non-rotationally invariant diagonal component, up to a small perturbation. More precisely, with $B = [N u, \infty)$, the expression obtained from \eqref{Equation:Kac_Rice} takes the following form (up to multiplicative constants that are inconsequential for exponential asymptotics):
\begin{equation}\label{Equation:Rough_First_Moment}
    \int_{\Omega(u)} \frac{1}{\opnorm{\sigma}_2^{N+p}}
    e^{-N f_N(\sigma)}
    \mathbb{E}\bigl[|\det M_{N-1}(\sigma)|\bigr]
    \, d\sigma,
\end{equation}
where $\Omega(u)=\big\{\sigma\in\mathbb{R}^N: N^{-1}\sum_{i=1}^N\big(p^{-1}\sigma_i V'(\sigma_i)-V(\sigma_i)\big)\ge u\big\}$  and
\begin{align*}
    f_N(\sigma)
    &= (1-p)\frac{\langle \sigma, V'(\sigma)\rangle^2}{N^2\opnorm{\sigma}_2^{2p}}
       + p\frac{\opnorm{V'(\sigma)}_2^2}{\opnorm{\sigma}_2^{2p-2}}, \\
    M_{N-1}(\sigma)
    &= B(\sigma)^\top\bigl(D_N(\sigma)+y(\sigma)y(\sigma)^\top\bigr)B(\sigma)
       + \mathrm{GOE}_{N-1}, \\
D_N(\sigma) &= (p(p - 1))^{-1/2}\opnorm{\sigma}_2^{2 - p} \text{diag}\big(V''(\sigma)\big).
\end{align*}
Here, $B(\sigma) = B_N(\sigma)$ is a semi-orthogonal matrix that projects $\mathbb{R}^N$ onto $\mathbb{R}^{N-1}$, and $y(\sigma)=y_N(\sigma)$ is a vector in $\mathbb{R}^N$ that depends on $\sigma$. Their precise definitions are given immediately before Proposition~\ref{Kac-Rice-Final-Formula}.

The analysis of the logarithmic asymptotics of \eqref{Equation:Rough_First_Moment} presents two main technical difficulties. The first concerns the determinant term appearing in the integrand, while the second is the identification of the correct large deviation principle governing the remaining factors. We discuss these issues in detail below.

\medskip

\noindent \textbf{Challenge 1: Exponential asymptotics of the determinant.}
The first step of our analysis is to establish the exponential asymptotics of the determinant term appearing in
\eqref{Equation:Rough_First_Moment}. Let $\mu_{A_N}$ denote the empirical spectral measure of a symmetric matrix $A_N$. For clarity of exposition, we first ignore the additive and multiplicative perturbations $y(\sigma)$ and $B(\sigma)$. It is well known in free probability that if $\mu_{D_N(\sigma)}$ converges to a probability measure $\nu$, then the empirical spectral measure of $\mathrm{GOE}_N + D_N(\sigma)$ converges to the free convolution $\nu \boxplus \mu_{\mathrm{sc}}$. In our setting, however, there is no single limiting ``target'' measure, since $\sigma\in\Omega(u)$ is arbitrary. Instead, we compare the empirical spectral measure of $M_{N-1}(\sigma)$ to the finite-$N$ approximation $\mu_{D_N(\sigma)} \boxplus \mu_{\mathrm{sc}}$, and approximate the determinant via
\begin{equation}\label{Equation:Exp_Asy_1}
    \frac{1}{N} \log \mathbb{E}\bigl[|\det M_{N - 1}(\sigma)|\bigr]
    \approx s_N(\sigma) :=
    \int_{\mathbb{R}} \log|\lambda|\,
    \bigl(\mu_{D_N(\sigma)} \boxplus \mu_{\mathrm{sc}}\bigr)(d\lambda),
\end{equation}
which introduces additional technical difficulties due to the singularity of the logarithm at the origin. The logarithmic asymptotics of determinants of Gaussian matrices with a covariance profile were studied systematically in \cite{ben2023exponential, ben2024landscape}. However, these results do not apply directly in our setting, since they require, as an input, a quantitative finite-$N$ approximation $\mathbb{E} \mu_{M_{N - 1}(\sigma)} \approx \mu_{D_N(\sigma)} \boxplus \mu_{\mathrm{sc}}$. Such estimates are available in the random matrix literature \cite{erdHos2019random, ajanki2019stability} via delicate Green function bounds, but only under the assumption that the mean matrix $\mathbb{E}[M_{N - 1}(\sigma)]$ has uniformly bounded spectrum. In our case, $D_N(\sigma)$ has unbounded spectrum, and we additionally need the approximation to hold uniformly over configurations $\sigma\in\Omega(u)$.

To overcome this difficulty, we proceed through the following sequence of approximations:
\begin{align}
    \frac{1}{N}\log |\det M_{N - 1}(\sigma)| = \int_{\mathbb{R}} \log |\lambda| \,\mu_{M_{N - 1}(\sigma)}(d\lambda)
    &\approx
    \int_{\mathbb{R}} \log |\lambda| \,\mathbb{E}\bigl[\mu_{M_{N - 1}(\sigma)}\bigr](d\lambda)
    \label{Equation:Exp_Asy_2} \\
    &\approx
    \int_{\mathbb{R}} \log |\lambda|\,
    \mathbb{E}\bigl[\mu_{D_N^K(\sigma)+\mathrm{GOE}_N}\bigr](d\lambda)
    \label{Equation:Exp_Asy_3} \\
    &\approx
    \int_{\mathbb{R}} \log |\lambda|\,
    \bigl(\mu_{D_N^K(\sigma)} \boxplus \mu_{\mathrm{sc}}\bigr)(d\lambda)
    \label{Equation:Exp_Asy_4} \\
    &\approx
    \int_{\mathbb{R}} \log |\lambda|\,
    \bigl(\mu_{D_N(\sigma)} \boxplus \mu_{\mathrm{sc}}\bigr)(d\lambda).
    \label{Equation:Exp_Asy_5}
\end{align}
Here $K$ is a large positive constant and $D_N^K(\sigma)$ denotes the truncated diagonal matrix
\[
    D_N^K(\sigma)
    =
    (p(p-1))^{-1/2}\,\opnorm{\sigma}_2^{2-p}
    \operatorname{diag}\bigl(V''(\sigma)\wedge K\bigr),
\]
which has bounded spectrum. We now briefly outline the justification of each approximation step, emphasizing that \eqref{Equation:Exp_Asy_3} is where the main truncation argument occurs.

\begin{enumerate}
    \item 
    The concentration step \eqref{Equation:Exp_Asy_2} is established via an analogue of \cite[Theorem 1.2]{ben2023exponential} (see Theorem~\ref{Theorem:Main_Asymptote_Unbounded}). In \cite{ben2023exponential}, this approximation relies on concentration of the empirical spectral measure of $M_{N - 1}(\sigma)$ together with control of the logarithmic singularity near the origin, obtained by assuming closeness to a compactly supported reference measure whose density has sufficient regularity at zero. While concentration follows from Theorem~\ref{Theorem:GZ_Concentration} via a standard application of the logarithmic Sobolev inequality, the latter assumption does not apply to our setting. We therefore reformulate the required input as a critical Wegner estimate (see Definition~\ref{Definition:Wegner}), available from \cite{aizenman2017matrix}, which controls the small eigenvalues of matrices of the form $\mathrm{GOE}_N + A_N$ for {\em arbitrary} deterministic symmetric $A_N$.
    
    \item
    The truncation step \eqref{Equation:Exp_Asy_3} is crucial in order to justify the key step
    \eqref{Equation:Exp_Asy_4}. We first show that the domain of integration in \eqref{Equation:Rough_First_Moment}
    can be restricted to configurations with bounded normalized $(2q_2-2)$-norm
    without affecting the leading exponential order (see Proposition~\ref{Proposition:q_Truncation}) by utilizing the rapid decay of the prefactor $e^{-Nf_N(\sigma)}$ when $\sigma$ is away from the origin. Under this restriction, we may truncate the diagonal matrix $D_N(\sigma)$ and simultaneously removing negligible terms such as the semi-orthogonal matrix and the rank-one projection from $M_{N - 1}(\sigma)$ (see Proposition \ref{Proposition:Main_Truncation}). We point out that this again relies on a Wegner estimate to control the logarithmic singularity near the origin, highlighting its essential role in this analysis.

    \item 
    After truncation, $\|D_N^K(\sigma)\|_{\text{op}}$ becomes uniformly bounded, so we can invoke existing, though nontrivial, results from the literature to deduce that
    \[\mu_{D_N^K(\sigma)+\mathrm{GOE}_N} \approx \mu_{D_N^K(\sigma)} \boxplus \mu_{\mathrm{sc}},\]
    via stability of the matrix Dyson equation (see Proposition \ref{Proposition:Stability_GOE}). Finally, \eqref{Equation:Exp_Asy_4} follows once we control the logarithmic singularity at $0$ using the Wegner estimate.
    
    \item 
    The final approximation \eqref{Equation:Exp_Asy_5}, in which the truncation parameter $K$ is sent to infinity, is carried out after deriving a truncated version of the variational formula \eqref{Equation:Main}. By proving that the convergence as $K\to\infty$ is uniform over all admissible configurations in the variational problem, we obtain the full asymptotic formula \eqref{Equation:Main} (see Proposition \ref{Proposition:Approximation_of_phi}).
\end{enumerate}

We remark that the recent approach of \cite[Theorem 1.2]{belius2024determinant} also yields the approximation in \eqref{Equation:Exp_Asy_3}, since it only requires sublinear growth of the operator norm of the mean matrix, which is $D_N(\sigma)$ in our case. In our setting, however, the spectrum of $D_N(\sigma)$ is unbounded prior to truncation, and a truncation-based approach therefore seems unavoidable. Moreover, we believe that the general framework developed in \eqref{Equation:Exp_Asy_2}--\eqref{Equation:Exp_Asy_5} may be useful in future investigations involving more general types of domains of $H_N.$

\medskip

\noindent\textbf{Challenge 2: Large Deviation Principle.} We decompose the integral in \eqref{Equation:Rough_First_Moment} into radial and spherical components. After substituting the approximation \eqref{Equation:Exp_Asy_1}, we obtain
\begin{align}
    &\int_{\Omega(u)} \frac{1}{\opnorm{\sigma}_2^{N + p}} 
    e^{-Nf_N(\sigma)}\mathbb{E}\bigl[|\det M_{N-1}(
    \sigma)|\bigr] d\sigma \notag \\
    &\approx 
    \int_0^\infty \int_{\sqrt{N}\mathbb{S}^{N - 1}} \mathbbm{1}_{\{t\omega \in \Omega(u)\}}
    \exp\bigl(N[-f_N(t\omega) + s_N(t\omega)]\bigr) 
    \,d\omega \,dt \notag \\
    &= \int_0^\infty \int_{\sqrt{N}\mathbb{S}^{N - 1}} \mathbbm{1}_{\{(T, L_{t\omega, N}))\in\mathfrak{F}(u)\}}
    \exp\bigl(N[-f(t,L_{t\omega,N}) + s(t,L_{t\omega,N})]\bigr)d\omega dt,
    \label{Equation:LDP_1}
\end{align}
where the set $\mathfrak{F}(u)$ encodes the constraint $\sigma=t\omega\in\Omega(u)$ (see \eqref{Equation:F(u)}), 
\begin{align*}
    f(t,\mu) &:= - (p-1) t^{-2p} \Big(\int_{\mathbb{R}} x V'(x) \mu(dx)\Big)^2 + pt^{-(2p-2)} \int_{\mathbb{R}} (V'(x))^2\,\mu(dx), \\
    s(t,\mu) &:= \int_{\mathbb{R}} \log|\lambda| \big((g_t)_*\mu \boxplus \mu_{\rm sc}\big)(d\lambda), \quad g_t(x) := \oldconstant{Constant:A_1}t^{2-p}\,V''(x)
\end{align*}
and 
\[L_{t\omega, N} = \dfrac{1}{N}\sum_{i = 1}^N \delta_{t\omega_i} \in \mathcal{P}(\mathbb{R})\]
denotes the empirical measure associated to any $t\omega.$ 
One can show that very large and very small values of $t$ do not contribute at the exponential scale.  Thus, after restricting the $t$-integral to $[\delta,M]$ with $M$ large and $\delta>0$ small, we may write
\begin{equation}\label{Equation:LDP_2}
    \eqref{Equation:LDP_1} \approx  (M - \delta)\cdot \mathbb{E}\Bigl[
    \mathbbm{1}_{\{(T, L_{T\omega, N})\in\mathfrak{F}(u)\}}
    \exp\bigl(N[-f(T,L_{T\omega,N}) + s(T,L_{T\omega,N})]\bigr)
    \Bigr],
\end{equation}
where $T$ is uniformly distributed on $[\delta,M]$, $\omega$ is uniformly distributed on $\sqrt{N}\mathbb{S}^{N-1}$, and $T$ and $\omega$ are independent.

Formally, if the pair $(T, L_{T\omega, N})$ satisfies a large deviation principle with rate function $J(t,\mu)$, then Laplace's method suggests that the logarithmic asymptotics of \eqref{Equation:LDP_2} are given by
\begin{equation}\label{Equation:Intro_4}
    \sup_{(t,\mu)\in\mathfrak{F}(u)}
    \bigl\{-f(t,\mu) + s(t,\mu)-J(t,\mu)\bigr\}.
\end{equation}
However, implementing this strategy is considerably more delicate. Ignoring the randomness of $T$ for the moment, it is known that the empirical measure of $\omega$ satisfies an LDP with an explicit rate function in the weak topology, and that this can be upgraded to the $s$-Wasserstein topology for $s<2,$ \cite{kim2018conditional}. In our case, however, the term $\big(\int_{\mathbb{R}} xV'(x)\,L_{t\omega,N}(dx)\big)^2$ appearing in the exponential factor $-f(t,L_{t\omega,N})$ depends on high moments of the empirical measure $L_{t\omega,N}$, and such functionals are not continuous in the $s$-Wasserstein topology when $s<2$. A second difficulty is that the components of the pair $(T,L_{T\omega,N})$ are not independent, so one needs a new joint large deviation principle, which does not appear to be available in the literature.

To address the first issue, we represent $\omega=g/\opnorm{g}_2$, for $g$ a standard Gaussian vector in $\mathbb{R}^N$ and fix the radial parameter $t$. The idea is to absorb the factor 
\begin{equation}\label{eq:tilting_factor}
    \exp\Bigl(-N t^{2-2p}\int V'(x)^2 L_{t \omega, N}(dx)\Bigr)
    = \exp\Bigl(-t^{2-2p}\sum_{i=1}^N V'\Bigl(\frac{tg_i}{\opnorm{g}_2}\Bigr)^2\Bigr)
\end{equation}
appearing in $e^{-Nf(t,L_{t\omega,N})}$ into the product Gaussian measure. This tilting enforces tails that decay faster than Gaussian.  However, it destroys the product structure of the base measure because of the global normalization term $\opnorm{g}_2$. To recover a product structure, we bound \eqref{eq:tilting_factor} by replacing $\opnorm{g}_2$ with a non-random proxy $a>0$. For instance, on the event $a_1 \le \opnorm{g}_2 \le a_2$, we have
\begin{equation*}\label{eq:tilting_factor_actual}
 \prod_{i=1}^N \exp\Bigl(-t^{2-2p} V'\Bigl(\frac{tg_i}{a_1}\Bigr)^2\Bigr) \le \exp\Bigl(-t^{2-2p}\sum_{i=1}^N V'\Bigl(\frac{tg_i}{\opnorm{g}_2}\Bigr)^2\Bigr)
    \le \prod_{i=1}^N \exp\Bigl(-t^{2-2p} V'\Bigl(\frac{tg_i}{a_2}\Bigr)^2\Bigr),
\end{equation*}
where the inequalities follows from the monotonicity property of $V'$ given in Assumption \ref{ass1}. We may therefore combine the product term $\prod_{i=1}^N \exp\bigl(-t^{2-2p} V'(tg_i/a)^2\bigr)$ with the Gaussian density to obtain a tilted product measure $\mu_{a,t}^{\otimes N}$, whose marginal has tails decaying like $\exp(-c_{a, t}|x|^{2q_2-2})$. For $x\sim \mu_{a,t}^{\otimes N}$,
the restriction on $\opnorm{x}_2$ into intervals then necessitates an analysis of the joint large deviation principle for
\[\bigl(m_2(L_{x,N}),\, L_{x/m_2(L_{x,N}),N}\bigr)\]
that follows from Cram\'er’s theorem for $\bigl(m_2(L_{x,N}),L_{x,N}\bigr)$ together with the contraction principle (see Lemma~\ref{Lemma:LDP_Tilted_Pre}).

To incorporate the random variable $T$, we again exploit certain  monotonicity property in Assumption~\ref{ass1} to show that the LDP obtained above is uniform in $t$. This yields a joint large deviation principle for the triple
\begin{equation}\label{LDP_joint_triple}
  \bigl(m_2(L_{x,N}),\, L_{x/m_2(L_{x,N}),N},\, T\bigr),  
\end{equation}
now with respect to the $s$-Wasserstein topology for any $s<2q_2-2$ (see Theorem~\ref{Theorem:LDP_Tilted}),  a threshold dictated by the tail decay of the tilted measure. Under this finer topology, the remaining term in $f(t,L_{t\omega,N})$, as well as $s(t,L_{t\omega,N})$, becomes continuous. A careful application of Varadhan's lemma to the joint large deviation principle in \eqref{LDP_joint_triple} (see Theorem~\ref{Theorem:Varadhan_Tilted}) then yields the desired asymptotic formula \eqref{Equation:Intro_4}.  We remark that \cite{belius2024determinant} does not address this Laplace transform step.

The resolution of the above two challenges constitutes the main body of this paper.

\subsection{Structure of the Paper}

\noindent In Section~\ref{Section:First_Moment}, we apply the Kac-Rice formula together with tools from random matrix theory to derive the main integral representation for the complexity. Section~\ref{Section:Truncation} is of a more technical nature, where we establish truncation lemmas that allow us to identify the exponential behavior of the determinant appearing in this integral, thereby resolving Challenge~1. Section~\ref{Section:Variational_Formula} addresses  Challenge~2. It begins by deriving a large deviation principle for the triple \eqref{LDP_joint_triple} followed by establishing the logarithmic asymptotics of the complexity as a variational formula stated in Theorem~ \ref{Theorem:Main_Theorem}. Finally, Section~\ref{Section:Analysis} analyzes our formula and concludes the proofs of Propositions~\ref{Proposition:Finite_of_I} and~\ref{Proposition:Critical_Level}.

\section{First Moment of the Complexity}\label{Section:First_Moment}

\noindent In this section, we establish the mean Kac-Rice-type formula for the mean number of critical points of $H_N$. Let $p\geq 2$ be an integer and $q>p$ be an arbitrary real number. Set constants 
$$\newconstant\label{Constant:A_1} = \frac{1}{\sqrt{p(p - 1)}}, \quad \newconstant\label{Constant:A_2} = \frac{p - 1}{\sqrt{p(p - 1)}}, \quad \mbox{and} \quad \newconstant\label{Constant:Exp} = \frac{1}{2p^2}.$$
For any fixed nonzero $\sigma \in \mathbb{R}^N$, let $\{ {v}_i\}_{i = 2}^N\subset \mathbb{R}^N$ satisfy that $\{{\sigma}/{\|\sigma\|_2},  {v}_2, \dots,  {v}_N\}$ forms an orthonormal basis of $\mathbb{R}^N$. Set the $N \times (N - 1)$ semi-orthogonal matrix $B(\sigma) = \begin{pmatrix}
    v_2 & \cdots & v_N
\end{pmatrix}$ and we define for all $\sigma \in \mathbb{R}^N$
\begin{equation}\label{def:M}
    M_{N - 1}(\sigma) =  B(\sigma)^\top\Bpar{\oldconstant{Constant:A_1}\,\text{diag}\Bpar{\dfrac{V''(\sigma)}{\opnorm{\sigma}_2^{p - 2}}} - \dfrac{v(\sigma) v(\sigma)^\top}{\lan{\sigma, v(\sigma)}\opnorm{\sigma}_2^{p - 2}}} B(\sigma) +  G_{N - 1},
\end{equation}
where $G_{N - 1}$ is a $(N - 1) \times (N - 1)$ GOE and the vector
\begin{equation}\label{Equation:Vector}
    v(\sigma) = \oldconstant{Constant:A_1}\sigma V''(\sigma) - \oldconstant{Constant:A_2}V'(\sigma).
\end{equation}
We adapt the convention for $\sigma = 0$ by $M_{N - 1}(0) = G_{N - 1}$. Note that since $G_{N - 1}$ is rotational invariant, the eigenvalue distribution of $M_{N - 1}$ is invariant under different choice of $B(\sigma)$.

\begin{proposition}\label{Kac-Rice-Final-Formula}
	For any Borel set $B \subseteq \mathbb{R}$, define
    \begin{equation}\label{Equation:Omega}
        \Omega(B) = \big\{\sigma \in \mathbb{R}^N \,\big\vert\, p^{-1}\lan{\sigma, V'(\sigma)} - \lan{\underline{1}, V(\sigma)} \in B\big\}.
    \end{equation}
    If $V$ satisfies \ref{Condition:V''}, we have
    \begin{equation}
        \mathbb{E}\bigl[\text{Crt}_{N}(B)\bigr] = \frac{1}{\sqrt{p}}\Bigl(\frac{p - 1}{2\pi}\Bigr)^{N/2}\int_{\Omega(B)} \frac{|\lan{\sigma, v(\sigma)}|}{N\opnorm{\sigma}_2^{N + p}} e^{-\oldconstant{Constant:Exp}Nf_N(\sigma)}\mathbb{E}\bigl[|\det {M}_{N - 1}(\sigma)|\bigr] \,d\sigma, \label{Equation:Kac-Rice-Final-Formula}
    \end{equation}
	where the function
	\[f_N(\sigma) = (1 - p)\frac{\lan{\sigma, V'(\sigma)}^2}{N^2\opnorm{\sigma}_2^{2p}} + p\frac{\opnorm{V'(\sigma)}_2^2}{\opnorm{\sigma}_2^{2p - 2}}.\]
\end{proposition}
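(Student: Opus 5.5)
The plan is to apply the Kac--Rice formula \eqref{Equation:Kac_Rice} to $H_N = X_N - \sum_i V(\sigma_i)$ on $\mathbb{R}^N\setminus\{0\}$, exhausting it by compact annuli and using monotone convergence. The key tool throughout is the homogeneity of the centered field
\[
X_N(\sigma)=N^{-(p-1)/2}\sum g_{i_1\dots i_p}\sigma_{i_1}\cdots\sigma_{i_p},
\qquad
\mathbb{E}[X_N(\sigma)X_N(\tau)]=N^{1-p}\langle\sigma,\tau\rangle^p .
\]
The origin is a single point and plays no role in the Lebesgue integral. I would treat it separately: it is a degenerate critical point with $H_N(0)=0$. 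I expect the identity to be meant for nonzero critical points, or for $B$ with $0\notin B$, and I would flag this convention explicitly.

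\textbf{Step 1: gradient density and the energy constraint.} Differentiating the covariance gives
\[
\mathrm{Cov}(\nabla X_N(\sigma)) = pN^{1-p}\|\sigma\|_2^{2p-2}\bigl(I+(p-1)ee^\top\bigr),\qquad e=\sigma/\|\sigma\|_2 .
\]
This matrix is nondegenerate for $\sigma\neq0$. Its inverse is $(I-\tfrac{p-1}{p}ee^\top)/(pN^{1-p}\|\sigma\|_2^{2p-2})$ and its determinant is $p\,(pN^{1-p}\|\sigma\|_2^{2p-2})^N$. The event $\nabla H_N(\sigma)=0$ means $\nabla X_N(\sigma)=V'(\sigma)$, so evaluating the Gaussian density there and writing $\|\sigma\|_2^2=N\opnorm{\sigma}_2^2$ gives exactly $e^{-\oldconstant{Constant:Exp}Nf_N(\sigma)}$, up to the normalizing powers of $\opnorm{\sigma}_2$, $p$ and $2\pi$.

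Euler's identity $\langle\sigma,\nabla X_N(\sigma)\rangle = pX_N(\sigma)$ then shows that, on this event, $H_N(\sigma)=p^{-1}\langle\sigma,V'(\sigma)\rangle-\langle \underline 1,V(\sigma)\rangle$. The energy is therefore deterministic, and the indicator becomes $\mathbbm 1_{\Omega(B)}(\sigma)$.

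\textbf{Step 2: conditional law of the Hessian.} Euler's identity applied to $\nabla X_N$ gives $\nabla^2X_N(\sigma)\sigma=(p-1)\nabla X_N(\sigma)$. Conditionally on $\nabla H_N=0$, the column $\nabla^2H_N(\sigma)e$ is therefore the deterministic vector
\[
\bigl((p-1)V'(\sigma)-V''(\sigma)\sigma\bigr)/\|\sigma\|_2 = -\sqrt{p(p-1)}\,v(\sigma)/\|\sigma\|_2 .
\]
For the orthogonal block, I compute
\[
\mathrm{Cov}(\partial_{ij}X_N,\partial_kX_N)=N^{1-p}p(p-1)\|\sigma\|_2^{2p-4}\bigl(\delta_{ik}\sigma_j+\delta_{jk}\sigma_i\bigr)+(\text{terms}\propto\sigma_i\sigma_j\sigma_k).
\]
Every term contains a factor $\sigma$ in the $i$ or $j$ slot, so it vanishes after projecting by $B(\sigma)$ on both sides. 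Hence $B^\top\nabla^2X_N B$ is independent of $\nabla X_N(\sigma)$. Its covariance, computed from the fourth derivatives restricted to $\sigma^\perp$, is that of $\sqrt{p(p-1)}\,\opnorm{\sigma}_2^{p-2}\,G_{N-1}$.

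\textbf{Step 3: Schur complement.} In the basis $(e,v_2,\dots,v_N)$ the corner entry is $a=e^\top\nabla^2H_Ne=-\sqrt{p(p-1)}\langle\sigma,v(\sigma)\rangle/\|\sigma\|_2^2$. Condition \ref{Condition:V''} gives $xV''(x)\ge (q-1)V'(x)>(p-1)V'(x)$ for $x>0$, and the same sign holds for $x<0$ by evenness. Hence $\langle\sigma,v(\sigma)\rangle>0$ for $\sigma\neq0$, so $a\neq0$ and
\[
|\det\nabla^2H_N| = |a|\cdot\bigl|\det B^\top\bigl(\nabla^2 H_N - a^{-1}(\nabla^2H_Ne)(\nabla^2H_Ne)^\top\bigr)B\bigr| .
\]
Factoring $\sqrt{p(p-1)}\,\opnorm{\sigma}_2^{p-2}$ out of the $(N-1)\times(N-1)$ Schur complement turns $-B^\top\mathrm{diag}(V''(\sigma))B$ and the rank-one term into the deterministic part of \eqref{def:M}. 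The signs are immaterial: absolute values are taken and $G_{N-1}\overset{d}{=}-G_{N-1}$. The result is $M_{N-1}(\sigma)$, and the prefactor $|a|$ produces $|\langle\sigma,v(\sigma)\rangle|/(N\opnorm{\sigma}_2^2)$.

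\textbf{Step 4: assembling constants, and the main obstacle.} Collecting all powers of $\opnorm{\sigma}_2$, $N$, $p$ and $2\pi$ should give the stated prefactor $p^{-1/2}((p-1)/2\pi)^{N/2}\opnorm{\sigma}_2^{-N-p}$. I expect this bookkeeping to be the most error-prone routine part.

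The real technical obstacle is justifying Kac--Rice on $\mathbb{R}^N\setminus\{0\}$, where the conditions of \cite[Theorem 11.2.1]{adler2009random} must be verified. On each compact annulus, nondegeneracy of the gradient follows from Step 1. The remaining requirements are almost-sure smoothness (immediate, since the field is polynomial) and the non-existence of degenerate critical points. For the latter I would use the Bulinskaya-type lemma, which relies on the absolute continuity of the Hessian law established in Step 2: $G_{N-1}$ plus a deterministic shift is singular with probability zero. Letting the annuli increase to $\mathbb{R}^N\setminus\{0\}$ and applying monotone convergence then completes the argument.
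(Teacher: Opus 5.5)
Your proof is correct, and it takes a genuinely different and shorter route than the paper.

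\textbf{How the paper argues.} The paper gets everything from the Gaussian regression formula. The energy is deterministic because the conditional variance vanishes. The full conditional Hessian is $\sqrt{p(p-1)}\opnorm{\sigma}_2^{p-2}(-A_N(\sigma)+P(\sigma)G_NP(\sigma))$, which needs the fourth-order covariance algebra of Appendix~A. The paper then rotates by $\overline B(\sigma)$ and takes the Schur complement with respect to the $(N-1)\times(N-1)$ block $C(\sigma)$. That step requires $C(\sigma)$ to be a.s.\ invertible, and the matrix determinant lemma is then needed to turn $\det C\cdot(\text{corner}-\text{quadratic form})$ back into the corner times $\det M_{N-1}(\sigma)$.

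\textbf{How your route differs.} You use homogeneity pathwise. Euler's identities make the energy and the entire radial column $\nabla^2H_N(\sigma)e=-\sqrt{p(p-1)}\,v(\sigma)/\|\sigma\|_2$ deterministic on $\{\nabla H_N(\sigma)=\underline{0}\}$. You then only need two facts: the cross-covariance vanishes once both Hessian slots are projected onto $\sigma^\perp$, and the projected fourth-derivative covariance. The Schur complement with respect to the scalar corner $a\neq 0$ then produces $M_{N-1}(\sigma)$ in one step, with no invertibility issue for the tangential block.

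\textbf{Constants.} The bookkeeping matches. $|a|\,(\sqrt{p(p-1)}\opnorm{\sigma}_2^{p-2})^{N-1}$ equals $(\sqrt{p(p-1)}\opnorm{\sigma}_2^{p-2})^{N}|\lan{\sigma,v(\sigma)}|/(N\opnorm{\sigma}_2^{p})$. Your $G_{N-1}$ has entry variance $1/N$, which agrees with the paper's convention $G_{N-1}=P(e_1)G_NP(e_1)$.

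\textbf{What each route buys.} The paper's computation is mechanical and does not depend on spotting the Euler structure. Yours is more transparent and avoids most of the covariance algebra.

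\textbf{The origin.} Your treatment is more careful than the paper's. By \ref{Condition:V} and evenness, $V(0)=V'(0)=0$, and $\nabla X_N(0)=0$ for $p\ge 2$. So the origin is always a critical point with $H_N(0)=0$, while $\mathrm{Cov}(\nabla H_N(0))=0$. The Kac--Rice hypotheses therefore fail on the balls $\{\|\sigma\|_2\le t\}$ that the paper exhausts by, and your annuli avoid this. Your caveat is correct: as stated, the identity is off by $\mathbbm{1}_{\{0\in B\}}$. For example, $B=\{0\}$ gives $\Omega(B)=\{0\}$ by \ref{Condition:V'}, so the right side is $0$ while the left side is at least $1$. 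This does not affect the exponential asymptotics, but it should be recorded.

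\textbf{Two small corrections.}
\begin{itemize}
\item The origin is a degenerate critical point only for $p\ge 3$. For $p=2$, $\nabla^2H_N(0)=\nabla^2X_N(0)$ is a.s.\ invertible. This is irrelevant once you excise the origin.
\item Your strict inequality $(q-1)V'(x)>(p-1)V'(x)$ uses $V'(x)>0$ for $x>0$. That comes from \ref{Condition:V}, not \ref{Condition:V''}. The paper relies on this implicitly as well.
\end{itemize}
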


For the rest of this section, we establish this proposition based on the Kac-Rice formula. We begin by computing the covariance of $(H_N,\nabla H_N,\nabla^2 H_N)$ in the following subsection. 

\subsection{Covariance Structure}

\begin{proposition}\label{Proposition:Cov_Structure}
	For any fixed $\sigma \in \mathbb{R}^N$, $(H_N(\sigma),\nabla H_N(\sigma),\nabla^2 H_N(\sigma))$ is a joint Gaussian field with the following means
    \begin{align}
        \mathbb{E}[H_N(\sigma)] &= -\lan{\underline{1}, V(\sigma)}, \label{Equation:Mean_H} \\
        \mathbb{E}[\nabla H_N(\sigma)] &= -V'(\sigma), \label{Equation:Mean_Grad} \\
        \mathbb{E}\bigl[\nabla^2H_N(\sigma)\bigr] &= -\,\text{diag}(V''(\sigma)) \label{Equation:Mean_Hess}
    \end{align}
    and covariances (for all $1 \leq i, j, k, \ell \leq N$)
    \begin{align}
        \text{Var}(H_N(\sigma)) &= N^{1 - p}\|\sigma\|_2^{2p}, \label{Equation:Cov_H} \\
		\text{Cov}(\nabla H_N(\sigma), \nabla H_N(\sigma)) &= N^{-1}p\opnorm{\sigma}_2^{2(p - 2)}\bigl((p - 1)\sigma\sigma^\top + \|\sigma\|_2^2\, I_N\bigr), \label{Equation:Cov_Grad} \\
        \text{Cov}(\partial_{\sigma_i\sigma_j}H_N(\sigma), \partial_{\sigma_k\sigma_\ell}H_N(\sigma)) &= N^{1 - p}p(p - 1)\|\sigma\|_2^{2(p - 4)}\big[(p - 2)(p - 3)\sigma_i\sigma_j\sigma_k\sigma_\ell \notag \\
		&+ \|\sigma\|_2^2(p - 2)\big(\delta_{ik}\sigma_j\sigma_\ell + \delta_{i\ell}\sigma_j\sigma_k + \delta_{jk}\sigma_i\sigma_\ell + \delta_{j\ell}\sigma_i\sigma_k\big) \notag \\
        &+ \|\sigma\|_2^4(\delta_{ik}\delta_{j\ell} + \delta_{i\ell}\delta_{jk})\big], \label{Equation:Cov_Hess} \\
        \text{Cov}\bpar{H_N(\sigma), \nabla H_N(\sigma)} &= N^{1 - p}p\|\sigma\|_2^{2(p - 1)}\sigma, \label{Equation:Cov_H_Grad} \\
		\text{Cov}\bpar{H_N(\sigma), \nabla^2H_N(\sigma)} &= N^{1 - p}p(p - 1)\|\sigma\|_2^{2(p - 2)}\sigma\sigma^\top, \label{Equation:Cov_H_Hess} \\
		\text{Cov}\bigl(\partial_{\sigma_i} H_N(\sigma), \partial_{\sigma_j\sigma_k}^2H_N(\sigma)\bigr) &= N^{1 - p}p(p - 1)\|\sigma\|_2^{2(p - 3)}\big[(p - 2)\sigma_i\sigma_j\sigma_k \notag \\
        & + \|\sigma\|_2^2\big(\delta_{ik}\sigma_j + \delta_{ij}\sigma_k\big)\big]. \label{Equation:Cov_Grad_Hess}
	\end{align}
\end{proposition}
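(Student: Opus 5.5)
The means and covariances split cleanly, so I would handle them separately. Write $H_N(\sigma)=X_N(\sigma)-\lan{\underline{1},V(\sigma)}$ with
\[
X_N(\sigma)=N^{-(p-1)/2}\sum_{i_1,\dots,i_p} g_{i_1,\dots,i_p}\sigma_{i_1}\cdots\sigma_{i_p}.
\]
$X_N$ is a centered Gaussian polynomial field, so its derivatives at a fixed $\sigma$ are finite linear combinations of the $g$'s. Hence $(X_N,\nabla X_N,\nabla^2X_N)(\sigma)$ is jointly Gaussian and centered.

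The means \eqref{Equation:Mean_H}--\eqref{Equation:Mean_Hess} then come from differentiating the deterministic part $-\sum_i V(\sigma_i)$. This is legitimate since $V\in C^2$, and the Hessian of a separable function is diagonal.

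For the covariances, I would compute the covariance kernel once:
\[
C(\sigma,\tau):=\mathbb{E}[X_N(\sigma)X_N(\tau)]=N^{1-p}\sum_{i_1,\dots,i_p}\sigma_{i_1}\tau_{i_1}\cdots\sigma_{i_p}\tau_{i_p}=N^{1-p}\lan{\sigma,\tau}^p .
\]
This uses the independence of the $g$'s over all ordered tuples, with no symmetrization. All required covariances are derivatives of $C$ evaluated on the diagonal, for example
\[
\text{Cov}(\partial_iX_N(\sigma),\partial_{jk}X_N(\sigma))=\partial_{\sigma_i}\partial_{\tau_j}\partial_{\tau_k}C(\sigma,\tau)\big|_{\tau=\sigma}.
\]
Exchanging derivatives and expectation is justified because everything is a polynomial in finitely many Gaussians.

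The remaining work is a chain of elementary differentiations of $x\mapsto N^{1-p}x^p$ composed with the bilinear form. I would use $\partial_{\tau_j}\lan{\sigma,\tau}=\sigma_j$ and $\partial_{\sigma_i}\lan{\sigma,\tau}=\tau_i$, with the mixed derivative producing $\delta_{ij}$.
\begin{itemize}
\item With no derivatives, $C(\sigma,\sigma)=N^{1-p}\|\sigma\|^{2p}$, which gives \eqref{Equation:Cov_H}.
\item One $\tau$-derivative gives $pN^{1-p}\|\sigma\|^{2(p-1)}\sigma$, which is \eqref{Equation:Cov_H_Grad}. Two $\tau$-derivatives give \eqref{Equation:Cov_H_Hess}.
\item For the gradient covariance, $\partial_{\sigma_i}\partial_{\tau_j}C=N^{1-p}\bigl[p(p-1)\lan{\sigma,\tau}^{p-2}\tau_i\sigma_j+p\lan{\sigma,\tau}^{p-1}\delta_{ij}\bigr]$. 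Setting $\tau=\sigma$ and rewriting $N^{1-p}\|\sigma\|^{2(p-2)}=N^{-1}\opnorm{\sigma}_2^{2(p-2)}$ gives \eqref{Equation:Cov_Grad}.
\item The third-order mixed derivative gives \eqref{Equation:Cov_Grad_Hess}. The fourth-order mixed derivative $\partial_{\sigma_i}\partial_{\sigma_j}\partial_{\tau_k}\partial_{\tau_\ell}C$ gives \eqref{Equation:Cov_Hess}.
\end{itemize}

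The only step needing care is the bookkeeping of the Kronecker-delta terms in the fourth-order derivative. I would organize it by counting how many of the two $\sigma$-derivatives land on an already produced $\sigma_k$ or $\sigma_\ell$ factor rather than on the power.
\begin{itemize}
\item If neither does, the term is $p(p-1)(p-2)(p-3)\lan{\sigma,\tau}^{p-4}\tau_i\tau_j\sigma_k\sigma_\ell$.
\item If exactly one does, there are four $\delta$-terms, each with coefficient $p(p-1)(p-2)\lan{\sigma,\tau}^{p-3}$.
\item If both do, the term is $p(p-1)\lan{\sigma,\tau}^{p-2}(\delta_{ik}\delta_{j\ell}+\delta_{i\ell}\delta_{jk})$.
\end{itemize}
Evaluating at $\tau=\sigma$ and factoring out $p(p-1)\|\sigma\|^{2(p-4)}$ yields exactly the stated bracket. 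The third-order formula is checked the same way.
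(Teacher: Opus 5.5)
Your proposal is correct and follows essentially the same route as the paper's proof in Appendix~\ref{Section:Covariance}. The paper also sets $C(x,y)=N^{1-p}\langle x,y\rangle^p$, obtains each covariance as a mixed derivative of $C$ on the diagonal via the derivative--covariance formula \eqref{Equation:covariance_Formula}, and treats the means as immediate from the potential term. Your Kronecker-delta bookkeeping for the fourth-order derivative reproduces the stated bracket exactly.
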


\begin{proof}
    Since the proofs are pure algebraic, we refer the readers to Appendix \ref{Section:Covariance}.
\end{proof}

For any $\sigma \in \mathbb{R}^N$, we define the following $N \times N$ matrices
\begin{align*}
    P(\sigma) &= I_N - \frac{\sigma\sigma^\top}{\|\sigma\|_2^2}, \\
    A_N(\sigma) &= \opnorm{\sigma}_2^{2 - p}\Big[\oldconstant{Constant:A_1}\,\text{diag}(V''(\sigma)) + \oldconstant{Constant:A_2}\Bigl(\dfrac{\lan{\sigma, V'(\sigma)}}{\|\sigma\|_2^4}\sigma\sigma^\top - \dfrac{\sigma V'(\sigma)^\top + V'(\sigma)\sigma^\top}{\|\sigma\|_2^2}\Big)\Big],
\end{align*}
where we adapt the convention for $\sigma = \underline{0}$ by $P(\underline{0}) = I_N$ and $A_N(\underline{0}) = O_N$.

\begin{proposition}[Conditional Distribution of $H_N$ and $\nabla^2H_N$ on $\nabla H_N$]\label{Proposition:Conditional_Distribution}
    Under the measure $\mathbb{P}(\cdot \mid \nabla H_N(\sigma) = \underline{0})$, we have
	\begin{enumerate}[label = (\roman*)]
		\item the Hamiltonian is deterministic with
        \begin{equation}\label{Equation:CD_H}
            H_N(\sigma) = p^{-1}\lan{\sigma, V'(\sigma)} - \lan{\underline{1}, V(\sigma)};
        \end{equation}
		\item $\nabla^2 H_N(\sigma)$ is a Gaussian matrix distributed as
		\begin{equation}\label{Equation:CD_Hess}
			\sqrt{p(p - 1)}\opnorm{\sigma}_2^{p - 2}\big(-A_N(\sigma) + P(\sigma)G_NP(\sigma)\bigr),
		\end{equation}
        where $G_N$ is a $N \times N$ GOE.
	\end{enumerate}
\end{proposition}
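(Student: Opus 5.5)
The plan is to avoid inverting the full Gaussian regression and instead use the Euler homogeneity of the centered field. Write $H_N=X_N-\lan{\underline{1},V(\sigma)}$ with $X_N(\sigma)=N^{-(p-1)/2}\sum g_{i_1\dots i_p}\sigma_{i_1}\cdots\sigma_{i_p}$. Since $X_N$ is a homogeneous polynomial of degree $p$, we have, pathwise and for every $\sigma$,
\[
\lan{\sigma,\nabla X_N(\sigma)}=pX_N(\sigma),\qquad \nabla^2X_N(\sigma)\,\sigma=(p-1)\nabla X_N(\sigma).
\]
By \eqref{Equation:Mean_Grad}, the event $\nabla H_N(\sigma)=\underline{0}$ is the event $\nabla X_N(\sigma)=V'(\sigma)$. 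Part (i) then follows at once: on this event $X_N(\sigma)=p^{-1}\lan{\sigma,V'(\sigma)}$ deterministically. As a sanity check, the regression formula with \eqref{Equation:Cov_Grad} and \eqref{Equation:Cov_H_Grad} gives conditional mean $p^{-1}\lan{\sigma,V'(\sigma)}$ and conditional variance $0$. Here one uses $\bigl((p-1)\sigma\sigma^\top+\|\sigma\|_2^2I\bigr)^{-1}\sigma=\sigma/(p\|\sigma\|_2^2)$ and $\|\sigma\|_2^{2p-4}=N^{p-2}\opnorm{\sigma}_2^{2p-4}$.

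For (ii), fix $\sigma\neq\underline{0}$, set $Q=\sigma\sigma^\top/\|\sigma\|_2^2=I-P(\sigma)$, and split
\[
\nabla^2X_N=P\nabla^2X_NP+Q\nabla^2X_N+P\nabla^2X_NQ .
\]
By the second Euler identity, $Q\nabla^2X_N=\sigma(\nabla^2X_N\sigma)^\top/\|\sigma\|_2^2=(p-1)\sigma\nabla X_N^\top/\|\sigma\|_2^2$, and symmetrically $P\nabla^2X_NQ=(p-1)P\nabla X_N\sigma^\top/\|\sigma\|_2^2$. Both terms are therefore deterministic functions of $\nabla X_N$. Under the conditioning they become
\[
(p-1)\Bigl[\frac{\sigma V'(\sigma)^\top+V'(\sigma)\sigma^\top}{\|\sigma\|_2^2}-\frac{\lan{\sigma,V'(\sigma)}}{\|\sigma\|_2^4}\sigma\sigma^\top\Bigr],
\]
after expanding $P=I-Q$ in the second term. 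Adding the mean $-\operatorname{diag}(V''(\sigma))$ from \eqref{Equation:Mean_Hess} and factoring out $\sqrt{p(p-1)}\opnorm{\sigma}_2^{p-2}$ reproduces exactly $-\sqrt{p(p-1)}\opnorm{\sigma}_2^{p-2}A_N(\sigma)$, using $\oldconstant{Constant:A_1}\sqrt{p(p-1)}=1$ and $\oldconstant{Constant:A_2}\sqrt{p(p-1)}=p-1$.

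It remains to identify the law of $P\nabla^2X_NP$ under the conditioning. I will show that it is independent of $\nabla X_N(\sigma)$. This holds because the vectors are jointly Gaussian, so it suffices that their cross-covariance vanish. In \eqref{Equation:Cov_Grad_Hess}, every term carries a factor $\sigma_j$ or $\sigma_k$ in a Hessian index, and projecting by $P$ in that index kills it. Hence the conditional law of $P\nabla^2X_NP$ equals its unconditional law. Applying $P\otimes P$ to \eqref{Equation:Cov_Hess} likewise kills every term containing $\sigma$, leaving
\[
N^{1-p}p(p-1)\|\sigma\|_2^{2(p-2)}(\delta_{ik}\delta_{j\ell}+\delta_{i\ell}\delta_{jk})=\frac{p(p-1)\opnorm{\sigma}_2^{2(p-2)}}{N}(\delta_{ik}\delta_{j\ell}+\delta_{i\ell}\delta_{jk}),
\]
sandwiched by $P$. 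This is precisely the covariance of $\sqrt{p(p-1)}\opnorm{\sigma}_2^{p-2}PG_NP$, since the GOE has covariance $N^{-1}(\delta_{ik}\delta_{j\ell}+\delta_{i\ell}\delta_{jk})$. This proves \eqref{Equation:CD_Hess} for $\sigma\neq\underline{0}$.

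The case $\sigma=\underline{0}$ is handled directly with the conventions $P=I$ and $A_N=O$. For $p\ge3$, both $\nabla X_N(0)$ and $\nabla^2X_N(0)$ vanish, so the conditional Hessian is $-\operatorname{diag}(V''(0))=0$, matching the prefactor $0^{p-2}=0$ (note $V''(0)=0$ by \ref{Condition:V}). For $p=2$, $\nabla X_N(0)=0$ and $\nabla^2X_N(0)$ is $\sqrt2$ times a GOE, which matches since $\opnorm{0}_2^{0}=1$.

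The only genuinely delicate point is the bookkeeping in matching the deterministic part with $A_N(\sigma)$, namely the signs and the powers of $N$ and $\opnorm{\sigma}_2$. The independence argument itself is clean once the Euler identities are used in place of a direct regression against the $N\times N$ gradient covariance.
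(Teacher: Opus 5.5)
Your proof is correct, but it takes a different route from the paper. The paper proves both parts by direct Gaussian regression. It writes down the explicit inverse \eqref{Equation:Cov_Grad_Inverse} of the gradient covariance and obtains the conditional means of $H_N$ and of each Hessian entry from $C_{12}C_{22}^{-1}$. Then, in Appendix~\ref{Section:Covariance}, it expands the Schur complement $C_{11}-C_{12}C_{22}^{-1}C_{21}$ entrywise until it collapses to $N^{-1}p(p-1)\opnorm{\sigma}_2^{2(p-2)}(P_{ik}P_{j\ell}+P_{i\ell}P_{jk})$, which it matches against the covariance of $PG_NP$.

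You instead use the Euler identities $\langle\sigma,\nabla X_N\rangle=pX_N$ and $\nabla^2X_N\sigma=(p-1)\nabla X_N$. With these, part (i) becomes a pathwise identity. In part (ii) the blocks $Q\nabla^2X_N$ and $P\nabla^2X_NQ$ are exact linear functions of the gradient. What remains is to check that the $P\otimes P$ block has zero cross-covariance with $\nabla X_N$ and to project \eqref{Equation:Cov_Hess}; no matrix inversion or Schur-complement algebra is needed.

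Your route explains structurally why the conditional variance of $H_N$ vanishes and why the fluctuation part is exactly $PG_NP$; the paper only alludes to homogeneity in its sketch. It also covers the degenerate point $\sigma=\underline{0}$, where $C_{22}$ is singular and the paper's formula is undefined. That omission is harmless for Kac--Rice, since $\sigma=\underline{0}$ is a null set. The paper's regression computation, by contrast, never uses homogeneity. It is therefore the template that would still apply to non-homogeneous (e.g.\ mixed) covariance structures, where the Euler identities fail.
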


\begin{proof}
	Recall the well-known rule on how Gaussian distributions transform under conditioning, see, e.g., \cite[Eqs (1.2.7) and (1.2.8)]{adler2009random}: If $x = (x_1, x_2)^\top \in \mathbb{R}^{n + m}$ is a Gaussian vector with mean and covariance matrix
	\[\mathbb{E}[x] = \begin{pmatrix}
		 {m}_1 \\
		 {m}_2
	\end{pmatrix} \in \mathbb{R}^{n + m} \quad \text{and} \quad \text{Cov}(x) = \begin{pmatrix}
		 {C}_{11} &  {C}_{12} \\
		 {C}_{21} &  {C}_{22}
	\end{pmatrix} \in \mathbb{R}^{(n + m) \times (n + m)},\]
	then the conditional distribution of $x_1$ given $x_2$ is a Gaussian vector with mean and covariance matrix
	\begin{equation}\label{Equation:Gaussian_Condition}
		\mathbb{E}[x_1 \mid x_2] = m_1 + C_{12}C_{22}^{-1}(x_2 - m_2) \quad \text{and} \quad \text{Cov}(x_1 \mid x_2) = C_{11} - C_{12}C_{22}^{-1}C_{21}.
	\end{equation}
	To prove \eqref{Equation:CD_H}, let $x_1 = H_N(\sigma)$ and $x_2 = \nabla H_N(\sigma)$. We see from \eqref{Equation:Cov_Grad} that
	\begin{equation}\label{Equation:Cov_Grad_Inverse}
		 {C}_{22}^{-1} = N^{-1}p^{-2}\opnorm{\sigma}_2^{-2p}\bigl((1 - p)\sigma\sigma^\top + p\|\sigma\|_2^2\, {I}_N\bigr).
	\end{equation}
	Then, by \eqref{Equation:Mean_H}, \eqref{Equation:Cov_H}, \eqref{Equation:Cov_H_Grad} and \eqref{Equation:Cov_Grad_Inverse}, the conditional distribution of $H_N(\sigma)$ on $\nabla H_N(\sigma) = \underline{0}$ has mean
	\[-\lan{\underline{1}, V(\sigma)} + \bpar{p\opnorm{\sigma}_2^{2(p - 1)}\sigma}^\top C_{22}^{-1} \bpar{\underline{0} + V'(\sigma)} = p^{-1}\lan{\sigma, V'(\sigma)} - \lan{\underline{1}, V(\sigma)}\]
    and variance
	\[N\opnorm{\sigma}_2^{2p} - \bigl(p\opnorm{\sigma}_2^{2(p - 1)}\sigma\bigr)^\top {C}_{22}^{-1}\bigl(p\opnorm{\sigma}_2^{2(p - 1)}\sigma\bigr) = 0.\]
	To prove \eqref{Equation:CD_Hess}, we set $x_1 = \nabla^2H_N(\sigma)$ and $x_2 = \nabla H_N(\sigma)$. By \eqref{Equation:Mean_Hess}, \eqref{Equation:Cov_Hess}, \eqref{Equation:Cov_Grad_Hess}, and \eqref{Equation:Cov_Grad_Inverse}, we see that the conditional distribution of $\nabla^2H_N(\sigma)$ on $\nabla H_N(\sigma) = \underline{0}$ has mean 
	\begin{align*}
		&\mathbb{E}\bsq{\partial_{\sigma_i\sigma_j}^2H_N(\sigma) \,\big\vert\, \nabla H_N(\sigma) =  \underline{0}} \\
		&= -\delta_{ij}V''(\sigma_i) + \sum_{\alpha, \beta = 1}^N \text{Cov}\bigl(\partial_{\sigma_i\sigma_j}^2H_N(\sigma), \partial_{\sigma_\alpha}H_N(\sigma)\big) (C_{22}^{-1})_{\alpha \beta}\big(\underline{0} + V'(\sigma)\big)_\beta \\
		&= -\Big[\delta_{ij}V''(\sigma_i) + (p - 1)\Big(\dfrac{\sum_{k = 1}^N \sigma_kV'(\sigma_k)}{\|\sigma\|_2^4}\sigma_i\sigma_j - \dfrac{\sigma_iV'(\sigma_j) + \sigma_jV'(\sigma_i)}{\|\sigma\|_2^2}\Big)\Big] \\
		&= -\sqrt{p(p - 1)}\opnorm{\sigma}_2^{p - 2}  A_N(\sigma)_{ij},
	\end{align*}
	and  the covariance is given by
	\begin{align}
		&\text{Cov}\bpar{\partial_{\sigma_i\sigma_j}^2H_N(\sigma), \partial_{\sigma_k\sigma_\ell}^2H_N(\sigma) \,\big\vert\, \nabla H_N(\sigma) = \underline{0}} \notag \\
        &= p(p - 1)\opnorm{\sigma}_2^{2(p - 2)} \cdot \dfrac{P(\sigma)_{ik} P(\sigma)_{j\ell} +  P(\sigma)_{i\ell} P(\sigma)_{jk}}{N} \label{Equation:Condition_Cov_Hess}.
	\end{align}
	We defer the justification \eqref{Equation:Condition_Cov_Hess} to Appendix \ref{Section:Covariance}. Set $ {M} =  {P}(\sigma) {G}_N {P}(\sigma)$. We have
	\begin{align}
		\text{Cov}(M_{ij}, M_{k\ell}) &= \mathbb{E}[M_{ij}M_{k\ell}] \notag \\
		&= \sum_{\alpha, \beta, \gamma, \delta = 1}^N \mathbb{E}[P(\sigma)_{i\alpha}( {G}_N)_{\alpha\beta} {P}(\sigma)_{\beta j} {P}(\sigma)_{k\gamma}( {G}_N)_{\gamma\delta} {P}(\sigma)_{\delta\ell}] \notag \\
		&= \sum_{\alpha, \beta, \gamma, \delta = 1}^N \frac{\delta_{\alpha \gamma}\delta_{\beta \delta} + \delta_{\alpha\delta}\delta_{\beta\gamma}}{N} \cdot  {P}(\sigma)_{i\alpha} {P}(\sigma)_{\beta j} {P}(\sigma)_{k\gamma} {P}(\sigma)_{\delta\ell} \notag \\
		&= \frac{ {P}(\sigma)_{ik} {P}(\sigma)_{j\ell} +  {P}(\sigma)_{i\ell} {P}(\sigma)_{jk}}{N}, \label{Equation:Condition_Cov_Hess_Guess}
	\end{align}
	where we used the fact that $ {P}(\sigma) =  {P}(\sigma)^\top$ and $ {P}(\sigma)^2 =  {P}(\sigma)$ in the last line. Comparing \eqref{Equation:Condition_Cov_Hess} and \eqref{Equation:Condition_Cov_Hess_Guess} yields \eqref{Equation:CD_Hess}.
\end{proof}

\subsection{Proof of Proposition \ref{Kac-Rice-Final-Formula}}

\begin{lemma}\label{ExpectedCrt}
	For any Borel set $B \subseteq \mathbb{R}$, we have
    \begin{equation}
        \mathbb{E}[\text{Crt}_N(B)] = \frac{1}{\sqrt{p}}\Big(\frac{p - 1}{2\pi}\Big)^{N/2}\int_{\Omega(B)} \dfrac{e^{-\oldconstant{Constant:Exp}Nf_N(\sigma)}}{\opnorm{\sigma}_2^N} \mathbb{E}[|\det (-A_N(\sigma) + P(\sigma)G_NP(\sigma))|]\,d\sigma. \label{Equation:Key_Formula_1}
    \end{equation}
\end{lemma}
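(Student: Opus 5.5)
We apply the Kac-Rice formula \eqref{Equation:Kac_Rice} on $\mathbb{R}^N$ and then substitute the conditional laws from Proposition~\ref{Proposition:Conditional_Distribution}. Since $\mathbb{R}^N$ is not compact, we first use the Kac-Rice formula on balls $C_R=\{\|\sigma\|_2\le R\}$ and let $R\to\infty$ by monotone convergence. The point $\sigma=\underline{0}$ is a null set and can be discarded.

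On each ball we verify the regularity hypotheses of \cite[Theorem 11.2.1]{adler2009random}. The field $H_N$ is a polynomial plus a $C^2$ deterministic term, so it is a.s. $C^2$. For $\sigma\neq\underline{0}$, the covariance \eqref{Equation:Cov_Grad} is nondegenerate, with eigenvalues $p\opnorm{\sigma}_2^{2(p-2)}\|\sigma\|_2^2N^{-1}$ and $p^2\opnorm{\sigma}_2^{2(p-2)}\|\sigma\|_2^2N^{-1}$. The conditional Hessian is nondegenerate on $P(\sigma)\mathbb{R}^N$. In the radial direction, the remaining nondegeneracy condition (a.s. no degenerate critical points) should follow from the structure of $A_N(\sigma)$: in the direction $\sigma$ one computes $\sigma^\top A_N(\sigma)\sigma$ in terms of $\lan{\sigma,v(\sigma)}$, and \ref{Condition:V''} is what ensures this does not vanish. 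Restricting to $\sigma\neq \underline{0}$ handles the isolated degeneracy at the origin.

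Next we evaluate the integrand. By Proposition~\ref{Proposition:Conditional_Distribution}(i), $H_N(\sigma)$ is conditionally deterministic, so $\mathbbm{1}_{\{H_N(\sigma)\in B\}}$ becomes $\mathbbm{1}_{\Omega(B)}(\sigma)$. By (ii), the conditional determinant equals
\[
\bigl(p(p-1)\bigr)^{N/2}\opnorm{\sigma}_2^{N(p-2)}\,\mathbb{E}\bigl|\det(-A_N+PG_NP)\bigr|.
\]
The Gaussian density at $\underline{0}$ is
\[
(2\pi)^{-N/2}(\det C_{22})^{-1/2}\exp\Bigl(-\tfrac12 V'(\sigma)^\top C_{22}^{-1}V'(\sigma)\Bigr).
\]
Here $\det C_{22}=N^{-N}p^{N}\opnorm{\sigma}_2^{2N(p-2)}\|\sigma\|_2^{2N}\cdot p=p^{N+1}\opnorm{\sigma}_2^{2N(p-1)}$. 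Using \eqref{Equation:Cov_Grad_Inverse}, the quadratic form is
\[
N^{-1}p^{-2}\opnorm{\sigma}_2^{-2p}\bigl((1-p)\lan{\sigma,V'}^2+p\|\sigma\|_2^2\|V'\|_2^2\bigr)=\tfrac{1}{p^2}Nf_N(\sigma),
\]
so the exponent is $-\oldconstant{Constant:Exp}Nf_N(\sigma)$.

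Finally we collect the powers. The constants combine as
\[
(2\pi)^{-N/2}p^{-(N+1)/2}\bigl(p(p-1)\bigr)^{N/2}=p^{-1/2}\Bigl(\tfrac{p-1}{2\pi}\Bigr)^{N/2},
\]
and the norms as $\opnorm{\sigma}_2^{N(p-2)-N(p-1)}=\opnorm{\sigma}_2^{-N}$, which gives \eqref{Equation:Key_Formula_1}. The main obstacle is the nondegeneracy hypothesis of Kac-Rice, namely ruling out degenerate critical points a.s. If the direct verification is awkward, the fallback is the version of Kac-Rice requiring only a nondegenerate gradient density and a continuous conditional Hessian, such as the Azaïs–Wschebor form; that version gives the formula as an identity, with both sides possibly infinite, without needing Morse-type conditions.
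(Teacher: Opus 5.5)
Your route matches the paper's: Kac--Rice on an exhaustion, the conditional law from Proposition~\ref{Proposition:Conditional_Distribution}, and the explicit Gaussian density. Your bookkeeping is also correct: $\det C_{22}=p^{N+1}\opnorm{\sigma}_2^{2N(p-1)}$, the quadratic form equals $p^{-2}Nf_N(\sigma)$, and the constants and powers of $\opnorm{\sigma}_2$ combine exactly into \eqref{Equation:Key_Formula_1}. However, two steps of your justification do not hold as written.

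\textbf{The origin.} You may discard $\underline 0$ from the integral, but not from the count. Write $X=H_N+\sum_i V(\sigma_i)$ for the homogeneous Gaussian part. Then $\nabla X(\underline 0)=\underline 0$, and $V$ is even with $V(0)=0$ by \ref{Condition:V}. So $\underline 0$ is a deterministic critical point with $H_N(\underline 0)=0$; for $p\ge 3$ it is even a degenerate one. Kac--Rice applies only on $\mathbb{R}^N\setminus\{\underline 0\}$, where $\nabla H_N(\sigma)$ has a density. The paper's balls $\{\|\sigma\|_2\le t\}$ have the same defect. What the argument actually yields is that $\mathbb{E}[\mathrm{Crt}_N(B)]-\mathbbm{1}_B(0)$ equals the right-hand side of \eqref{Equation:Key_Formula_1}. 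The correction matters only when $0\in B$, i.e.\ $u=0$, and it is invisible at exponential scale. Still, the identity as stated, in the paper as well, is off by $\mathbbm{1}_B(0)$.

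\textbf{The Morse condition.} Your computation $\sigma^\top A_N(\sigma)\sigma=\opnorm{\sigma}_2^{2-p}\lan{\sigma,v(\sigma)}>0$ shows only that, for each \emph{fixed} $\sigma$, the conditional Hessian is a.s.\ invertible. The hypothesis you need is that a.s.\ \emph{no} $\sigma$ is a degenerate critical point. That is a statement about uncountably many $\sigma$ at once, and the fixed-$\sigma$ computation does not control it.

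Your fallback does not avoid this: the Gaussian Rice formula in Aza\"is--Wschebor's book assumes exactly this condition. The conditions you list are also not those of Adler--Taylor's Theorem 11.2.1. That theorem needs a conditional density of $\nabla H_N(\sigma)$ given $\nabla^2H_N(\sigma)$, and none exists: Euler's identity $\nabla^2X(\sigma)\sigma=(p-1)\nabla X(\sigma)$ makes the gradient a deterministic function of the Hessian.

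The condition is in fact automatic, by Sard rather than by \ref{Condition:V''}.
Fix $\sigma_0\neq\underline 0$ and put $\xi=\nabla X(\sigma_0)$ and $M(\sigma)=\mathrm{Cov}(\nabla X(\sigma),\xi)\,\mathrm{Cov}(\xi)^{-1}$.
Near $\sigma_0$, $M$ is invertible and $\nabla H_N(\sigma)=M(\sigma)\bigl(\xi-W(\sigma)\bigr)$, where $W=M^{-1}\bigl(V'-\nabla X+M\xi\bigr)$ is a $C^1$ field independent of $\xi$.
A degenerate zero of $\nabla H_N$ forces $\xi$ to be a critical value of $W$. These form a Lebesgue-null set by Sard ($C^1$ suffices in equal dimensions), and $\xi$ has a density, so this happens with probability zero.
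Cover $\mathbb{R}^N\setminus\{\underline 0\}$ by countably many such neighborhoods.

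You can then apply the Gaussian Rice formula on $\mathbb{R}^N\setminus\{\underline 0\}$ directly to the Borel set $\Omega(B)$. First replace the constraint $H_N(\sigma)\in B$ by $\sigma\in\Omega(B)$, using the pathwise identity $pX(\sigma)=\lan{\sigma,\nabla X(\sigma)}=\lan{\sigma,V'(\sigma)}$ at critical points.
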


\begin{proof}
    Denote for $t \geq 0$ the critical numbers
	\[\text{Crt}_N(B, t) = |\{\|\sigma\|_2 \leq t \mid \nabla H_N(\sigma) = 0, H_N(\sigma) \in B\}|.\] 
    To apply the Kac-Rice formula \eqref{Equation:Kac_Rice}, we check the requirements given in \cite[Theorem 11.2.1]{adler2009random}. Condition (a) is staightforward since the mappings $\sigma \mapsto H_N(\sigma), \nabla H_N(\sigma), \nabla^2H_N(\sigma)$ are continuous and has finite variances. Condition (b) - (f) are guaranteed since $(H_N(\sigma), \nabla H_N(\sigma), \nabla^2H_N(\sigma))$ are Gaussian. Lastly, for condition (g), it suffice to check that the covariance function of the Hessian $\nabla^2H_N(\sigma)$ satisfy \cite[Eq (11.2.5)]{adler2009random} and that the potential term $\sum_{i = 1}^N V(\sigma_i)$ is locally Lipshitz (Lipshitz on compact set). The first assertion can by directly verified by \eqref{Equation:covariance_Formula} and the second assertion follows from $V \in C^2(\mathbb{R})$. Recall the definition of $\Omega(B)$ in \eqref{Equation:Omega}, we may now apply \eqref{Equation:Kac_Rice} to show
	\begin{align}
		\mathbb{E}[\text{Crt}_{N}(B, t)] &= \int_{\{\|\sigma\|_2 \leq t\}} \mathbb{E}\big[|\det \nabla^2H_N(\sigma)| \cdot \mathbbm{1}_{B}(H_N(\sigma)) \mid \nabla H_{N, p}(\sigma) = \underline{0}\big] f_{\nabla H_N(\sigma)} (\underline{0})\,d\sigma \notag \\
		&= \int_{\{\|\sigma\|_2 \leq t\}} \mathbbm{1}_{\Omega(B)}(\sigma) \cdot \mathbb{E}\big[|\det \nabla^2H_N(\sigma)| \,\big\vert\, \nabla H_N(\sigma) = \underline{0}\big] f_{\nabla H_N(\sigma)}(\underline{0}) \,d\sigma, \label{Equation:KR_in_R^N_1}
	\end{align}
	where the second equality holds from \eqref{Equation:CD_H}. To evaluate the density, we see from \eqref{Equation:Cov_Grad_Inverse} that
	\begin{align*}
		f_{\nabla H_N(\sigma)}(\underline{0}) &= \dfrac{\exp\bigl(-\frac{1}{2}(x - \mathbb{E}[\nabla H_N(\sigma)])^\top\text{Cov}(\nabla H_N(\sigma))^{-1}(x - \mathbb{E}[\nabla H_N(\sigma)])\bigr)}{(2\pi)^{N/2}|\det \text{Cov}(\nabla H_N(\sigma)|^{1/2}}\Big|_{x = \underline{0}} \\
		&= \dfrac{\exp\bigl[-\frac{1}{2}\big(V'(\sigma)\big)^\top N^{-1}p^{-2}\opnorm{\sigma}_2^{-2p}\big((1 - p)\sigma\sigma^\top + p\|\sigma\|_2^2 {I}_N\big)\big(V'(\sigma)\big)\big]}{(2\pi)^{N/2}\bigl|\det\bigl[N^{-1}p\opnorm{\sigma}_2^{2(p - 2)}\bigl((p - 1)\sigma\sigma^\top + \|\sigma\|_2^2 I_N\bigr)\bigr]\bigr|^{1/2}} \\
		&= \dfrac{1}{p^{1/2}
			\bigl(2\pi p\opnorm{\sigma}_2^{2(p - 1)}\bigr)^{N/2}}\exp\Bigl[-\frac{1}{2p^2}\Bigl((1 - p)\frac{\lan{\sigma, V'(\sigma)}^2}{\opnorm{\sigma}_2^{2p}} + p\frac{\|V'(\sigma)\|_2^2}{\opnorm{\sigma}_2^{2p - 2}}\Bigr)\Bigr].
	\end{align*}
	Moreover, by \eqref{Equation:CD_Hess}, 
	\begin{align*}
		\mathbb{E}\big[\big|\det \nabla^2H_N(\sigma)\bigr|\big| \nabla H_N(\sigma) = \underline{0}\bigr] = \big(\sqrt{p(p - 1)}\opnorm{\sigma}_2^{p - 2}\big)^N \mathbb{E}[|\det (-A_N(\sigma) +  P(\sigma) G_N P(\sigma))|].
	\end{align*}
	Finally, by plugging these back in the integral \eqref{Equation:KR_in_R^N_1}, we get
    \[\mathbb{E}[\text{Crt}_N(B, t)] = \frac{1}{\sqrt{p}}\Big(\frac{p - 1}{2\pi}\Big)^{N/2}\int_{\{\|\sigma\|_2 \leq t\} \cap \Omega(B)} \dfrac{e^{-\oldconstant{Constant:Exp}Nf_N(\sigma)}}{\opnorm{\sigma}_2^N} \cdot \mathbb{E}\left[\left|\det \left(-A_N(\sigma) + P(\sigma)G_NP(\sigma)\right)\right|\right]\,d\sigma.\]
    Since both sides of the equation above are monotonic in $t$, we may deduce \eqref{Equation:Key_Formula_1} by tuning the radius $t \to \infty$.
\end{proof}

Next, we continue to perform a calculation that reduces the dimension of the matrix in the determinant term of \eqref{Equation:Key_Formula_1}  in the meantime ensuring that the Gaussian part is a lower dimensional GOE. For a fixed nonzero $\sigma\in \mathbb{R}^N$, recall the definition of $ {B}(\sigma)$ defined before Proposition \ref{Kac-Rice-Final-Formula} and the vector $v(\sigma)$ defined in \eqref{Equation:Vector}.

\begin{lemma}[Alternative Expression of Determinant]\label{Proposition:Expression_of_Det}
	For any fixed nonzero $\sigma\in \mathbb{R}^N$, if the inner product $\lan{\sigma, v(\sigma)} \neq 0$, we have
	\begin{align}
        \begin{split}
            &\mathbb{E}[|\det (- A_N(\sigma) +  P(\sigma) G_N P(\sigma))|] \\
		      &= \frac{|\lan{\sigma, v(\sigma)}|}{N\opnorm{\sigma}_2^p}\mathbb{E}\Bigl|\det\Bigl[B(\sigma)^\top\Bigl(\oldconstant{Constant:A_1}\text{diag}\Bigl(\dfrac{V''(\sigma)}{\opnorm{\sigma}_2^{p - 2}}\Bigr) - \dfrac{v(\sigma) v(\sigma)^\top}{\lan{\sigma, v(\sigma)}\opnorm{\sigma}_2^{p - 2}}\Bigr) {B}(\sigma) +  {G}_{N - 1}\Bigr]\Bigr|.
	           \label{Equation:Key_Formula_2}
        \end{split}
	\end{align}	
\end{lemma}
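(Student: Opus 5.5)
The plan is to change to an orthonormal basis adapted to $\sigma$ and then take a Schur complement with respect to the $(1,1)$ entry. Let $e=\sigma/\|\sigma\|_2$ and $O=\begin{pmatrix} e & B(\sigma)\end{pmatrix}$, an $N\times N$ orthogonal matrix. Since $|\det|$ is invariant under orthogonal conjugation, it suffices to study $O^\top(-A_N(\sigma)+P(\sigma)G_NP(\sigma))O$. We have $P(\sigma)O=\begin{pmatrix}0 & B(\sigma)\end{pmatrix}$, because $P(\sigma)e=0$ and $P(\sigma)B(\sigma)=B(\sigma)$. Hence the Gaussian part becomes the block matrix $\begin{pmatrix}0&0\\0&B^\top G_NB\end{pmatrix}$. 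By orthogonal invariance of the GOE, $B^\top G_N B$ (with $B^\top B=I_{N-1}$) is distributed as $G_{N-1}$; this follows from the covariance computation \eqref{Equation:Condition_Cov_Hess_Guess} restricted to the range of $B$.

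Next I compute the blocks of $O^\top A_N(\sigma) O$, writing $s=\opnorm{\sigma}_2^{2-p}$ and $D=\mathrm{diag}(V''(\sigma))$. Using $B^\top\sigma=0$ and $\oldconstant{Constant:A_1}\sigma V''(\sigma)-\oldconstant{Constant:A_2}V'(\sigma)=v(\sigma)$, the $(1,1)$ entry is
\[
e^\top A_N e=\frac{s}{\|\sigma\|_2^2}\bigl(\oldconstant{Constant:A_1}\sigma^\top D\sigma+\oldconstant{Constant:A_2}\lan{\sigma,V'}-2\oldconstant{Constant:A_2}\lan{\sigma,V'}\bigr)=\frac{s\lan{\sigma,v(\sigma)}}{\|\sigma\|_2^2}.
\]
In the off-diagonal block, the $\sigma\sigma^\top$ term and the $V'\sigma^\top$ term drop after multiplying by $B^\top$ on the left. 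This gives
\[
B^\top A_N e=\frac{s}{\|\sigma\|_2}B^\top\bigl(\oldconstant{Constant:A_1}D\sigma-\oldconstant{Constant:A_2}V'\bigr)=\frac{s}{\|\sigma\|_2}B^\top v(\sigma).
\]
In the lower block all rank-one terms involving $\sigma$ vanish, so $B^\top A_N B=s\,\oldconstant{Constant:A_1}B^\top DB$.

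Therefore $O^\top(-A_N+PG_NP)O=\begin{pmatrix}\alpha & w^\top\\ w & C+G_{N-1}\end{pmatrix}$, where
\[
\alpha=-\frac{s\lan{\sigma,v}}{\|\sigma\|_2^2},\qquad w=-\frac{s}{\|\sigma\|_2}B^\top v,\qquad C=-s\,\oldconstant{Constant:A_1}B^\top DB.
\]
Since $\lan{\sigma,v(\sigma)}\neq 0$ we have $\alpha\neq0$, and the Schur complement formula gives
\[
\det=\alpha\,\det\bigl(C+G_{N-1}-\alpha^{-1}ww^\top\bigr).
\]
A direct computation gives $\alpha^{-1}ww^\top=-s\,B^\top vv^\top B/\lan{\sigma,v}$. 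So the Schur complement equals
\[
-\Bigl[B^\top\Bigl(\oldconstant{Constant:A_1}sD-\frac{s\,vv^\top}{\lan{\sigma,v}}\Bigr)B\Bigr]+G_{N-1}.
\]
Using $G_{N-1}\overset{d}{=}-G_{N-1}$, its absolute determinant has the same law as that of the matrix $M_{N-1}(\sigma)$ in \eqref{Equation:Key_Formula_2}, because $s=1/\opnorm{\sigma}_2^{p-2}$.

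Finally, the prefactor is
\[
|\alpha|=\frac{s|\lan{\sigma,v}|}{N\opnorm{\sigma}_2^2}=\frac{|\lan{\sigma,v}|}{N\opnorm{\sigma}_2^{p}}.
\]
Taking expectations yields \eqref{Equation:Key_Formula_2}. The only step needing care is the sign and cancellation bookkeeping in the block entries of $O^\top A_N O$, in particular checking that the two $\oldconstant{Constant:A_2}\lan{\sigma,V'}$ terms combine into $v$. The structural steps (orthogonal invariance and the Schur complement) are routine.
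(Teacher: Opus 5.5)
Your proposal is correct and follows essentially the same route as the paper. Both conjugate by the orthogonal matrix $\overline{B}(\sigma)$ of \eqref{Equation:B_bar}, identify the compressed Gaussian block with $G_{N-1}$, obtain the same block entries, and flip the sign of the GOE; the only difference is that you take the Schur complement at the scalar $(1,1)$ entry, which needs only $\lan{\sigma, v(\sigma)}\neq 0$ and holds for every realization, whereas the paper eliminates the $(N-1)\times(N-1)$ block $C(\sigma)$ (tacitly using its almost-sure invertibility) and then applies the rank-one formula \eqref{Equation:Det_Rank_1} to reach the same expression.
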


\begin{proof}
	Set the matrices
    \begin{equation}\label{Equation:B_bar}
        \overline{B}(\sigma) = \begin{pmatrix}
		    \frac{\sigma}{\|\sigma\|_2} &  B(\sigma)
	    \end{pmatrix} \in \mathbb{R}^{N \times N} \quad \text{and} \quad C(\sigma) = \oldconstant{Constant:A_1}B(\sigma)^\top\text{diag}\Bpar{\dfrac{V''(\sigma)}{\opnorm{\sigma}_2^{p - 2}}}B(\sigma) + G_{N - 1},
    \end{equation}
    where $G_{N - 1} = P(e_1)G_NP(e_1)$ is a $(N - 1) \times (N - 1)$ GOE. Note that $\overline{B}(\sigma)$ is an orthogonal matrix and a direct computation gives
	\[\overline B(\sigma)^\top P(\sigma) = P(e_1)\overline B(\sigma)^\top.\]
	It follows that
	\begin{align}
		&\det\bigl(-A_N(\sigma) + P(\sigma) G_N P(\sigma)\bigr) \notag \\
        &\overset{d}{=} \det\bigl(-\overline{B}(\sigma)^\top{A}_N(\sigma)\overline{B}(\sigma) +  \overline{B}(\sigma)^\top P(\sigma) G_N {P}(\sigma)\overline{B}(\sigma)\bigr) \label{Equation:D_Matrix_1} \\
		&= \det\bigl(-\overline{B}(\sigma)^\top A_N(\sigma)\overline{B}(\sigma) +  {P}( {e}_1)\bigl(\overline{ {B}}(\sigma)^\top {G}_N\overline{B}(\sigma)\bigr) {P}(e_1)\bigr) \notag \\
		&\overset{d}{=}  \det\bigl(-\overline{B}(\sigma)^\top A_N(\sigma)\overline{B}(\sigma) -  P(e_1) G_N {P}(e_1)\bigr) \label{Equation:D_Matrix_2} \\
		&= -\begin{pmatrix}
			N^{-1}\opnorm{\sigma}_2^{-p}\lan{\sigma, v(\sigma)} & \bpar{N^{-1/2}\opnorm{\sigma}_2^{1 - p} v(\sigma)}^\top B(\sigma) \\[0.1cm]
			B(\sigma)^\top\bpar{N^{-1/2}\opnorm{\sigma}_2^{1 - p} v(\sigma)} & C(\sigma)
		\end{pmatrix}, \label{Equation:Distribution_Matrix}
	\end{align}
	where \eqref{Equation:D_Matrix_1} holds by rotational invariance of $G_N$ and \eqref{Equation:D_Matrix_2} holds since $-G_N \overset{d}{=} G_N$. Note that if $\lan{\sigma, v(\sigma)} \neq 0$,
	\begin{align*}
		&|\det[-A_N(\sigma) + P(\sigma) G_N P(\sigma)]| \\
		& \overset{d}{=} \Bigl|\det(C(\sigma)) \cdot \Bigl(\frac{\lan{\sigma, v(\sigma)}}{N\opnorm{\sigma}_2^p} - \frac{v(\sigma)^\top B(\sigma)C(\sigma)^{-1}B(\sigma)^\top v(\sigma)}{N\opnorm{\sigma}_2^{2p - 2}}\Bigr)\Bigr| \\
		& = \frac{|\lan{\sigma, v(\sigma)}|}{N\opnorm{\sigma}_2^p}\Bigl|\det\Bigl(C(\sigma) - \dfrac{B(\sigma)^\top v(\sigma) v(\sigma)^\top B(\sigma)}{\lan{\sigma, v(\sigma)}\opnorm{\sigma}_2^{p - 2}}\Bigr)\Bigr|,
	\end{align*}
	where we apply \eqref{Equation:Det_Block_Matrix} in the first equality and \eqref{Equation:Det_Rank_1} in the second. We then obtain the desired formula.
\end{proof}

\begin{proof}[\bf Proof of Proposition \ref{Kac-Rice-Final-Formula}]
	\eqref{Equation:Kac-Rice-Final-Formula} is simply a combination of \eqref{Equation:Key_Formula_1}, \eqref{Equation:Key_Formula_2}, and \eqref{Equation:V'} (which justifies the inner product $\langle \sigma, v(\sigma)\rangle > 0$).
\end{proof}

\section{Truncation Arguments}\label{Section:Truncation}

\noindent This is a preparatory section for our proof of Theorem \ref{Theorem:Main_Theorem}. Let $u \geq 0$ be fixed and plug the Borel set $B = [Nu, \infty)$ into Proposition \ref{Kac-Rice-Final-Formula}, where we abuse notion by $\Omega(u) = \Omega([Nu, \infty))$. The idea is to take advantage of Theorem \ref{Theorem:Main_Asymptote_Bounded} and extract the log asymptotic behavior of the expected value of the determinant term in \eqref{Equation:Kac-Rice-Final-Formula}. However, to do so, we need to truncate the mean of $M_{N - 1}(\sigma)$ (recall from \eqref{def:M}) so that it has bounded operator norm. Define, for any $K > 0$, the matrix $Q_N^K(\sigma)$ by
\[Q_N^K(\sigma) = \oldconstant{Constant:A_1}\,\text{diag}\Bpar{\frac{V''(\sigma)}{\opnorm{\sigma}_2^{ p -2}} \wedge K} + G_N \in \mathbb{R}^{N \times N}.\]
Our main goal for this section will be to show the following proposition.
\begin{proposition}\label{Proposition:Main_Truncation}
    If $V$ satisfies \ref{Condition:V}, then for all $M > 0$,
    \[\lim_{N, K \to \infty} \sup_{\opnorm{\sigma}_2 \leq M} \Big|\dfrac{1}{N}\log\mathbb{E}\big|\det Q_N^K(\sigma)\bigr| - \dfrac{1}{N}\log\mathbb{E}\big|\det M_{N - 1}(\sigma)\bigr|\Bigr| = 0.\]
\end{proposition}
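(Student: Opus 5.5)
The plan is to bridge $M_{N-1}(\sigma)$ and $Q_N^K(\sigma)$ through a chain of intermediate matrices, each differing from the next by a perturbation of small rank, and to compare logarithmic determinants along the chain. Write $s=\opnorm{\sigma}_2$ and $D(\sigma)=\oldconstant{Constant:A_1}\operatorname{diag}(V''(\sigma)/s^{p-2})$. The chain is
\[
M_{N-1}(\sigma)\;\to\; B(\sigma)^\top D(\sigma)B(\sigma)+G_{N-1}\;\to\; D(\sigma)+G_N \;\to\; Q_N^K(\sigma).
\]
The steps are as follows.
\begin{itemize}
\item The first arrow removes a rank-one term.
\item The second arrow passes from a compression of $D(\sigma)+G_N$ to the full $N\times N$ matrix. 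Indeed, $B(\sigma)^\top(D+G_N)B(\sigma)$ has the law of $B^\top DB+G_{N-1}$ by rotational invariance, and its eigenvalues interlace with those of $D+G_N$.
\item The third arrow changes only those diagonal entries with $V''(\sigma_i)/s^{p-2}>K$.
\end{itemize}
In every case the two empirical spectral measures satisfy $d_{\rm KS}\le r/N$, where $r$ is the rank of the perturbation (after adjusting for the one-dimensional difference).

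The first quantitative input is that the truncation rank is $o(N)$ uniformly in $\opnorm{\sigma}_2\le M$. By \ref{Condition:V}, $V''(x)\le c(|x|^{q_1-2}+|x|^{q_2-2})$, so $V''(\sigma_i)>Ks^{p-2}$ forces $|\sigma_i|\ge \min\{(Ks^{p-2}/2c)^{1/(q_1-2)},(Ks^{p-2}/2c)^{1/(q_2-2)}\}$. Markov's inequality with the second moment $s^2$ then bounds the fraction of such indices by
\[
\varepsilon_K(s)\lesssim K^{-2/(q_1-2)}s^{2-2(p-2)/(q_1-2)}+K^{-2/(q_2-2)}s^{2-2(p-2)/(q_2-2)} .
\]
Both exponents of $s$ are positive since $q_1,q_2>p$, so $\sup_{s\le M}\varepsilon_K(s)\to0$ as $K\to\infty$. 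In the same way, since $\log$ grows slowly, I would bound
\[
\frac1N\sum_i \log\bigl(1+|D_{ii}|\bigr)\mathbbm 1_{\{|D_{ii}|>K\}}
\]
uniformly by a quantity tending to $0$. The argument splits at $|\sigma_i|\le1$ versus $|\sigma_i|>1$ and uses $\log(1+x^{a})\lesssim x^2$ for large $x$.

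Next I reduce $\frac1N\log\mathbb E|\det X|$ to $\int\log|\lambda|\,\mathbb E\mu_X(d\lambda)$ for each matrix $X$ in the chain, uniformly in $\sigma$. I would invoke the analogue of \cite[Theorem 1.2]{ben2023exponential} announced in the sketch as Theorem~\ref{Theorem:Main_Asymptote_Unbounded}. Its inputs are concentration of $\mu_X$ from Theorem~\ref{Theorem:GZ_Concentration}, which holds for arbitrary deterministic mean plus GOE, and a critical Wegner estimate in the sense of Definition~\ref{Definition:Wegner}, taken from \cite{aizenman2017matrix}, which is also uniform over deterministic means. The rank-one term in $M_{N-1}$ is handled via interlacing, which preserves the needed small-ball control up to an $O(1/N)$ loss.

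It then remains to compare $\int\log|\lambda|$ against two expected spectral measures $\mu,\nu$ with $d_{\rm KS}(\mu,\nu)\le\varepsilon$. I split $\mathbb R$ into three regions.
\begin{itemize}
\item On $|\lambda|<\eta$, the Wegner bound gives $\mathbb E\mu_X([-\eta,\eta])\le C\eta$. Hence both integrals of $|\log|\lambda||$ over this region are $O(\eta\log(1/\eta))$ uniformly.
\item On $\eta\le|\lambda|\le R$, integration by parts against the bounded-variation function $\log|\lambda|$ gives an error of at most $2\varepsilon\log(R/\eta)+2\varepsilon|\log \eta|$.
\item On $|\lambda|>R$, I use Weyl's inequality $|\lambda_i(D+G)|\le|D_{\pi(i)}|+\|G\|_{\rm op}$ to bound the tail contribution. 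It is at most the large-entry logarithmic sum above plus a term controlled by $\mathbb E\|G_N\|_{\rm op}$ and $\frac1N\sum\log(1+|D_{ii}|)\mathbbm 1_{\{|D_{ii}|>R/2\}}$, which is uniformly small for large $R$.
\end{itemize}
Sending $N\to\infty$, then $K\to\infty$, then $R\to\infty$ and $\eta\to0$ gives the claim.

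The main obstacle is the second step, the uniform passage from $\log\mathbb E|\det|$ to $\mathbb E\log|\det|$. This is where the unbounded spectrum of $D(\sigma)$ and the logarithmic singularity interact. Concentration must be applied to a regularized log (for example $\log(|\lambda|\vee\eta)$, which is Lipschitz), and the discarded part must be controlled in exponential moment rather than merely in expectation. I would first try to handle the latter with the Wegner estimate applied to $\mathbb E|\det X|^{\pm\delta}$-type bounds on the smallest eigenvalues.
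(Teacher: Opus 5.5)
Your proposal is correct and is essentially the paper's argument. The paper likewise bounds $d_{\rm KS}$ between $\mathbb{E}[\mu_{M_{N-1}(\sigma)}]$ and $\mathbb{E}[\mu_{Q_N^K(\sigma)}]$ by rank and interlacing with the same Markov count, giving $O\bigl((M^{q_2-p}/K)^{2/(q_2-2)}\bigr)+O(1/N)$ uniformly over $\opnorm{\sigma}_2\le M$. It converts this into closeness of $\int\log|\lambda|$ using the Aizenman et al.\ Wegner bound near $0$, a tail bound (a fractional moment via trace inequalities instead of your Weyl argument), and integration by parts in the bulk. It then passes to $\frac1N\log\mathbb{E}|\det|$ via Theorem~\ref{Theorem:Main_Asymptote_Unbounded_GOE}, uniformly in the deterministic mean.

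Two small corrections:
\begin{enumerate}
\item You do not need interlacing for the Wegner input for $M_{N-1}(\sigma)$, because $M_{N-1}(\sigma)$ is itself a deterministic matrix plus $G_{N-1}$, so the Aizenman et al.\ estimate applies directly. Interlacing alone would also not suffice, since it cannot rule out a single eigenvalue at scale $e^{-N^{\epsilon}}$, which is exactly what the gap event in the lower bound needs.
\item Your ``main obstacle'' is already resolved in Lemmas~\ref{Lemma:Unbounded_Upper_Bound}--\ref{Lemma:Unbounded_Lower_Bound} without any exponential moments of the discarded part. The upper bound only uses $\log|\lambda|\le\log_\eta(\lambda)$ with $\eta=N^{-\epsilon}$. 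The lower bound restricts to the intersection of a spectral-gap event, a concentration event and a small-mass event, each of which has probability tending to one.
\end{enumerate}
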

As a preparation, we need to show that the logarithmic asymptotics of the integral \eqref{Equation:Key_Formula_2} is essentially the same as the one where the domain of the integral is restricted to be within a bounded $\opnorm{\cdot}_{2q_2 - 2}$-ball. To be precise, given a function $w \colon \mathbb{R}^N \to \mathbb{R}$, we set for all Borel measurable set $B \subseteq \mathbb{R}^N$ the notation
\begin{equation}\label{Equation:Integral_Notation}
    I_w(B) = \int_{\Omega(u) \cap B} \frac{|\lan{\sigma, v(\sigma)}|}{N\opnorm{\sigma}_2^{N + p}} e^{-\oldconstant{Constant:Exp}Nf_N(\sigma)} \cdot w(\sigma) d\sigma.
\end{equation}
Then, we will show that
\begin{proposition}\label{Proposition:q_Truncation}
    If $V$ satisfies Assumption \ref{ass1}, we have for any $u \geq 0$ and $\epsilon > 0$, there exist $M = M(\epsilon, u)$ such that,
    \[\limsup_{N \to \infty} \dfrac{1}{N}\log I_{\mathbb{E}|\det M_{N - 1}(\cdot)|}(\mathbb{R}) \leq \epsilon + \limsup_{N \to \infty}\frac{1}{N}\log I_{\mathbb{E}|\det M_{N - 1}(\cdot)|}(\{\opnorm{\sigma}_{2q_2 - 2} \leq M\}).\]
\end{proposition}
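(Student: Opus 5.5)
Our approach is to show that the part of the Kac--Rice integral where $\opnorm{\sigma}_{2q_2-2}>M$ is exponentially negligible. The reason is that the Gaussian prefactor $e^{-\oldconstant{Constant:Exp}Nf_N(\sigma)}$ decays like $\exp(-cN\opnorm{V'(\sigma)}_2^2/\opnorm{\sigma}_2^{2p-2})$, and by \ref{Condition:V} we have $\opnorm{V'(\sigma)}_2^2\gtrsim \opnorm{\sigma}_{2q_1-2}^{2q_1-2}+\opnorm{\sigma}_{2q_2-2}^{2q_2-2}$. Since $I(\mathbb{R})\le I(\{\opnorm{\sigma}_{2q_2-2}\le M\})+I(\{\opnorm{\sigma}_{2q_2-2}>M\})$, it suffices to show that $\limsup_N N^{-1}\log I(\{\opnorm{\sigma}_{2q_2-2}>M\})\to-\infty$ as $M\to\infty$. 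We also need the restricted integral to have a finite exponential rate bounded below; this follows from a crude lower bound on a small ball around some fixed configuration in $\Omega(u)$, or from Proposition \ref{Proposition:Finite_of_I}. Then for $M$ large the tail is dominated and the $\epsilon$ absorbs the $\log 2$-type error.

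\textbf{Step 1 (lower bound on $f_N$).} Write $(p-1)\lan{\sigma,V'(\sigma)}^2/N^2\opnorm{\sigma}_2^{2p}$ and bound it by Cauchy--Schwarz: $\lan{\sigma,V'(\sigma)}^2\le \|\sigma\|_2^2\|V'(\sigma)\|_2^2$. This gives $f_N(\sigma)\ge \opnorm{V'(\sigma)}_2^2/\opnorm{\sigma}_2^{2p-2}$, since $p-(p-1)=1$. With $V'(x)^2\ge c(|x|^{2q_1-2}+|x|^{2q_2-2})$ and Hölder ($2q_i-2>2p-2$), this yields
\[f_N(\sigma)\ge c\,\opnorm{\sigma}_{2q_2-2}^{2q_2-2}\,\opnorm{\sigma}_2^{-(2p-2)}\ge c\,\opnorm{\sigma}_{2q_2-2}^{2q_2-2p},\]
and similarly $f_N\ge c\opnorm{\sigma}_2^{2q_1-2p}$. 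This grows superlinearly when $\opnorm{\sigma}_{2q_2-2}$ is large.

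\textbf{Step 2 (polynomial-in-$\opnorm{\sigma}$ bounds on the other factors).} The factor $|\lan{\sigma,v(\sigma)}|/N$ is bounded by $C(\opnorm{\sigma}_{q_1}^{q_1}+\opnorm{\sigma}_{q_2}^{q_2})$ using \ref{Condition:V}. For the determinant, we use Hadamard, or AM--GM on eigenvalues: $\mathbb{E}|\det M_{N-1}|\le \mathbb{E}\bigl(N^{-1}\mathrm{tr}\,M_{N-1}^2\bigr)^{N/2}$. The mean part contributes $N^{-1}\|\mathrm{diag}(V''(\sigma))\|_F^2\,\opnorm{\sigma}_2^{4-2p}\lesssim \opnorm{\sigma}_{2q_2-4}^{2q_2-4}\opnorm{\sigma}_2^{4-2p}$ plus similar terms. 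The rank-one term $vv^\top/\lan{\sigma,v}$ is positive semidefinite with trace $\|v\|^2/\lan{\sigma,v}$; by \ref{Condition:V''}, $\lan{\sigma,v}\gtrsim\lan{\sigma,V'(\sigma)}$, so this trace is polynomially bounded after normalization. The GOE part has bounded normalized trace moments of exponential order. Altogether, $\frac1N\log\mathbb{E}|\det M_{N-1}(\sigma)|\le C\log(2+\opnorm{\sigma}_{2q_2-2}+\opnorm{\sigma}_2^{-1})$. The factor $\opnorm{\sigma}_2^{-N-p}$ is likewise logarithmic in the exponent.

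\textbf{Step 3 (integration).} The integrand is at most $\exp\bigl(N[C\log(2+\opnorm{\sigma}_{2q_2-2})+C\log^+\opnorm{\sigma}_2^{-1}-c\,\opnorm{\sigma}_{2q_2-2}^{2q_2-2p}]\bigr)$. On $\{\opnorm{\sigma}_{2q_2-2}>M\}$ we split off half of the decay term, which is at most $-\tfrac c2 NM^{2q_2-2p}$. The other half, $-\tfrac{c}{2}N\opnorm{\sigma}_2^{-(2p-2)}\opnorm{\sigma}_{2q_2-2}^{2q_2-2}$, suffices to make the remaining integral over $\mathbb{R}^N$ at most $e^{CN}$, uniformly in $M$. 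To see this, dominate by a product of one-dimensional integrals using $\opnorm{\sigma}_2\lesssim\opnorm{\sigma}_{2q_2-2}$, or pass to polar coordinates $\sigma=r\omega$. The surface area factor $S_{N-1}N^{(N-1)/2}$ gives $e^{O(N)}$, and the radial integral is finite because of the $\opnorm{\sigma}_2^{-(2p-2)}$ boost near the origin. This last point is where the main obstacle lies: small $\opnorm{\sigma}_2$, where $\opnorm{\sigma}_2^{-N}$ blows up. We handle it with $f_N\ge c\opnorm{\sigma}_2^{2q_1-2p}$ only at large scale. Near zero we instead note that $f_N\ge c\opnorm{\sigma}_{2q_1-2}^{2q_1-2}/\opnorm{\sigma}_2^{2p-2}\ge c\opnorm{\sigma}_2^{2q_1-2p}$, which does not help. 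However, on $\Omega(u)$ with $u\ge0$ the region $\opnorm{\sigma}_{2q_2-2}>M$ forces $\opnorm{\sigma}_2$ to be not small relative to the decay. So we handle the small-$\opnorm{\sigma}_2$ region by the radial change of variables, in which $\opnorm{\sigma}_2^{-N}d\sigma = N^{N/2} r^{-1}dr\,d\omega$ has no singularity at exponential scale, and we check integrability of $r^{-1-p}$ against $\exp(-cNr^{-(2p-2)}\cdots)$. Sending $M\to\infty$ completes the proof.
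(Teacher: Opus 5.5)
Your treatment of the tail $\{\opnorm{\sigma}_{2q_2-2}>M\}$ is a valid alternative to the paper's, but it is organized differently, and one sentence in it is wrong.

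The paper first shows that the deterministic part of $M_{N-1}(\sigma)$ is positive semidefinite (Cauchy--Schwarz, using $\lan{\sigma,v(\sigma)}>0$ from \ref{Condition:V''}). AM--GM on the trace then gives $\frac1N\log\mathbb{E}|\det M_{N-1}(\sigma)|\le C\bigl(1+\opnorm{V'(\sigma)}_2/\opnorm{\sigma}_2^{p-1}\bigr)\le C\bigl(1+\sqrt{f_N(\sigma)}\bigr)$. Half of the Gaussian factor absorbs this everywhere. The leftover $r^{-1-p}$ in polar coordinates $\sigma=\sqrt N r\omega$ is then controlled crudely by $r\ge MN^{-(q_2-2)/(2q_2-2)}$, which holds on the tail because $\opnorm{\sqrt N\omega}_{2q_2-2}\le N^{(q_2-2)/(2q_2-2)}$.

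Your Frobenius/AM--GM bound needs no positivity and grows only logarithmically. However, it carries a term $CN\log\opnorm{\sigma}_2^{-1}$, and against that the crude lower bound on $r$ fails (it costs $e^{CN\log N}$). What rescues you is that on the tail $f_N\ge c\,\opnorm{\sigma}_2^{-(2p-2)}\opnorm{\sigma}_{2q_2-2}^{2q_2-2}\ge cM^{2q_2-2}\opnorm{\sigma}_2^{-(2p-2)}$. Moreover, $\sup_{0<r\le1}\bigl(C\log(1/r)-\tfrac c4M^{2q_2-2}r^{-(2p-2)}\bigr)$ is bounded uniformly in $M\ge1$.

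Your claim that the second half of the decay makes the integral over all of $\mathbb{R}^N$ at most $e^{CN}$ is false. Along $\sigma=\sqrt N r\omega$ that half equals $\tfrac c2 r^{2q_2-2p}\opnorm{\sqrt N\omega}_{2q_2-2}^{2q_2-2}$, which tends to $0$ as $r\to0$. The boost near the origin therefore exists only because of the constraint $\opnorm{\sigma}_{2q_2-2}>M$. It has nothing to do with $\Omega(u)$, which equals $\mathbb{R}^N$ when $u=0$.

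The genuine gap is the lower bound, which you correctly say is needed but do not supply. On a suitable set of configurations the factors $e^{-cNf_N}$, $\lan{\sigma,v(\sigma)}/N$ and $\opnorm{\sigma}_2^{-N-p}$ are easy to bound. You also need $\mathbb{E}|\det M_{N-1}(\sigma)|\ge e^{-CN}$ there, and the proposal says nothing about this. It is not automatic, since the rank-one term is subtracted. Two ways to get it:
\begin{itemize}
\item The paper shows that $\lambda_1$ of the deterministic part is at least $c\,\opnorm{\sigma}_2^{2-p}\min_i|\sigma_i|^{q_1-2}$. Once this is $\ge10$, $|\det M_{N-1}(\sigma)|\ge1$ on the event $\{\|G_{N-1}\|_{\mathrm{op}}\le9\}$.
\item Alternatively, Jensen plus the uniform Wegner estimate of Theorem \ref{Theorem:GOE_Wegner_Estimate} give $\frac{1}{N-1}\log\mathbb{E}|\det(A+G_{N-1})|\ge\int\log|\lambda|\,\mathbb{E}\mu_{A+G_{N-1}}(d\lambda)\ge-2c$ for every deterministic symmetric $A$.
\end{itemize}

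The set you integrate over must also have volume $e^{-O(N)}$ on the scale $N^{N/2}S_{N-1}$ while keeping $\opnorm{\sigma}_{2q_2-2}$ bounded. A cube $\prod_i[a-\delta,a+\delta]$ with $a$ large works; the paper's event $\bigcap_i\{C^{-1}\le|g_i|\le C\}$ plays this role. A Euclidean ball of fixed radius does not, since its volume is superexponentially small.

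Citing Proposition \ref{Proposition:Finite_of_I} does not close the gap by itself. That proposition concerns the variational supremum, and turning it into a bound on the integral requires Proposition \ref{Proposition:Main_Theorem_Lower}. That route is not circular, but it only bounds $I(\mathbb{R}^N)$, not the restricted integral. You would then have to argue, as the paper does, via $I(\{\opnorm{\sigma}_{2q_2-2}\le M\})=I(\mathbb{R}^N)-I(\{\opnorm{\sigma}_{2q_2-2}>M\})$.
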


The proofs of these truncation propositions are provided in the following two subsections.

\subsection{Proof of Proposition \ref{Proposition:Main_Truncation}}

\noindent We introduce the following two matrices:
\begin{equation}
    D_N^K(\sigma) = \oldconstant{Constant:A_1}\,\text{diag}\Bpar{\frac{V''(\sigma)}{\opnorm{\sigma}_2^{p-2}} \wedge K} \quad \text{and} \quad M_{N - 1}^K(\sigma) = B(\sigma)^\top D_N^K(\sigma)B(\sigma) + G_{N - 1}.
\end{equation}
We also recall the definition of $Q_N^K(\sigma) = D_N^K(\sigma) + G_N$ from the start of the section.

\begin{lemma}
    If $V$ satisfies \ref{Condition:V}, then for all $M \geq 1$ and $K \geq 2\oldconstant{Constant:Bound}$,
    \begin{equation}\label{Equation:KS_Distance}
        \sup_{\opnorm{\sigma}_2 \leq M} d_{\text{KS}}\bigl(\mathbb{E}[\mu_{M_{N - 1}(\sigma)}], \mathbb{E}[\mu_{Q_N^K(\sigma)}]\bigr) \leq 4\Bigl(\frac{2\oldconstant{Constant:Bound}M^{q_2 - p}}{K}\Bigr)^{2/(q_2 - 2)} + \frac{3}{N}.
    \end{equation}
\end{lemma}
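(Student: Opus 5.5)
The plan is to compare the two expected spectral measures in three steps, with a chain of random matrices coupled on one probability space, and to control each step by either a low-rank perturbation bound or Cauchy interlacing. Two standard facts are used throughout. First, for symmetric $n\times n$ matrices $X,Y$ one has $d_{\text{KS}}(\mu_X,\mu_Y)\le \operatorname{rank}(X-Y)/n$, which is a consequence of Weyl interlacing for finite-rank perturbations. Second, the KS distance is convex under averaging of CDFs, so $d_{\text{KS}}(\mathbb{E}\mu_X,\mathbb{E}\mu_Y)\le \mathbb{E}\, d_{\text{KS}}(\mu_X,\mu_Y)$ for any coupling. It therefore suffices to bound the KS distances deterministically under a suitable coupling.

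\textbf{Step 1: from $M_{N-1}(\sigma)$ to $M_{N-1}^K(\sigma)$.} Use the same GOE $G_{N-1}$ in both matrices. The difference $M_{N-1}(\sigma)-M_{N-1}^K(\sigma)$ equals
\[
B(\sigma)^\top\Big(\oldconstant{Constant:A_1}\,\text{diag}\big(V''(\sigma)\opnorm{\sigma}_2^{2-p}\big)-D_N^K(\sigma)\Big)B(\sigma)
\]
minus the rank-one term built from $v(\sigma)v(\sigma)^\top$. Its rank is therefore at most $1+\#\{i: V''(\sigma_i)>K\opnorm{\sigma}_2^{p-2}\}$.

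To count these indices, write $r=\opnorm{\sigma}_2\le M$ and $b=2\oldconstant{Constant:Bound}M^{q_2-p}/K$. The bound is trivial if $b>1$, so assume $b\le 1$. By \ref{Condition:V}, $V''(x)\le 2\oldconstant{Constant:Bound}\max(|x|^{q_1-2},|x|^{q_2-2})$.

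For the indices with $|\sigma_i|\ge1$, exceeding the threshold forces $|\sigma_i|^{q_2-2}>Kr^{p-2}/(2\oldconstant{Constant:Bound})$. Chebyshev's inequality applied to $N^{-1}\sum\sigma_i^2=r^2$ then bounds their fraction by
\[
r^2\big(2\oldconstant{Constant:Bound}/(Kr^{p-2})\big)^{2/(q_2-2)}=\big(2\oldconstant{Constant:Bound}r^{q_2-p}/K\big)^{2/(q_2-2)}\le b^{2/(q_2-2)}.
\]
For the indices with $|\sigma_i|<1$, the same argument with exponent $q_1-2$ gives the fraction bound $(2\oldconstant{Constant:Bound}r^{q_1-p}/K)^{2/(q_1-2)}\le b^{2/(q_1-2)}$, using $r^{q_1-p}\le M^{q_2-p}$ (since $M\ge1$). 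Because $b\le1$ and $q_1\le q_2$, this is at most $b^{2/(q_2-2)}$.

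The condition $K\ge 2\oldconstant{Constant:Bound}$ is what lets the two regimes be handled uniformly. Altogether, the rank divided by $N-1$ is at most $2b^{2/(q_2-2)}\cdot\frac{N}{N-1}+\frac1{N-1}$.

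\textbf{Step 2: from $M_{N-1}^K(\sigma)$ to a compression of $Q_N^K(\sigma)$.} With $\overline B(\sigma)$ the orthogonal matrix from \eqref{Equation:B_bar}, the lower-right $(N-1)\times(N-1)$ block of $\overline B^\top Q_N^K\overline B$ is $B^\top D_N^K B+B^\top G_NB$. By rotational invariance, $B^\top G_N B$ is an $(N-1)\times(N-1)$ GOE, so this block has the law of $M_{N-1}^K(\sigma)$. Cauchy interlacing between an $N\times N$ symmetric matrix and its $(N-1)\times(N-1)$ compression, together with the change of normalization from $1/(N-1)$ to $1/N$, gives $d_{\text{KS}}\le 2/N$ pathwise.

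\textbf{Step 3: collecting the bounds.} Combining the two steps by the triangle inequality, and absorbing the factors $N/(N-1)$ into the constants, yields the right-hand side of \eqref{Equation:KS_Distance}. The slack from $2$ to $4$ in front of $b^{2/(q_2-2)}$ and the $3/N$ term accommodate these $O(1/N)$ normalization corrections. All bounds are uniform in $\opnorm{\sigma}_2\le M$.

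The main obstacle is the counting in Step 1, specifically the regime where $\opnorm{\sigma}_2$ is small and $p>2$. There the threshold $K\opnorm{\sigma}_2^{p-2}$ is small, so one must check that the factor $r^2$ from Chebyshev compensates exactly, giving the exponent $q_{1,2}-p>0$. This is where $q_1>p$ from \ref{Condition:V} is essential.
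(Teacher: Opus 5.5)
Your argument follows the paper's proof. A rank bound handles the passage from $M_{N-1}(\sigma)$ to $M_{N-1}^K(\sigma)$ (truncated indices plus the rank-one $v(\sigma)v(\sigma)^\top$ term). Cauchy interlacing then compares $M_{N-1}^K(\sigma)$ with $Q_N^K(\sigma)$ through the lower-right block of $\overline{B}(\sigma)^\top Q_N^K(\sigma)\overline{B}(\sigma)$. The two are combined via $d_{\text{KS}}(\mathbb{E}\mu,\mathbb{E}\nu)\le \mathbb{E}\,d_{\text{KS}}(\mu,\nu)$ and the triangle inequality. Your Step 1 count reproduces the paper's bound $\frac{2N}{N-1}b^{2/(q_2-2)}+\frac{1}{N-1}$ exactly. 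You split at $|\sigma_i|=1$ and reduce to $b\le 1$, whereas the paper uses $K\ge 2\oldconstant{Constant:Bound}$ to get $(K/2\oldconstant{Constant:Bound})^{-2/(q_1-2)}\le (K/2\oldconstant{Constant:Bound})^{-2/(q_2-2)}$. So in your version the hypothesis $K\ge 2\oldconstant{Constant:Bound}$ is never used: $b\le 1$ already forces it, and it is $b\le 1$ that lets the two regimes be handled together.

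The one real slip is in Step 2, and it breaks the stated constant. Interlacing gives $1/N$, not $2/N$. Let $k=\#\{i:\lambda_i\le x\}$ for the $N\times N$ matrix and $j=\#\{i:\mu_i\le x\}$ for its compression. Then $\lambda_i\le\mu_i\le\lambda_{i+1}$ forces $j\in\{k-1,k\}$. If $j=k$ (so $k\le N-1$), the difference is $k/(N(N-1))\le 1/N$. If $j=k-1$ (so $k\ge1$), it is $(N-k)/(N(N-1))\le 1/N$.

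With your $2/N$, the additive terms sum to $\frac{1}{N-1}+\frac{2}{N}=\frac{3N-2}{N(N-1)}$, which is strictly larger than $\frac{3}{N}$. The only slack available is $\bigl(4-\frac{2N}{N-1}\bigr)b^{2/(q_2-2)}$, which tends to $0$ as $b\to 0$ (for instance $K\to\infty$). So your Step 3 claim that the slack absorbs the $O(1/N)$ corrections is false as written. With the correct $1/N$ you get $\frac{1}{N-1}+\frac{1}{N}\le\frac{3}{N}$ and $\frac{2N}{N-1}\le 4$ for $N\ge 2$, and the stated bound follows.
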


\begin{proof}
    We will show that
    \begin{align}
        \sup_{\opnorm{\sigma}_2 \leq M} d_{\text{KS}}\bigl(\mathbb{E}[\mu_{M_{N - 1}(\sigma)}], \mathbb{E}[\mu_{M_{N - 1}^K(\sigma)}]\bigr) &\leq \frac{2N}{N - 1}\Big(\frac{2\oldconstant{Constant:Bound}M^{q_2 - p}}{K}\Bigr)^{2/(q_2 - 2)} + \frac{1}{N - 1}, \label{Equation:KS_Distance_1} \\
        \sup_{\sigma \in \mathbb{R}^N} d_{\text{KS}}\bigl(\mathbb{E}[\mu_{M_{N - 1}^K(\sigma)}], \mathbb{E}[\mu_{Q_N^K(\sigma)}]\bigr) &\leq \frac{1}{N}. \label{Equation:KS_Distance_2}
    \end{align}
    For inequality \eqref{Equation:KS_Distance_1}, observe that 
    \begin{align*}
        &\text{rank}(M_{N - 1}(\sigma) - M_{N - 1}^K(\sigma))\\
        & = \text{rank}\Big[B(\sigma)^\top\Big(\oldconstant{Constant:A_1}\,\text{diag}\Big[\Big(\frac{V''(\sigma)}{\opnorm{\sigma}_2^{p - 2}} - K\Big) \mathbbm{1}_{\{\opnorm{\sigma}_2^{2 - p}V''(\sigma) \geq K\}}\Big] + \dfrac{v(\sigma) v(\sigma)^\top}{\lan{\sigma, v(\sigma)}\opnorm{\sigma}_2^{p - 2}}\Bigr)B(\sigma)\Big] \\
        & \leq \text{rank}\Bigl(\oldconstant{Constant:A_1}\,\text{diag}\Big[\Big(\frac{V''(\sigma)}{\opnorm{\sigma}_2^{p - 2}} - K\Big) \mathbbm{1}_{\{\opnorm{\sigma}_2^{2 - p}V''(\sigma) \geq K\}}\Big] + \dfrac{v(\sigma) v(\sigma)^\top}{\lan{\sigma, v(\sigma)}\opnorm{\sigma}_2^{p - 2}}\Bigr) \\
        & \leq \sum_{i = 1}^N \mathbbm{1}_{\{\opnorm{\sigma}_2^{2 - p}V''(\sigma_i) \geq K\}} + 1,
    \end{align*}
    where the second inequality holds by the Cauchy interlacing inequality \eqref{Equation:Cauchy_Interlacing}. By condition \ref{Condition:V}, we have for $K \geq 2\oldconstant{Constant:Bound}$ that
    \begin{align*}
        \sum_{i = 1}^N \mathbbm{1}_{\{\opnorm{\sigma}_2^{2 - p}V''(\sigma_i) \geq K\}} &\leq \sum_{i = 1}^N \bigl(\mathbbm{1}_{\{\oldconstant{Constant:Bound}\opnorm{\sigma}_2^{2 - p}|\sigma_i|^{q_1 - 2} \geq K/2\}} + \mathbbm{1}_{\{\oldconstant{Constant:Bound}\opnorm{\sigma}_2^{2 - p}|\sigma_i|^{q_2 - 2} \geq K/2\}}\big) \\
        &\leq \sum_{i = 1}^N \Bigl(\frac{|\sigma_i|^2}{(\opnorm{\sigma}_2^{p - 2}K/2\oldconstant{Constant:Bound})^{2/(q_1 - 2)}} + \frac{|\sigma_i|^2}{(\opnorm{\sigma}_2^{p - 2}K/2\oldconstant{Constant:Bound})^{2/(q_2 - 2)}}\Bigr) \\
        &\leq N(K/2\oldconstant{Constant:Bound})^{-2/(q_2 - 2)}\bpar{\opnorm{\sigma}_2^{2(q_1 - p)/(q_1 - 2)} + \opnorm{\sigma}_2^{2(q_2 - p)/(q_2 - 2)}}.
    \end{align*}
    It follows from \eqref{Equation:E[KS]} and the rank inequality (Lemma \ref{Lemma:Rank_Inequality}) that if $\opnorm{\sigma}_2 \leq M$ and $M \geq 1$,
    \begin{align*}
        d_{\text{KS}}\bigl(\mathbb{E}[\mu_{M_{N - 1}(\sigma)}], \mathbb{E}[\mu_{M_{N - 1}^K(\sigma)}]\bigr) &\leq \mathbb{E}[d_{\text{KS}}\bigl(\mu_{M_{N - 1}(\sigma)}, \mu_{M_{N - 1}^K(\sigma)}\bigr)] \\
        &\leq \dfrac{\mathbb{E}[\text{rank}\,(M_{N - 1}(\sigma) - M_{N - 1}^K(\sigma))]}{N - 1} \\
        &\leq \frac{2N}{N - 1}\Big(\frac{2\oldconstant{Constant:Bound}M^{q_2 - p}}{K}\Bigr)^{2/(q_2 - 2)} + \frac{1}{N - 1}.
    \end{align*}
    This proves \eqref{Equation:KS_Distance_1}. For \eqref{Equation:KS_Distance_2}, recall from \eqref{Equation:B_bar} the definition of the orthogonal matrix $\overline{B}(\sigma)$, 
    \begin{align*}
        \text{Spec}(Q_N^K(\sigma)) &= \text{Spec}\big[\overline{B}(\sigma) Q_N^K(\sigma) \overline{B}(\sigma)\big] \notag \\
        &\overset{d}{=} \text{Spec}\Big[\begin{pmatrix}
            \opnorm{\sigma}_2^{-2}\sigma^\top D_N^K(\sigma)\sigma & \opnorm{\sigma}_2^{-1}\sigma^\top D_N^K(\sigma)B(\sigma) \\[0.1cm]
            \opnorm{\sigma}_2^{-1}B(\sigma)^\top D_N^K(\sigma)\sigma & B(\sigma)^\top D_N^K(\sigma)B(\sigma)
        \end{pmatrix} + G_N\Big] = \text{Spec}(\overline{Q}_N^K(\sigma)), \label{Equation:Truncation_KS_2}
    \end{align*}
    where we apply the rotational invariance of $G_N$ in the second equality and $\overline{Q}_N^K(\sigma)$ is defined as 
    \[\overline{Q}_N^K(\sigma) = \begin{pmatrix}
        \opnorm{\sigma}_2^{-2}\sigma^\top D_N^K(\sigma)\sigma + g_{11} & \opnorm{\sigma}_2^{-1}\sigma^\top D_N^K(\sigma)B(\sigma) + g^\top \\[0.2cm]
        \opnorm{\sigma}_2^{-1}B(\sigma)^\top D_N^K(\sigma)\sigma + g & B(\sigma)^\top D_N^K(\sigma)B(\sigma) + G_{N - 1}
    \end{pmatrix},\]
    where $G_N = (g_{ij})_{1 \leq i, j \leq N}$, $g = (g_{12}, \dots, g_{1N})^\top$. Therefore,
    \begin{align*}
	   d_{\text{KS}}\bigl(\mathbb{E}[\mu_{M_{N - 1}^K(\sigma)}], \mathbb{E}[\mu_{Q_N^K(\sigma)}]\bigr) &= 
	   d_{\text{KS}}\bigl(\mathbb{E}[\mu_{M_{N - 1}^K(\sigma)}], \mathbb{E}[\mu_{\overline{Q}_N^K(\sigma)}]\bigr) \\
	   &\leq \mathbb{E}\bigl[d_{\text{KS}}\bigl(\mu_{M_{N - 1}^K(\sigma)}, \mu_{\overline{Q}_N^K(\sigma)}\bigr)\bigr] \leq \frac{1}{N},
    \end{align*}
    where the first inequality again follows from \eqref{Equation:E[KS]} and the second inequality holds by the Cauchy interlacing inequality \eqref{Equation:Cauchy_Interlacing} since $M_{N - 1}^K(\sigma)$ is a submatrix of $\overline{Q}_N^K(\sigma)$. Combining \eqref{Equation:KS_Distance_1} and \eqref{Equation:KS_Distance_2}, we may obtain \eqref{Equation:KS_Distance} by employing triangular inequality.
\end{proof}

For $\eta > 0$ and $\lambda > 0$, set the function
\[\log_\eta(\lambda) = \log|\lambda + i\eta|.\]
Note that $\log \lambda<\log_\eta(\lambda)$ for all $\lambda>0$ and $\|\log_\eta(\cdot)\|_{\text{Lip}} \leq (2\eta)^{-1}.$

\begin{lemma}\label{Lemma:KSD}
    If $V$ satisfies \ref{Condition:V}, then for all $M > 0$,
    \begin{equation}\label{Equation:KSD}
        \lim_{N, K \to \infty} \sup_{\opnorm{\sigma}_2 \leq M} \Bigl|\int_{\mathbb{R}} \log|\lambda|\,\mathbb{E}\bigl[\mu_{Q_N^K(\sigma)} - \mu_{M_{N - 1}(\sigma)}\bigr](d\lambda)\Bigr| = 0.
    \end{equation}
\end{lemma}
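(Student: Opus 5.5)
We split $\log|\lambda|$ into a smoothed part and a singular part near the origin. For $\eta\in(0,1)$ we write
\[
\log|\lambda| = \log_\eta(\lambda) - \bigl(\log_\eta(\lambda)-\log|\lambda|\bigr),
\]
where $0\le \log_\eta(\lambda)-\log|\lambda| = \tfrac12\log(1+\eta^2/\lambda^2)$. The difference is small unless $|\lambda|\lesssim \sqrt\eta$, and it is integrable at $0$. We also cut off large $|\lambda|$ at some level $R$, so the integral splits into three pieces: the smooth part, the large-$|\lambda|$ tail, and the singular part. Each piece is controlled uniformly over $\opnorm{\sigma}_2\le M$, and the limits are taken in the order $N,K\to\infty$, then $R\to\infty$, then $\eta\to0$.

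\textbf{Smooth part.} On $[-R,R]$ the function $\log_\eta$ has total variation at most $C(\eta,R)$. Integrating by parts against the difference of the distribution functions gives
\[
\Bigl|\int_{[-R,R]}\log_\eta\, d\bigl(\mathbb{E}\mu_{Q_N^K(\sigma)}-\mathbb{E}\mu_{M_{N-1}(\sigma)}\bigr)\Bigr| \le C(\eta,R)\, d_{\mathrm{KS}}\bigl(\mathbb{E}\mu_{Q_N^K(\sigma)},\mathbb{E}\mu_{M_{N-1}(\sigma)}\bigr).
\]
By \eqref{Equation:KS_Distance}, the right-hand side tends to $0$ uniformly over $\opnorm{\sigma}_2\le M$ as $N,K\to\infty$.

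\textbf{Large-$|\lambda|$ tail.} Here we need a uniform second-moment bound. We bound $\mathbb{E}\int\lambda^2\,d\mu_{M_{N-1}(\sigma)}$ by $N^{-1}\mathbb{E}\operatorname{tr}$ of the square of the matrix, which is at most $C\bigl(1+N^{-1}\|D_N(\sigma)\|_F^2 + \text{rank-one term}\bigr)$. With $\opnorm{\sigma}_2\le M$ this involves $\opnorm{\sigma}_{2q_2-4}$, which is \emph{not} bounded by $M$ alone. However, the diagonal entries are nonnegative and the rank-one term has a sign. We therefore use only first-moment tail bounds, and only on the negative side, plus a positive-side bound for the log. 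Concretely, $\int_{|\lambda|>R}\log|\lambda|$ is controlled by $\log$ of the first absolute moment via Jensen/Markov. The first moment is at most $C(1+N^{-1}\operatorname{tr} D_N)$, and $N^{-1}\operatorname{tr} D_N$ is again unbounded in terms of $M$ alone.

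So the fallback is to note that the quantity in \eqref{Equation:KSD} should be read with both log-integrals finite, and to use interlacing/rank arguments directly. The two matrices differ by a finite rank perturbation plus entries above $K$. Eigenvalues of $M_{N-1}$ exceeding those of $Q_N^K$ come in at most $r$ (the rank bound from the previous lemma) positions. Their contribution, $N^{-1}\sum\log|\lambda_i|$ over those indices, is at most $(r/N)\log(\|M_{N-1}\|_{\rm op})$, and
\[
\frac{r}{N}\log\|M_{N-1}\|_{\rm op} \lesssim (M^{q_2-p}/K)^{2/(q_2-2)}\,\log\bigl(N K\,\text{poly}(\|\sigma\|_\infty)\bigr).
\]
This still needs care. \textbf{This tail control is the step I expect to be the main obstacle}, and it may require restricting to bounded $\opnorm{\sigma}_{2q_2-2}$ as in Proposition~\ref{Proposition:q_Truncation}.

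\textbf{Singular part.} Here we use the Wegner estimate (Definition~\ref{Definition:Wegner}, from \cite{aizenman2017matrix}) for $\mathrm{GOE}+A$ with arbitrary deterministic symmetric $A$. It gives $\mathbb{E}\mu_{Q_N^K(\sigma)}([-\epsilon,\epsilon])\le C\epsilon$ uniformly in $\sigma,K,N$. The same holds for $M_{N-1}(\sigma)$, since it is a GOE plus a deterministic matrix, with a harmless $N/(N-1)$ factor. Hence each expected measure has density bounded by $C$ near $0$, and
\[
\int\tfrac12\log(1+\eta^2/\lambda^2)\,d\mathbb{E}\mu \le C\int_{\mathbb{R}}\tfrac12\log(1+\eta^2/\lambda^2)\,d\lambda = C'\eta,
\]
uniformly in $\sigma,N,K$. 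Letting $\eta\to0$ at the end completes the argument.
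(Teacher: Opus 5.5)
Your bulk estimate (integration by parts against the distribution functions together with \eqref{Equation:KS_Distance}) and your singular estimate are correct. For the singular part, the Wegner bound of \cite{aizenman2017matrix} gives $\mathbb{E}\mu$ a density bounded uniformly in $\sigma,N,K$, hence an $O(\eta)$ error. Both follow the paper's three-piece decomposition.

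The genuine gap is the large-$|\lambda|$ tail, which you leave open, and your fallback does not close it. Your own bound is $(M^{q_2-p}/K)^{2/(q_2-2)}\log\bigl(NK\,\mathrm{poly}(\|\sigma\|_\infty)\bigr)$, and $\|\sigma\|_\infty$ can be as large as $\sqrt{N}M$ under $\opnorm{\sigma}_2\le M$. So, for fixed $M$, the bound is of order $K^{-2/(q_2-2)}\log N$. This is unbounded when $\log N\gg K^{2/(q_2-2)}$, so it does not vanish under the joint limit $N,K\to\infty$. Your order of limits, however, requires the tail to be small, uniformly in $N$ and $K$, once $R$ is large. Restricting to bounded $\opnorm{\sigma}_{2q_2-2}$ would prove a different statement.

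The missing idea is to go \emph{below} the first moment rather than above it, since $\log$ grows slower than any power. Take $s=2/(q_2-2)$. For $|\lambda|\ge R\ge 1$ and $\eta<1$ one has $\log_\eta(\lambda)\le C_s|\lambda|^{s/2}$. By Markov's inequality,
\[
\int_{\{|\lambda|\ge R\}}\log_\eta\,d\mu\le C_sR^{-s/2}\int_{\mathbb{R}}|\lambda|^s\,d\mu .
\]
This $s$-th moment is controlled by $\opnorm{\sigma}_2$ alone. Using \eqref{Equation:Subadditive_Trace}, interlacing to remove $B(\sigma)$, $\sup_N\mathbb{E}\|G_N\|_{\mathrm{op}}^s<\infty$, and \ref{Condition:V}, you get
\[
\frac{1}{N}\operatorname{Tr}\bigl(D_N(\sigma)^s\bigr)\lesssim\opnorm{\sigma}_2^{s(2-p)}\,\frac{1}{N}\sum_{i=1}^N\bigl(|\sigma_i|^{s(q_1-2)}+|\sigma_i|^{2}\bigr)\lesssim\opnorm{\sigma}_2^{2(q_1-p)/(q_2-2)}+\opnorm{\sigma}_2^{2(q_2-p)/(q_2-2)},
\]
because $s(q_2-2)=2$ and $0<s(q_1-2)\le 2$ (power-mean inequality). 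Both exponents are positive, so the bound depends only on $M$. It is uniform in $K$, since truncation only lowers the diagonal entries.

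For $M_{N-1}(\sigma)$ the rank-one term is harmless. By \eqref{Equation:q_Truncation_2} (which uses $\langle\sigma,v(\sigma)\rangle>0$), the matrix $\frac{1}{\sqrt{p(p-1)}}\operatorname{diag}(V''(\sigma))-v(\sigma)v(\sigma)^\top/\langle\sigma,v(\sigma)\rangle$ is positive semidefinite and dominated in Loewner order by its diagonal part. Hence its $s$-th trace is no larger than that of the diagonal part.

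This yields a tail bound uniform in $\sigma$, $N$, $K$ that vanishes as $R\to\infty$. It is exactly the paper's argument in \eqref{Equation:KSD_3}--\eqref{Equation:KSD_7}.
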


\begin{proof}
    For convenience, denote $\mu = \mathbb{E}[\mu_{Q_N^K(\sigma)}]$ and $\nu = \mathbb{E}[\mu_{M_{N - 1}(\sigma)}]$.
    Observe that
    \begin{equation}\label{Equation:KSD_1}
        \Bigl|\int_{\mathbb{R}} \log|\lambda| (\mu - \nu)(d\lambda)\Bigr| \leq \int_{\mathbb{R}} \bigl(\log_{\eta}(\lambda) - \log|\lambda|\bigr)(\mu + \nu)(d\lambda) + \Bigl|\int_{\mathbb{R}} \log_\eta(\lambda) (\mu - \nu)(d\lambda)\Bigr|.
    \end{equation}
    For the first term in \eqref{Equation:KSD_1}, note that by Theorem \ref{Theorem:GOE_Wegner_Estimate}, we have Wegner estimates (uniform over $\sigma \in \mathbb{R}^N$) for $Q_N^K(\sigma)$ and $M_{N - 1}(\sigma)$.  Fix $\epsilon > 0$, we may choose $\eta = \eta(\epsilon) > 0$ so that \eqref{Equation:Condition_Wegner} holds. Then, by Corollary \ref{corollary:C.8}, we have the estimation
    \[\int_{\mathbb{R}} \left(\log_{\eta}(\lambda) - \log|\lambda|\right)(\mu + \nu)(d\lambda) \leq 2\oldconstant{Constant:Truncation_Log}\bpar{\log(1 + \eta) + \eta^{(1 - \epsilon)/2}} < \epsilon\]
    by choosing small $\eta > 0$. For the second term in \eqref{Equation:KSD_1}, we bound
    \begin{equation}\label{Equation:KSD_2}
        \Bigl|\int_{\mathbb{R}} \log_\eta(\lambda)(\mu - \nu)(d\lambda)\Bigr| \leq \int_{\{|\lambda| \geq A\}} \log_\eta(\lambda) (\mu + \nu)(d\lambda) + \Bigl|\int_{\{|\lambda| < A\}} \log_\eta(\lambda)(\mu - \nu)(d\lambda)\Bigr|
    \end{equation}
    for some constant $A > 1$ to be chosen later. For the tail term in \eqref{Equation:KSD_2}, observe that for any $0<\eta<1$ and any random variable $X$ with the law $\mu$, then by setting $s = 2/(q_2 - 2)$, there exist $\newconstant\label{A} > 0$ such that
    \begin{equation}\label{Equation:KSD_3}
        \int_{\{|\lambda| \geq A\}} \log_\eta(\lambda)\mu(d\lambda) = \mathbb{E}\bsq{\log_\eta(X) \cdot \mathbbm{1}_{\{|X| \geq A\}}} \leq \oldconstant{A}\bsq{|X|^{s / 2} \cdot \mathbbm{1}_{\{|X| \geq A\}}} \leq \oldconstant{A} A^{-s/2}\mathbb{E}[|X|^s].
    \end{equation}
    Here, the $s$-moment
    \begin{align}
        \mathbb{E}[|X|^s] &= \dfrac{1}{N}\mathbb{E}\bsq{\text{Tr}\bigl(\left|Q_N^K(\sigma)\right|^s\bigr)} \notag \\
        &\leq \dfrac{2^{s}}{N}\mathbb{E}\left[\text{Tr}\bigl[(B(\sigma)^\top D_N^K(\sigma)B(\sigma))^s + |G_N|^s\bigr]\right] \label{Equation:KSD_4} \\
        &\leq 2^s\Bigl(\frac{1}{N}\text{Tr}\left(\oldconstant{Constant:A_1}^s\text{diag}\left(\oldconstant{Constant:Bound}\opnorm{\sigma}_2^{2 - p}(|\sigma|^{q_1 - 2} + |\sigma|^{q_2 - 2})\right)^s\right) + \mathbb{E}\left[\||G_N|\|_{\text{op}}^s\right]\Bigr) \label{Equation:KSD_5} \\
        &\leq 2^s\left(2^s(\oldconstant{Constant:Bound}\oldconstant{Constant:A_1})^s\opnorm{\sigma}_2^{s(2 - p)}\bigl(\opnorm{\sigma}_{2(q_1 - 2)/(q_2 - 2)}^{2(q_1 - 2)/(q_2 - 2)} + \opnorm{\sigma}_2^2\bigr) + \mathbb{E}\left[\|G_N\|_{\text{op}}^s\right]\right) \label{Equation:KSD_6}  \\
        &\leq 2^{s}\left(2^s(\oldconstant{Constant:Bound}\oldconstant{Constant:A_1})^s\bigl(\opnorm{\sigma}_{2}^{2(q_1 - p)/(q_2 - 2)} + \opnorm{\sigma}_2^{2(q_2 - p) / (q_2 - 2)}\bigr) + \mathbb{E}\left[\|G_N\|_{\text{op}}^s\right]\right), \label{Equation:KSD_7}
    \end{align}
    where \eqref{Equation:KSD_4} holds by \eqref{Equation:Subadditive_Trace} with $m = 2$ and the fact that $|A| = A$ if $A$ is positive semi-definite; \eqref{Equation:KSD_5} holds by the Cauchy interlacing \eqref{Equation:Cauchy_Interlacing} (Using $\mbox{Tr}f(B^\top AB)\leq \mbox{Tr}(f(A))$ if $B^\top B=I_{k}$), condition \ref{Condition:V}, and $\||G_N|\|_{\text{op}} = \|G_N\|_{\text{op}}$; and \eqref{Equation:KSD_6} holds by the elementary inequality $(a + b)^s \leq 2^s(a^s + b^s)$. Observe that \eqref{Equation:KSD_6} is a finite constant (independent of $N$) since $\opnorm{\sigma}_2 \leq M$. Hence, we may choose $A$ large enough so that \eqref{Equation:KSD_2} is less than $\epsilon$. The same argument works for $M_{N - 1}(\sigma)$. For the bulk term in \eqref{Equation:KSD_2}, we see by applying integration by part,
    \begin{align*}
        &\Big|\int_{\{|\lambda| < A\}} \log_\eta(\lambda)(\mu - \nu)(d\lambda)\Big| \\
        &= \Big|\left.\log_\eta(\lambda)(F_\mu(\lambda) - F_\nu(\lambda))\right|_{\lambda = -A}^{\lambda = A} - \int_{\{|\lambda| < A\}} \left(\log_\eta(\lambda)\right)'(F_\mu(\lambda) - F_\nu(\lambda))\,d\lambda\Big| \\
        &\leq 2\log_\eta(A) \cdot d_{\text{KS}}(\mu, \nu) + \frac{2A}{2\eta} \cdot d_{\text{KS}}(\mu, \nu),
    \end{align*}
    where the inequality holds since $\|\log_\eta(\cdot)\|_{\text{Lip}} \leq (2\eta)^{-1}$. By \eqref{Equation:KS_Distance}, we see then the term vanishes as $N, K \to \infty$ and thus \eqref{Equation:KSD} holds.
\end{proof}

\begin{proof}[\bf Proof of Proposition \ref{Proposition:Main_Truncation}]
    Observe that we have
    \begin{align*}
        &\Big|\dfrac{1}{N}\log\mathbb{E}\left|\det Q_N^K(\sigma)\right| - \dfrac{1}{N}\log\mathbb{E}|\det M_{N - 1}(\sigma)|\Big| \\
        &\leq \Big|\dfrac{1}{N}\log\mathbb{E}\big|\det Q_N^K(\sigma)\big| - \int_{\mathbb{R}} \log|\lambda|\,\mathbb{E}\big[\mu_{Q_N^K(\sigma)}\big](d\lambda)\Big| + \Big|\int_{\mathbb{R}} \log|\lambda|\,\mathbb{E}\big[\mu_{Q_N^K(\sigma)} - \mu_{M_{N - 1}(\sigma)}\big](d\lambda)\Big| \\
        &\hspace{2cm} + \Big|\dfrac{1}{N}\log\mathbb{E}|\det M_{N - 1}(\sigma)| - \int_{\mathbb{R}} \log|\lambda|\,\mathbb{E}\left[\mu_{M_{N - 1}(\sigma)}\right](d\lambda)\Big|.
    \end{align*}
    The first and the third term vanishes as $N \to \infty$ uniformly over $\sigma \in \mathbb{R}^N$ and $K > 0$ by Theorem \ref{Theorem:Main_Asymptote_Unbounded_GOE}. The second term vanishes as $N, K \to \infty$ uniformly over $\opnorm{\sigma}_2 \leq M$ by Lemma \ref{Lemma:KSD}. We then deduce the desired assertion.
\end{proof}

\subsection{Proof of Proposition \ref{Proposition:q_Truncation}}

\noindent First of all, note that by the Cauchy-Schwarz inequality, we have for all $\sigma \in \mathbb{R}^N$, 
\begin{equation}\label{Equation:Rough_Estimate_Exp}
	\dfrac{\opnorm{V'(\sigma)}_2^2}{\opnorm{\sigma}_2^{2p - 2}} \leq f_N(\sigma) \leq p\dfrac{\opnorm{V'(\sigma)}_2^2}{\opnorm{\sigma}_2^{2p - 2}}.
\end{equation}
We begin with two technical lemmas.

\begin{lemma}
    If $V$ satisfies Assumption \ref{ass1}, then for all $C > 1$, and $B \subseteq \mathbb{R}^N$ we have
    \begin{align}
        I_1(B) &\geq \oldconstant{Constant:Tech_2}N^{N/2}S_{N - 1}\int_{(u/\oldconstant{Constant:Tech_1})^{1 - p/q_1}}^\infty e^{-4\oldconstant{Constant:Bound}^2\oldconstant{Constant:Exp}pN\big(t^2C^{2(2q_1 - 2)} + t^{2(q_2 - p)/(q_1 - p)}C^{2(2q_2 - 2)}\big)} \notag \\
        &\hspace{1cm} \times \mathbb{P}\Big(\{t^{1/(q_1 - p)}\sqrt{N}g/\|g\|_2 \in B\} \cap \bigcap_{i = 1}^N \{C^{-1} \leq |g_i| \leq C\}\Big) dt, \label{Equation:Tech}
    \end{align}
    where the constant $\newconstant\label{Constant:Tech_1} = \oldconstant{Constant:Bound}^{-1}(q/p - 1)$, $\newconstant\label{Constant:Tech_2} = \oldconstant{Constant:Bound}^{-1}\oldconstant{Constant:A_1}(q - p)$, and $g_1, \dots, g_N$ are i.i.d.\ standard normal random variables.
\end{lemma}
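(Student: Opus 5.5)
The plan is to pass to polar coordinates adapted to the normalized sphere $\sqrt N\,\mathbb S^{N-1}$. Then I bound each factor of the integrand of $I_1(B)$ from below on the event that all coordinates of the Gaussian direction are comparable to $1$, and finally change the radial variable to $t=s^{q_1-p}$.

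\textbf{Step 1 (polar coordinates).} Write $\sigma=s\sqrt N\,\theta$ with $s=\opnorm{\sigma}_2$ and $\theta\in\mathbb S^{N-1}$. Then
\[
d\sigma=N^{N/2}s^{N-1}\,ds\,S_{N-1}\,\mathbb P_\theta(d\theta),
\]
where $\mathbb P_\theta$ is the uniform law on the sphere. Realize $\theta=g/\|g\|_2$ with $g$ standard Gaussian in $\mathbb R^N$, so that $\sigma=s\sqrt N g/\|g\|_2$. The factor $\opnorm{\sigma}_2^{-(N+p)}$ combines with $s^{N-1}$ to leave $s^{-p-1}$. Since the integrand is nonnegative, I restrict the $\theta$-expectation to the event
\[
E_C=\bigcap_i\{C^{-1}\le|g_i|\le C\}.
\]
On $E_C$ we have $\opnorm{g}_2\in[C^{-1},C]$, and therefore $sC^{-2}\le|\sigma_i|\le sC^{2}$ for every $i$.

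\textbf{Step 2 (pointwise lower bounds on the integrand).} There are three estimates to make.

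\emph{Prefactor.} Write $\lan{\sigma,v(\sigma)}=\oldconstant{Constant:A_1}\sum_i\bigl(\sigma_i^2V''(\sigma_i)-(p-1)\sigma_iV'(\sigma_i)\bigr)$. Condition \ref{Condition:V''} gives $\lan{\sigma,v(\sigma)}\ge \oldconstant{Constant:A_1}(q-p)\sum_i\sigma_iV'(\sigma_i)$. Condition \ref{Condition:V} then gives $\lan{\sigma,v(\sigma)}\ge \oldconstant{Constant:Tech_2}N\opnorm{\sigma}_{q_1}^{q_1}$. By Jensen (since $q_1\ge2$), $\opnorm{\sigma}_{q_1}^{q_1}\ge s^{q_1}$. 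Hence
\[
\frac{|\lan{\sigma,v(\sigma)}|}{N s^{p}}\ge \oldconstant{Constant:Tech_2}\,s^{q_1-p}.
\]
Combined with the leftover $s^{-1}\,ds$, this produces $\oldconstant{Constant:Tech_2}s^{q_1-p-1}ds$, which is $dt$ up to the harmless constant $(q_1-p)^{-1}$ after the substitution $t=s^{q_1-p}$. I would track constants so that this factor is absorbed, or accept a slightly different constant.

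\emph{Exponential.} By \eqref{Equation:Rough_Estimate_Exp}, $f_N\le p\opnorm{V'(\sigma)}_2^2/s^{2p-2}$. Condition \ref{Condition:V} gives $|V'(x)|\le \oldconstant{Constant:Bound}(|x|^{q_1-1}+|x|^{q_2-1})$, and with $(a+b)^2\le 2(a^2+b^2)$ and $|\sigma_i|\le sC^2$ this yields
\[
f_N\le 4\oldconstant{Constant:Bound}^2p\bigl(s^{2(q_1-p)}C^{2(2q_1-2)}+s^{2(q_2-p)}C^{2(2q_2-2)}\bigr),
\]
using a generous constant. In terms of $t$ this is exactly $t^2C^{2(2q_1-2)}+t^{2(q_2-p)/(q_1-p)}C^{2(2q_2-2)}$, matching the exponent in \eqref{Equation:Tech}.

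\emph{Domain $\Omega(u)$.} Condition \ref{Condition:V'} gives $p^{-1}xV'(x)-V(x)\ge(p^{-1}-q^{-1})xV'(x)$, which is at least $\oldconstant{Constant:Tech_1}|x|^{q_1}$ after using \ref{Condition:V}, up to the convention for $\oldconstant{Constant:Tech_1}$. Summing over $i$ and applying Jensen again shows that $\sigma\in\Omega(u)$ as soon as $\oldconstant{Constant:Tech_1}s^{q_1}\ge u$. This is the condition $t\ge(u/\oldconstant{Constant:Tech_1})^{(q_1-p)/q_1}=(u/\oldconstant{Constant:Tech_1})^{1-p/q_1}$. So I restrict the $t$-integral to that half-line and drop the indicator of $\Omega(u)$. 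The indicator of $B$ remains, written as $\{t^{1/(q_1-p)}\sqrt N g/\|g\|_2\in B\}$.

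\textbf{Step 3 (assemble).} Putting these bounds back and using Fubini gives \eqref{Equation:Tech}. I expect no real obstacle: each step is a direct pointwise estimate. The only delicate points are bookkeeping, namely matching the exact constants $\oldconstant{Constant:Tech_1}$ and $\oldconstant{Constant:Tech_2}$ (which may require a slightly more careful use of \ref{Condition:V'} and \ref{Condition:V''}), the Jacobian of $s\mapsto t$, and making sure that the two-sided control $C^{-2}s\le|\sigma_i|\le C^2 s$ is used only in the upper bound on $f_N$. The lower bounds for the prefactor and for $\Omega(u)$ instead rely on Jensen, which holds without the event $E_C$.
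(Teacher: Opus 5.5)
Your proof is correct and follows the paper's argument essentially step for step: polar coordinates with $\omega=g/\|g\|_2$; the lower bounds $N^{-1}\lan{\sigma, v(\sigma)}\geq \oldconstant{Constant:Tech_2}\opnorm{\sigma}_2^{q_1}$ and $\Omega(u)\supseteq\{\opnorm{\sigma}_2\geq (u/\oldconstant{Constant:Tech_1})^{1/q_1}\}$ via Jensen; the upper bound on $f_N$ on the event $\bigcap_{i}\{C^{-1}\leq |g_i|\leq C\}$; and the substitution $t=s^{q_1-p}$. Two bookkeeping remarks: to get exactly $\oldconstant{Constant:Tech_1}$, apply \ref{Condition:V'} in the form $p^{-1}xV'(x)-V(x)\geq (q/p-1)V(x)$ and then use the lower bound on $V$ from \ref{Condition:V} (your route through $V\leq q^{-1}xV'$ loses a factor $q$, which would shrink the range of integration), and you are right that the Jacobian factor $(q_1-p)^{-1}$ is genuinely present---the paper drops it silently, which is harmless since only the exponential order is used later.
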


\begin{proof}
    We will start with some preliminary estimate. First, by \ref{Condition:V} and the elementary inequality $(a + b)^2 \leq 2(a^2 + b^2)$, we have
    \begin{equation}\label{Equation:Tech_1}
        \opnorm{V'(\sigma)}_2^2 \leq \frac{\oldconstant{Constant:Bound}^2}{N}\sum_{i = 1}^N (|\sigma_i|^{q_1 - 1} + |\sigma_i|^{q_2 - 1})^2 \leq 2\oldconstant{Constant:Bound}^2\big(\opnorm{\sigma}_{2q_1 - 2}^{2q_1 - 2} + \opnorm{\sigma}_{2q_2 - 2}^{2q_2 - 2}\big).
    \end{equation}
    By \eqref{Equation:V'} (since condition \ref{Condition:V''} is satisfied) and \ref{Condition:V}, we have
    \begin{equation}\label{Equation:Tech_2}
        N^{-1}\lan{\sigma, v(\sigma)} \geq \oldconstant{Constant:Bound}^{-1}\oldconstant{Constant:A_1}(q - p)\opnorm{\sigma}_{q_1}^{q_1} \geq \oldconstant{Constant:Tech_2}\opnorm{\sigma}_{2}^{q_1}
    \end{equation}
    Last, we have by \eqref{Equation:V} (since condition \ref{Condition:V'} is satisfied) and \ref{Condition:V},
    \begin{equation}\label{Equation:Tech_3}
        N^{-1}\big(p^{-1}\lan{\sigma, V'(\sigma)} - \lan{\underline{1}, V(\sigma)}\big) \geq \oldconstant{Constant:Bound}^{-1}(q/p - 1)\opnorm{\sigma}_{q_1}^{q_1} \geq \oldconstant{Constant:Tech_1}\opnorm{\sigma}_2^{q_1},
    \end{equation}
    which implies $\Omega(u) \supseteq \{\opnorm{\sigma}_2 \geq (u/\oldconstant{Constant:Tech_1})^{1/q_1}\}$. We see then
    \begin{align*}
        I_1(B) &= \int_{\Omega(u) \cap B}\frac{|\lan{\sigma, v(\sigma)}|}{N\opnorm{\sigma}_2^{N + p}} e^{-\oldconstant{Constant:Exp}Nf_N(\sigma)} d\sigma \\
        &\geq \oldconstant{Constant:Tech_2}\int_{\{\opnorm{\sigma}_2 \geq (u/\oldconstant{Constant:Tech_1})^{1/q_1}\} \cap B} \opnorm{\sigma}_2^{q_1 - p - N} e^{-2\oldconstant{Constant:Bound}^2\oldconstant{Constant:Exp}pN\big(\opnorm{\sigma}_{2q_1 - 2}^{2q_1 - 2} + \opnorm{\sigma}_{2q_2 - 2}^{2q_2 - 2}\big)/\opnorm{\sigma}_2^{2p - 2}} d\sigma \\
        &= \oldconstant{Constant:Tech_2} N^{N/2}S_{N - 1}\int_{(u/\oldconstant{Constant:Tech_1})^{1/q_1}}^\infty t^{q_1 - p -1} \\
        &\hspace{1cm} \times\mathbb{E}\Big[e^{-4\oldconstant{Constant:Bound}^2\oldconstant{Constant:Exp}pN\big(t^{2(q_1 - p)}\opnorm{\sqrt{N}g/\|g\|_2}_{2q_1 - 2}^{2q_1 - 2} + t^{2(q_2 - p)}\opnorm{\sqrt{N}g/\|g\|_2}_{2q_2 - 2}^{2q_2 - 2}\big)}\mathbbm{1}_{\{t\sqrt{N}g/\|g\|_2 \in B\}}\Big] dt,
    \end{align*}
    where the first inequality holds by the upper bound in \eqref{Equation:Rough_Estimate_Exp}, \eqref{Equation:Tech_1}, \eqref{Equation:Tech_2} and \eqref{Equation:Tech_3}; the second inequality holds by the change of variable $\sigma = \sqrt{N}t\omega$ where $\omega \in \mathbb{S}^{N - 1}$ and the identification that $\omega \overset{d}{=} g/\|g\|_2$. If we further restrict the expected value on the event $\bigcap_{i = 1}^N \{C^{-1} \leq |g_i| \leq C\}$, then the exponent
    \[t^{2(q_1 - p)}\opnorm{\sqrt{N}g/\|g\|_2}_{2q_1 - 2}^{2q_1 - 2} + t^{2(q_2 - p)}\opnorm{\sqrt{N}g/\|g\|_2}_{2q_2 - 2}^{2q_2 - 2} \leq t^{2(q_1 - p)}C^{2(2q_1 - 2)} + t^{2(q_2 - p)}C^{2(2q_2 - 2)}.\]
    By substituting $s = t^{q_1 - p}$, we see \eqref{Equation:Tech} holds.
\end{proof}

\begin{lemma}\label{Lemma:General_Truncation}
    Suppose $V$ satisfies Assumption \ref{ass1} and let $w_N \colon \mathbb{R}^N \to \mathbb{R}$ be a sequence of functions.
    \begin{enumerate}[label = (\roman*)]
        \item\label{Lemma_Trucation_1} If there exist a constant $\newconstant\label{Constant:Rough_Lower_Bound_1} > 0$ such that 
        \begin{equation}
            w_N(\sigma) \geq e^{-\oldconstant{Constant:Rough_Lower_Bound_1}N}\mathbbm{1}_{\big\{\opnorm{\sigma}_2^{2 - p}\min(|\sigma|^{q_1 - 2}) \geq \oldconstant{Constant:Rough_Lower_Bound_1}\big\}} \label{Equation:Rough_Lower_Bound_Condition}
        \end{equation}
        for all $N \in \mathbb{N}$. Then, there exist constant $\newconstant\label{Constant:Rough_Lower_Bound_2} = \oldconstant{Constant:Rough_Lower_Bound_2}(u) > 0$ such that,
        \begin{equation}\label{Equation:Rough_Lower_Bound}
            I_{w_N}(\mathbb{R}^N) \geq e^{-\oldconstant{Constant:Rough_Lower_Bound_2}N}.
        \end{equation}
        \item\label{Lemma_Trucation_2} If there exist a constant $\newconstant\label{Constant:Rough_Upper_Bound_1} > 0$ such that
        \begin{equation}\label{Equation:Rough_Upper_Bound_Condition}
            |w_N(\sigma)| \leq \exp\Big[\oldconstant{Constant:Rough_Upper_Bound_1}N\Big(1 + \frac{\opnorm{V'(\sigma)}_2}{\opnorm{\sigma}_2^{p - 1}}\Big)\Big]
        \end{equation}
        for all $N \in \mathbb{N}$. Then, there exist constant $\newconstant\label{Constant:Rough_Upper_Bound_2} > 0$ such that for all large $M$,
        \begin{equation}\label{Equation:Rough_Upper_Bound}
            I_{w_N}(\{\opnorm{\sigma}_{2q_2 - 2} \geq M\}) \leq e^{\oldconstant{Constant:Rough_Upper_Bound_2}N(1 - M^{2(q_2 - p)})}.
        \end{equation}
        \item\label{Lemma_Trucation_3} If $w_N$ satisfies both \eqref{Equation:Rough_Lower_Bound} and \eqref{Equation:Rough_Upper_Bound}, then for all $\epsilon > 0$, $u \geq 0$, there exist $M = M(\epsilon, u) > 0$ such that
        \[\limsup_{N \to \infty} \dfrac{1}{N}\log I_{w_N}(\mathbb{R}) \leq \epsilon + \limsup_{N \to \infty} \frac{1}{N}\log I_w(\{\opnorm{\sigma}_{2q_2 - 2} \leq M\}).\]
    \end{enumerate} 
\end{lemma}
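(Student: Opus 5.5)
We prove the three parts separately, using the previous technical lemma (the lower bound \eqref{Equation:Tech} for $I_1(B)$) for part (i), the Gaussian-type decay of $e^{-\oldconstant{Constant:Exp}Nf_N}$ via \eqref{Equation:Rough_Estimate_Exp} for part (ii), and a simple comparison for part (iii).

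For (i), we restrict the integral to the set where $\opnorm{\sigma}_2^{2-p}\min(|\sigma|^{q_1-2})\ge \oldconstant{Constant:Rough_Lower_Bound_1}$. There $w_N\ge e^{-\oldconstant{Constant:Rough_Lower_Bound_1}N}$, so $I_{w_N}(\mathbb{R}^N)\ge e^{-\oldconstant{Constant:Rough_Lower_Bound_1}N}I_1(B)$ with $B$ equal to this set. We then apply \eqref{Equation:Tech} with a fixed $C>1$ (say $C=2$). On the event $\bigcap_i\{C^{-1}\le |g_i|\le C\}$, the point $\sigma=t^{1/(q_1-p)}\sqrt N g/\|g\|_2$ has coordinates comparable to $t^{1/(q_1-p)}$ up to factors of $C^2$. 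Hence $\opnorm{\sigma}_2^{2-p}\min|\sigma_i|^{q_1-2}\gtrsim t^{(q_1-p)/(q_1-p)}C^{-c}=tC^{-c}$, and for $t$ at least some $t_0$ the event $\{\sigma\in B\}$ contains the box event. Restricting the $t$-integral to a fixed window $[t_1,t_1+1]$ with $t_1\ge t_0\vee (u/\oldconstant{Constant:Tech_1})^{1-p/q_1}$ bounds the exponential factor below by $e^{-cN}$. The box probability is $\mathbb{P}(C^{-1}\le|g_1|\le C)^N=e^{-cN}$. Finally, $N^{N/2}S_{N-1}\ge e^{-cN}$ by Stirling's formula, since $S_{N-1}\approx (2\pi e/N)^{N/2}$. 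Together these give \eqref{Equation:Rough_Lower_Bound}.

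For (ii), we use the lower bound in \eqref{Equation:Rough_Estimate_Exp}, namely $f_N\ge \opnorm{V'(\sigma)}_2^2/\opnorm{\sigma}_2^{2p-2}$. Writing $R=\opnorm{V'(\sigma)}_2/\opnorm{\sigma}_2^{p-1}$, the integrand is at most $\frac{|\lan{\sigma,v(\sigma)}|}{N\opnorm{\sigma}_2^{N+p}}e^{-\oldconstant{Constant:Exp}NR^2+\oldconstant{Constant:Rough_Upper_Bound_1}N(1+R)}$. We bound $|\lan{\sigma,v}|/N$ polynomially by \ref{Condition:V}, and this factor is absorbed into $e^{cN(1+R)}$. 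Completing the square gives a bound $e^{cN-\frac{\oldconstant{Constant:Exp}}2NR^2}$. By \ref{Condition:V}, $\opnorm{V'(\sigma)}_2\ge c\,\opnorm{\sigma}_{2q_2-2}^{q_2-1}$, and $\opnorm{\sigma}_2\le\opnorm{\sigma}_{2q_2-2}$, so $R^2\ge c\,\opnorm{\sigma}_{2q_2-2}^{2(q_2-p)}\ge cM^{2(q_2-p)}$ on the region. We spend half of this decay to get $e^{-cNM^{2(q_2-p)}}$. We use the other half, $e^{-cNR^2}$, together with the weight $\opnorm{\sigma}_2^{-N-p}$, to ensure the remaining integral over $\mathbb{R}^N$ is at most $e^{cN}$. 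To do this, pass to polar coordinates $\sigma=t\sqrt N\omega$, which gives $N^{N/2}S_{N-1}\int t^{-p-1}\mathbb{E}[e^{-cN R^2}]\,dt$. Since $R^2\ge c(t^{2(q_1-p)}m_1+t^{2(q_2-p)}m_2)$ with $m_j$ the normalized $(2q_j-2)$-moments of $\omega$, and these moments are at least $1$ by Jensen, the $t$-integrand decays super-exponentially at large $t$. The singularity at small $t$ is the main obstacle. There $\opnorm{\sigma}_2^{-N}$ blows up and is only compensated by $e^{-cNt^{2(q_1-p)}}$, which is not strong enough. The fix is to use $\Omega(u)$: by \eqref{Equation:Tech_3} in reverse, via the upper bounds in \ref{Condition:V}, $\sigma\in\Omega(u)$ forces $\opnorm{\sigma}_{q_1}^{q_1}+\opnorm{\sigma}_{q_2}^{q_2}\ge cu$. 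For $u>0$ this keeps the relevant small-$t$ region controlled by moments of $\omega$, with a large-deviation cost for moments of $\omega$ being large. If this is insufficient, in particular at $u=0$, we instead bound $\opnorm{\sigma}_2^{-N}$ against $e^{-cN\opnorm{\sigma}_{2q_1-2}^{2q_1-2}/\opnorm{\sigma}_2^{2p-2}}$. This uses that $\opnorm{\sigma}_{2q_1-2}\ge\opnorm{\sigma}_2$ and that the ratio to $\opnorm{\sigma}_2$ has Gaussian-like tails, so the $t$-integral near $0$ behaves like $\int_0 t^{-N}e^{-cNt^{2(q_1-p)}}dt$. We expect this to need careful handling, possibly via the factor $\lan{\sigma,v}\sim\opnorm{\sigma}_2^{q_1}$, which yields at most $e^{cN}$ after optimization.

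For (iii), combine (i) and (ii). By (i), $\liminf\frac1N\log I_{w_N}(\mathbb{R}^N)\ge-\oldconstant{Constant:Rough_Lower_Bound_2}$. By (ii), choosing $M$ large with $\oldconstant{Constant:Rough_Upper_Bound_2}(1-M^{2(q_2-p)})<-\oldconstant{Constant:Rough_Lower_Bound_2}-1$, the tail region $\{\opnorm{\sigma}_{2q_2-2}\ge M\}$ is exponentially negligible. Then $I(\mathbb{R})\le I(\{\opnorm{\sigma}_{2q_2-2}\le M\})+I(\text{tail})\le 2\max$ of the two, and the tail cannot be the maximum for large $N$. This gives the stated inequality, even with $\epsilon=0$.
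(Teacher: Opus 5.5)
Your parts (i) and (iii) are correct and essentially match the paper. In (i) you restrict to the set where $\opnorm{\sigma}_2^{2-p}\min(|\sigma|^{q_1-2})$ is large, apply \eqref{Equation:Tech} on the box event with $t$ in a bounded window, and use $N^{N/2}S_{N-1}=e^{O(N)}$. In (iii) you compare $I_{w_N}(\mathbb{R}^N)\ge e^{-cN}$ with the exponentially smaller tail.

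\textbf{The gap is in (ii).} After completing the square and extracting $e^{-cNM^{2(q_2-p)}}$, you propose to bound the leftover integral $\int_{\mathbb{R}^N}\opnorm{\sigma}_2^{-N-p}e^{-cNR^2}\,d\sigma$ by $e^{cN}$. That integral is infinite.
\begin{itemize}
\item In polar coordinates $\sigma=t\sqrt N\omega$ it equals $N^{N/2}S_{N-1}\int_0^\infty t^{-p-1}\mathbb{E}_\omega\bigl[e^{-cNR^2}\bigr]\,dt$.
\item For fixed $N$, $R^2\lesssim t^{2(q_1-p)}\opnorm{\sqrt N\omega}_{2q_1-2}^{2q_1-2}+t^{2(q_2-p)}\opnorm{\sqrt N\omega}_{2q_2-2}^{2q_2-2}\to 0$ uniformly in $\omega$ as $t\to0$.
\item So the integrand behaves like $t^{-p-1}$, which is not integrable at $0$.
\end{itemize}
Your diagnosis of this obstacle is also mistaken. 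The Jacobian $t^{N-1}$ has already cancelled $\opnorm{\sigma}_2^{-N}$; that is where your $t^{-p-1}$ came from. So there is no $t^{-N}$ singularity, and the model integral $\int_0 t^{-N}e^{-cNt^{2(q_1-p)}}\,dt$ is irrelevant. It also diverges anyway, since the exponential tends to $1$ at $t=0$.

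Your suggested remedies do not close the gap. At $u=0$ one has $\Omega(0)=\mathbb{R}^N$ by \ref{Condition:V'}, so the $\Omega(u)$ constraint is vacuous. The idea involving $\langle\sigma,v(\sigma)\rangle$ is left unexecuted, and you had already absorbed that factor into $e^{cN(1+R)}$.

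\textbf{The missing step is simply to keep the region of integration.}
\begin{itemize}
\item For $\omega\in\mathbb{S}^{N-1}$ one has $\|\omega\|_{2q_2-2}\le\|\omega\|_2=1$, hence $\opnorm{\sqrt N\omega}_{2q_2-2}\le N^{1/2-1/(2q_2-2)}$.
\item So on $\{\opnorm{\sigma}_{2q_2-2}\ge M\}$ you get $t=\opnorm{\sigma}_2\ge MN^{-(q_2-2)/(2q_2-2)}=:t_0$.
\item The radial factor is then at most $\int_{t_0}^\infty t^{-p-1}\,dt=t_0^{-p}/p$, a polynomial in $N$.
\item Combined with $N^{N/2}S_{N-1}=e^{O(N)}$ and the $e^{cN}$ from completing the square, this gives \eqref{Equation:Rough_Upper_Bound} for large $M$.
\end{itemize}
This is what the paper does. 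It substitutes $\sigma=\sqrt N t^{1/(q_2-p)}\omega$ and keeps the lower limit $(M/\opnorm{\sqrt N\omega}_{2q_2-2})^{q_2-p}$. It uses the resulting polynomial-in-$N$ lower bound on $t$ to control the power of $t$, and finishes with a Gaussian tail bound in $t$.

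Alternatively, you could leave $|\langle\sigma,v(\sigma)\rangle|/N\lesssim\opnorm{\sigma}_{q_1}^{q_1}+\opnorm{\sigma}_{q_2}^{q_2}$ unabsorbed near the origin. That turns $t^{-p-1}$ into the integrable $t^{q_1-p-1}$. Either way, no use of $\Omega(u)$ is needed.
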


\begin{proof}
    Let's start with the lower bound \ref{Lemma_Trucation_1}. Observe that by \eqref{Equation:Rough_Lower_Bound_Condition} and \eqref{Equation:Tech}, we have
    \begin{align}
        I_{w_N}(\mathbb{R}^N) &\geq e^{-\oldconstant{Constant:Rough_Lower_Bound_1}N}I_1(\{\opnorm{\sigma}_2^{2 - p}\min(|\sigma|^{q_1 - 2}) \geq \oldconstant{Constant:Rough_Lower_Bound_1}\}) \notag \\
        &\geq \oldconstant{Constant:Tech_2}e^{-\oldconstant{Constant:Rough_Lower_Bound_1}N}N^{N/2}S_{N - 1}\int_{(u/\oldconstant{Constant:Tech_1})^{1 - p/q_1}}^\infty e^{-4\oldconstant{Constant:Bound}^2\oldconstant{Constant:Exp}pN\big(t^2C^{2(2q_1 - 2)} + t^{2(q_2 - p)/(q_1 - p)}C^{2(2q_2 - 2)}\big)} \label{Equation:Rough_Lower_Bound_1} \\
        &\hspace{1cm} \times \mathbb{P}\Big(\big\{t\min((\sqrt{N}|g|/\|g\|_2)^{q_1 - 2}) \geq \oldconstant{Constant:Rough_Lower_Bound_1}\big\} \cap \bigcap_{i = 1}^N \{C^{-1} \leq |g_i| \leq C\}\Big) dt. \notag 
    \end{align}
    On the event $\bigcap_{i = 1}^N \{C^{-1} \leq |g_i| \leq C\}$, 
    \[\{t\min((\sqrt{N}|g|/\|g\|_2)^{q_1 - 2}) \geq \oldconstant{Constant:Rough_Lower_Bound_1}\} \supseteq \{t \geq \oldconstant{Constant:Rough_Lower_Bound_1}C^{2(q_1 - 2)}\}.\]
    Moreover, if $t \leq C^\alpha$ for some $\alpha > 0$, the exponent in \eqref{Equation:Rough_Lower_Bound_1} can be bounded by
    \begin{align*}
        t^2C^{2(2q_1 - 2)} + t^{2(q_2 - p)/(q_1 - p)}C^{2(2q_2 - 2)} &\leq C^{2\alpha + 2(2q_1 - 2)} + C^{2\alpha(q_2 - p)/(q_1 - p) + 2(2q_2 - 2)} \\
        &\leq 2C^{2\alpha(q_2 - p)/(q_1 - p) + 2(2q_2 - 2)}.
    \end{align*}
    Therefore, by choosing $\alpha > 2(q_1 - 2)$, we have 
    \begin{align*}
        I_{w_N}(\mathbb{R}^N) &\geq \oldconstant{Constant:Tech_2}e^{-\oldconstant{Constant:Rough_Lower_Bound_1}N}N^{N/2}S_{N - 1}\Big(C^\alpha - (u/\oldconstant{Constant:Tech_1})^{1 - p/q_1} \vee \oldconstant{Constant:Rough_Lower_Bound_1}C^{2(q_1 - 2)}\Big) \\
        &\hspace{1cm} \times e^{-4\oldconstant{Constant:Bound}^2\oldconstant{Constant:Exp}pNC^{2\alpha(q_2 - p)/(q_1 - p) + 2(2q_2 - 2)}}\mathbb{P}(C^{-1} \leq |g_1| \leq C)^N
    \end{align*}
    and we see \eqref{Equation:Rough_Lower_Bound} holds for some constant $\oldconstant{Constant:Rough_Lower_Bound_2} > 0$.

    We will start the proof of the upper bound \ref{Lemma_Trucation_2} with some preliminary estimation. Note that by condition \ref{Condition:V},
    \[\frac{\opnorm{V'(\sigma)}_2}{\opnorm{\sigma}_2^{p - 1}} \geq \frac{1}{\oldconstant{Constant:Bound}\opnorm{\sigma}_2^{p - 1}} \Big(\frac{1}{N}\sum_{i = 1}^N |\sigma_i|^{2q_2 - 2}\Big)^{1/2} = \frac{\opnorm{\sigma}_{2q_2 - 2}^{q_2 - 1}}{\oldconstant{Constant:Bound}^2\opnorm{\sigma}_2^{p - 1}} \geq \oldconstant{Constant:Bound}^{-1}\opnorm{\sigma}_{2q_2 - 2}^{q_2 - p}.\]
    Therefore, if $\opnorm{\sigma}_{2q_2 - 2}$ is sufficiently large (so that $\opnorm{V'(\sigma)}_2/\opnorm{\sigma}_2^{p - 1} \geq 2\oldconstant{Constant:Rough_Upper_Bound_1}/\oldconstant{Constant:Exp}$), 
    \begin{align}
        \oldconstant{Constant:Exp}\frac{\opnorm{V'(\sigma)}_2^2}{\opnorm{\sigma}_2^{2p - 2}} - \oldconstant{Constant:Rough_Upper_Bound_1}\frac{\opnorm{V'(\sigma)}_2}{\opnorm{\sigma}_2^{p - 1}} \geq \oldconstant{Constant:Exp}\frac{\opnorm{V'(\sigma)}_2^2}{\opnorm{\sigma}_2^{2p - 2}} - \oldconstant{Constant:Rough_Upper_Bound_1} \cdot \frac{\oldconstant{Constant:Exp}}{2\oldconstant{Constant:Rough_Upper_Bound_1}}\frac{\opnorm{V'(\sigma)}_2^2}{\opnorm{\sigma}_2^{2p - 2}} \geq \frac{\oldconstant{Constant:Exp}}{2\oldconstant{Constant:Bound}^2}\opnorm{\sigma}_{2q_2 - 2}^{2(q_2 - p)}. \label{Equation:Rough_Upper_Bound_1}
    \end{align}
    Moreover, we see that by condition \ref{Condition:V}, if $\opnorm{\sigma}_{2q_2 - 2}$ is sufficiently large,
    \begin{align}
        N^{-1}|\lan{\sigma, v(\sigma)}| &\leq \oldconstant{Constant:Bound}(\oldconstant{Constant:A_1} + \oldconstant{Constant:A_2})(\opnorm{\sigma}_{q_1}^{q_1} + \opnorm{\sigma}_{q_2}^{q_2}) \notag \\
        &\leq \oldconstant{Constant:Bound}(\oldconstant{Constant:A_1} + \oldconstant{Constant:A_2})(\opnorm{\sigma}_{2q_2 - 2}^{q_1} + \opnorm{\sigma}_{2q_2 - 2}^{q_2}) \leq \exp\Big( \frac{\oldconstant{Constant:Exp}}{4\oldconstant{Constant:Bound}^2}\opnorm{\sigma}_{2q_2 - 2}^{2(q_2 - p)}\Big). \label{Equation:Rough_Upper_Bound_2}
    \end{align}
    Combining \eqref{Equation:Rough_Estimate_Exp}, \eqref{Equation:Rough_Upper_Bound_1} and \eqref{Equation:Rough_Upper_Bound_2}, we see that for $M > 0$ large enough, the integral
    \begin{align}
        &I_{w_N}(\{\opnorm{\sigma}_{2q_2 - 2} \geq M\})  \notag \\
        &\leq e^{\oldconstant{Constant:Rough_Upper_Bound_1}N} \int_{\{\opnorm{\sigma}_{2q_2 - 2} \geq M\}} \opnorm{\sigma}_2^{-(N + p)} e^{-(\oldconstant{Constant:Exp}/(4\oldconstant{Constant:Bound}^2))\opnorm{\sigma}_{2q_2 - 2}^{2(q_2 - p)}N} d\sigma \notag \\
        &= \frac{e^{\oldconstant{Constant:Rough_Upper_Bound_1}N}N^{N/2}S_{N - 1}}{q_2 - p} \mathbb{E}\Big[\int_{(M/\opnorm{\sqrt{N}\omega}_{2q_2 - 2})^{q_2 - p}}^\infty t^{-(p + 2)/(q_2 - p)}e^{-(\oldconstant{Constant:Exp}/(4\oldconstant{Constant:Bound}^2))Nt^2\opnorm{\sqrt{N}\omega}_{2q_2 - 2}^{2(q_2 - p)}} dt\Big], \label{Equation:Rough_Upper_Bound_3}
    \end{align}
    where \eqref{Equation:Rough_Upper_Bound_3} holds by change of variables $\sigma = \sqrt{N}t^{1/(q_2 - p)}\omega$ for $\omega \in \mathbb{S}^{N - 1}$ (the expected value is taken over the uniform measure on $\mathbb{S}^{N - 1}$). Note that since $\opnorm{\sqrt{N}\omega}_q \leq N^{1/2 - 1/q}$ when $q \geq 2$,
    \[t^{-1} \leq M^{-1}\opnorm{\sqrt{N}\omega}_{2q_2 - 2} \leq M^{-1}N^{(q_2 - 2)/(2q_2 - 2)} \leq N^{(q_2 - 2)/(2q_2 - 2)}\]
    for $M \geq 1$. Therefore, up to a term of order $e^{O_{N \to \infty}(N)}$, \eqref{Equation:Rough_Upper_Bound_3} can be bound by
    \begin{align}
        &\mathbb{E}\Big[\int_{(M/\opnorm{\sqrt{N}\omega}_{2q_2 - 2})^{q_2 - p}}^\infty e^{-(\oldconstant{Constant:Exp}/(4\oldconstant{Constant:Bound}^2))Nt^2\opnorm{\sqrt{N}\omega}_{2q_2 - 2}^{2(q_2 - p)}} dt\Big] \notag \\
        &\leq \mathbb{E}\Bigl[\frac{e^{-(\oldconstant{Constant:Exp}/(4\oldconstant{Constant:Bound}^2))NM^{2(q_2 - p)}}}{2(\oldconstant{Constant:Exp}/(4\oldconstant{Constant:Bound}^2)N\opnorm{\sqrt{N}\omega}_{2q_2 - 2}^{2(q_2 - p)})(M/\opnorm{\sqrt{N}\omega}_{2q_2 - 2})^{q_2 - p}}\Bigr] \leq e^{-(\oldconstant{Constant:Exp}/(4\oldconstant{Constant:Bound}^2))NM^{2(q_2 - p)}}, \label{Equation:Rough_Upper_Bound_4}
    \end{align}
    where the first inequality holds by $\int_c^\infty e^{-ax^2}dx \leq e^{-ac^2}/(2ac)$ and the second holds for large $N$ since $\opnorm{\sqrt{N}\omega}_q \geq 1$ for $q \geq 2$. We now see \eqref{Equation:Rough_Upper_Bound} follows from combining \eqref{Equation:Rough_Upper_Bound_3} and \eqref{Equation:Rough_Upper_Bound_4}.

    Last, for \ref{Lemma_Trucation_3}, we see that
    \begin{align*}
        \frac{I_{w_N}(\{\opnorm{\sigma}_{2q_2 - 2} \geq M\})}{I_{w_N}(\{\opnorm{\sigma}_{2q_2 - 2} < M\})} &= \frac{I_{w_N}(\{\opnorm{\sigma}_{2q_2 - 2} \geq M\})}{I_{w_N}(\mathbb{R}^N) - I_{w_N}(\{\opnorm{\sigma}_{2q_2 - 2} \geq M\})} \leq \frac{e^{\oldconstant{Constant:Rough_Upper_Bound_2}N(1 - M^{2(q_2 - p)})}}{e^{-\oldconstant{Constant:Rough_Lower_Bound_2}N} - e^{\oldconstant{Constant:Rough_Upper_Bound_2}N(1 - M^{2(q_2 - p)})}} < \epsilon^N
    \end{align*}
    for large $M$. This directly implies our assertion.
\end{proof}

\begin{proof}[\bf Proof of Proposition \ref{Proposition:q_Truncation}]
    Recall $M_{N-1}$ from \eqref{def:M}. Let $w_N(\sigma) = \mathbb{E}|\det M_{N - 1}(\sigma)|$. By the virtue of Lemma \ref{Lemma:General_Truncation}, it suffices to show that conditions \eqref{Equation:Rough_Lower_Bound_Condition} and \eqref{Equation:Rough_Upper_Bound_Condition} hold. We will check \eqref{Equation:Rough_Lower_Bound_Condition} first. Note that for all $\sigma ,y \in \mathbb{R}^N$,
    \begin{align}
        &y^\top\Big(\oldconstant{Constant:A_1}\text{diag}\Big(\dfrac{V''(\sigma)}{\opnorm{\sigma}_2^{p - 2}}\Big) - \dfrac{v(\sigma) v(\sigma)^\top}{\lan{\sigma, v(\sigma)}\opnorm{\sigma}_2^{p - 2}}\Big) y \notag \\
        &= \opnorm{\sigma}_2^{2 - p} \Big(\oldconstant{Constant:A_1}\sum_{i = 1}^N V''(\sigma_i)|y_i|^2 - \frac{\lan{v(\sigma), y}^2}{\lan{\sigma, v(\sigma)}}\Big) \notag \\
        &\geq \opnorm{\sigma}_2^{2 - p} \Big(\oldconstant{Constant:A_1}\sum_{i = 1}^N V''(\sigma_i)|y_i|^2 - \sum_{i = 1}^N \sigma_i^{-1}v(\sigma_i)|y_i|^2\Big) \label{Equation:q_Truncation_1} \\
        &= \oldconstant{Constant:A_2}\opnorm{\sigma}_2^{2 - p} \sum_{i = 1}^N \sigma_i^{-1}V'(\sigma_i)|y_i|^2 = \oldconstant{Constant:A_2}\opnorm{\sigma}_2^{2 - p}\min\{\sigma^{-1}V'(\sigma)\}\|y\|_2^2, \label{Equation:q_Truncation_2}
    \end{align}
    where \eqref{Equation:q_Truncation_1} holds by the Cauchy-Schwarz inequality and noting that $\lan{\sigma, v(\sigma)} > 0$, $V'(t) > 0$ (we understand the notation $x^{-1}v(x) = 0$ if $x = 0$); \eqref{Equation:q_Truncation_2} holds by the definition of $v$ (see \eqref{Equation:Vector}). Recall the notation that $\lambda_1(A)$ stands for the smallest eigenvalue for the symmetric matrix $A$. From \eqref{Equation:q_Truncation_2}, the Cauchy interlacing inequality (see \eqref{Equation:Cauchy_Interlacing}) and condition \ref{Condition:V}, we have
    \begin{align}
        \lambda_1(M_{N - 1}(\sigma)) &= \lambda_1\Big[B(\sigma)^\top\Big(\dfrac{V''(\sigma)}{\opnorm{\sigma}_2^{p - 2}}\Big) - \dfrac{v(\sigma) v(\sigma)^\top}{\lan{\sigma, v(\sigma)}\opnorm{\sigma}_2^{p - 2}}\Big)B(\sigma)\Big] \notag \\
        &\geq \oldconstant{Constant:A_2}\opnorm{\sigma}_2^{2 - p}\min\{\sigma^{-1}V'(\sigma)\} \geq \oldconstant{Constant:Bound}^{-1}\oldconstant{Constant:A_2}\opnorm{\sigma}_2^{2 - p}\min(|\sigma|^{q_1 - 2}). \label{Equation:q_Truncation_3}
    \end{align}
    Therefore, we have the lower bound
    \begin{align*}
        \mathbb{E}|\det M_{N - 1}(\sigma)| &\geq \mathbbm{1}_{\{\oldconstant{Constant:Bound}^{-1}\oldconstant{Constant:A_2}\opnorm{\sigma}_2^{2 - p}\min(|\sigma|^{q_1 - 2}) \geq 10\}}\mathbb{E}\left[|\det M_{N - 1}(\sigma)|\mathbbm{1}_{\left\{\|G_{N - 1}\|_{\text{op}} \leq 9\right\}}\right] \notag \\
        &\geq (10 - 9)^N\mathbbm{1}_{\{\oldconstant{Constant:Bound}^{-1}\oldconstant{Constant:A_2}\opnorm{\sigma}_2^{2 - p}\min(|\sigma|^{q_1 - 2}) \geq 10\}}\mathbb{P}(\|G_{N - 1}\|_{\text{op}} \leq 9).
    \end{align*}
    By \eqref{Equation:Tail_of_GOE}, we see then $w_N$ satisfies \eqref{Equation:Rough_Lower_Bound_Condition} with $\oldconstant{Constant:Rough_Lower_Bound_1} = 10\oldconstant{Constant:Bound}\oldconstant{Constant:A_2}^{-1}$ for large $N$. As for \eqref{Equation:Rough_Upper_Bound_Condition}, observe that 
    \begin{align}
        &|\det M_{N - 1}(\sigma)| \notag \\
        &\leq \exp\Bsq{\sum_{i = 1}^{N - 1} \log \Bpar{\Bigl|\lambda_i\Bigl(B(\sigma)^\top\Bigl(\oldconstant{Constant:A_1}\text{diag}\Bigl(\dfrac{V''(\sigma)}{\opnorm{\sigma}_2^{p - 2}}\Bigr) - \dfrac{v(\sigma) v(\sigma)^\top}{\lan{\sigma, v(\sigma)}\opnorm{\sigma}_2^{p - 2}}\Bigr) {B}(\sigma)\Bigr)\Bigr| + \|G_{N - 1}\|_{\text{op}}}} \notag \\
        &\leq \exp\Bigl( N\log \mbox{\rm Tr}\Bigl(B(\sigma)^\top\Bigl(\oldconstant{Constant:A_1}\text{diag}\Bigl(\dfrac{V''(\sigma)}{\opnorm{\sigma}_2^{p - 2}}\Bigr) - \dfrac{v(\sigma) v(\sigma)^\top}{\lan{\sigma, v(\sigma)}\opnorm{\sigma}_2^{p - 2}}\Bigr) {B}(\sigma)\Bigr) + N\|G_{N - 1}\|_{\text{op}}\Bigr) \label{Equation:q_Truncation_4} \\
            &\leq \exp\Bigl(N\log\Bigl(\oldconstant{Constant:A_1}\frac{\opnorm{V''(\sigma)}_1}{\opnorm{\sigma}_2^{p - 2}} - \frac{\opnorm{v(\sigma)}_2^2}{\lan{\sigma, v(\sigma)}\opnorm{\sigma}_2^{p - 2}} + \|G_{N - 1}\|_{\text{op}}\Bigr)\Bigr)  \label{Equation:q_Truncation_5} \\
            &\leq \Big(\oldconstant{Constant:A_1}\opnorm{\sigma}_2^{2 - p}\opnorm{V''(\sigma)}_1 + \|G_{N - 1}\|_{\text{op}}\Big)^N, \label{Equation:q_Truncation_6}
    \end{align}
    where \eqref{Equation:q_Truncation_4} holds by Jensen's inequality and the fact that the eigenvalues are all nonnegative as was guaranteed by \eqref{Equation:q_Truncation_3}; \eqref{Equation:q_Truncation_5} holds by the Cauchy interlacing inequality \eqref{Equation:Cauchy_Interlacing}; and \eqref{Equation:q_Truncation_6} holds by removing the negative term. We see that
    \begin{align}
        \oldconstant{Constant:A_1}\opnorm{\sigma}_2^{2 - p}\opnorm{V''(\sigma)}_1 &\leq \oldconstant{Constant:Bound}\oldconstant{Constant:A_1}\opnorm{\sigma}_2^{2 - p}\big(\opnorm{\sigma}_{q_1 - 2}^{q_1 - 2} + \opnorm{\sigma}_{q_2 - 2}^{q_2 - 2}\big) \label{Equation:q_Truncation_7} \\
        &\leq \oldconstant{Constant:Bound}\oldconstant{Constant:A_1}\opnorm{\sigma}_2^{1 - p}\big(\opnorm{\sigma}_{2q_1 - 2}^{q_1 - 1} + \opnorm{\sigma}_{2q_2 - 2}^{q_2 - 1}\big) \label{Equation:q_Truncation_8} \\
        &\leq \exp\Big[\sqrt{2}\oldconstant{Constant:Bound}^2\oldconstant{Constant:A_1}\Bpar{1 + \oldconstant{Constant:Bound}^{-1}\opnorm{\sigma}_2^{1 - p}\big(\opnorm{\sigma}_{2q_1 - 2}^{2q_1 - 2} + \opnorm{\sigma}_{2q_2 - 2}^{2q_2 - 2}\big)^{1/2}}\Big] \label{Equation:q_Truncation_9} \\
        &\leq \exp\Big[\sqrt{2}\oldconstant{Constant:Bound}^2\oldconstant{Constant:A_1}\Big(1 + \frac{\opnorm{V'(\sigma)}_2}{\opnorm{\sigma}_2^{p - 1}}\Big)\Big], \label{Equation:q_Truncation_10}
    \end{align}
    where \eqref{Equation:q_Truncation_7} holds by condition \ref{Condition:V}; \eqref{Equation:q_Truncation_8} holds by Jensen's inequality (recall $q_2 \geq q_1 > p \geq 2$) $\opnorm{\sigma}_2 \leq \opnorm{\sigma}_{2q_1 - 2} \leq \opnorm{\sigma}_{2q_2 - 2}$; \eqref{Equation:q_Truncation_9} holds by the elementary inequality $(a + b)^2 \leq 2(a^2 + b^2)$ and $x \leq e^x$ for all $x \geq 0$; last, \eqref{Equation:q_Truncation_10} again holds by condition \ref{Condition:V}. Therefore, we see the first term in \eqref{Equation:q_Truncation_6} is dominated by the desired function in \eqref{Equation:Rough_Upper_Bound_Condition}. By taking expectation on \eqref{Equation:q_Truncation_6}, we may conclude that $w_N$ satisfies \eqref{Equation:Rough_Upper_Bound_Condition} by \eqref{Equation:Moment_of_GOE}. Since both \eqref{Equation:Rough_Lower_Bound_Condition} and \eqref{Equation:Rough_Upper_Bound_Condition} are satisfied, our proposition holds by Lemma \ref{Lemma:General_Truncation}.
\end{proof}

\section{Establishing the Variational Formula}\label{Section:Variational_Formula}

\noindent With the estimations in Proposition \ref{Proposition:Main_Truncation} and Proposition \ref{Proposition:q_Truncation}, we can now analyze the truncated version of \eqref{Equation:Key_Formula_2},
\begin{equation}
    \frac{1}{\sqrt{p}}\Bigl(\frac{p - 1}{2\pi}\Bigr)^{N/2}\int_{\Omega(u)} \mathbbm{1}_{\{\opnorm{\sigma}_{2q_2 - 2} \leq M\}} \frac{|\lan{\sigma, v(\sigma)}|}{N\opnorm{\sigma}_2^{N + p}} e^{-\oldconstant{Constant:Exp}Nf_N(\sigma)}\mathbb{E}\bigl[|\det Q_N^K(\sigma)|\bigr] \,d\sigma.\label{Equation:Kac-Rice-Final-Formula_2}
\end{equation}
Recall the space $\mathfrak{D}(u)$, the function $g$, and the functional $\varphi$ defined in Section \ref{sec:introduction}. We begin by introducing their modifications.

\begin{definition}\label{def:functions}
$\,$
    \begin{enumerate}[label = (\roman*)]
        \item For all $u \geq 0$, define the set
        \begin{equation}\label{Equation:F(u)}
            \mathfrak{F}(u) = \big\{(t, \mu) \in [0, \infty) \times \mathcal{P}(\mathbb{R}) \,\big\vert\, \mathbb{E}_\mu\big[p^{-1}tXV'(tX) - V(tX)\big] \geq u\big\}.
        \end{equation}
        
        \item Fix $K\in [0,\infty].$ For all $t \geq 0$, define the auxiliary function $g_{t, K} \colon \mathbb{R} \to \mathbb{R}$ by
        $g_{t, K}(x) = 
            \oldconstant{Constant:A_1}(t^{2 - p}V''(tx) \wedge K)$ when $t>0$ and $g_{t, K}(x)\equiv 0$ when $t=0.$
        Note that $g_{t, \infty} = g_t$ and that for any measure $\mu \in \mathcal{P}(\mathbb{R})$, we have
        \begin{equation}\label{Equation:Support_Control}
            \text{supp}\,((g_{t, K})_\ast\mu) \subseteq g_{t, K}(\mathbb{R}) = \left[0, \oldconstant{Constant:A_1}K\right].
        \end{equation}
        \item Set the mapping $\psi \colon [0, \infty) \times \mathbb{R} \to \mathbb{R}$ by
        \[\psi(t, x) = 
        \begin{cases}
            \oldconstant{Constant:Exp}pt^{2 - 2p}V'(tx)^2, & \mbox{if $t>0$ and $x\in \R$}, \\
            0, & \mbox{if $t=0$ and $x\in \R$}.
        \end{cases}\]
        Note that $\psi(t, \cdot)$ is an even function. Set the functions $\phi_1 \colon [0, \infty) \times \mathcal{P}_{q_2}(\mathbb{R}) \to [0 , \infty)$ and $\phi_2 \colon [0, \infty) \times \mathcal{P}(\mathbb{R}) \to [0, \infty]$  by
        \begin{align*}
            \phi_1(t, \mu) &= \oldconstant{Constant:Exp}(p - 1)t^{2 - 2p}\mathbb{E}_\mu[XV'(tX)]^2
        \end{align*}
        and
        \begin{align*}
            \phi_2(t, \mu) &= \begin{cases}
                \oldconstant{Constant:Exp}pt^{2 - 2p}\mathbb{E}_\mu[V'(tX)^2] = \mathbb{E}_\mu[\psi(t, X)], & \mbox{if $(t,\mu)\in [0,\infty)\times\mathcal{P}_{2q_2 - 2}(\mathbb{R})$}, \\
                \infty, & \text{ otherwise}.
            \end{cases}
        \end{align*}
        Also, when $0<K<\infty,$ we set $\phi_{3, K} \colon [0, \infty) \times \mathcal{P}(\mathbb{R}) \to \mathbb{R}$ by
        \begin{align*}
            \phi_{3, K}(t, \mu) &= \int_{\mathbb{R}} \log|\lambda| \big((g_{t, K})_\ast\mu \boxplus \mu_{\text{sc}}\big)(d\lambda)
        \end{align*} 
        and when $K=\infty$, we restrict to the domain $[0, \infty) \times \bigcup_{s > 0} \mathcal{P}_s(\mathbb{R})$ and set
        \[\phi_{3, \infty}(t, \mu) = \int_{\mathbb{R}} \log|\lambda| \big((g_t)_\ast\mu \boxplus \mu_{\text{sc}}\big)(d\lambda).\]
        Finally, set $\varphi_K = \phi_1 - \phi_2 + \phi_{3, K}$ and
        \[\mathcal{I}_K(t, \mu) = \frac{1}{2}\log(p - 1) + \frac{1}{2} + \varphi_K(t, \mu) - \text{KL}(\mu \,\| \,\mu_{\text{Norm}}).\]
    \end{enumerate}
\end{definition}

Recall the definitions of $\mathcal{I}(t,\mu)$ and $\mathcal{I}(\mu)$ from \eqref{Equation:Funtional_with_t} and \eqref{Equation:Funtional_no_t}. By performing change of variable $\nu = S_t(\mu)$, we can rewrite
\begin{align}
    &\sup\{\,\mathcal{I}_\infty(t, \mu) \mid m_2(\mu) = 1, \mu \in \mathcal{P}_{2q_2 - 2}(\mathbb{R}), (t, \mu) \in \mathfrak{F}(u)\} \notag \\
    &= \sup\{\,\mathcal{I}_\infty(t, S_{1/t}(\nu)) \mid m_2(\nu) = t^2, \nu \in \mathcal{P}_{2q_2 - 2}(\mathbb{R}) \cap \mathfrak{D}(u)\} \notag \\
    &= \sup\{\,\mathcal{I}(\nu) \mid \nu \in \mathcal{P}_{2q_2 - 2}(\mathbb{R}) \cap \mathfrak{D}(u)\}. \label{Equation:Reduction_t}
\end{align}

\subsection{Auxiliary Lemmas}

\noindent This subsection gathers some properties of the auxiliary functions defined above.

\begin{proposition}[Properties of $\phi_1$, $\phi_2$ and $\phi_{3, K}$]\label{Proposition:Continuity_of_Exp}\
    \begin{enumerate}[label = (\roman*)]
        \item\label{Item:Continuity_1} If $V$ satisfies condition \ref{Condition:V}, then the mapping $\phi_1$ is well-defined and is continuous on $([0, \infty) \times \mathcal{P}_s(\mathbb{R}), \mathscr{T} \otimes \mathscr{W}_s)$ for all $s \geq q_2$.
        \item\label{Item:Continuity_2} If $V$ satisfies conditions \ref{Condition:V}, then $x \mapsto \psi(t, x)$ is nonincreasing  on $(-\infty, 0]$ and nondecreasing on $[0, \infty)$ for all $t \in [0, \infty)$.
        \item\label{Item:Continuity_3} If $V$ satisfies conditions \ref{Condition:V''}, then $t \mapsto \psi(t, x)$ is increasing on $[0, \infty)$ for all $x \in \mathbb{R}$.
        \item\label{Item:Continuity_4} For $K \neq \infty$, the mapping $\phi_{3, K}$ is well-defined and is continuous on $([0, \infty) \times \mathcal{P}(\mathbb{R}), \mathscr{T} \otimes \mathscr{T}_{\text{weak}})$. Moreover, for all $(t, \mu) \in [0, \infty) \times \mathcal{P}(\mathbb{R})$,
        \begin{equation}\label{Equation:Bounded_phi_3}
            -4\oldconstant{Constant:Free_Convolution_Infty} \leq \phi_{3, K}(t, \mu) \leq 4\oldconstant{Constant:Free_Convolution_Infty} + 2\oldconstant{Constant:A_1}K + 4.
        \end{equation}
        \item\label{Item:Continuity_5} If $V$ satisfies condition \ref{Condition:V}, then $\phi_{3, \infty}$ is well-defined on $[0, \infty) \times \mathcal{P}_s(\mathbb{R})$ for all $s > 0$.
    \end{enumerate}
\end{proposition}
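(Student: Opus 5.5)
I will prove the five items separately; all except \ref{Item:Continuity_4} are elementary consequences of Assumption \ref{ass1}.

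For \ref{Item:Continuity_1}, note from \ref{Condition:V} that $|xV'(tx)| = t^{-1}|tx\,V'(tx)| \le \oldconstant{Constant:Bound} t^{-1}(|tx|^{q_1}+|tx|^{q_2})$. Hence $\mathbb{E}_\mu[XV'(tX)]$ is finite on $\mathcal{P}_s$ for $s\ge q_2$, and it is continuous in $t>0$ locally uniformly. Joint continuity in $(t,\mu)$ then follows in two steps. First, on compacts in $t$ the integrand grows at most like $|x|^{q_2}$, uniformly in $t$. Second, $\mathscr W_s$-convergence with $s\ge q_2$ gives uniform integrability of $|x|^{q_2}$, so the expectations converge.

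The case $t\to0$ must be checked against the factor $t^{2-2p}$. We have $t^{2-2p}(\mathbb{E}[XV'(tX)])^2 \lesssim t^{2-2p}t^{2q_1-2}m_{q_1}^2+\dots$, which tends to $0$ since $q_1>p$. This matches $\phi_1(0,\cdot)=0$.

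For \ref{Item:Continuity_2}, $V$ is even, so $V'$ is odd. Condition \ref{Condition:V} gives $xV'(x)\ge0$, so $V'\ge0$ on $[0,\infty)$. It also gives $x^2V''(x)\ge0$, so $V'$ is nondecreasing on $[0,\infty)$. Therefore $V'(tx)^2$ is nondecreasing in $|x|$, and evenness gives the claimed monotonicity on $(-\infty,0]$.

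For \ref{Item:Continuity_3}, fix $x\ne0$ (for $x=0$ the function is identically $0$) and write $h(t)=t^{1-p}V'(tx)$ for $t>0$. Then
\[
h'(t)=t^{-p}\big[(tx)V''(tx)-(p-1)V'(tx)\big].
\]
For $x>0$, condition \ref{Condition:V''} gives $(tx)V''(tx)\ge(q-1)V'(tx)$, so $h'(t)\ge t^{-p}(q-p)V'(tx)>0$, using $V'>0$ on $(0,\infty)$ by \ref{Condition:V}. The case $x<0$ follows by oddness of $V'$. Hence $h(t)^2$ increases in $t$, and $\psi(t,x)\to0$ as $t\downarrow0$ because $V'(y)\lesssim |y|^{q_1-1}$ near $0$.

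For \ref{Item:Continuity_5}, the map $g_t$ grows polynomially by \ref{Condition:V}. So $(g_t)_\ast\mu$ has a finite $s/(q_2-2)$ moment, and so does its free convolution with $\mu_{\mathrm{sc}}$ (fractional moments are subadditive under $\boxplus$, or one can argue via matrix models). This makes $\log_+|\lambda|$ integrable. For the negative part we use that $\nu\boxplus\mu_{\mathrm{sc}}$ has density bounded by a universal constant, $\le 1/\pi$ up to normalization, a standard fact in Appendix \ref{Section:Free_Convolution}. Therefore $\int\log_-|\lambda|$ is bounded uniformly.

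Item \ref{Item:Continuity_4} is the main step. Well-definedness and the bounds \eqref{Equation:Bounded_phi_3} come from two facts. First, the uniform density bound controls $\int\log_-|\lambda|$ by a constant, which gives $-4\oldconstant{Constant:Free_Convolution_Infty}$. Second, by \eqref{Equation:Support_Control}, $\mathrm{supp}((g_{t,K})_\ast\mu)\subseteq[0,\oldconstant{Constant:A_1}K]$, so the free convolution is supported in $[-2,\oldconstant{Constant:A_1}K+2]$. This gives $\log|\lambda|\le \oldconstant{Constant:A_1}K+2$, which lies within the stated upper bound.

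For continuity, $(t,x)\mapsto g_{t,K}(x)$ is bounded and continuous for $t>0$. At $t=0$ it is also continuous: for bounded $|x|$ we have $g_{t,K}(x)\to0$, since $t^{2-p}V''(tx)\lesssim t^{q_1-p}|x|^{q_1-2}+\dots$. Tightness of $\mu$ handles large $|x|$. So $(t_n,\mu_n)\to(t,\mu)$ implies $(g_{t_n,K})_\ast\mu_n\to(g_{t,K})_\ast\mu$ weakly.

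Free convolution with $\mu_{\mathrm{sc}}$ is weakly continuous, and all these measures are supported in a fixed compact set. The only obstacle is the singularity of $\log$ at $0$. I handle it by splitting $\log|\lambda|=\log_\eta(\lambda)+(\log|\lambda|-\log_\eta(\lambda))$. The first term is bounded and continuous on the common compact support. The second is bounded by $C\int_{-1}^{1}\log(1+\eta^2/\lambda^2)\,d\lambda\to0$ as $\eta\to0$, uniformly in the measures, thanks to the uniform density bound. This gives continuity in $\mathscr T\otimes\mathscr T_{\mathrm{weak}}$.
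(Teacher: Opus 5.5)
Your proposal is correct and follows essentially the same route as the paper. It uses growth bounds from \ref{Condition:V} plus uniform integrability for $\phi_1$, derivative computations for the monotonicity of $\psi$, the split $\log|\lambda| = \log_\eta(\lambda) + (\log|\lambda| - \log_\eta(\lambda))$ controlled by the uniform density bound and the common compact support for $\phi_{3,K}$, and moment bounds under $\boxplus$ for $\phi_{3,\infty}$. You are slightly more careful than the paper in two places: you check joint continuity of $(t,x)\mapsto g_{t,K}(x)$ at $t=0$ via tightness, and you invoke \ref{Condition:V} to get $V'>0$ on $(0,\infty)$, which the sign argument in \ref{Item:Continuity_3} needs in addition to \ref{Condition:V''}.
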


\begin{proof}
    For \ref{Item:Continuity_1}, observe that by \ref{Condition:V}, the integral
    \begin{equation}\label{Equation:Property_Phi_1}
        0 \leq \int_{\mathbb{R}} txV'(tx)\mu(dx) \leq \int_{\mathbb{R}} \oldconstant{Constant:Bound}(|tx|^{q_1} + |tx|^{q_2})\mu(dx) = \oldconstant{Constant:Bound}\big(t^{q_1}m_{q_1}(\mu) + t^{q_2}m_{q_2}(\mu)\big)
    \end{equation}
    is finite on $\mathcal{P}_s(\mathbb{R})$ for $s \geq q_2$. To show continuity, let $(t_n, \mu_n) \to (t_\infty, \mu_\infty)$ in $([0, \infty) \times \mathcal{P}_s(\mathbb{R}), \mathscr{T} \otimes \mathscr{W}_s)$. First we consider the case $t_\infty > 0$ and assume $t_n > 0$ of all $n \in \mathbb{N}$ without loss of generality. Set $f(x) = xV'(x)$. By the continuous mapping theorem (see \cite[Theorems 2.7 and 2.8]{billingsley2013convergence}), we have $f_\ast S_{t_n}(\mu_n) \to f_\ast S_{t_\infty}(\mu_\infty)$ in $(\mathcal{P}_s(\mathbb{R}), \mathscr{T}_{\text{weak}})$. Moreover, from the uniform integrability  \eqref{Equation:Property_Phi_1}, we conclude that 
    \[\phi_1(t_n, \mu_n) = \oldconstant{Constant:Exp}(p - 1)t_n^{-2p}m_1(f_\ast S_{t_n}(\mu_n))^2 \to \oldconstant{Constant:Exp}(p - 1)t_\infty^{-2p}m_1(f_\ast S_{t_\infty}(\mu_\infty))^2 = \phi_1(t_\infty, \mu_\infty).\]
Next, for the case $t_\infty = 0$, we see that by \eqref{Equation:Property_Phi_1},
    \[0 \leq \phi_1(t_n, \mu_n) \leq \oldconstant{Constant:Bound}^2\oldconstant{Constant:Exp}(p - 1)t^{2(q_1 - p)}\big(m_{q_1}(\mu_n) + t_n^{q_2 - q_1}m_{q_2}(\mu_n)\big)^2 \to 0=\phi_1(t_\infty, \mu_\infty)\]
   since the $q_2$-th moment of $\mu_n$ are uniformly bounded by the $\mathscr{W}_s$-convergence. Hence, we have the desired continuity.
    
    For \ref{Item:Continuity_2}, note the assumption \ref{Condition:V} implies that $V'$ and $V''$ are both nonnegative for $x \geq 0$. Consequently,
    \[\frac{\partial \psi(t, x)}{\partial x} = \oldconstant{Constant:Exp}pt^{2 - 2p}\frac{\partial }{\partial x} \bigl(V'(tx)^2\bigr) = 2\oldconstant{Constant:Exp}pt^{3 - 2p}V'(tx)V''(tx) \geq 0.\]
    Therefore, $x \mapsto \psi(t, x)$ is nondecreasing on $[0, \infty)$. Since $\psi(t,\cdot)$ is even, it follows that  $\psi(t,\cdot)$ is decreasing on $(-\infty, 0]$.
 As for \ref{Item:Continuity_3}, observe that from \eqref{Equation:V'}, 
    \[\frac{\partial \psi(t, x)}{\partial t} = 2\oldconstant{Constant:Exp}pt^{1 - 2p}\big((1 - p)V'(tx)^2 + txV'(tx)V''(tx)\big) \geq 2\oldconstant{Constant:Exp}(q - p)pt^{1 - 2p}V'(tx)^2 \geq 0.\]
 Therefore, $t \mapsto \psi(t, x)$ is nondecreasing.
    
    For the finiteness assertion in \ref{Item:Continuity_4}, note that from \eqref{Equation:Support_Control} and Theorem \ref{Theorem:Free_Convolution_Semi_Circle}, $(g_{t, K})_\ast \mu \boxplus \mu_{\text{sc}}$ for $K<\infty$ has a bounded density and its support is contained in the interval $[-\oldconstant{Constant:A_1}K - 2,\oldconstant{Constant:A_1}K + 2]$, which implies that $\phi_{3,K}$ is finite. To show continuity, let $(t_n, \mu_n) \to (t_\infty, \mu_\infty)$ in $([0, \infty) \times \mathcal{P}(\mathbb{R}), \mathscr{T} \otimes \mathscr{T}_{\text{weak}})$ and denote the measures $\nu_n = (g_{t_n, K})_\ast\mu_n \boxplus \mu_{\text{sc}}$ for $n \in \mathbb{N} \cup \{\infty\}$. Note that the difference
    \begin{align}
        \Big|\int_{\mathbb{R}} \log|\lambda| (\nu_n - \nu_\infty)(d\lambda)\Big| &\leq \int_{\mathbb{R}} \bpar{\log_\eta(\lambda) + \log|\lambda|} (\nu_n - \nu_\infty)(d\lambda) + \Big|\int_{\mathbb{R}} \log_\eta(\lambda) (\nu_n - \nu_\infty)(d\lambda)\Big| \notag \\
        &\leq \int_{\mathbb{R}} \dfrac{1}{2}\log\Bpar{1 + \frac{\eta^2}{\lambda^2}}(f_{\nu_n}(\lambda) + f_{\nu_\infty}(\lambda))\,d\lambda + \dfrac{1}{2\eta}W_1(\nu_n, \nu_\infty). \label{Equation:Proposition:Continuity_of_Exp_1}
    \end{align}
    Here, for the first term, we have
    \[\int_{\mathbb{R}} \log\Bpar{1 + \frac{\eta^2}{\lambda^2}}(f_{\nu_n}(\lambda) + f_{\nu_\infty}(\lambda))\,d\lambda \leq 2\oldconstant{Constant:Free_Convolution_Infty}\int_{\{|\lambda| \leq \oldconstant{Constant:A_1}K + 2\}} \log\Bpar{1 + \frac{\eta^2}{\lambda^2}}\,d\lambda,\]
    which converges to $0$ as $\eta \to 0$ by dominated convergence theorem. For the second term, we see by Theorem \ref{Theorem:Triangle_Equation_Free_Convolution} and \cite[Theorem 11.3.3]{dudley2018real} that
    \begin{align*}
        W_1(\nu_n, \nu_\infty) &\leq (\oldconstant{Constant:A_1}K + 2) \cdot d_{\text{BL}}(\nu_n, \nu_\infty) \\
        &\leq (\oldconstant{Constant:A_1}K + 2) \cdot 2d_{\text{L}}(\nu_n, \nu_\infty) \leq 2(\oldconstant{Constant:A_1}K + 2) \cdot d_{\text{L}}\left((g_{t_n, K})_\ast\mu_n, (g_{t_\infty, K})_\ast\mu_\infty\right),
    \end{align*}
    which converges to $0$ ad $n \to \infty$ since $g_{t, K}$ is continuous and bounded. Therefore,
    \[\Big|\int_{\mathbb{R}} \log|\lambda| (\nu_n - \nu_\infty)(d\lambda)\Big| \leq o_{\eta \to 0}(1) + \frac{o_{n \to \infty}(1)}{\eta},\]
    concluding the desired continuity. The second assertion follows by 
    \[\phi_{3, K}(t, \mu) \leq -2\oldconstant{Constant:Free_Convolution_Infty}\int_{\{|\lambda| \leq 1\}} \log|\lambda| \,d\lambda + \int_{\{|\lambda| \geq 1\}} |\lambda| \big((g_{t, K})_\ast\mu \boxplus \mu_{\text{sc}}\big)(d\lambda) \leq 2(2\oldconstant{Constant:Free_Convolution_Infty} + \oldconstant{Constant:A_1}K + 2)\]
    and
    \[\phi_{3, K}(t, \mu) \geq -\Big|\int_{\{|\lambda| \leq 1\}} \log|\lambda| \big((g_{t, K})_\ast\mu \boxplus \mu_{\text{sc}}\big)(d\lambda)\Big| \geq -4\oldconstant{Constant:Free_Convolution_Infty}.\]

    Finally, for \ref{Item:Continuity_5}, we write
    \[\phi_{3, \infty}(t, \mu) = \int_{\{|\lambda| \leq 1\}} \log |\lambda| \big((g_t)_\ast\mu \boxplus \mu_{\text{sc}}\big)(d\lambda) + \int_{\{|\lambda| > 1\}} \log |\lambda| \big((g_t)_\ast\mu \boxplus \mu_{\text{sc}}\big)(d\lambda).\]
    The first term is finite since $(g_{t, K})_\ast\mu \boxplus \mu_{\text{sc}}$ has bounded density with respect to Lebesgue measure by Theorem \ref{Theorem:Free_Convolution_Semi_Circle}. The second term can be bounded by the $m_{s/(q_2 - 2)}\bpar{(g_{t, K})_\ast\mu \boxplus \mu_{\text{sc}}}$ up to a constant. By \eqref{Equation:Free_Addition_m_2} and condition \ref{Condition:V}, we see that such $(s/(q_2 - 2))$ moment is finite and this completes our proof.
\end{proof}

The following proposition shows that $\varphi_{3, K}$ is a good approximation of $\varphi_{3, \infty}$ on appropriate sets:

\begin{proposition}\label{Proposition:Approximation_of_phi}
    If $V$ satisfies condition \ref{Condition:V}, then for all $d > c > 0$, we have
    \[\lim_{K \to \infty} \sup_{t \in [c, d], m_2(\mu) = 1} |\phi_{3, K}(t, \mu) - \phi_{3, \infty}(t, \mu)| = 0.\]
\end{proposition}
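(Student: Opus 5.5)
Set $\nu_K=(g_{t,K})_\ast\mu\boxplus\mu_{\text{sc}}$ and $\nu_\infty=(g_t)_\ast\mu\boxplus\mu_{\text{sc}}$. The plan is to split $\log|\lambda|$ into three pieces and control each one uniformly over $t\in[c,d]$ and $m_2(\mu)=1$:
\[
\log|\lambda| = \bigl(\log|\lambda|-\log_\eta(\lambda)\bigr) + \log_\eta(\lambda)\mathbbm{1}_{\{|\lambda|<A\}} + \log_\eta(\lambda)\mathbbm{1}_{\{|\lambda|\ge A\}},
\]
with $\eta$ small and $A$ large, in the same way as in Lemma~\ref{Lemma:KSD} and in the proof of Proposition~\ref{Proposition:Continuity_of_Exp}\ref{Item:Continuity_4}.

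\textbf{Step 1 (small $\lambda$).} For any probability measure $\rho$, the free convolution $\rho\boxplus\mu_{\text{sc}}$ has a density bounded by a universal constant $c_{\rm F}$ (Theorem~\ref{Theorem:Free_Convolution_Semi_Circle}; this is the constant used in \eqref{Equation:Bounded_phi_3}). The bound does not depend on $K$, $t$ or $\mu$. Hence
\[
\int\bigl(\log_\eta-\log|\cdot|\bigr)\,d(\nu_K+\nu_\infty)\le 2c_{\rm F}\int_{\mathbb{R}}\tfrac12\log\bigl(1+\eta^2/\lambda^2\bigr)\,d\lambda = O(\eta),
\]
uniformly in $K$, $t$ and $\mu$.

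\textbf{Step 2 (tail $|\lambda|\ge A$).} I need a uniform moment bound for $\nu_K$ and $\nu_\infty$. Take $s=2/(q_2-2)$. Using \eqref{Equation:Free_Addition_m_2}, or subadditivity of moments of free convolution for small $s$, it suffices to bound $m_s((g_t)_\ast\mu)$ together with a constant coming from $\mu_{\text{sc}}$. Truncation at $K$ only decreases $g_t$, so the same bound covers $g_{t,K}$. By \ref{Condition:V},
\[
|g_t(x)|^s\lesssim t^{s(2-p)}\bigl(|tx|^{s(q_1-2)}+|tx|^{s(q_2-2)}\bigr).
\]
Both exponents $s(q_i-2)$ are at most $2$, so with $t\in[c,d]$ and $m_2(\mu)=1$ this is bounded by a constant $C(c,d)$. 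Therefore $\int_{|\lambda|\ge A}\log_\eta\,d(\nu_K+\nu_\infty)\lesssim A^{-s/2}C(c,d)$, uniformly in $K$. If the free-convolution moment inequality is not directly available for $s<1$, I would instead use the bound $m_2(\rho\boxplus\mu_{\text{sc}})=m_2(\rho)+1$ after a further truncation, or work at the level of random matrices.

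\textbf{Step 3 (bulk).} This is the main step. Integrating by parts as in Lemma~\ref{Lemma:KSD} gives
\[
\Bigl|\int_{|\lambda|<A}\log_\eta\,d(\nu_K-\nu_\infty)\Bigr|\lesssim\bigl(\log_\eta(A)+A/\eta\bigr)\,d_{\text{KS}}(\nu_K,\nu_\infty).
\]
The Lévy distance is contracted by free convolution (Theorem~\ref{Theorem:Triangle_Equation_Free_Convolution}). The measure $\nu_\infty$ has a density bounded by $c_{\rm F}$, so $d_{\text{KS}}\le(1+c_{\rm F})\,d_{\text{L}}$. Since $g_{t,K}$ and $g_t$ differ only where $t^{2-p}V''(tx)>K$,
\[
d_{\text{L}}\bigl((g_{t,K})_\ast\mu,(g_t)_\ast\mu\bigr)\le\mu\bigl(\{x: t^{2-p}V''(tx)>K\}\bigr).
\]
By \ref{Condition:V} and Markov's inequality with the second moment (the counting argument of \eqref{Equation:KS_Distance_1}), this is at most a constant times $(d^{q_2-p}/K)^{2/(q_2-2)}$ uniformly, which tends to $0$ as $K\to\infty$.

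To conclude, first choose $\eta$ small, then $A$ large, then $K$ large. The main obstacle I anticipate is the uniform tail bound in Step 2 for $\nu_\infty$, since its support is unbounded. This requires a moment inequality for free convolution with a fractional moment, or the Markov-type control above.
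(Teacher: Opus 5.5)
Your argument is correct, but it handles the near-origin region and, more importantly, the tail differently from the paper.

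\textbf{Near the origin and in the bulk.} The paper does not regularize with $\log_\eta$. It simply cuts out $[0,\delta]$ using the uniform density bound from Theorem~\ref{Theorem:Free_Convolution_Semi_Circle}. It then integrates $\log|\lambda|$ by parts directly on $[\delta,\infty)$ against $F_{\nu_K}-F_{\nu_\infty}$, using the Kolmogorov--Smirnov form of Theorem~\ref{Theorem:Triangle_Equation_Free_Convolution}. Your detour through the L\'evy distance and the density bound is valid but unnecessary.

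\textbf{The tail.} Instead of your fractional-moment bound via \eqref{Equation:Free_Addition_m_2} with $s=2/(q_2-2)$, the paper uses the compact support of $\nu_K$, which lies in $[-\oldconstant{Constant:A_1}K-2,\oldconstant{Constant:A_1}K+2]$. For $\lambda$ beyond this interval,
\[
1-F_{\nu_\infty}(\lambda)=F_{\nu_K}(\lambda)-F_{\nu_\infty}(\lambda)\le d_{\text{KS}}(\nu_K,\nu_\infty)\lesssim K^{-2/(q_2-2)}.
\]
Choosing an auxiliary $K$ comparable to $\lambda$ turns this same Markov estimate into a uniform tail bound $1-F_{\nu_\infty}(\lambda)\lesssim\lambda^{-2/(q_2-2)}$. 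That bound is enough to make the boundary term and $\int^\infty\lambda^{-1}(F_{\nu_K}-F_{\nu_\infty})\,d\lambda$ vanish as $K\to\infty$.

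\textbf{What each route buys.} The paper's route needs only the KS contraction and the support of $\nu_K$. It avoids a moment inequality for free convolution at an exponent $s$ that is below $1$ when $q_2>4$, a regime that needs the noncommutative $L^s$ quasi-norm inequality. Your route is more modular: it is exactly the $\eta$/$A$/$K$ template of Lemma~\ref{Lemma:KSD}. As a by-product it gives a uniform moment bound on $\nu_\infty$, and hence finiteness of $\phi_{3,\infty}$. It does depend on \eqref{Equation:Free_Addition_m_2}, but the paper proves that inequality for all $s>0$ in Appendix~\ref{Section:Free_Convolution}, so the obstacle you flagged in Step 2 is resolved.
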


\begin{proof}
    Note that for all $t \in [c, d]$ and $m_2(\mu) = 1$, we have
    \begin{equation}\label{Equation:Approximation_of_phi_1}
        \phi_{3, K}(t, \mu) - \phi_{3, \infty}(t, \mu) = \int_{\mathbb{R}} \log|\lambda|\left((g_{t, K})_\ast\mu \boxplus \mu_{\text{sc}} - (g_{t, \infty})_\ast\mu \boxplus \mu_{\text{sc}}\right)(d\lambda).
    \end{equation}
    Denote $\nu_K = (g_{t, K})_\ast\mu \boxplus \mu_{\text{sc}}$ and $\nu_\infty = (g_{t, \infty})_\ast\mu \boxplus \mu_{\text{sc}}$. 
    For the portion near origin in \eqref{Equation:Approximation_of_phi_1}, first note that both $\nu_K$ and $\nu_\infty$ admits bounded density by Theorem \ref{Theorem:Free_Convolution_Semi_Circle} and hence
    \[\left|\int_0^\delta \log|\lambda|(\nu_K - \nu_\infty)(d\lambda)\right| \leq \int_0^\delta -\log(\lambda)(\nu_K + \nu_\infty)(d\lambda) \leq 2\oldconstant{Constant:Free_Convolution_Infty}\delta(1 - \log(\delta)),\]
    where we may take $\delta > 0$ small enough so this term is also small. For the tail in \eqref{Equation:Approximation_of_phi_1}, we observe first by Theorem \ref{Theorem:Triangle_Equation_Free_Convolution}, there exist $\newconstant\label{Constant:Main_Lower_Bound} > 0$ such that if $K \geq 1$,
    \begin{align}
        d_{\text{KS}}(\nu_K, \nu_\infty) &\leq d_{\text{KS}}\left((g_{t, K})_\ast\mu, (g_{t, \infty})_\ast\mu\right) \notag \\
        &= \sup\{\mathbb{P}_\mu\left(\oldconstant{Constant:A_1}\left(t^{2 - p}V''(tX) \wedge K\right) \leq x < \oldconstant{Constant:A_1}t^{2 - p}V''(tX)\right) \mid x \in \mathbb{R}\} \notag \\
        &\leq \mathbb{P}_\mu\big(t^{2 - p}V''(tX) \geq K\big) \notag \\
        &\leq \mathbb{P}_\mu\big(\oldconstant{Constant:Bound}t^{2 - p}[(t|X|)^{q_1 - 2} + (t|X|)^{q_2 - 2}] \geq K\big) \label{Equation:Limit_K_1} \\
        &\leq \mathbb{P}_\mu\big(\oldconstant{Constant:Bound}t^{q_1 - p}|X|^{q_1 - 2} \geq K/2\big) + \mathbb{P}\big(\oldconstant{Constant:Bound}t^{q_2 - p}|X|^{q_2 - 2} \geq K/2\big) \notag \\
        &\leq \frac{(\oldconstant{Constant:Bound}d^{q_1 - p})^{2/(q_1 - 2)}}{(K/2)^{2/(q_1 - 2)}} + \frac{(\oldconstant{Constant:Bound}d^{q_2 - p})^{2/(q_2 - 2)}}{(K/2)^{2/(q_2 - 2)}} \label{Equation:Limit_K_2} \\
        &\leq \oldconstant{Constant:Main_Lower_Bound}K^{-2/(q_2 - 2)}, \label{Equation:Limit_K_3}.
    \end{align}
    Here \eqref{Equation:Limit_K_1} holds by condition \ref{Condition:V} and \eqref{Equation:Limit_K_2} follows  by the Markov inequality, $m_2(\mu) = 1$, and $0 < t \leq d$. On the other hand, as before, by \eqref{Equation:Support_Control} and Theorem \ref{Theorem:Free_Convolution_Semi_Circle}, the support of $\nu_K$ is contained in the interval $[-\oldconstant{Constant:A_1}K - 2,\oldconstant{Constant:A_1}K + 2]$. Now, for any $\lambda > 4$, we choose $K >2\oldconstant{Constant:A_1}^{-1}$ so that $\lambda \in (\oldconstant{Constant:A_1}K + 2, 2\oldconstant{Constant:A_1}K)$ and thus, by \eqref{Equation:Limit_K_3},
        \begin{align*}
            1 - F_{\nu_\infty}(\lambda) = F_{\nu_K}(\lambda) - F_{\nu_\infty}(\lambda) \leq \oldconstant{Constant:Main_Lower_Bound}K^{-2/(q_2 - 2)} \leq \oldconstant{Constant:Main_Lower_Bound}\Big(\frac{2\oldconstant{Constant:A_1}}{\lambda}\Big)^{2/(q_2 - 2)}.
        \end{align*}
        Therefore, the tail in \eqref{Equation:Approximation_of_phi_1} satisfies
        \begin{align*}
            \int_\delta^\infty \log|\lambda|(\nu_K - \nu_\infty)(d\lambda) &= \left.\log(\lambda)\left[F_{\nu_K}(\lambda) - F_{\nu_\infty}(\lambda)\right]\right|_{ \delta}^{\infty} - \int_\delta^\infty \dfrac{1}{\lambda}\left[F_{\nu_K}(\lambda) - F_{\nu_\infty}(\lambda)\right]\,d\lambda \\
            &\leq \lim_{\lambda \to \infty} \log(\lambda) \oldconstant{Constant:Main_Lower_Bound}\Big(\frac{2\oldconstant{Constant:A_1}}{\lambda}\Big)^{2/(q_2 - 2)} - \log(\delta)d_{\text{KS}}(\nu_K, \nu_\infty) \\
            & + \int_\delta^{\oldconstant{Constant:A_1}K + 3} \dfrac{1}{\lambda} \cdot \oldconstant{Constant:Main_Lower_Bound}K^{-2/(q_2 - 2)} d\lambda + \int_{\oldconstant{Constant:A_1}K + 3}^\infty \dfrac{1}{\lambda} \cdot \oldconstant{Constant:Main_Lower_Bound}\Big(\frac{2\oldconstant{Constant:A_1}}{\lambda}\Big)^{2/(q_2 - 2)}\,d\lambda,
        \end{align*}
        which obviously converges to $0$ as $K \to \infty$.
\end{proof}

For all $K \in [0, \infty]$ and $\sigma \in \mathbb{R}^N$, define
    \begin{equation}
        s_N^K(\sigma) = \phi_{3, K}\Bpar{1, \frac{1}{N}\sum_{i = 1}^N \delta_{\sigma_i}} = \int_{\mathbb{R}} \log|\lambda| (\mu_{D_N^K(\sigma)} \boxplus \mu_{\text{sc}})(d\lambda).
    \end{equation}

\begin{proposition}\label{Proposition:Application_of_Ben}
    If $K <\infty$, we have
    \[\lim_{N \to \infty} \sup_{\sigma \in \mathbb{R}^N} \Bpar{\dfrac{1}{N}\log \mathbb{E}\left|\det Q_N^K(\sigma)\right| - s_N^K(\sigma)} = 0.\]
\end{proposition}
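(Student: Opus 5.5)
We split the difference into a concentration part and a free-probability part:
\[
\frac1N\log\mathbb{E}|\det Q_N^K(\sigma)| - s_N^K(\sigma) = \Bigl(\frac1N\log\mathbb{E}|\det Q_N^K(\sigma)| - \int_{\mathbb{R}}\log|\lambda|\,\mathbb{E}[\mu_{Q_N^K(\sigma)}](d\lambda)\Bigr) + \int_{\mathbb{R}}\log|\lambda|\,\bigl(\mathbb{E}[\mu_{Q_N^K(\sigma)}] - \mu_{D_N^K(\sigma)}\boxplus\mu_{\text{sc}}\bigr)(d\lambda).
\]
The first bracket tends to $0$ uniformly in $\sigma\in\mathbb{R}^N$ by Theorem \ref{Theorem:Main_Asymptote_Unbounded_GOE}, exactly as in the proof of Proposition \ref{Proposition:Main_Truncation}. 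That theorem uses only the uniform Wegner estimate for GOE plus an arbitrary deterministic symmetric matrix, so the truncation level is irrelevant there. It therefore remains to show that the second integral vanishes uniformly in $\sigma$ for fixed $K<\infty$.

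The key point is that $D_N^K(\sigma)$ is diagonal with entries in $[0,\oldconstant{Constant:A_1}K]$ (by \eqref{Equation:Support_Control}), so $\|D_N^K(\sigma)\|_{\text{op}}\le \oldconstant{Constant:A_1}K$ uniformly in $\sigma$ and $N$. This is the bounded-spectrum regime where the stability of the matrix Dyson equation applies. I would invoke Proposition \ref{Proposition:Stability_GOE} to obtain a bound
\[
\sup_{\sigma}d\bigl(\mathbb{E}[\mu_{D_N^K(\sigma)+G_N}],\mu_{D_N^K(\sigma)}\boxplus\mu_{\text{sc}}\bigr)\le \epsilon_N(K)\to0 ,
\]
stated either in Kolmogorov--Smirnov distance or via Stieltjes transforms on $\{\Im z\ge N^{-c}\}$, which can then be converted to a distance between measures. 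Uniformity in $\sigma$ holds because the error constants depend only on the norm bound $\oldconstant{Constant:A_1}K$.

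To pass from distributional closeness to the $\log$ integral, I would repeat the scheme of Lemma \ref{Lemma:KSD}. First replace $\log|\lambda|$ by $\log_\eta(\lambda)$. The error for $\mathbb{E}[\mu_{Q_N^K(\sigma)}]$ is controlled by the Wegner estimate (Theorem \ref{Theorem:GOE_Wegner_Estimate} and Corollary \ref{corollary:C.8}). The error for $\mu_{D_N^K(\sigma)}\boxplus\mu_{\text{sc}}$ is controlled by its bounded density (Theorem \ref{Theorem:Free_Convolution_Semi_Circle}), which gives $\int\frac12\log(1+\eta^2/\lambda^2)\,d\lambda\to0$, as in the proof of Proposition \ref{Proposition:Continuity_of_Exp}\ref{Item:Continuity_4}. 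The tails $|\lambda|\ge A$ are handled as follows. The free convolution is supported in $[-\oldconstant{Constant:A_1}K-2,\oldconstant{Constant:A_1}K+2]$, and for $\mathbb{E}[\mu_{Q_N^K}]$ we use a moment bound via $\|G_N\|_{\text{op}}$ together with \eqref{Equation:Moment_of_GOE}. On the bulk $|\lambda|<A$, integration by parts gives a bound of order $(\log_\eta A+A/\eta)\,d_{\text{KS}}$. We then let $N\to\infty$ first and $\eta\to0$ afterwards.

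The main obstacle is the middle step: extracting from Proposition \ref{Proposition:Stability_GOE} a quantitative estimate that is genuinely uniform over all $\sigma\in\mathbb{R}^N$ and is in a form (KS or $W_1$ on compacts) usable against $\log_\eta$. If only a Stieltjes transform bound is available, I would test against $\log_\eta$ directly. Using Helffer--Sjöstrand, or the representation $\log_\eta(\lambda)=\log|\lambda-i\eta|$ written as an integral of $\Im$ of the Stieltjes transform over $\Im z\in[\eta,\infty)$ with a large-$|z|$ cutoff, the difference is bounded by $\sup_{\Im z\ge\eta,|z|\le C}|m_N(z)-m_\infty(z)|$, which is uniform in $\sigma$ because only $\|D_N^K\|_{\text{op}}$ enters.
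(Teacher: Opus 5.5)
The step that fails as written is the middle one. Proposition \ref{Proposition:Stability_GOE} is a purely deterministic statement. It compares the measure $\mu_N$ solving the matrix Dyson equation with GOE self-energy $\mathcal{S}[M]=(M^\top+(\text{Tr}\,M)I_N)/N$ to $\overline{\mu}_N=\mu_{A_N}\boxplus\mu_{\text{sc}}$. It says nothing about the random matrix $A_N+G_N$. So it cannot give you $\sup_\sigma d\bigl(\mathbb{E}[\mu_{D_N^K(\sigma)+G_N}],\mu_{D_N^K(\sigma)}\boxplus\mu_{\text{sc}}\bigr)\to0$. Your Stieltjes-transform fallback presupposes the same missing fact, namely $|m_N(z)-m_\infty(z)|\to0$ on $\{\Im z\ge\eta\}$ uniformly in $\sigma$. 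What you are missing is a global law for the \emph{expected} empirical spectral measure of $A_N+G_N$, uniform over $\|A_N\|_{\text{op}}\le\oldconstant{Constant:A_1}K$.

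The paper never needs this as a separate input. It applies Theorem \ref{Theorem:Asymptote_Bounded} with $A_N=D_N^K(\sigma)$. That proof first invokes Theorem \ref{Theorem:Main_Asymptote_Bounded} (Ben Arous--Bourgade--McKenna) to compare $\frac1N\log\mathbb{E}|\det H_N|$ directly with $\int\log|\lambda|\,\mu_N(d\lambda)$; the random-versus-deterministic comparison is buried inside that black box. Only afterwards does it use Proposition \ref{Proposition:Stability_GOE} to replace $\mu_N$ by $\overline{\mu}_N$, controlling $\log$ near $0$ through the bounded densities of these two deterministic measures. Neither $\mathbb{E}[\mu_{Q_N^K(\sigma)}]$ nor Theorem \ref{Theorem:Main_Asymptote_Unbounded_GOE} enters.

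Your architecture is otherwise sound; it is the roadmap \eqref{Equation:Exp_Asy_2}--\eqref{Equation:Exp_Asy_4}. The hole is cheap to fill: once $\log|\lambda|$ is replaced by $\log_\eta$ with $\eta$ fixed, you only need uniform \emph{weak} closeness, with no rate. One way is by contradiction:
\begin{itemize}
\item Suppose $N_k\to\infty$ and $\|A_{N_k}\|_{\text{op}}\le\oldconstant{Constant:A_1}K$ keep $d_{\text{L}}(\mathbb{E}[\mu_{A_{N_k}+G_{N_k}}],\mu_{A_{N_k}}\boxplus\mu_{\text{sc}})\ge\epsilon$.
\item Extract $\mu_{A_{N_k}}\to\nu$ in the compact set $\mathcal{P}([-\oldconstant{Constant:A_1}K,\oldconstant{Constant:A_1}K])$.
\item Asymptotic freeness of a GOE from bounded deterministic matrices gives $\mathbb{E}[\mu_{A_{N_k}+G_{N_k}}]\to\nu\boxplus\mu_{\text{sc}}$.
\item Theorem \ref{Theorem:Triangle_Equation_Free_Convolution} gives $\mu_{A_{N_k}}\boxplus\mu_{\text{sc}}\to\nu\boxplus\mu_{\text{sc}}$, a contradiction.
\end{itemize}
Alternatively, Pastur's Gaussian integration-by-parts argument gives $\mathbb{E}g_N(z)=m_{\mu_{A_N}}(z+\mathbb{E}g_N(z))+O(N^{-1})$, uniformly in $A_N$ with constants depending only on $\Im z$, where $g_N$ is the normalized resolvent trace. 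Combined with stability of this subordination equation, it leads to the same conclusion.

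Since $\mu_{A_N}\boxplus\mu_{\text{sc}}$ has density bounded by a universal constant (Theorem \ref{Theorem:Free_Convolution_Semi_Circle}), L\'evy closeness upgrades to the Kolmogorov--Smirnov closeness your integration by parts uses. Uniformly bounded second moments handle the tails.

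Repaired this way, your route is arguably lighter than the paper's. It needs only the concentration/Wegner result Theorem \ref{Theorem:Main_Asymptote_Unbounded_GOE}, whose uniformity over all $A_N$ is explicit, plus a soft global law. The paper instead leans on a local-law-based determinant theorem and a quantitative MDE stability bound, whose uniformity in $A_N$ it asserts only after closer inspection of the cited proofs.
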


\begin{proof}
    Note that the convergence is uniform in $\sigma \in \mathbb{R}^N$ since the convergent rate only depends on 
    \[\sup_{N \in \mathbb{N}} m_\infty(\mu_{D_N^K(\sigma)}) = \sup_{N \in \mathbb{N}}\|D_N^K(\sigma)\|_{\text{op}} \leq \oldconstant{Constant:A_1}K.\]
    Our assertion holds by using Theorem \ref{Theorem:Asymptote_Bounded}.
\end{proof}

\subsection{A Varadhan-type Theorem}

    \noindent 
   Recall $\psi$ from Definition \ref{def:functions}(iii). For all $a, t > 0$, define the tilted measure 
    \begin{equation}\label{Equation:Tilted_Measure}
        \mu_{a, t}(dx) = \frac{e^{-\psi(t, x/a)}}{Z_{a, t}}\mu_{\text{Norm}}(dx),
    \end{equation}
    where
    \begin{align*}
        Z_{a, t} = \int_{\mathbb{R}} e^{-\psi(t, x/a)} \mu_{\text{Norm}}(dx).
    \end{align*}
Throughout the section, we will use $g = (g_i)_{i = 1}^N$ and $x^{a, t} = (x_i^{a, t})_{i = 1}^N$ to denote i.i.d.\ samples from $\mu_{\text{Norm}}$ and $\mu_{a, t}$, respectively. For any $x \in \mathbb{R}^N$, set the empirical measures
    \[L_{x, N} = \dfrac{1}{N}\sum_{i = 1}^N \delta_{x_i} \quad \text{and} \quad \nu_{x, N} = \dfrac{1}{N}\sum_{i = 1}^N \delta_{x_i/\sqrt{m_2(L_{x, N})}}.\]  Note that $m_2(\nu_{x,N})=1$ and $\nu_{g,N}$ is indeed a uniform measure on the unit sphere $\mathbb{S}^{N-1}.$ For convenience, denote $L_{a, t, N} = L_{x^{a, t}, N}$ and $\nu_{a, t, N} = \nu_{x^{a, t}, N}$. Our goal of this subsection is to prove the following Varadhan integral inequalities.

\begin{theorem}\label{Theorem:Varadhan_Tilted}
    Fix $0 < s < 2q_2 - 2$, $a > 0$, and $d > c > 0$. Let $\Phi \colon ([c, d] \times \mathcal{P}_s(\mathbb{R}), \mathscr{T} \otimes \mathscr{W}_s) \to \mathbb{R}$ be a continuous function and the random variable $T$ is uniformly sampled from $[c, d]$. Set the function $\Psi = \Phi - \phi_2$ and assume
    $\Psi$ satisfies the tail condition
    \begin{equation}\label{Equation:Varadhan_Tilted_Condition_1}
        \limsup_{M \to \infty} \sup_{t \in [c, d], m_2(\mu) = 1, m_{2q_2 - 2}(\mu) \geq M} \Psi(t, \mu) =  - \infty
    \end{equation}
    and that there exists some $\newconstant\label{Constant:Varadhan} > 0$ such that
    \begin{equation}\label{Equation:Varadhan_Tilted_Condition_2}
        \sup_{t \in [c, d], \mu \in \mathcal{P}_s(\mathbb{R}), m_2(\mu) = 1} \Psi(t, \mu) \leq \oldconstant{Constant:Varadhan}.
    \end{equation}
    If $V$ satisfies \ref{Condition:V}, we have 
    \begin{enumerate}[label = (\roman*)]
        \item for all open sets $\mathfrak{U} \subseteq ([c, d] \times \mathcal{P}_s(\mathbb{R}), \mathscr{T} \otimes \mathscr{W}_s)$,
        \[\liminf_{N \to \infty} \frac{1}{N}\log \mathbb{E}\Big[e^{N\Psi(T, \nu_{g, N})}\mathbbm{1}_{\{(T, \nu_{g, N}) \in \mathfrak{U}\}}\Big]  \geq \sup_{m_2(\mu) = 1, (t, \mu) \in \mathfrak{U}} \bpar{\Psi(t, \mu) - \text{KL}(\mu \,\| \,\mu_{\text{Norm}})},\]
        \item for all closed set $\mathfrak{C} \subseteq ([c, d] \times \mathcal{P}_s(\mathbb{R}), \mathscr{T} \otimes \mathscr{W}_s)$,
        \[\limsup_{N \to \infty} \frac{1}{N}\log \mathbb{E}\Big[e^{N\Psi(T, \nu_{g, N})}\mathbbm{1}_{\{(T, \nu_{g, N}) \in \mathfrak{C}\}}\Big] \leq \sup_{m_2(\mu) = 1, (t, \mu) \in \mathfrak{C}} \bpar{\Psi(t, \mu) - \text{KL}(\mu \,\| \,\mu_{\text{Norm}})}.\]
        \end{enumerate}
\end{theorem}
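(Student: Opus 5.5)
The strategy follows the outline in Section~\ref{sec:sketchofproof}. The term $e^{-N\phi_2(T,\nu_{g,N})}$ will be absorbed into a product measure through the tilted law $\mu_{a,t}$. We then prove a joint LDP for $(m_2(L_{a,t,N}),\nu_{a,t,N},T)$ in the $\mathscr{W}_s$ topology for $s<2q_2-2$, and finally apply Varadhan's lemma to the remaining continuous functional $\Phi$. The first task is the change of measure. Write $\nu_{g,N}$ as the empirical measure of $g/\opnorm{g}_2$. Then $e^{-N\phi_2(t,\nu_{g,N})}=\prod_i e^{-\psi(t,g_i/\opnorm{g}_2)}$. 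By Proposition~\ref{Proposition:Continuity_of_Exp}\ref{Item:Continuity_2}, $\psi(t,\cdot)$ is monotone in $|x|$. Hence on the event $\{a_1\le\opnorm{g}_2\le a_2\}$ we have
\[
\prod_i e^{-\psi(t,g_i/a_1)}\le e^{-N\phi_2(t,\nu_{g,N})}\le \prod_i e^{-\psi(t,g_i/a_2)} .
\]
Consequently, for fixed $t$,
\[
\mathbb{E}\bigl[e^{N\Psi(t,\nu_{g,N})}\mathbbm{1}_{\{\cdot\}}\mathbbm{1}_{\{\opnorm{g}_2\in[a_1,a_2]\}}\bigr]
\]
is sandwiched between $Z_{a_j,t}^N\,\mathbb{E}\bigl[e^{N\Phi(t,\nu_{a_j,t,N})}\mathbbm{1}_{\{\cdot\}}\mathbbm{1}_{\{m_2(L_{a_j,t,N})^{1/2}\in[a_1,a_2]\}}\bigr]$ for $j=1,2$, with $x^{a_j,t}$ in place of $g$. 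The range of $\opnorm{g}_2$ will be split into finitely many short intervals $[a_1,a_2]$. Ranges where $\opnorm{g}_2$ is very large or very small are discarded using Gaussian Cramér bounds, since $\Psi\le c$ by \eqref{Equation:Varadhan_Tilted_Condition_2}. In the limit the mismatch between $a_1$ and $a_2$ is removed by letting the mesh go to zero, using continuity of $a\mapsto\psi(t,x/a)$ and of $a\mapsto Z_{a,t}$.

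Next comes the LDP. Under $\mu_{a,t}^{\otimes N}$ the marginal has tails of order $\exp(-c|x|^{2q_2-2})$. Cramér/Sanov, strengthened to the $\mathscr{W}_s$ topology for $s<2q_2-2$ via exponential tightness from the finite exponential moment of $|x|^{s'}$ with $s<s'<2q_2-2$, then gives an LDP for $(m_2(L_{x,N}),L_{x,N})$. The rate function is $\mathrm{KL}(\cdot\,\|\,\mu_{a,t})$ with $m_2$ read off, and $m_2$ is $\mathscr{W}_s$-continuous whenever $s>2$. If $s\le 2$ we may enlarge $s$, since the $\mathscr{W}_{s'}$ topology is finer. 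The contraction $\mu\mapsto S_{m_2(\mu)^{-1/2}}\mu$ is continuous away from $m_2=0$, and this produces the LDP for $(m_2(L),\nu)$, as in Lemma~\ref{Lemma:LDP_Tilted_Pre}. Uniformity in $t\in[c,d]$, which is needed to add the independent uniform variable $T$, will come from Proposition~\ref{Proposition:Continuity_of_Exp}\ref{Item:Continuity_3}. That result gives monotonicity of $\psi$ in $t$, so partitioning $[c,d]$ finely lets us sandwich $e^{-\psi(t,\cdot)}$ between its values at the grid endpoints, exactly as was done in $a$. Varadhan's lemma is then applied to $\Phi$, which is continuous. The upper bound uses \eqref{Equation:Varadhan_Tilted_Condition_2}, and the part of space with large $m_{2q_2-2}$ is handled by \eqref{Equation:Varadhan_Tilted_Condition_1}, which makes it negligible. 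This yields the exponent
\[
\log Z_{a,t}+\Phi(t,\mu')-\mathrm{KL}(S_{a}\mu'\,\|\,\mu_{a,t})
\]
for $\mu'$ the normalized version with $m_2(\mu')=1$ and $m_2=a^2$ before normalization. Undoing the tilt gives
\[
\log Z_{a,t}-\mathrm{KL}(\rho\,\|\,\mu_{a,t})=-\mathrm{KL}(\rho\,\|\,\mu_{\rm Norm})-\int\psi(t,x/a)\,\rho(dx),
\]
and with $\rho=S_a\mu$ this equals $-\mathrm{KL}(S_a\mu\,\|\,\mu_{\rm Norm})-\phi_2(t,\mu)$.

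The optimization over $a$ is then elementary. We have
\[
\mathrm{KL}(S_a\mu\,\|\,\mu_{\rm Norm})=\mathrm{KL}(\mu\,\|\,\mu_{\rm Norm})-\log a+\tfrac{a^2-1}{2}
\]
because $m_2(\mu)=1$, and this is minimized at $a=1$. Taking the supremum over $a$ therefore recovers $\Psi-\mathrm{KL}(\mu\,\|\,\mu_{\rm Norm})$, which is the same answer as the LDP of $\nu_{g,N}$ on the sphere. The lower bound for open $\mathfrak U$ is local: fix $(t_0,\mu_0)$, take $a$-intervals around $1$ and $t$-intervals around $t_0$, and use the lower-bound side of the sandwich together with the open-set LDP lower bound.

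I expect the main obstacle to be the upper bound over closed $\mathfrak C$. It must be uniform over $t\in[c,d]$ and over the $a$-grid at the same time, and $\phi_2$ is not $\mathscr{W}_s$-continuous for $s<2q_2-2$, so it may only be handled through the tilt and never through Varadhan's lemma. The delicate part is making sure that the errors from $\psi(t,x/a_1)$ versus $\psi(t,x/a_2)$, and from neighbouring $t$ grid points, cost only $o(1)$ per coordinate uniformly. The plan is to bound these errors using \ref{Condition:V}: we have $\psi(t,x/a_1)-\psi(t,x/a_2)\le C(a_2/a_1-1)\psi(t,x/a_1)$, so the error is absorbed by slightly decreasing the tilt strength. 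Together with the tail condition \eqref{Equation:Varadhan_Tilted_Condition_1}, which controls the contribution of $\int\psi$, this makes the error vanish as the mesh tends to zero.
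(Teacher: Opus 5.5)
Your proposal is correct and follows essentially the paper's route. Both arguments absorb $e^{-N\phi_2}$ into the tilted product law $\mu_{a,t}^{\otimes N}$, using monotonicity of $\psi$ in $|x|$ (and in $t$) on slabs of $\opnorm{g}_2$; discard small or large $\opnorm{g}_2$ and large $m_{2q_2-2}$ via \eqref{Equation:Varadhan_Tilted_Condition_2} and \eqref{Equation:Varadhan_Tilted_Condition_1}; apply Varadhan's lemma to $\Phi$ with the joint LDP of Theorem~\ref{Theorem:LDP_Tilted}; and finally optimize $-\tfrac12(r-1)+\tfrac12\log r$ at $r=1$.

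The differences are cosmetic. The paper removes the mesh error by compactness of $\{m_{2q_2-2}\le M\}$ together with upper semicontinuity, whereas you use the uniform comparability $\psi(t,x/a_1)-\psi(t,x/a_2)\le C(a_2/a_1-1)\psi(t,x/a_1)$. That inequality does hold under \ref{Condition:V}, and applied in $t$ it would even remove the appeal to Proposition~\ref{Proposition:Continuity_of_Exp}\ref{Item:Continuity_3}. That appeal relies on \ref{Condition:V''}, which both you and the paper use although the statement assumes only \ref{Condition:V}.
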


The proof of Theorem \ref{Theorem:Varadhan_Tilted} is based on the following large deviation principle, whose proof is deferred to Subsection \ref{subsec:LDP}.

    \begin{theorem}\label{Theorem:LDP_Tilted}
    Fix $0 < s < 2q_2 - 2$ and $a > 0$. Let $T$ be uniformly sampled from $[c, d]$ where $d > c > 0$. If $V$ satisfies \ref{Condition:V} and \ref{Condition:V''}, then the sequence 
    $(m_2(L_{a, T, N}), \nu_{a, T, N}, T)$ satisfies a large deviation principle in the space $([0, \infty) \times \mathcal{P}_s(\mathbb{R}) \times [c, d], \mathscr{T} \otimes \mathscr{W}_s \otimes \mathscr{T})$ with speed $N$ and a good rate function given by
    \[J_a(r, \mu, t) = \begin{cases}
        \log Z_{a, t} + \text{KL}(\mu \,\| \,\mu_{\text{Norm}}) + \frac{1}{2}(r - 1) - \frac{1}{2}\log r + \phi_2\big(t, S_{\sqrt{r}/a}(\mu)\big), & m_2(\mu) = 1, \\
        \infty, & m_2(\mu) \neq 1.
    \end{cases}\]
\end{theorem}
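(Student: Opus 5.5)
We prove the LDP in three stages: fix $t$ and use Cram\'er plus contraction, make the resulting bounds uniform in $t$ via monotonicity of $\psi$, and then randomize $t$.

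\textbf{Step 1 (fixed $t$).} For fixed $t>0$, the vector $x^{a,t}$ has i.i.d.\ coordinates with law $\mu_{a,t}$. By \ref{Condition:V}, $\psi(t,x/a)\ge c_{a,t}|x|^{2q_2-2}-C$, so $\mu_{a,t}$ has tails of order $\exp(-c|x|^{2q_2-2})$. Hence $\int e^{\lambda|x|^{s'}}\,d\mu_{a,t}<\infty$ for every $\lambda>0$ and every $s'<2q_2-2$.

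By Sanov's theorem, strengthened to the $\mathscr{W}_{s'}$ topology (a Wang--Wang--Wu type result, valid because all exponential moments of $|x|^{s'}$ are finite), $L_{x^{a,t},N}$ satisfies an LDP in $(\mathcal{P}_{s'},\mathscr{W}_{s'})$ with rate $\mathrm{KL}(\cdot\,\|\,\mu_{a,t})$. Take $s'\in(\max(s,2),2q_2-2)$. Then $m_2$ is continuous on $\mathcal{P}_{s'}$, so $\mu\mapsto(m_2(\mu),S_{1/\sqrt{m_2(\mu)}}\mu)$ is continuous away from $m_2=0$; that region costs infinite rate.

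The contraction principle gives an LDP for $(m_2(L),\nu)$ with rate
\[
\inf\{\mathrm{KL}(\rho\,\|\,\mu_{a,t}) : m_2(\rho)=r,\ S_{1/\sqrt r}\rho=\mu\}=\mathrm{KL}(S_{\sqrt r}\mu\,\|\,\mu_{a,t}).
\]
A direct computation of this divergence gives
\[
\mathrm{KL}(S_{\sqrt r}\mu\,\|\,\mu_{\rm Norm})+\log Z_{a,t}+\mathbb{E}_{S_{\sqrt r}\mu}\,\psi(t,X/a).
\]
Expanding the Gaussian divergence under dilation, $\mathrm{KL}(S_{\sqrt r}\mu\,\|\,\mu_{\rm Norm})=\mathrm{KL}(\mu\,\|\,\mu_{\rm Norm})-\tfrac12\log r+\tfrac12(r-1)$ when $m_2(\mu)=1$. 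The last term equals $\phi_2(t,S_{\sqrt r/a}\mu)$. This reproduces $J_a$. Goodness of the rate follows from the goodness of the Sanov rate together with continuity of the map.

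\textbf{Step 2 (uniformity in $t$, the main obstacle).} We need the fixed-$t$ bounds to hold uniformly for $t$ in small intervals $[t_0-\delta,t_0+\delta]$. By Proposition~\ref{Proposition:Continuity_of_Exp}\ref{Item:Continuity_3}, $t\mapsto\psi(t,x)$ is nondecreasing. Therefore, for $t$ in the interval,
\[
e^{-\psi(t_0+\delta,x/a)}\le e^{-\psi(t,x/a)}\le e^{-\psi(t_0-\delta,x/a)}.
\]
These inequalities yield density ratio bounds
\[
\frac{d\mu_{a,t}^{\otimes N}}{d\mu_{\rm Norm}^{\otimes N}}\ \text{sandwiched by}\ \frac{Z_{a,t_0\mp\delta}}{Z_{a,t}}\prod_i e^{-\psi(t_0\pm\delta,x_i/a)}.
\]
Probabilities of sets under $\mu_{a,t}^{\otimes N}$ can therefore be compared with expectations of product weights evaluated at the endpoint parameters. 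The endpoint weights fall under the Step~1 machinery (Sanov plus the Varadhan lemma applied to the weight $\psi$).

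Two continuity facts then let $\delta\to0$. First, $Z_{a,t}$ is continuous in $t$ by dominated convergence. Second, $\phi_2(t,S_{\sqrt r/a}\mu)$ is continuous in $t$; because $\psi$ is monotone, monotone convergence handles this term and gives lower semicontinuity at least. The upper bound uses this lower semicontinuity, which suffices. The lower bound uses a local argument at a point of finite rate.

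\textbf{Step 3 (random $T$).} $T$ is independent of the Gaussian-type sample and uniform on $[c,d]$, so the $T$-marginal costs zero at the exponential scale. For the lower bound with an open set containing $(r,\mu,t_0)$, we restrict to $T\in(t_0-\delta,t_0+\delta)$. This costs a factor $2\delta/(d-c)$, which is subexponential, and we then apply the uniform bound from Step~2. For the upper bound we first prove it on compact sets by covering $[c,d]$ with finitely many $\delta$-intervals and taking the worst interval. Finally we upgrade from compact to closed sets via exponential tightness. This holds because, uniformly in $t\in[c,d]$, $\mathbb{E}\exp\bigl(\lambda\sum_i|x_i|^{s'}\bigr)\le e^{CN}$, which gives tightness in $\mathscr{W}_s$ (and in $m_2$) since $s<s'$.
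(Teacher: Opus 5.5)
Your proposal is correct. It has the same three-stage architecture as the paper: a fixed-$t$ LDP, a sandwich of the density of $\mu_{a,T}$ between endpoint tilted measures using monotonicity of $t\mapsto\psi(t,x)$, and then randomizing $T$ and passing from compact to closed sets via exponential tightness. Two ingredients are handled differently.

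\emph{Fixed $t$.} The paper applies Cram\'er's theorem to the pair $(m_2(L_{a,t,N}),L_{a,t,N})$ in $\mathbb{R}\times\mathcal{M}(\mathbb{R})$ with the weak topology; the pair is needed because $m_2$ is not weakly continuous. It identifies the rate via Donsker--Varadhan, contracts through $(r,\mu)\mapsto(r,S_{1/\sqrt r}(\mu))$ weakly, and only then upgrades to $\mathscr{W}_s$ by exponential tightness. You instead use Sanov's theorem directly in $\mathscr{W}_{s'}$ with $2<s'<2q_2-2$ (Wang--Wang--Wu). Then $m_2$ is already continuous and one contraction suffices. This is shorter, and you treat the discontinuity at $r=0$ explicitly (it carries infinite rate), which the paper passes over. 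The cost is importing a less elementary external theorem.

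\emph{Uniformity in $t$.} The paper anchors its intervals at the relevant parameter. For the lower bound it uses $(t-\delta,t)$ and compares with $\mu_{a,t}$; for the upper bound it uses $[t_j,t_j+\delta]$ with $t_j\in\mathfrak{C}^2$ and compares with $\mu_{a,t_j}$. So the fixed-$t$ rate is always evaluated at a parameter in the set, and only uniform continuity of $\log Z_{a,t}$ is needed. Your symmetric intervals compare with $\mu_{a,t_0\pm\delta}$, so you must also control $t\mapsto J_{a,t}(r,\mu)$:
\begin{itemize}
\item for the lower bound, continuity at finite-rate points (finite rate forces $m_{2q_2-2}(\mu)<\infty$, and condition \ref{Condition:V} then supplies a dominating function at $t_0+\delta$);
\item for the upper bound, lower semicontinuity, via monotonicity in $t$ plus Fatou.
\end{itemize}
This is slightly more work, but it also covers $t_0=c$, where the paper's interval $(t-\delta,t)$ falls outside $[c,d]$.

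\emph{Exponential tightness needs a small repair.} The bound $\mathbb{E}\exp(\lambda\sum_i|x_i|^{s'})\le e^{CN}$ controls $L_{a,T,N}$ but not $\nu_{a,T,N}$, since $m_{s'}(\nu_{a,T,N})=m_{s'}(L_{a,T,N})/m_2(L_{a,T,N})^{s'/2}$. You also need $\limsup_{N}\frac{1}{N}\log\mathbb{P}(m_2(L_{a,T,N})<\delta)\to-\infty$ as $\delta\downarrow0$. This follows from $f_{\mu_{a,t}}\le f_{\mu_{\text{Norm}}}/Z_{a,d}$ for $t\le d$ together with Chebyshev applied to $e^{-\lambda\sum_i x_i^2}$, exactly as in Lemma \ref{Lemma:Exp_Tight}.

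\emph{A clarification in Step 2.} The parenthetical ``Sanov plus the Varadhan lemma applied to the weight $\psi$'' should simply read ``Step 1 applied to $\mu_{a,t_0\pm\delta}$''. The endpoint weights times $\mu_{\text{Norm}}^{\otimes N}$ are exactly $Z_{a,t_0\pm\delta}^N\,\mu_{a,t_0\pm\delta}^{\otimes N}$. By contrast, Varadhan's lemma with the unbounded weight $\int\psi\,dL$ over the Gaussian Sanov theorem would not be directly justified.
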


\begin{proof}[\bf Proof of Theorem \ref{Theorem:Varadhan_Tilted}]
    First, we deal with the lower bound. Note that for any $a >0$, 
    \begin{align}
        &\mathbb{E}\Big[e^{N\Psi(T, \nu_{g, N})}\mathbbm{1}_{\{(T, \nu_{g, N}) \in \mathfrak{U}\}}\Big] \notag \\
        &\geq \mathbb{E}\Big[e^{N\Psi(\nu_{g, N})} \mathbbm{1}_{\{m_2(L_{g, N}) > a^2\}}\mathbbm{1}_{\{(T, \nu_{g, N}) \in \mathfrak{U}\}}\Big] \notag \\
        &= \mathbb{E}\Big[\int_{\mathbb{R}^N} e^{N\Phi(T, \nu_{g, N}) - \sum_{i = 1}^N \psi(T, \sqrt{N}g_i/\|g\|_2)}\mathbbm{1}_{\{\|g\|_2/\sqrt{N} > a\}} \mathbbm{1}_{\{(T, \nu_{g, N}) \in \mathfrak{U}\}}\mu_{\text{Norm}}^{\otimes N}(\text{d}g)\Big] \notag \\
        &\geq \mathbb{E}\Big[\int_{\mathbb{R}^N} e^{N\Phi(T, \nu_{g, N}) - \sum_{i = 1}^N \psi(T, g_i/a)}\mathbbm{1}_{\{\|g\|_2/\sqrt{N} > a\}} \mathbbm{1}_{\{(T, \nu_{g, N}) \in \mathfrak{U}\}} \mu_{\text{Norm}}^{\otimes N}(\text{d}g)\Big] \label{Equation:Varadhan_Lower_1} \\
        &= \mathbb{E}\Big[Z_{a, T}^N \int_{\mathbb{R}^N} e^{N\Phi(T, \nu_{x, N})} \mathbbm{1}_{\{m_2(L_{x, N}) > a^2\}} \mathbbm{1}_{\{(T, \nu_{x, N}) \in \mathfrak{U}\}} \mu_{a, T}^{\otimes N}(\text{d}x)\Big] \label{Equation:Varadhan_Lower_2} \\
        &= \mathbb{E}\Big[e^{N(\log Z_{a, T} + \Phi(T, \nu_{a, T, N}))}\mathbbm{1}_{\{m_2(L_{a, T, N}) > a^2\}}\mathbbm{1}_{\{(T, \nu_{a, T, N}) \in \mathfrak{U}\}}\Big], \notag
    \end{align}
    where \eqref{Equation:Varadhan_Lower_1} holds since $\psi(t, x)$ is increasing in $|x|$ by Proposition \ref{Proposition:Continuity_of_Exp} and \eqref{Equation:Varadhan_Lower_2} holds by the definition \eqref{Equation:Tilted_Measure}. 
    
    Now, note that $(t, \mu) \mapsto \log Z_{a, t} + \Phi(t, \mu)$ is continuous on $([c, d] \times \mathcal{P}_s(\mathbb{R}), \mathscr{T} \otimes \mathscr{W}_s)$ (continuity of $t \mapsto Z_{a, t}$ follows from dominating convergence theorem). Therefore, in view of Theorem \ref{Theorem:LDP_Tilted}, we can apply the lower bound of Varadhan's lemma \eqref{Equation:Varadhan_Original_Lower} to the pair $(m_2(L_{a, T, N}), \nu_{a, T, N}, T)$ with respect to the open set $(a^2, \infty) \times \mathfrak{U}$ to see that
    \begin{align}
        &\liminf_{n \to \infty} \frac{1}{N}\log \mathbb{E}\Big[e^{N\Psi(T, \nu_{g, N})}\mathbbm{1}_{\{(T, \nu_{g, N}) \in \mathfrak{U}\}}\Big] \notag \\
        &\geq \sup_{m_2(\mu) = 1, (t, \mu) \in \mathfrak{U}, r > a^2} \Big[\log Z_{a, t} + \Phi(\mu) - J_a(r, \mu, t)\Big] \notag \\
        &= \sup_{m_2(\mu) = 1, (t, \mu) \in \mathfrak{U}, r > a^2} \Big[\Phi(\mu) - \text{KL}(\mu \,\| \,\mu_{\text{Norm}}) - \frac{1}{2}(r - 1) + \frac{1}{2}\log r - \int_{\mathbb{R}} \psi\Bpar{t, \dfrac{\sqrt{r}x}{a}} \mu(dx)\Big]. \label{Equation:Varadhan_Lower_3}
    \end{align}
    Taking the supremum over $a > 0$ on the right hand side yields
    \begin{align}
        &\sup_{a > 0} \sup_{r > a^2} \Big[- \frac{1}{2}(r - 1) + \frac{1}{2}\log r - \int_{\mathbb{R}} \psi\Bpar{t, \dfrac{\sqrt{r}x}{a}}\mu(dx)\Big] \notag \\
        &= \sup_{r > 0}\Big(- \frac{1}{2}(r - 1) + \frac{1}{2}\log r - \inf_{a < \sqrt{r}} \int_{\mathbb{R}} \psi\Bpar{t, \dfrac{\sqrt{r}x}{a}}\mu(dx)\Big)\Big] \notag \\
        &= -\int_{\mathbb{R}} \psi(t, x)\mu(dx) + \sup_{r > 0}\Bpar{-\dfrac{1}{2}(r - 1) + \dfrac{1}{2}\log r} \label{Equation:Varadhan_Lower_4} \\
        &= -\int_{\mathbb{R}} \psi(t, x)\mu(dx), \label{Equation:Varadhan_Lower_5}
    \end{align}
    where \eqref{Equation:Varadhan_Lower_4} holds since the optimal choice of $a$ is $a \nearrow \sqrt{r}$ (recall that $\psi(t, x/a)$ is monotinically decreasing in $a$ by Proposition \ref{Proposition:Continuity_of_Exp}) and \eqref{Equation:Varadhan_Lower_5} holds by simple calculus. Combining \eqref{Equation:Varadhan_Lower_3} and \eqref{Equation:Varadhan_Lower_5}, we obtain the asserted lower bound.

    Next, we treat the upper bound. Fix $M, K \geq 1$ and $0 < \epsilon, \delta < 1$. Since $[\delta, K]$ is compact, there exists for $a_1, a_2, \ldots, a_m  \in [\sqrt{\delta}, \sqrt{K}]$ such that $[\delta, K] \subseteq \bigcup_{i = 1}^m [a_j^2, (a_j + \epsilon)^2]$. We then have
    \begin{align}
        &\mathbb{E}\Big[e^{N\Psi(T, \nu_{g, N})}\mathbbm{1}_{\{(T, \nu_{g, N}) \in \mathfrak{C}\}}\Big] \notag \\
        &\leq \mathbb{E}\Big[e^{N\Psi(T, \nu_{g, N})}\mathbbm{1}_{\{m_{2q_2 - 2}(\nu_{g, N}) \geq M\}}\Big] + \mathbb{E}\Big[e^{N\Psi(T, \nu_{g, N})}\mathbbm{1}_{\{(T, \nu_{g, N}) \in \mathfrak{C}\} \cap \{m_{2q_2 - 2}(\nu_{g, N}) \leq M\}}\Big] \notag \\
        &\leq \mathbb{E}\Big[e^{N\Psi(T, \nu_{g, N})}\mathbbm{1}_{\{m_{2q_2 - 2}(\nu_{g, N}) \geq M\}}\Big] \label{Equation:Varadhan_Upper_1} \\
        &\hspace{1cm} + \sum_{i = 1}^m \mathbb{E}\Big[e^{N\Psi(T, \nu_{g, N})}\mathbbm{1}_{\{(T, \nu_{g, N}) \in \mathfrak{C}\} \cap \{m_{2q_2 - 2}(\nu_{g, N}) \leq M\} \cap \{a_j^2 \leq m_2(L_{g, N}) \leq (a_j + \epsilon)^2\}}\Big] \label{Equation:Varadhan_Upper_2} \\
        &\hspace{1cm} + \mathbb{E}\Big[e^{N\Psi(T, \nu_{g, N})}\mathbbm{1}_{\{m_2(L_{g, N}) < \delta\}}\Big] + \mathbb{E}\Big[e^{N\Psi(T, \nu_{g, N})}\mathbbm{1}_{\{m_2(L_{g, N}) \geq K\}}\Big] \label{Equation:Varadhan_Upper_3}.
    \end{align}
    Here, the $N^{-1}\log$ asymptote of \eqref{Equation:Varadhan_Upper_1} is obviously bounded above by
    \[h_1(M) := \sup_{t \in [c,d], m_2(\mu) = 1, m_{2q_2 - 2}(\mu) \geq M} \Psi(t, \mu) 
    \]
    and due to \eqref{Equation:Varadhan_Tilted_Condition_2}, the $N^{-1}\log$ of \eqref{Equation:Varadhan_Upper_3} is controlled by
    \[h_2(\delta, K) :=  \oldconstant{Constant:Varadhan} + \limsup_{N \to \infty} \frac{1}{N}\log((\mathbb{P}(m_2(L_{g, N}) < \delta)) + \mathbb{P}(m_2(L_{g, N}) \geq K)).\]
    As for \eqref{Equation:Varadhan_Upper_2}, denote for $a, M > 0$,
    \[\mathfrak{C}_{a, M} = \{(r, \mu, t) \in [0, \infty) \times \mathcal{P}_s(\mathbb{R}) \times [c, d] \mid (t, \mu) \in \mathfrak{C}, m_{2q_2 - 2}(\mu) \leq M, a^2 \leq r \leq (a + \epsilon)^2\},\]
    which is compact under $\mathscr{T} \otimes \mathscr{W}_s \otimes \mathscr{T}$ by \cite[Lemma 3.14]{kim2018conditional}. The $N^{-1}\log$ asymptote of \eqref{Equation:Varadhan_Upper_2} is bounded above by
    \begin{align}
       &\limsup_{N \to \infty}\frac{1}{N} \log \sum_{j = 1}^m \mathbb{E}\Big[\int_{\mathbb{R}^N} e^{N\Phi(T, \nu_{g, N}) - \sum_{i = 1}^N \psi(T, \sqrt{N}g_i/\|g\|_2)}\mathbbm{1}_{\{(m_2(L_{g, N}), \nu_{g, N}, T) \in \mathfrak{C}_{a_j, M}\}} \mu_{\text{Norm}}^{\otimes N}(dg)\Big] \notag \\
        &\leq \limsup_{N \to \infty}\frac{1}{N} \log \sum_{j = 1}^m \mathbb{E}\Big[e^{N(\log Z_{a_j + \epsilon, T} + \Phi(T, \nu_{a_j + \epsilon, T, N}))}\mathbbm{1}_{\{(m_2(L_{a_j + \epsilon, T, N}), \nu_{a_j + \epsilon, T, N}, T) \in \mathfrak{C}_{a_j, M}\}}\Big]  \label{Equation:Varadhan_Upper_4} \\
        &\leq \max_{1 \leq j \leq m} \sup_{m_2(\mu) = 1, (r, \mu, t) \in \mathfrak{C}_{a_j, M}} \Big[\log Z_{a_j + \epsilon, t} + \Phi(t, \mu) - J_{a_j + \epsilon}(r, \mu, t)\Big] \label{Equation:Varadhan_Upper_5} \\
        &\leq \sup_{\delta \leq a^2 \leq K, m_2(\mu) = 1, (r, \mu, t) \in \mathfrak{C}_{a, M}} \Bsq{\Phi(t, \mu) - \text{KL}(\mu \,\| \,\mu_{\text{Norm}}) - \frac{1}{2}(r - 1) + \frac{1}{2}\log r - \phi_2\bpar{t, S_{\sqrt{r}/(a + \epsilon)}(\mu)}}, \notag
    \end{align}
    where \eqref{Equation:Varadhan_Upper_4} holds since $\psi(t, x)$ is increasing in $|x|$ by Proposition \ref{Proposition:Continuity_of_Exp} and \eqref{Equation:Varadhan_Upper_5} is valid by the virtue of Theorem \ref{Theorem:LDP_Tilted}, in which we may apply the upper bound of Varadhan's lemma \eqref{Equation:Varadhan_Original_Upper} on the pair $(m_2(L_{a_j + \epsilon, T, N}), \nu_{a_j + \epsilon, T, N}, T)$ with respect to the closed set $\mathfrak{C}_{a_j, M}$ for $1 \leq j \leq m$. Note that \eqref{Equation:Varadhan_Original_Upper} is applicable since the mapping $(r, \mu, t) \mapsto \log Z_{a_j + \epsilon, t} + \Phi(t, \mu)$ is continuous (continuity of $t \mapsto Z_{a_j + \epsilon, t}$ follows from dominating convergence theorem) and it satisfies the condition \eqref{Equation:Varadhan_Original_Condition} by
    \begin{align*}
        \log Z_{a_j + \epsilon, t} + \Phi(t, \mu) &\leq 0 + \oldconstant{Constant:Varadhan} + \phi_2(t, \mu) \\
        &\leq \oldconstant{Constant:Varadhan} + \oldconstant{Constant:Exp}pt^{2 - 2p}\oldconstant{Constant:Bound}^2 \int_{\mathbb{R}}(t^{q_1 - 1}|x|^{q_1 - 1} + t^{q_2 - 1}|x|^{q_2 - 1})^2 \mu(dx) \\
        &\leq \oldconstant{Constant:Varadhan} + 2\oldconstant{Constant:Bound}^2\oldconstant{Constant:Exp}p\big(t^{2(q_1 - p)}m_{2q_1 - 2}(\mu) + t^{2(q_2 - p)}m_{2q_2 - 2}(\mu)\big) \\
        &\leq \oldconstant{Constant:Varadhan} + 2\oldconstant{Constant:Bound}^2\oldconstant{Constant:Exp}p\big(d^{q_1 - p}M^{(q_1 - 1)/(q_1 - 2)} + d^{q_2 - p}M\big)
    \end{align*}
    for all configurations $m_2(\mu) = 1$ (we may assume this since $m_2(\nu_{a_j + \epsilon, t, N}) = 1$ for all $t$ and $N$), $m_{2q_2 - 2}(\mu) \leq M$ and $t \in [c, d]$, where the first inequality comes from our assumption \eqref{Equation:Varadhan_Tilted_Condition_2}.
    
    To obtain the desired upper bound, it remains to analyze the last inequality as $\delta,\epsilon\downarrow 0$ and $K,M\uparrow\infty.$ First, observe that taking supremum over $a$ and $r$ gives
    \begin{align*}
        &\sup_{\delta \leq a^2 \leq K, a^2 \leq r \leq (a + \epsilon)^2}\Big[- \frac{1}{2}(r - 1) + \frac{1}{2}\log r - \int_{\mathbb{R}} \psi\Big(t, \frac{\sqrt{r}x}{a + \epsilon}\Big)\mu(dx)\Big] \\
        &\leq \sup_{\delta \leq r \leq 2K} \Big[-\frac{1}{2}(r - 1) + \frac{1}{2}\log r - \inf_{a \leq \sqrt{r} \wedge K} \int_{\mathbb{R}} \psi\Big(t, \frac{\sqrt{r}x}{a + \epsilon}\Big)\mu(dx)\Big] \\
        &= \sup_{\delta \leq r \leq 2K} \Big[-\frac{1}{2}(r - 1) + \frac{1}{2}\log r - \int_{\mathbb{R}} \psi\Big(t, \frac{\sqrt{r}x}{\sqrt{r} \wedge K + \epsilon}\Big)\mu(dx)\Big] \\
        &= \sup_{\delta \leq r \leq 1} \Big[-\frac{1}{2}(r - 1) + \frac{1}{2}\log r - \int_{\mathbb{R}} \psi\Big(t, \frac{\sqrt{r}x}{\sqrt{r} \wedge K + \epsilon}\Big)\mu(dx)\Big],
    \end{align*}
    where the last equality holds since the mapping 
    \[r \mapsto - \frac{1}{2}(r - 1) + \frac{1}{2}\log r - \int_{\mathbb{R}} \psi\Big(t, \frac{\sqrt{r}x}{\sqrt{r} + \epsilon}\Big)\mu(dx)\]
    is decreasing on $[1, \infty)$. To deal with the limit $\epsilon \downarrow 0$, note that the parameter set 
    \[\mathfrak{A}_{\delta, M} = \{(r, \mu, t) \in [0, \infty) \times \mathcal{P}_s(\mathbb{R}) \times [c, d] \mid (t, \mu) \in \mathfrak{C}, m_{2q_2 - 2}(\mu) \leq M, \delta \leq r \leq 1\}\]
    is again compact in $([c, d] \times [0, \infty) \times \mathcal{P}_s(\mathbb{R}), \mathscr{T} \otimes \mathscr{W}_s \otimes \mathscr{T})$ by \cite[Lemma 3.14]{kim2018conditional} and 
    \[F(\epsilon, (r, \mu, t)) = \Phi(t, \mu) - \text{KL}(\mu \,\| \,\mu_{\text{Norm}}) - \frac{1}{2}(r - 1) + \frac{1}{2}\log r - \int_{\mathbb{R}} \psi\Big(t, \frac{\sqrt{r}x}{\sqrt{r} + \epsilon}\Big)\mu(dx)\]
    is upper semi-continuous on $[0, \infty) \times ([\delta, \infty) \times \mathcal{P}_s(\mathbb{R}) \times [c, d])$ by Fatou's lemma, we then have
    \begin{align*}
        \limsup_{\epsilon \to 0^+} \sup_{(r, \mu, t) \in \mathfrak{A}_{\delta, M}} F(\epsilon, (t, r, \mu)) &\leq \sup_{(r, \mu, t) \in \mathfrak{A}_{\delta, M}} F(0, (t, r, \mu)) \notag \\
        &\leq \sup_{m_2(\mu) = 1, (t, \mu) \in \mathfrak{C}} \Big(\Phi(t, \mu) - \int_{\mathbb{R}} \psi(t, x) \mu(dx) - \text{KL}(\mu \,\|\, \mu_{\text{Norm}}) \Big).
    \end{align*}
    Since $h_1(M) \to -\infty$ as $M \uparrow \infty$ due to \eqref{Equation:Varadhan_Tilted_Condition_1} and $h_2(\delta, K) \to -\infty$ as $\delta \downarrow 0$ and $K \uparrow\infty$ by standard Gaussian estimates, the announced upper bound follows.
\end{proof}

\subsection{Proof of Theorem \ref{Theorem:LDP_Tilted}}\label{subsec:LDP}

\noindent This subsection is devoted to establishing the proof of Theorem \ref{Theorem:LDP_Tilted} that is based on the following three lemmas.

\begin{lemma}
    If $V$ satisfies \ref{Condition:V}, then the moment generating function satisfies
    \begin{equation}\label{Equation:MGF}
        \mathbb{E}\Big[\int_{\mathbb{R}}e^{\lambda |x|^s} \mu_{a, T}(dx)\Big] < \infty
    \end{equation}
    for all $\lambda > 0$ and $0 < s < 2q_2 - 2$.
\end{lemma}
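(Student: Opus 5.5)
We have to show that $\mathbb{E}\big[\int e^{\lambda|x|^s}\mu_{a,T}(dx)\big]<\infty$, where $T$ is uniform on $[c,d]$ and $d>c>0$. We write the expectation over $T$ explicitly:
\[
\mathbb{E}\Big[\int_{\mathbb{R}} e^{\lambda|x|^s}\mu_{a,T}(dx)\Big]=\frac{1}{d-c}\int_c^d \frac{1}{Z_{a,t}}\int_{\mathbb{R}} e^{\lambda|x|^s-\psi(t,x/a)}\,\mu_{\text{Norm}}(dx)\,dt .
\]
The plan is to bound the two factors separately, uniformly in $t\in[c,d]$: first $Z_{a,t}$ from below, then the numerator from above.

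\emph{Lower bound on $Z_{a,t}$.} By \ref{Condition:V}, $|V'(y)|\le \oldconstant{Constant:Bound}(|y|^{q_1-1}+|y|^{q_2-1})$. Hence
\[
0\le\psi(t,x/a)\le \oldconstant{Constant:Exp}p\,t^{2-2p}\oldconstant{Constant:Bound}^2\big(|tx/a|^{q_1-1}+|tx/a|^{q_2-1}\big)^2 .
\]
For $|x|\le 1$ and $t\in[c,d]$ the right-hand side is at most a constant $C_0=C_0(a,c,d)$. This uses that $t^{2-2p}\le c^{2-2p}$ on $[c,d]$. Therefore
\[
Z_{a,t}\ge e^{-C_0}\,\mu_{\text{Norm}}([-1,1])>0
\]
uniformly in $t$. (Alternatively, Proposition~\ref{Proposition:Continuity_of_Exp}\ref{Item:Continuity_3} says $\psi$ is increasing in $t$, so $Z_{a,t}\ge Z_{a,d}>0$.)

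\emph{Upper bound on the numerator.} Here we need a matching lower bound on $\psi$. Since $V$ is even, \ref{Condition:V} gives, for every $y$,
\[
|V'(y)|=\frac{|yV'(y)|}{|y|}\ge \oldconstant{Constant:Bound}^{-1}\big(|y|^{q_1-1}+|y|^{q_2-1}\big)\ge \oldconstant{Constant:Bound}^{-1}|y|^{q_2-1}.
\]
This yields
\[
\psi(t,x/a)\ge \oldconstant{Constant:Exp}p\,\oldconstant{Constant:Bound}^{-2}t^{2-2p}(t/a)^{2q_2-2}|x|^{2q_2-2}\ge \kappa|x|^{2q_2-2},
\]
where
\[
\kappa=\oldconstant{Constant:Exp}p\,\oldconstant{Constant:Bound}^{-2}\min_{t\in[c,d]}t^{2q_2-2p}a^{2-2q_2}>0 .
\]
Positivity of $\kappa$ uses $t\ge c>0$. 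Dropping the Gaussian factor $e^{-x^2/2}\le 1$, the inner integral is therefore bounded by
\[
\frac{1}{\sqrt{2\pi}}\int_{\mathbb{R}} e^{\lambda|x|^s-\kappa|x|^{2q_2-2}}dx .
\]
This integral is finite because $s<2q_2-2$: for large $|x|$ we have $\lambda|x|^s\le\frac{\kappa}{2}|x|^{2q_2-2}$, so the integrand is eventually dominated by $e^{-\kappa|x|^{2q_2-2}/2}$.

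Combining the two bounds, the integrand in $t$ is bounded uniformly on $[c,d]$, which gives finiteness. The only point needing care is the uniform lower bound on $\psi$. It relies on evenness of $V$ together with the lower bound on $xV'(x)$ in \ref{Condition:V}, and on keeping $t$ bounded away from $0$.
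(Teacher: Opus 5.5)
Your argument is correct and follows the paper's proof: a lower bound $\psi(t,x/a)\ge\kappa|x|^{2q_2-2}$ from \ref{Condition:V}, uniform over $t\in[c,d]$ because $t\ge c>0$, after which $s<2q_2-2$ gives integrability. You are more careful than the paper in two places: you make explicit the uniform lower bound on $Z_{a,t}$, which the paper leaves implicit, and your power $a^{2-2q_2}$ is correct where the paper writes $a^{2q_2-2}$. Your parenthetical alternative via Proposition~\ref{Proposition:Continuity_of_Exp}\ref{Item:Continuity_3} relies on \ref{Condition:V''}, which this lemma does not assume, so your primary argument is the one to keep.
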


\begin{proof}
   By condition \ref{Condition:V}, we have that for all $t \in [c, d]$,
    \[\psi(t, x/a) = \oldconstant{Constant:Exp}pt^{2 - 2p}V'(tx/a)^2 \geq \oldconstant{Constant:Bound}^{-2}\oldconstant{Constant:Exp}pt^{2(q_2 - p)}a^{2q_2 - 2}|x|^{2q_2 - 2} \geq \oldconstant{Constant:Bound}^{-2}\oldconstant{Constant:Exp}pc^{2(q_2 - p)}a^{2q_2 - 2}|x|^{2q_2 - 2},\]
which readily yields our assertion.
\end{proof}

\begin{lemma}\label{Lemma:Exp_Tight}
    If $V$ satisfies \ref{Condition:V}, then the random sequence $(m_2(L_{a, T, N}), \nu_{a, T, N}, T)$ is exponentially tight in the space $([0, \infty) \times \mathcal{P}_s(\mathbb{R}) \times [c, d], \mathscr{T} \otimes \mathscr{W}_s \otimes \mathscr{T})$ for all $0 < s < 2q_2 - 2$.
\end{lemma}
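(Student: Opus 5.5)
Exponential tightness of the triple will be checked one coordinate at a time; a product of compact sets is compact and the tail probabilities add. The coordinate $T$ lives in the compact set $[c,d]$, so there is nothing to do for it. For the radial coordinate, Chebyshev's exponential inequality together with \eqref{Equation:MGF} (take $s=2$, and $\lambda$ large) gives
\[
\mathbb{P}\bigl(m_2(L_{a,T,N})>R\bigr)\le e^{-\lambda NR}\,\mathbb{E}\Bigl[\Bigl(\int e^{\lambda x^2}\mu_{a,T}(dx)\Bigr)^N\Bigr].
\]
Conditionally on $T=t$ the $x_i$ are i.i.d., so the expectation is a power. I will bound it by $C^N$ with $C$ uniform in $t\in[c,d]$. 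The lower bound on $\psi(t,x/a)$ in the proof of \eqref{Equation:MGF} holds uniformly over $t\in[c,d]$. Also $Z_{a,t}\ge Z_{a,d}>0$, because $\psi$ is monotone in $t$ (Proposition \ref{Proposition:Continuity_of_Exp}\ref{Item:Continuity_3}); if \ref{Condition:V''} is not to be used, the upper bound in \ref{Condition:V} serves instead. Hence $\sup_{t\in[c,d]}\int e^{\lambda|x|^{s'}}\mu_{a,t}(dx)=:C_{\lambda,s'}<\infty$ for every $s'<2q_2-2$. This shows that $[0,R]$ carries all but $e^{-NL}$ of the mass once $R$ is large.

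For the measure coordinate I take a compact set in $(\mathcal{P}_s,\mathscr{W}_s)$ of the form
\[
\mathcal{K}_{M}=\{\mu:\ m_2(\mu)=1,\ m_{s'}(\mu)\le M\},\qquad s<s'<2q_2-2 .
\]
This set is $\mathscr{W}_s$-compact because a uniform bound on a strictly higher moment gives tightness and uniform integrability of $|x|^s$; the paper already uses this fact via \cite[Lemma 3.14]{kim2018conditional}. The constraint $m_2=1$ is closed under that topology since $s\ge2$ can be assumed (otherwise use $s'>2$ and the same argument). Now $m_{s'}(\nu_{a,T,N})=m_{s'}(L_{a,T,N})/m_2(L_{a,T,N})^{s'/2}$, so
\[
\{m_{s'}(\nu_{a,T,N})>M\}\subseteq\{m_2(L_{a,T,N})<\delta\}\cup\{m_{s'}(L_{a,T,N})>M\delta^{s'/2}\}.
\]
The second event is handled exactly as the radial coordinate was, using Chebyshev with $C_{\lambda,s'}$. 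This gives decay $e^{-N(\lambda M\delta^{s'/2}-\log C_{\lambda,s'})}$, which is arbitrarily fast once $M$ is large for fixed $\delta$.

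The main obstacle is the first event, $\{m_2(L)<\delta\}$, where the normalization blows up. I will use a lower-tail Cramér bound:
\[
\mathbb{P}(m_2(L)<\delta\mid T=t)\le e^{\theta N\delta}\Bigl(\int e^{-\theta x^2}\mu_{a,t}(dx)\Bigr)^N .
\]
The density of $\mu_{a,t}$ is bounded by $(2\pi)^{-1/2}/Z_{a,d}$ uniformly in $t$. Hence $\int e^{-\theta x^2}\mu_{a,t}(dx)\le C'\theta^{-1/2}$. Choosing $\theta=1/\delta$ gives the bound $(eC'\sqrt{\delta})^N$, which is at most $e^{-NL}$ for $\delta$ small. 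Choosing first $\delta$, then $M$ and $R$, for the prescribed rate $L$ yields the compact set $[0,R]\times\mathcal{K}_M\times[c,d]$ with complement probability at most $3e^{-NL}$. This is exponential tightness.
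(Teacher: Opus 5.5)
Your proof is correct and follows the paper's argument essentially step for step. The shared steps are: coordinatewise tightness; an exponential Chebyshev bound for $m_2(L_{a,T,N})$; the compact set $\{m_{s'}\le M\}$ with $s<s'<2q_2-2$; the split $\{m_2(L)<\delta\}\cup\{m_{s'}(L)>M\delta^{s'/2}\}$; and the lower-tail bound with $\theta=1/\delta$ via the uniform density bound $(2\pi)^{-1/2}/Z_{a,d}$. Your version is slightly more careful than the paper's in two places: you bound $\mathbb{E}[(\int e^{\lambda x^2}\mu_{a,T}(dx))^N]$ by a supremum over $t\in[c,d]$, whereas the paper writes $(\mathbb{E}[\int\cdots])^N$, which needs exactly this uniformity; and you observe that the lower bound on $Z_{a,t}$ follows from \ref{Condition:V} alone, matching the lemma's stated hypothesis.
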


\begin{proof}
    It suffice to show that each of them are exponentially tight in their corresponding spaces. Fix $M, K > 0$. Observe that by the Markov inequality,
    \[\mathbb{P}(m_2(L_{a, T, N}) \geq M) = \mathbb{P}\Big(\frac{1}{N}\sum_{i = 1}^N (x_i^{a, t})^2 \geq M\Big) \leq \Big(e^{-M} \mathbb{E}\Big[\int_{\mathbb{R}} e^{x^2}\mu_{a, T}(dx)\Big]\Big)^N.\]
    By \eqref{Equation:MGF}, we choose $M > 0$ so that 
    \[\mathbb{P}(m_2(L_{a, T, N}) \not \in [0, M]) \leq e^{-NK}.\] 
    Hence, $m_2(L_{a, T, N})$ is exponentially tight in $([0, \infty), \mathscr{T})$. Next, consider the set
    \[\mathfrak{K}_{\alpha, M} = \{\mu \in \mathcal{P}(\mathbb{R}) \mid m_\alpha(\mu) \leq M\},\]
    which is compact in $(\mathcal{P}_s(\mathbb{R}), \mathscr{W}_s)$ as long as $s < \alpha < 2q_2 - 2$ by \cite[Lemma 3.14]{kim2018conditional}. Observe that
    \begin{align}
        \mathbb{P}(\nu_{a, T, N} \not \in \mathfrak{K}_{\alpha, M}) &= \mathbb{P}(m_\alpha(\nu_{a, T, N}) > M) \notag \\
        &= \mathbb{P}\Big(\frac{m_\alpha(L_{a, T, N})}{m_2(L_{a, T, N})^{\alpha/2}} > M \Big) \notag \\
        &\leq \mathbb{P}(m_2(L_{a, T, N}) < \delta) + \mathbb{P}(m_\alpha(L_{a, T, N}) > \delta^{\alpha/2}M). \label{Equation:Exp_Tight_1}
    \end{align}
    For the first term in \eqref{Equation:Exp_Tight_1}, by the Markov inequality, for all $\lambda > 0$,
    \begin{align}
        \mathbb{P}(m_2(L_{a, T, N}) < \delta) &\leq \Big(e^{\lambda \delta} \mathbb{E}\Big[\int_{\mathbb{R}} e^{-\lambda x^2}\mu_{a, T}(dx)\Big]\Big)^N \notag \\
        &= \Big(e^{\lambda \delta} \mathbb{E}\Big[\frac{1}{\sqrt{2\pi}Z_{a, T}}\int_{\mathbb{R}} e^{-\lambda x^2 - x^2/2 - \psi(T, x/a)} dx\Big]\Big)^N \leq \Big(\frac{e^{\lambda \delta}}{\sqrt{2\lambda}Z_{a, d}}\Big)^N, \label{Equation:Exp_Tight_2}
    \end{align}
    where the last inequality used Proposition \ref{Proposition:Continuity_of_Exp}(iii).
    For the second term in \eqref{Equation:Exp_Tight_1}, we have again by the Markov inequality,
    \begin{equation}\label{Equation:Exp_Tight_3}
        \mathbb{P}(m_\alpha(L_{a, t, N}) > \delta^{\alpha/2}M) \leq \Big(e^{-\delta^{\alpha/2}M} \mathbb{E}\Big[\int_{\mathbb{R}} e^{|x|^\alpha}\mu_{a, T}(dx)\Big]\Big)^N.
    \end{equation}
    Now, we may choose $\delta = 1/\lambda$ and $\lambda > 0$ large enough so that the term in \eqref{Equation:Exp_Tight_2} is less than $e^{-NK}$. Then, we may choose $M > 0$ large enough so that the term in \eqref{Equation:Exp_Tight_3} is also less than $e^{-NK}$, which is valid by \eqref{Equation:MGF}. This shows that $\nu_{a, T, N}$ is exponentially tight in $(\mathcal{P}_s(\mathbb{R}), \mathscr{W}_s)$. Finally, since $T$ is supported on a compact interval $[c, d]$, it is obviously exponentially tight and our proof is completed.
\end{proof}

\begin{lemma}\label{Lemma:LDP_Tilted_Pre}
    If $V$ satisfies \ref{Condition:V}, then for all $a, t > 0$, the sequence $(m_2(L_{a, t, N}), \nu_{a, t, N})$ satisfies a large deviation principle in the space $([0, \infty) \times \mathcal{P}_s(\mathbb{R}), \mathscr{T} \otimes \mathscr{W}_s)$ for all $0 < s < 2q_2 - 2$ with speed $N$ and a good rate function given by
    \[J_{a, t}(r, \mu) = \begin{cases}
        \log Z_{a, t} + \text{KL}(\mu \,\| \,\mu_{\text{Norm}}) + \frac{1}{2}(r - 1) - \frac{1}{2}\log r + \phi_2\big(t, S_{\sqrt{r}/a}(\mu)\big), & m_2(\mu) = 1, \\
        \infty, & m_2(\mu) \neq 1.
    \end{cases}\]
\end{lemma}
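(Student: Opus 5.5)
Fix $a,t>0$ and write $x=x^{a,t}$, which has i.i.d.\ coordinates with law $\mu_{a,t}$. The plan is to get a joint LDP for the pair $(m_2(L_{x,N}),L_{x,N})$ and then push it forward by the map $(r,L)\mapsto(r,S_{1/\sqrt r}(L))$ via the contraction principle.

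\textbf{Step 1: Sanov in $\mathscr{W}_\alpha$.} By \eqref{Equation:MGF}, for fixed $t$, $\int e^{\lambda|x|^\alpha}\mu_{a,t}(dx)<\infty$ for all $\lambda>0$ and $\alpha<2q_2-2$. The strengthened Sanov theorem in Wasserstein topology (as in \cite{kim2018conditional}) then gives an LDP for $L_{x,N}$ in $(\mathcal{P}_\alpha(\mathbb{R}),\mathscr{W}_\alpha)$ with good rate function $\text{KL}(\cdot\,\|\,\mu_{a,t})$. Fix some $\alpha\in(\max(s,2),\,2q_2-2)$, which is possible since $q_2>p\ge2$. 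In $\mathscr{W}_\alpha$ with $\alpha>2$, the map $L\mapsto m_2(L)$ is continuous. So the map $L\mapsto(m_2(L),L)$ is continuous, and the contraction principle gives an LDP for $(m_2(L_{x,N}),L_{x,N})$ with rate $\text{KL}(L\,\|\,\mu_{a,t})$ on the graph $\{r=m_2(L)\}$ and $+\infty$ elsewhere.

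\textbf{Step 2: Normalizing.} Apply the map $F(r,L)=(r,S_{1/\sqrt r}(L))$. For $r>0$ it is continuous from $\mathscr{T}\otimes\mathscr{W}_\alpha$ to $\mathscr{T}\otimes\mathscr{W}_s$, since dilation by a continuously varying factor is $W_s$-continuous. The point $r=0$ needs care. The event $\{m_2(L_{x,N})<\delta\}$ has probability at most $(e^{\lambda\delta}/(\sqrt{2\lambda}Z_{a,t}))^N$ by the Markov estimate \eqref{Equation:Exp_Tight_2}, which is superexponentially small as $\delta\to0$. I would therefore either use the contraction principle on the open set $\{r>0\}$, which carries all the mass up to exponentially negligible events, or use the approximate contraction principle (Dembo--Zeitouni Theorem 4.2.23) with maps $F_\delta$ that agree with $F$ on $\{r\ge\delta\}$. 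Exponential tightness in $\mathscr{W}_s$ follows as in Lemma \ref{Lemma:Exp_Tight}. Together these give an LDP for $(m_2(L_{a,t,N}),\nu_{a,t,N})$ with good rate
\[
J_{a,t}(r,\mu)=\text{KL}\bigl(S_{\sqrt r}(\mu)\,\|\,\mu_{a,t}\bigr)\quad\text{if } m_2(\mu)=1,
\]
since $L=S_{\sqrt r}(\mu)$ is the unique preimage. The rate is $+\infty$ otherwise.

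\textbf{Step 3: Computing the rate.} Let $\nu=S_{\sqrt r}(\mu)$ with density $f$ with respect to Lebesgue measure. Then
\[
\text{KL}(\nu\,\|\,\mu_{a,t})=\text{KL}(\nu\,\|\,\mu_{\text{Norm}})+\log Z_{a,t}+\int\psi(t,y/a)\,\nu(dy).
\]
The last term equals $\int\psi(t,\sqrt r x/a)\,\mu(dx)=\phi_2(t,S_{\sqrt r/a}(\mu))$. For the Gaussian part, the scaling identity gives
\[
\text{KL}(S_{\sqrt r}\mu\,\|\,\mu_{\text{Norm}})=\text{KL}(\mu\,\|\,\mu_{\text{Norm}})-\tfrac12\log r+\tfrac12(r-1)m_2(\mu),
\]
which follows from the change of entropy $-\log\sqrt r$ together with the quadratic term. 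With $m_2(\mu)=1$ this yields exactly $J_{a,t}$. Both sides are $+\infty$ simultaneously when $\mu\notin\mathcal{P}_{2q_2-2}$, because $\phi_2=\infty$ there; this matches the convention in the definition of $\phi_2$.

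\textbf{Main obstacle.} I expect the hardest part to be Step 2 at $r\to0$, together with checking goodness of the rate and the topology upgrade from $\mathscr{W}_\alpha$ to $\mathscr{W}_s$ for the normalized measure. I would handle this through the exponential equivalence argument above, combined with compactness of the sets $\{m_\alpha\le M\}$ from \cite[Lemma 3.14]{kim2018conditional}.
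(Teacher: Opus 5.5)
Your proof is correct, and it takes a different route from the paper's.

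\textbf{The paper's route.} It applies Cram\'er's theorem to the i.i.d.\ pairs $((x_i^{a,t})^2,\delta_{x_i^{a,t}})$ in the vector space $\mathbb{R}\times\mathcal{M}(\mathbb{R})$ with the weak topology. Exponential tightness then gives a full LDP for $(m_2(L_{a,t,N}),L_{a,t,N})$. The Legendre transform is evaluated via the Donsker--Varadhan formula, tilting $\mu_{a,t}$ by $e^{\lambda x^2}$, which yields $\log Z_{a,t}+\mathrm{KL}(\mu\,\|\,\mu_{\text{Norm}})+\phi_2(t,S_{1/a}(\mu))$ on $\{m_2(\mu)=r\}$. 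The paper then contracts by $G(r,\mu)=(r,S_{1/\sqrt r}(\mu))$ in the weak topology. Only at the end does it upgrade to $\mathscr{W}_s$, via exponential tightness and inverse contraction \cite[Corollary 4.2.6]{dembo2010large}.

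\textbf{Your route.} You start from Sanov's theorem directly in $\mathscr{W}_\alpha$ with $\alpha\in(\max(s,2),2q_2-2)$. This is available because $\mu_{a,t}$ has density $\lesssim e^{-c|x|^{2q_2-2}}$. Since $m_2$ is then continuous, the joint LDP is a plain contraction with rate $\mathrm{KL}(\cdot\,\|\,\mu_{a,t})$ on the graph. The explicit form of $J_{a,t}$ then follows from the density of $\mu_{a,t}$ and the Gaussian scaling identity. You need no Legendre-transform computation and no final topology upgrade.

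\textbf{Trade-off.} Your route relies on the Wasserstein-topology Sanov theorem, which is nontrivial and requires the full exponential-moment condition \eqref{Equation:MGF} for every $\lambda>0$. The paper's route needs only weak-topology Cram\'er plus tightness.

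\textbf{The point $r=0$.} Your treatment here is more careful than the paper's. The paper asserts that $G$ is continuous on all of $\mathbb{R}\times\mathcal{P}(\mathbb{R})$, although $G$ is undefined at $r=0$. You do not actually need the superexponential bound. The set $\{r>0\}$ has full probability and contains the effective domain of the rate function. So \cite[Lemma 4.1.5]{dembo2010large} lets you restrict to $\{r>0\}$, contract there, and extend back by $+\infty$, using goodness of the rate function.
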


\begin{proof}
    First, by Cram\'{e}r's theorem (see \cite[Theorem 6.1.3]{dembo2010large}), the empirical mean $(m_2(L_{a, t, N}), L_{a, t, N})$ of the i.i.d.\ sequence $((x_i^{a, t})^2, \delta_{x_i^{a, t}})$ in the locally convex, Hausdorff vector space $(\mathbb{R} \times \mathcal{M}(\mathbb{R}), \mathscr{T} \otimes \mathscr{T}_{\text{weak}})$ satisfies an weak LDP with speed $N$ and a good rate function given by
    \begin{align*}
        \Lambda_{a, t}(r, \mu) &= \sup_{(\lambda, f) \in \mathbb{R} \times C_b(\mathbb{R})} \Big\{ \lan{(\lambda, f), (r, \mu)} - \log\mathbb{E}\Big[\exp\Big(\Big\langle (\lambda, f), ((x_1^{a, t})^2, \delta_{x_{1}^{a, t}})\Big\rangle\Big)\Big]\Big\} \\
        &= \sup_{(\lambda, f) \in \mathbb{R} \times C_b(\mathbb{R})} \Big[\Big(\lambda r + \int_{\mathbb{R}} f(x)\mu(dx) - \log\int_{\mathbb{R}} e^{f(x) + \lambda x^2} \mu_{a, t}(dx)\Big].
    \end{align*}
    Since the sequence $(m_2(L_{a, t, N}), L_{a, t, N})$ is exponentially tight in $(\mathbb{R} \times \mathcal{P}(\mathbb{R}), \mathscr{T} \otimes \mathscr{T}_{\text{weak}})$ (this fact can be shown using the same strategy as in Lemma \ref{Lemma:Exp_Tight}), we see then by \cite[Lemmas 1.2.18 and 4.1.5]{dembo2010large} that the empirical mean $(m_2(L_{a, t, N}), L_{a, t, N})$ satisfies an LDP on $([0, \infty) \times \mathcal{P}(\mathbb{R}), \mathscr{T} \otimes \mathscr{T}_{\text{weak}})$ with speed $N$ and good rate function $\Lambda_{a, t}(r, \mu)$.
    
    Next, we simplify the rate function as follows. Consider again the tilted measure
    \[\mu_{a, t, \lambda} = \frac{e^{\lambda x^2}}{Z_{a, t, \lambda}}\mu_{a, t}(dx), \quad Z_{a, t, \lambda} = \int_{\mathbb{R}} e^{\lambda x^2} \mu_{a, t}(dx),\]
    where $Z_{a, t, \lambda} < \infty$ for all $\lambda > 0$ by \eqref{Equation:MGF}. By the Donsker-Varadhan variational formula (see \cite[Lemma 6.2.13]{dembo2010large}), we may rewrite the rate function as
    \begin{align*}
        \Lambda_{a, t}(r, \mu) &= \sup_{(\lambda, f) \in \mathbb{R} \times C_b(\mathbb{R})} \Big[\Big(\lambda r + \int_{\mathbb{R}} f(x)\mu(dx) - \log Z_{a, t, \lambda} - \log\int_{\mathbb{R}} e^{f(x)}\mu_{a, t, \lambda}(dx)\Big] \\
        &= \sup_{\lambda \in \mathbb{R}} \,\big(\lambda r - \log Z_{a, t, \lambda} + \text{KL}(\mu \,\|\, \mu_{a, t, \lambda})\big).
    \end{align*}
    Note that 
    \begin{align}
        \text{KL}(\mu \,\|\, \mu_{a, t, \lambda}) &= \text{KL}(\mu \,\| \,\mu_{\text{Norm}}) - \int_{\mathbb{R}} \log\Big(\frac{d\mu_{a, t, \lambda}}{d\mu_{\text{Norm}}}(x)\Big) \mu(dx) \notag \\
        &= \text{KL}(\mu \,\| \,\mu_{\text{Norm}}) - \int_{\mathbb{R}} \log\Big(\frac{e^{\lambda x^2}}{Z_{a, t, \lambda}} \cdot \frac{e^{-\psi(t, x/a)}}{Z_{a, t}}\Big) \mu(dx) \label{Equation:LDP_Tilted_Pre_1} \\
        &= \text{KL}(\mu \,\| \,\mu_{\text{Norm}}) + \log Z_{a, t, \lambda} + \log Z_{a, t} - \lambda m_2(\mu) + \phi_2(t, S_{1/a}(\mu)). \label{Equation:LDP_Tilted_Pre_2}
    \end{align}
    Observe that if $m_{2q_2 - 2}(\mu) = \infty$, the quantity in \eqref{Equation:LDP_Tilted_Pre_1} does blow up according to the definition of $\psi$ and condition \ref{Condition:V}. This matches up with the definition of $\phi_2(\mu) = \infty$ in \eqref{Equation:LDP_Tilted_Pre_2}. Hence, the rate function simplifies to
    \begin{align*}
        \Lambda_{a, t}(r, \mu) &= \log Z_{a, t} + \text{KL}(\mu \,\| \,\mu_{\text{Norm}}) + \phi_2(t, S_{1/a}(\mu)) + \sup_{\lambda \in \mathbb{R}} \lambda(r - m_2(\mu)) \\
        &= \begin{cases}
            \log Z_{a, t} + \text{KL}(\mu \,\|\, \mu_{\text{Norm}}) + \phi_2(t, S_{1/a}(\mu)), & m_2(\mu) = r, \\
            \infty, & m_2(\mu) \neq r.
        \end{cases}
    \end{align*}

    Now, we consider the LDP for $(m_2(L_{a, t, N}), \nu_{a, t, N})$. Note that the mapping $G \colon [0, \infty) \times \mathcal{P}(\mathbb{R}) \to [0, \infty) \times \mathcal{P}(\mathbb{R})$ given by 
    \[G(r, \mu) = (r, S_{1/\sqrt{r}}(\mu)).\]
    is continuous on  $(\mathbb{R} \times \mathcal{P}(\mathbb{R}), \mathscr{T} \otimes \mathscr{T}_{\text{weak}})$ by Slutsky's theorem and we have 
    \[G(m_2(L_{a, t, N}), L_{a, t, N}) = (m_2(L_{a, t, N}), \nu_{a, t, N}).\]
    By contraction principle (see \cite[Theorem 4.2.1]{dembo2010large}), we see then $(m_2(L_{a, t, N}), \nu_{a, t, N})$ satisfies an LDP on $(\mathbb{R} \times \mathcal{P}(\mathbb{R}), \mathscr{T} \otimes \mathscr{T}_{\text{weak}})$ with speed $N$ and good rate function
    \begin{align*}
        J_{a, t}(r, \mu) &= \inf\{\Lambda_{a, t}(s, \mu) \mid (s, \mu) \in \mathbb{R} \times \mathcal{P}(\mathbb{R}), G(s, \nu) = (r, \mu)\} \\
        &= \Lambda_{a, t}(r, S_{\sqrt{r}}(\mu)) \\
        &= \begin{cases}
            \log Z_{a, t} + \text{KL}(S_{\sqrt{r}}(\mu) \,\|\, \mu_{\text{Norm}}) + \phi_2\big(t, S_{\sqrt{r}/a}(\mu)\big), & m_2(S_{\sqrt{r}}(\mu)) = r, \\
            \infty, & m_2(S_{\sqrt{r}}(\mu)) \neq r
        \end{cases} \\
        &= \begin{cases}
            \log Z_{a, t} + \text{KL}(\mu \,\| \,\mu_{\text{Norm}}) + \frac{1}{2}(r - 1) - \frac{1}{2}\log r + \phi_2\big(t, S_{\sqrt{r}/a}(\mu)\big) & m_2(\mu) = 1, \\
            \infty & m_2(\mu) \neq 1.
        \end{cases}
    \end{align*}

    Lastly, since the sequence $(m_2(L_{a, t, N}), \nu_{a, t, N})$ is exponentially tight in $([0, \infty) \times \mathcal{P}_s(\mathbb{R}), \mathscr{T} \otimes \mathscr{W}_s)$ by Lemma \ref{Lemma:Exp_Tight}, we can upgrade the topology for the LDP by \cite[Corollary 4.2.6]{dembo2010large} and the proof is concluded.
\end{proof}

Now we are ready to present the proof of Theorem \ref{Theorem:LDP_Tilted}.

\begin{proof}[\bf Proof of Theorem \ref{Theorem:LDP_Tilted}]
    First, we provide a simple fact about the measure $\mu_{a, t}$. Observe that for all $t \in [c', d'] \subseteq [c, d]$, the density of $\mu_{a, t}$ satisfies
    \begin{align*}
        f_{\mu_{a, t}}(x) = \dfrac{e^{-\psi(t, x/a)}}{Z_{a, t}}f_{\mu_{\text{Norm}}}(x) \geq \dfrac{e^{-\psi(d', x/a)}}{Z_{a, c'}}f_{\mu_{\text{Norm}}}(x) = \frac{Z_{a, d'}}{Z_{a, c'}}f_{\mu_{a, d'}}(x),
    \end{align*}
    where the inequality holds since $t \mapsto \psi(t, x)$ is increasing by Proposition \ref{Proposition:Continuity_of_Exp}, which also implies that $Z_{a, t} \le Z_{a, c'}.$ We can argue an upper bound similarly. Since the mapping $t \mapsto Z_{a, t}$ is continuous by dominating convergence theorem, the set $[c, d]$ is compact and $Z_{a, t} \geq Z_{a, d} > 0$ for all $t \in [c, d]$, the mapping $t \mapsto \log Z_{a, t}$ is uniformly continuous on $[c, d]$. Therefore, for all $\epsilon > 0$, we may choose $\delta > 0$ so that
    \begin{equation}\label{Equation:Tilted_LDP_0}
        e^{-\epsilon}f_{\mu_{a, d'}}(x) \leq f_{\mu_{a, t}}(x) \leq e^{\epsilon}f_{\mu_{a, c'}}(x)
    \end{equation}
    for all $d' - c' < \delta$, $t \in [c', d']$ and $x \in \mathbb{R}$.
    
    Now, we work with the lower bound. Consider first the product case $\mathfrak{U} = \mathfrak{U}^1 \times \mathfrak{U}^2$, where the sets $\mathfrak{U}^1 \subseteq ([0, \infty) \times \mathcal{P}_s(\mathbb{R}), \mathscr{T} \otimes \mathscr{W}_s)$ and $\mathfrak{U}^2 \subseteq ([c, d], \mathscr{T})$ are open. Fix $\epsilon > 0$. For all $t \in \mathfrak{U}^2$, there exists some $\delta > 0$ such that $(t - \delta, t) \subseteq \mathfrak{U}^2$ and \eqref{Equation:Tilted_LDP_0} holds. It follows that
    \begin{align}
        &\liminf_{N \to \infty}\frac{1}{N}\log \mathbb{P}((m_2(L_{a, T, N}), \nu_{a, T, N}, T) \in \mathfrak{U}^1 \times \mathfrak{U}^2) \notag \\
        &\geq -\epsilon + \liminf_{N \to \infty} \frac{1}{N}\log \mathbb{E}\Big[\mathbbm{1}_{\{T \in (t - \delta, t)\}}\int_{\mathbb{R}^N} \mathbbm{1}_{\{(m_2(L_{x, N}), \nu_{x, N}) \in \mathfrak{U}^1\}} \mu_{a, t}^{\otimes N}(dx) \Big] \label{Equation:Tilted_LDP_1} \\
        &= -\epsilon + \liminf_{N \to \infty}\frac{1}{N}\log\mathbb{P}((m_2(L_{a, t, N}), \nu_{a, t, N}) \in \mathfrak{U}^1) \label{Equation:Tilted_LDP_2} \\
        &= -\epsilon - \inf_{m_2(\mu) = 1, (r, \mu) \in \mathfrak{U}^1} J_{a, t}(r, \mu), \label{Equation:Tilted_LDP_3}
    \end{align}
    where \eqref{Equation:Tilted_LDP_1} used  \eqref{Equation:Tilted_LDP_0}, \eqref{Equation:Tilted_LDP_2} holds since $N^{-1} \log \mathbb{P}(T \in (t - \delta, t))$ vanishes as $N \to \infty$, and lastly \eqref{Equation:Tilted_LDP_3} follows by Lemma \ref{Lemma:LDP_Tilted_Pre}. By taking supremum over $t \in \mathfrak{U}^2$ on the right hand side of \eqref{Equation:Tilted_LDP_2} and utilizing the fact that $\epsilon > 0$ is arbitrary, we conclude
    \[\liminf_{N \to \infty}\frac{1}{N}\log \mathbb{P}((m_2(L_{a, T, N}), \nu_{a, T, N}, T) \in \mathfrak{U}^1 \times \mathfrak{U}^2) \geq -\inf_{m_2(\mu) = 1, (r, \mu, t) \in \mathfrak{U}^1 \times \mathfrak{U}^2} J_{a, t}(r, \mu)\]
    where by definition, $J_{a, t}(r, \mu) = J_a(r, \mu, t)$. For the general open set $\mathfrak{U}$, we see that for all $(r, \mu, t) \in \mathfrak{U}$, there exist open sets $\mathfrak{U}^1 \subseteq ([0, \infty) \times \mathcal{P}_s(\mathbb{R}), \mathscr{T} \otimes \mathscr{W}_s)$ and $\mathfrak{U}^2 \subseteq ([c, d], \mathscr{T})$ such that $(r, \mu, t) \in \mathfrak{U}^1 \times \mathfrak{U}^2 \subseteq \mathfrak{U}$, then,
    \begin{align*}
        &\liminf_{N \to \infty}\frac{1}{N}\log \mathbb{P}((m_2(L_{a, T, N}), \nu_{a, T, N}, T) \in \mathfrak{U}) \\
        &\geq \liminf_{N \to \infty}\frac{1}{N}\log \mathbb{P}((m_2(L_{a, T, N}), \nu_{a, T, N}, T) \in \mathfrak{U}^1 \times \mathfrak{U}^2) = -\inf_{m_2(\mu) = 1, (r, \mu, t) \in \mathfrak{U}^1 \times \mathfrak{U}^2} J_a(r, \mu, t)
    \end{align*}
    and we may obtain the lower bound by taking the supremum over $(r, \mu, t) \in \mathfrak{U}$.

    Next, we handle the upper bound. Consider first the product case $\mathfrak{C} = \mathfrak{C}^1 \times \mathfrak{C}^2$, where the sets $\mathfrak{C}^1 \subseteq ([0, \infty) \times \mathcal{P}_s(\mathbb{R}), \mathscr{T} \otimes \mathscr{W}_s)$ and $\mathfrak{C}^2 \subseteq ([c, d], \mathscr{T})$ are closed. Fix $\epsilon > 0$ and choose $\delta > 0$ according to \eqref{Equation:Tilted_LDP_0}. Since $\mathfrak{C}^2$ is compact, there exist $(t_j)_{1\leq j\leq m}\subset \mathfrak{C}^2$ such that $\mathfrak{C}^2 \subseteq \bigcup_{j = 1}^m[t_j, t_j + \delta]$. Observe that the probability
    \begin{align}
        &\limsup_{N \to \infty}\frac{1}{N}\log \mathbb{P}((m_2(L_{a, T, N}), \nu_{a, T, N}, T) \in \mathfrak{C}^1 \times \mathfrak{C}^2) \notag \\
        &\leq \epsilon + \limsup_{N \to \infty} \frac{1}{N}\log\Big\{\sum_{j = 1}^m\mathbb{E} \Big[\mathbbm{1}_{\{T \in [t_j, t_j + \delta]\}}\int_{\mathbb{R}^N} \mathbbm{1}_{\{(m_2(L_{x, N}), \nu_{x, N}) \in \mathfrak{C}^1\}} \mu_{a, t_j}^{\otimes N}(dx)\Big] \label{Equation:Tilted_LDP_4} \\
        &= \epsilon + \limsup_{N \to \infty} \frac{1}{N}\log\Big\{\sum_{j = 1}^m \mathbb{P}((m_2(L_{a, t_j, N}), \nu_{a, t_j, N}) \in \mathfrak{C}^1)\Big\} \label{Equation:Tilted_LDP_5} \\
        &= \epsilon + \max_{1 \leq j \leq m} \Big[-\inf_{(r, \mu) \in \mathfrak{C}^1, m_2(\mu) = 1} J_{a, t_j}(r, \mu)\Big] \label{Equation:Tilted_LDP_6} \\
        &\leq \epsilon - \inf_{(r, \mu, t) \in \mathfrak{C}^1 \times \mathfrak{C}^2, m_2(\mu) = 1} J_{a, t}(r, \mu),
    \end{align}
    where \eqref{Equation:Tilted_LDP_4} follows from \eqref{Equation:Tilted_LDP_0}, \eqref{Equation:Tilted_LDP_5} holds since $N^{-1} \log\mathbb{P}(T \in [t_j, t_j + \delta])$ vanishes as $N \to \infty$, and \eqref{Equation:Tilted_LDP_6} is valid by Lemma \ref{Lemma:LDP_Tilted_Pre}. As for the case of arbitrary compact set $\mathfrak{C}$, there exists for $1 \leq j \leq m$, $(r_j, \mu_j, t_j) \in \mathfrak{C}$ and closed sets $\mathfrak{C}_j^1 \subseteq ([0, \infty) \times \mathcal{P}_s(\mathbb{R}), \mathscr{T} \otimes \mathscr{W}_s)$, $\mathfrak{C}_j^2 \subseteq ([c, d], \mathscr{T})$ such that $\mathfrak{C} \subseteq \bigcup_{j = 1}^m \mathfrak{C}_j^1 \times \mathfrak{C}_j^2$ and hence
    \begin{align*}
        &\limsup_{N \to \infty}\frac{1}{N}\log \mathbb{P}((m_2(L_{a, T, N}), \nu_{a, T, N}, T) \in \mathfrak{C}) \\
        &\leq \limsup_{N \to \infty} \frac{1}{N}\log\Big\{\sum_{i = 1}^m \mathbb{P}((m_2(L_{a, T, N}), \nu_{a, T, N}, T) \in \mathfrak{C}_j^1 \times \mathfrak{C}_j^2\Big\} \\
        &\leq \max_{1 \leq j \leq m} \Big[-\inf_{(r, \mu, t) \in \mathfrak{C}_j^1 \times \mathfrak{C}_j^2, m_2(\mu) = 1} J_{a, t}(r, \mu)\Big] \\
        &= -\inf_{(r, \mu, t) \in \mathfrak{C}, m_2(\mu) = 1} J_{a, t}(r, \mu)
    \end{align*}
    and we have again the upper bound. Last, we consider the general case where $\mathfrak{C}$ is closed. Since the law of the random sequence $(m_2(L_{a, T, N}), \nu_{a, T, N}, T)$ is exponentially tight by Lemma \ref{Lemma:Exp_Tight}, we may choose for all $M > 0$ a compact set $\mathfrak{K}_M$ such that
    \[\frac{1}{N}\log \mathbb{P}((m_2(L_{a, T, N}), \nu_{a, T, N}, T) \not \in \mathfrak{K}_M) \leq -M.\]
    We see then
    \begin{align*}
        &\limsup_{N \to \infty}\frac{1}{N}\log \mathbb{P}((m_2(L_{a, T, N}), \nu_{a, T, N}, T) \in \mathfrak{C}) \\
        &\leq \Big[-\inf_{(r, \mu, t) \in \mathfrak{C} \cap \mathfrak{K}_M, m_2(\mu) = 1} J_{a, t}(r, \mu)\Big] \vee \Big[\limsup_{N \to \infty}\frac{1}{N}\log \mathbb{P}((m_2(L_{a, T, N}), \nu_{a, T, N}, T) \in \mathfrak{C} \setminus \mathfrak{K}_M)\Big]
    \end{align*}
    and we may obtain the desired upper bound by taking $M \to \infty$.
\end{proof}

\subsection{Proof of Theorem \ref{Theorem:Main_Theorem}}

First, we have the following lower bound.

\begin{proposition}\label{Proposition:Main_Theorem_Lower}
    If $V$ satisfies Assumption \ref{ass1}, then for all $u \geq 0$,
    \[\liminf_{N \to \infty}\dfrac{1}{N}\log \mathbb{E}[\text{Crt}_N((Nu, \infty))] \geq \sup\{\,\mathcal{I}(\mu) \mid \mu \in \mathcal{P}_{2q_2 - 2}(\mathbb{R}) \cap \mathfrak{D}_+(u)\},\]
    where
    \begin{equation}\label{Equation:D_+(u)}
        \mathfrak{D}_+(u) : 
        = \Big\{\mu \in \mathcal{P}(\mathbb{R}) \,\Big\vert\, \mathbb{E}_\mu\big[p^{-1}XV'(X) - V(X)\big] > u\Big\}.
    \end{equation}
\end{proposition}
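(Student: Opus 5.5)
Fix $\mu^\ast\in\mathcal{P}_{2q_2-2}(\mathbb{R})\cap\mathfrak{D}_+(u)$ with $\mathcal{I}(\mu^\ast)>-\infty$. Set $t^\ast=\sqrt{m_2(\mu^\ast)}$ and $\bar\mu=S_{1/t^\ast}(\mu^\ast)$, so that $m_2(\bar\mu)=1$ and $(t^\ast,\bar\mu)\in\mathfrak{F}_+(u)$, the strict version of \eqref{Equation:F(u)}. By \eqref{Equation:Reduction_t} it suffices to show that the $\liminf$ is at least $\mathcal{I}_\infty(t^\ast,\bar\mu)-\epsilon$ for every $\epsilon>0$.

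\textbf{Step 1: restrict and convert the Kac--Rice integral.} Start from Proposition \ref{Kac-Rice-Final-Formula} with $B=(Nu,\infty)$. Since the integrand is nonnegative, restrict to $\{\opnorm{\sigma}_2\in[c,d],\ \opnorm{\sigma}_{2q_2-2}\le M\}$ with $c<t^\ast<d$. On this set Proposition \ref{Proposition:Main_Truncation} lets us replace $\mathbb{E}|\det M_{N-1}|$ by $\mathbb{E}|\det Q_N^K|$ at cost $e^{o(N)}$ for large $K$. Proposition \ref{Proposition:Application_of_Ben} then replaces this by $e^{Ns_N^K(\sigma)}$. Note that $s_N^K(\sigma)=\phi_{3,K}(\opnorm{\sigma}_2,\nu_{\sigma,N})$ after the rescaling $\sigma=t\omega$. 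The prefactor $|\lan{\sigma,v(\sigma)}|/(N\opnorm{\sigma}_2^{N+p})$ is handled by polynomial factors and \eqref{Equation:Tech_2}; the factor $\opnorm{\sigma}_2^{-N}$ cancels against the Jacobian $t^{N-1}$ in polar coordinates. Write $\sigma=t\sqrt{N}g/\|g\|_2$ with $S_{N-1}N^{N/2}$ times the uniform law on the sphere. Stirling gives $\frac1N\log\bigl((\frac{p-1}{2\pi})^{N/2}S_{N-1}N^{N/2}\bigr)\to\frac12\log(p-1)+\frac12$, which is exactly the constant in $\mathcal{I}$.

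Since $c_{\ref{Constant:Exp}}f_N(\sigma)=\phi_2(t,\nu_{g,N})-\phi_1(t,\nu_{g,N})$, the integral is bounded below by $(d-c)\,e^{N(\frac12\log(p-1)+\frac12+o(1))}$ times
\[
\mathbb{E}\Big[e^{N\Psi(T,\nu_{g,N})}\mathbbm{1}_{\{(T,\nu_{g,N})\in\mathfrak{U}\}}\Big],
\qquad \Psi=\Phi-\phi_2,\quad \Phi=\phi_1+\phi_{3,K}.
\]
Here $T$ is uniform on $[c,d]$, and $\mathfrak{U}$ is the open set where $(t,\mu)\in\mathfrak{F}_+(u)$ and $m_{2q_2-2}(\mu)<M$.

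\textbf{Step 2: apply Theorem \ref{Theorem:Varadhan_Tilted}(i).} Choose $s\in[q_2,2q_2-2)$ and check the following.
\begin{itemize}
\item $\Phi$ is continuous in $\mathscr{T}\otimes\mathscr{W}_s$, by Proposition \ref{Proposition:Continuity_of_Exp}(i) and (iv).
\item $\mathfrak{U}$ is open. The constraint functional $\mu\mapsto\mathbb{E}_\mu[p^{-1}tXV'(tX)-V(tX)]$ involves moments up to order $q_2\le s$, so it is $\mathscr{W}_s$-continuous. The condition $m_{2q_2-2}<M$ is $\mathscr{W}_s$-lower semicontinuous only, so this needs care.
\item The tail condition \eqref{Equation:Varadhan_Tilted_Condition_1} and the bound \eqref{Equation:Varadhan_Tilted_Condition_2} hold. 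By Cauchy--Schwarz, $\phi_1\le\frac{p-1}{p}\phi_2$, as in \eqref{Equation:Rough_Estimate_Exp}. Hence $\Psi\le -\frac1p\phi_2+\phi_{3,K}$. The term $\phi_{3,K}$ is bounded by \eqref{Equation:Bounded_phi_3}, and $\phi_2\gtrsim c^{2(q_2-p)}m_{2q_2-2}(\mu)$ by \ref{Condition:V}.
\end{itemize}
Theorem \ref{Theorem:Varadhan_Tilted}(i) then gives $\liminf\ge \Psi(t^\ast,\bar\mu)-\mathrm{KL}(\bar\mu\,\|\,\mu_{\text{Norm}})$, provided $(t^\ast,\bar\mu)\in\mathfrak{U}$ with $M>m_{2q_2-2}(\bar\mu)$.

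\textbf{Step 3: send $K\to\infty$ and match the formula.} Remove the truncation using Proposition \ref{Proposition:Approximation_of_phi}, which is uniform over $t\in[c,d]$ and $m_2=1$. Then rewrite $\phi_1-\phi_2+\phi_{3,\infty}$ at $(t^\ast,\bar\mu)$ as $\varphi(t^\ast,\mu^\ast)$ via $\nu=S_t(\mu)$. The extra term $-\frac12(1-t^2+2\log t)$ in $\mathcal{I}$ comes from $\mathrm{KL}(S_t\bar\mu\,\|\,\mu_{\text{Norm}})=\mathrm{KL}(\bar\mu\,\|\,\mu_{\text{Norm}})+\frac12(t^2-1)-\log t$. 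This bookkeeping should be checked against \eqref{Equation:Reduction_t}.

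\textbf{Main obstacle.} Step 1's uniformity is the subtle point. Proposition \ref{Proposition:Main_Truncation} is stated as a two-parameter limit uniform over $\opnorm{\sigma}_2\le M$, and we must fix $K$ large before letting $N\to\infty$. The plan is to fix $\epsilon$ and choose $K_0,N_0$ so the error is below $\epsilon$ for all $K\ge K_0$ and $N\ge N_0$. A second concern is the openness of the $m_{2q_2-2}$ restriction. I would avoid it by not imposing that restriction in the lower bound at all, and using $\mathfrak{U}=\mathfrak{F}_+(u)\cap([c,d]\times\mathcal{P}_s)$ directly. This is legitimate because Proposition \ref{Proposition:Main_Truncation} only needs $\opnorm{\sigma}_2\le d$.
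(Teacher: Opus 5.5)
Your proposal is correct and follows the paper's proof essentially step for step. You restrict the Kac--Rice integral to a shell in $\opnorm{\sigma}_2$, pass from $M_{N-1}$ to $Q_N^K$ and then to $e^{Ns_N^K}$ via Propositions \ref{Proposition:Main_Truncation} and \ref{Proposition:Application_of_Ben}, and apply the lower bound of Theorem \ref{Theorem:Varadhan_Tilted} with $\Phi=\phi_1+\phi_{3,K}$ on the open set $\mathfrak{F}_+(u)$; like your final version, the paper imposes no $\opnorm{\cdot}_{2q_2-2}$ cutoff here. You then remove $K$ with Proposition \ref{Proposition:Approximation_of_phi} and convert via \eqref{Equation:Reduction_t}. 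The only cosmetic difference is that you localize $t$ around a fixed target $t^\ast=\sqrt{m_2(\mu^\ast)}$, whereas the paper takes the supremum over a window $[\delta,M]$ and then sends $\delta\downarrow 0$, $M\uparrow\infty$.
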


\begin{proof}
    Fix $M, \epsilon, \delta > 0$. The following inequalities hold as long as $K$ is large enough,
    \begin{align}
        &\liminf_{N \to \infty} \dfrac{1}{N}\log \mathbb{E}[\text{Crt}_N((Nu, \infty))] \notag \\
        &= \frac{1}{2}\log\Bpar{\frac{p - 1}{2\pi}} + \liminf_{N \to \infty} \frac{1}{N}\log I_{\mathbb{E}|\det M_{N - 1}(\cdot)|}(\mathbb{R}^N) \label{Equation:Main_Lower_Bound_1} \\
       &\geq \frac{1}{2}\log\Bpar{\frac{p - 1}{2\pi}} + \liminf_{N \to \infty} \frac{1}{N}\log I_{\mathbb{E}|\det M_{N - 1}(\cdot)|}(\{\delta \leq \opnorm{\sigma}_2 \leq M\}) \label{Equation:Main_Lower_Bound_2} 
        \\
        &\geq -\epsilon + \frac{1}{2}\log\Bpar{\frac{p - 1}{2\pi}} + \liminf_{N \to \infty} \frac{1}{N}\log I_{\mathbb{E}|\det Q_N^K(\cdot)|}(\{\delta \leq \opnorm{\sigma}_2 \leq M\}) \label{Equation:Main_Lower_Bound_3} \\
        &= -\epsilon + \frac{1}{2}\log\Bpar{\frac{p - 1}{2\pi}} + \liminf_{N \to \infty}\dfrac{1}{N}\log I_{\exp s_N^K(\cdot)}(\{\delta \leq \opnorm{\sigma}_2 \leq M\}) \label{Equation:Main_Lower_Bound_4} \\
        \nonumber&= -\epsilon + \frac{1}{2}\log(p - 1) + \frac{1}{2} \\
        &+ \liminf_{N \to \infty} \frac{1}{N}\log\Big\{\int_\delta^M  \mathbb{E}\Big[t^{-p - 1}\frac{|\langle\sqrt{N}t\omega, v(\sqrt{N}t\omega) \rangle|}{N}e^{N\varphi_K(t, L_{\sqrt{N}\omega, N})}\mathbbm{1}_{\{(t, L_{\sqrt{N}\omega, N}) \in \mathfrak{F}(u)\}}\Big]dt\Big\} \label{Equation:Main_Lower_Bound_5},
    \end{align}
    where \eqref{Equation:Main_Lower_Bound_1} holds by \eqref{Kac-Rice-Final-Formula},  \eqref{Equation:Main_Lower_Bound_3} follows from Proposition \ref{Proposition:Main_Truncation}, \eqref{Equation:Main_Lower_Bound_4} uses Proposition \ref{Proposition:Application_of_Ben}, and finally \eqref{Equation:Main_Lower_Bound_5} is valid by change of variables $\sigma = \sqrt{N}t\omega$ where $\omega \in \mathbb{S}^{N - 1}$. Note that by condition \ref{Condition:V} and \eqref{Equation:Tech_2}, we have
    \[t^{-p - 1}\frac{|\langle\sqrt{N}t\omega, v(\sqrt{N}t\omega) \rangle|}{N} \geq \oldconstant{Constant:Tech_2}t^{q_1 - p - 1} \geq \oldconstant{Constant:Tech_2}\delta^{q_1 - p - 1} \geq \oldconstant{Constant:Tech_2}\min\{M^{q_1 - p - 1}, \delta^{q_1 - p - 1}\}\]
    and this term is therefore negligible in \eqref{Equation:Main_Lower_Bound_5}. Hence, after noting that $L_{\sqrt{N}\omega,N}$ is identically distributed as $\nu_{g,N}$, we have arrived at
    \begin{align}
        &\liminf_{N \to \infty} \dfrac{1}{N}\log \mathbb{E}[\text{Crt}_N((Nu, \infty))] \notag \\
        &\geq -\epsilon + \frac{1}{2}\log(p - 1) + \frac{1}{2} + \liminf_{N \to \infty} \frac{1}{N}\log \mathbb{E}\Big[e^{N\varphi_K(T, \nu_{g, N})} \mathbbm{1}_{\{(T, \nu_{g, N}) \in \mathfrak{F}(u)\}}\Big], \label{Equation:Main_Lower_Bound_6}
    \end{align}
    where $T$ is uniformly sampled from $[\delta, M]$. Set $q_2 < s < 2q_2 - 2$ and $\Phi \colon [\delta, M] \times \mathcal{P}_s(\mathbb{R}) \to \mathbb{R}$ by $\Phi = \phi_1 + \phi_{3, K}$, which is continuous on $([\delta, M] \times \mathcal{P}_s(\mathbb{R}), \mathscr{T} \otimes \mathscr{W}_s)$ by Proposition \ref{Proposition:Continuity_of_Exp}. Notice that 
    \begin{align}
        \Phi(\mu) - \phi_2(t, \mu) &= \phi_1(t, \mu) - \phi_2(t, \mu) + \phi_{3, K}(t, \mu) \notag \\
        &\leq -\oldconstant{Constant:Exp}t^{2 - 2p}\int_{\mathbb{R}}V'(tx)^2\mu(dx) + \phi_{3, K}(t, \mu) \label{Equation:Main_Upper_Bound_7} \\
        &\leq -\oldconstant{Constant:Bound}^{-2}\oldconstant{Constant:Exp}t^{2(q_2 - p)}m_{2q_2 - 2}(\mu) + (4\oldconstant{Constant:Free_Convolution_Infty} + 2\oldconstant{Constant:A_1}K + 4), \label{Equation:Main_Upper_Bound_8}
    \end{align}
    where \eqref{Equation:Main_Upper_Bound_7} holds by the Cauchy-Schwartz inequality and \eqref{Equation:Main_Upper_Bound_8} is true because of  \ref{Condition:V} and \eqref{Equation:Bounded_phi_3}. Therefore, $\Phi$ satisfies both conditions \eqref{Equation:Varadhan_Tilted_Condition_1} and \eqref{Equation:Varadhan_Tilted_Condition_2}. As a result, we can apply the lower bound in Theorem \ref{Theorem:Varadhan_Tilted} to the right hand side of \eqref{Equation:Main_Lower_Bound_6} with the open set 
    \[\mathfrak{U} = \mathfrak{F}_+(u) = \Big\{(t, \mu) \in [0, \infty) \times \mathcal{P}_s(\mathbb{R}) \,\Big\vert\, \mathbb{E}_\mu\big[p^{-1}tXV'(tX) - V(tX)\big] > u\Big\}\] 
    to conclude that
    \begin{equation}\label{Equation:Main_Lower_Bound_7}
        \liminf_{N \to \infty} \dfrac{1}{N}\log \mathbb{E}[\text{Crt}_N((Nu, \infty))] \geq -\epsilon + \sup_{\delta \leq t \leq M, m_2(\mu) = 1, (t, \mu) \in \mathfrak{F}_+(u)} \mathcal{I}_K(t, \mu).
    \end{equation}
    At last, by Proposition \ref{Proposition:Approximation_of_phi}, we can pass to the limits in \eqref{Equation:Main_Lower_Bound_7} in the order $K \uparrow \infty$ and then $\delta\downarrow 0,M\uparrow \infty$  by using the equivalence \eqref{Equation:Reduction_t} and noting that adding an additional constraint $m_{2q_2 - 2}(\mu) < \infty$ reduces a further lower bound.
\end{proof}

Next we treat the upper bound. As before, we need to avoid the singularity by performing a truncation near the origin. Unlike the trivial bound \eqref{Equation:Main_Lower_Bound_2} in our upper bound proof, this truncation requires more delicate justifications.
\begin{lemma}[Truncation in $\opnorm{\cdot}_2$]\label{Lemma:Upper_Truncation}\
    \begin{enumerate}[label = (\roman*)]
        \item\label{Item:Upper_Truncation_1} If $V$ satisfies \ref{Condition:V}, then for all $M \ge 1$ and $ u > 0$, there exists some $\delta = \delta(M, u)>0$ such that
        \[I_{\exp s_N^K(\cdot)}(\{\opnorm{\sigma}_2 \leq \delta, \opnorm{\sigma}_{2q_2 - 2} \leq M\}) = 0.\]
        \item\label{Item:Upper_Truncation_2} If $V$ satisfies Assumption \ref{ass1} and $u = 0$, then for all $M \geq 1$, $K <\infty$, and $\epsilon > 0$, there exists some $\delta = \delta(M, K, \epsilon) > 0$ such that
        \begin{align*}
            &\limsup_{N \to \infty}\frac{1}{N}\log I_{\exp s_N^K(\cdot)}(\{\opnorm{\sigma}_{2q_2 - 2} \leq M\}) \\
            &\leq \epsilon + \limsup_{N \to \infty}\frac{1}{N}\log I_{\exp s_N^K(\cdot)}(\{\delta \leq \opnorm{\sigma}_2, \opnorm{\sigma}_{2q_2 - 2} \leq M\}).
        \end{align*}
    \end{enumerate}
\end{lemma}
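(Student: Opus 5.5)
Part \ref{Item:Upper_Truncation_1} should be purely deterministic: I will show that the region $\{\opnorm{\sigma}_2 \leq \delta,\ \opnorm{\sigma}_{2q_2-2}\leq M\}$ does not meet $\Omega(u)$ when $\delta$ is small. Part \ref{Item:Upper_Truncation_2} is a comparison of exponential rates: near the origin the integrand is essentially the $t\to 0$ value of the functional, and that value can also be reached from the region $\opnorm{\sigma}_2 \geq \delta$.

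For \ref{Item:Upper_Truncation_1}, condition \ref{Condition:V} gives
\[N^{-1}\bigl(p^{-1}\lan{\sigma,V'(\sigma)}-\lan{\underline 1,V(\sigma)}\bigr)\leq \oldconstant{Constant:Bound}(p^{-1}+1)\bigl(\opnorm{\sigma}_{q_1}^{q_1}+\opnorm{\sigma}_{q_2}^{q_2}\bigr).\]
Since $2<q_1\leq q_2<2q_2-2$, Hölder interpolation gives $\opnorm{\sigma}_r\leq \opnorm{\sigma}_2^{\theta_r}\opnorm{\sigma}_{2q_2-2}^{1-\theta_r}$ for $r\in\{q_1,q_2\}$, with some $\theta_r\in(0,1)$. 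On the region in question the right-hand side is therefore at most $C(M)(\delta^{q_1\theta_{q_1}}+\delta^{q_2\theta_{q_2}})$. Choosing $\delta$ so that this is $<u$ makes the region disjoint from $\Omega(u)$, and the integral vanishes identically.

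For \ref{Item:Upper_Truncation_2} with $u=0$, I use the polar change of variables $\sigma=\sqrt N t\omega$ as in \eqref{Equation:Main_Lower_Bound_5}. This writes $I_{\exp s_N^K}$ over $\{t\le\delta,\ \opnorm{\sigma}_{2q_2-2}\le M\}$ as
\[N^{N/2}S_{N-1}\int_0^\delta t^{-p-1}\,\mathbb{E}\bigl[N^{-1}\lan{\sigma,v(\sigma)}\,e^{-\oldconstant{Constant:Exp}Nf_N(\sigma)+Ns_N^K(\sigma)}\mathbbm 1\{\cdots\}\bigr]\,dt.\]
The steps are as follows.
\begin{enumerate}[label=(\alph*)]
\item The prefactor $t^{-p-1}N^{-1}\lan{\sigma,v(\sigma)}$ is bounded using \ref{Condition:V} and the interpolation above by $C(M)t^{\gamma}$ for some $\gamma>-1$. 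Hence it is integrable near $0$ and subexponential.
\item Drop $e^{-\oldconstant{Constant:Exp}Nf_N}\leq 1$, which is valid by \eqref{Equation:Rough_Estimate_Exp}.
\item Show that $s_N^K(\sigma)\leq -\tfrac12+\epsilon$ uniformly on the small region; recall that $-\tfrac12=\int\log|\lambda|\,\mu_{\rm sc}(d\lambda)$. The mean matrix $D_N^K(\sigma)$ has nonnegative entries, and its averaged trace is at most $\oldconstant{Constant:A_1}\oldconstant{Constant:Bound}\opnorm{\sigma}_2^{2-p}(\opnorm{\sigma}_{q_1-2}^{q_1-2}+\opnorm{\sigma}_{q_2-2}^{q_2-2})$. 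Interpolating between $\opnorm{\cdot}_2$ and $\opnorm{\cdot}_{2q_2-2}$ bounds this by $C(M)\delta^{\kappa}$ for some $\kappa>0$, since $q_1>p$. Therefore $W_1(\mu_{D_N^K(\sigma)},\delta_0)\leq C(M)\delta^\kappa$. By Theorem \ref{Theorem:Triangle_Equation_Free_Convolution}, $\mu_{D_N^K}\boxplus\mu_{\rm sc}$ is then close to $\mu_{\rm sc}$. The logarithm is handled as in Proposition \ref{Proposition:Continuity_of_Exp}\ref{Item:Continuity_4}: split off the $\log_\eta$ regularization and use the bounded density from Theorem \ref{Theorem:Free_Convolution_Semi_Circle} together with support in $[-\oldconstant{Constant:A_1}K-2,\oldconstant{Constant:A_1}K+2]$. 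This yields the claimed bound for small $\delta=\delta(M,K,\epsilon)$.
\end{enumerate}
Combining (a)--(c) with $\frac1N\log(N^{N/2}S_{N-1})\to\frac12\log(2\pi e)$, and including the prefactor $\frac12\log\frac{p-1}{2\pi}$, the small region contributes at most $\frac12\log(p-1)+\frac12-\frac12+\epsilon$.

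It remains to show that the region $\{\opnorm{\sigma}_2\geq\delta\}$ has rate at least $\frac12\log(p-1)+\frac12-\frac12-\epsilon$. I will rerun the lower-bound chain \eqref{Equation:Main_Lower_Bound_4}--\eqref{Equation:Main_Lower_Bound_7} with the constraint $\opnorm{\sigma}_{2q_2-2}\leq M$ inserted; it is an open condition in $\mathscr W_s$ once $s$ is close to $2q_2-2$, and otherwise I relax it by continuity. I then evaluate at the explicit test point $(t,\mu_{\rm Norm})$ with $t\in[\delta,2\delta]$. Here $\mathrm{KL}=0$, and $\phi_1,\phi_2\to0$ and $\phi_{3,K}(t,\mu_{\rm Norm})\to-\frac12$ as $t\to0$. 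The point lies in $\mathfrak F_+(0)$ by \eqref{Equation:Tech_3}, and it satisfies the $\opnorm{\cdot}_{2q_2-2}\leq M$ constraint with overwhelming probability when $M\geq1$ is at least a Gaussian moment constant. The two bounds match up to $2\epsilon$, which gives the claim.

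The main obstacle is step (c). The required bound on $s_N^K$ must be uniform in $N$ and $\sigma$ rather than in the limit, and the logarithmic singularity at $0$ has to be controlled through the bounded-density estimate for the free convolution, uniformly over all mean spectra supported in $[0,\oldconstant{Constant:A_1}K]$.
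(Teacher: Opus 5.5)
Part \ref{Item:Upper_Truncation_1} is correct and is the paper's argument; the paper uses the special case $\opnorm{\sigma}_q^q\le\opnorm{\sigma}_2\,\opnorm{\sigma}_{2q-2}^{q-1}$ of your interpolation. For part \ref{Item:Upper_Truncation_2} your architecture also matches the paper's: an upper bound on the ball $\{\opnorm{\sigma}_2\le\delta\}$ and a matching lower bound on an annulus, both at the level $s_N^K(\underline 0)=-\frac12$. However, three of the estimates you rely on fail as stated.

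\textbf{Step (a).} Interpolating against the constraint $\opnorm{\sigma}_{2q_2-2}\le M$ destroys the homogeneity in $t$. From $\frac1r=\frac{\theta_r}{2}+\frac{1-\theta_r}{2q_2-2}$ one gets $r\theta_r=\frac{2q_2-2-r}{q_2-2}$, so $q_2\theta_{q_2}=1$. Your bound on the prefactor is therefore $C(M)t^{-p}$, i.e.\ $\gamma=-p\le-2$, which is not integrable at $0$. Since H\"older is sharp here, no $N$-uniform pointwise bound over the constraint set does better. You must keep the polar structure as in \eqref{Equation:Large_Truncation_3}: $N^{-1}\lan{\sigma,v(\sigma)}\le\oldconstant{Constant:Bound}\oldconstant{Constant:A_1}\bigl(t^{q_1}\opnorm{\sqrt N\omega}_{q_1}^{q_1}+t^{q_2}\opnorm{\sqrt N\omega}_{q_2}^{q_2}\bigr)$. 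Its $\omega$-averages are $O(1)$ (at worst polynomial in $N$), which leaves $t^{q_1-p-1}$, integrable since $q_1>p$.

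\textbf{Step (c).} This is the more serious problem. Your bound on the \emph{untruncated} normalized trace does not tend to zero when $p\ge4$. Let $\sigma$ have $\lfloor\rho N\rfloor$ coordinates equal to $1$ and the rest $0$. Then $\opnorm{\sigma}_2\approx\sqrt\rho$ and $\opnorm{\sigma}_{2q_2-2}\le1\le M$, yet $\opnorm{\sigma}_2^{2-p}N^{-1}\sum_iV''(\sigma_i)\approx\rho^{(4-p)/2}V''(1)$. This stays bounded away from $0$, and blows up for $p>4$. So no $\kappa>0$ exists, and the uniform closeness of $s_N^K(\sigma)$ to $-\frac12$ has to come from the truncation at $K$. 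The paper uses the L\'evy distance, which only sees the \emph{fraction} of diagonal entries exceeding $\epsilon$. By Markov with exponent $2/(q_2-2)$ and Jensen, this fraction is at most $C\epsilon^{-2/(q_2-2)}\bigl(\opnorm{\sigma}_2^{2(q_1-p)/(q_2-2)}+\opnorm{\sigma}_2^{2(q_2-p)/(q_2-2)}\bigr)$, uniformly in $N$ and without using $M$. The support bound $[-\oldconstant{Constant:A_1}K-2,\oldconstant{Constant:A_1}K+2]$ then converts this into $|s_N^K(\sigma)+\frac12|<\epsilon$ on $\{\opnorm{\sigma}_2\le2\delta\}$. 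Within your $W_1$ formulation, $x\wedge K\le K^{1-\alpha}x^{\alpha}$ with $\alpha=\min\{1,2/(q_2-2)\}$ does the same job.

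\textbf{The lower bound.} The set $\{\opnorm{\sigma}_{2q_2-2}\le M\}$ is not open in $\mathscr{W}_s$ for any $s<2q_2-2$, which is the only range in which Theorem~\ref{Theorem:LDP_Tilted} is available. There $m_{2q_2-2}$ is merely lower semicontinuous: move mass $\epsilon$ to a point $x_\epsilon$ with $\epsilon x_\epsilon^s\to0$ but $\epsilon x_\epsilon^{2q_2-2}\to\infty$. Hence no nonempty $\mathscr{W}_s$-open set lies inside the constraint, and Theorem~\ref{Theorem:Varadhan_Tilted}(i) cannot be applied to it.

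No large deviation input is needed here, because your test point $\mu_{\text{Norm}}$ has $\text{KL}=0$. By the law of large numbers, $m_{2q_2-2}(\nu_{g,N})\le2m_{2q_2-2}(\mu_{\text{Norm}})$ with probability tending to one. On that event, for $t\in[\delta,2\delta]$ with $\delta$ small:
\begin{itemize}
\item the constraint holds;
\item $\oldconstant{Constant:Exp}f_N\le\epsilon$ by \eqref{Equation:Rough_Estimate_Exp} and \eqref{Equation:Tech_1};
\item $s_N^K\ge-\frac12-\epsilon$ by the repaired step (c);
\item the prefactor is bounded below through \eqref{Equation:Tech_2}.
\end{itemize}
The paper does exactly this, more explicitly, via \eqref{Equation:Tech} restricted to $\{C^{-1}\le|g_i|\le C\}$, together with the uniform continuity of $s_N^K$ at $\underline 0$. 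With these three repairs your proof becomes the paper's.
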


\begin{proof}
    For \ref{Item:Upper_Truncation_1}, recall the set
    \[\Omega(u) = \Big\{\sigma \in \mathbb{R}^N \, \Big\vert\,\frac{1}{N}\sum_{i = 1}^N p^{-1}\sigma_iV'(\sigma_i) - V(\sigma_i) \geq u\Big\}.\]
    If $\opnorm{\sigma}_{2q_2 - 2} \leq M$, we can choose $\delta > 0$ small enough so that if $\opnorm{\sigma}_2 \leq \delta$, then 
    \begin{align*}
        \frac{1}{N}\sum_{i = 1}^N p^{-1}\sigma_iV'(\sigma_i) - V(\sigma_i) &\leq \frac{1}{N}\sum_{i = 1}^N p^{-1}\oldconstant{Constant:Bound}(|\sigma_i|^{q_1} + |\sigma_i|^{q_2}) = p^{-1}\oldconstant{Constant:Bound}(\opnorm{\sigma}_{q_1}^{q_1} + \opnorm{\sigma}_{q_2}^{q_2}) \\
        &\leq p^{-1}\oldconstant{Constant:Bound}\opnorm{\sigma}_2(\opnorm{\sigma}_{2q_1 - 2}^{q_1 - 1} + \opnorm{\sigma}_{2q_2 - 2}^{q_2 - 1}) \leq \delta \cdot 2p^{-1}\oldconstant{Constant:Bound}M^{q_2 - 1} < u.
    \end{align*}
    Hence, $\sigma\notin\Omega(u)$ and \ref{Item:Upper_Truncation_1} holds. 
    
    For \ref{Item:Upper_Truncation_2}, note that as in Proposition \ref{Proposition:Continuity_of_Exp}, we have 
    \[\left|s_N^K(\sigma) - s_N^K(\underline{0})\right| \leq o_{\eta \to 0}(1) + \frac{2(\oldconstant{Constant:A_1}K + 2)}{\eta} \cdot d_{\text{L}}(\mu_{D_N^K(\sigma)}, \mu_{D_N^K(\underline{0})}),\]
    where the L\'{e}vy distance satisfies
    \begin{align*}
        d_{\text{L}}(\mu_{D_N^K(\sigma)}, \mu_{D_N^K(\underline{0})}) &\leq \inf\Big\{\epsilon > 0 \,\Bigl\vert\, \frac{1}{N}\sum_{i = 1}^N \mathbbm{1}_{\{\oldconstant{Constant:A_1}V''(\sigma_i)/\opnorm{\sigma}_2^{p - 2} > \epsilon\}} < \epsilon\Big\} \\
        &\leq \inf\Big\{\epsilon > 0 \,\Bigl\vert\, \frac{1}{N}\sum_{i = 1}^N \Big(\frac{\oldconstant{Constant:A_1}V''(\sigma_i)}{\epsilon\opnorm{\sigma}_2^{p - 2}}\Big)^{2/(q_2 - 2)} < \epsilon\Big\} \\
        &\leq \inf\Big\{\epsilon > 0 \,\Bigl\vert\, \Big(\frac{2^{q_2/2}\oldconstant{Constant:Bound}\oldconstant{Constant:A_1}}{\epsilon\opnorm{\sigma}_2^{p - 2}}\Big)^{2/(q_2 - 2)}\opnorm{\sigma}_2^2 < \epsilon\Big\} = 2(\oldconstant{Constant:Bound}\oldconstant{Constant:A_1})^{2/q_2}\opnorm{x}_2^{2(q_2 - p)/q_2}.
    \end{align*}
    Hence, we can choose $\eta > 0$ and $\delta = \delta(K) > 0$ such that $|s_N^K(\sigma) - s_N^K(\underline{0})| < \epsilon$ whenever $\opnorm{x}_2 < 2\delta$. Now, we apply \eqref{Equation:Tech} so that
    \begin{align*}
        &I_{\exp{s_N^K(\cdot)}}(\{\delta \leq \opnorm{\sigma}_2 \leq 2\delta, \opnorm{\sigma}_{2q_2 - 2} \leq M\}) \notag \\
        &\geq e^{N(s_N^K(0) - \epsilon)}I_1(\{\delta \leq \opnorm{\sigma}_2 \leq 2\delta, \opnorm{\sigma}_{2q_2 - 2} \leq M\}) \\
        &\geq \oldconstant{Constant:Tech_2}e^{N(s_N^K(0) - \epsilon)}N^{N/2}S_{N - 1}\int_{\delta^{q_1 - p}}^{(2\delta)^{q_1 - p}} e^{-2\oldconstant{Constant:Bound}^2\oldconstant{Constant:Exp}pN\big(t^2C^{2(2q_1 - 2)} + t^{2(q_2 - p)/(q_1 - p)}C^{2(2q_2 - 2)}\big)} \\
        &\hspace{1cm} \times \mathbb{P}\Big(\big\{t^{1/(q_1 - p)}\opnorm{\sqrt{N}|g|/\|g\|_2}_{2q_2 - 2} \leq M\big\} \cap \bigcap_{i = 1}^N \{C^{-1} \leq |g_i| \leq C\}\Big) dt.
    \end{align*}
    Observe that for $(t,g)$ satisfying that $\delta^{q_1-p}\leq t\leq (2\delta)^{q_1-p}$ and $C^{-1}\leq |g_i|\leq C$ for all $1\leq i\leq N,$ we have
    $$t^{1/(q_1 - p)}\opnorm{\sqrt{N}g/\|g\|_2}_{2q_2 - 2} \leq 2\delta C^2,$$
    which implies that the integral above is bounded from below by
    \[[(2\delta)^{q_1 - p} - \delta^{q_1 - p}]e^{-2\oldconstant{Constant:Bound}^2\oldconstant{Constant:Exp}pN\big((2\delta)^{2(q_1 - p)}C^{2(2q_1 - 2)} + (2\delta)^{2(q2 - p)}C^{2(2q_2 - 2)}\big)}\mathbbm{1}_{\{2\delta C^2 \leq M\}}\mathbb{P}(C^{-1} \leq |g_1| \leq C)^N.\]
    By choosing $C$ large and $\delta > 0$ small enough, it follows that
    \[I_{\exp{s_N^K(\cdot)}}(\{\delta \leq \opnorm{\sigma}_2 \leq 2\delta, \opnorm{\sigma}_{2q_2 - 2} \leq M\}) \geq e^{N(s_N^K(0) - \epsilon)}N^{N/2}S_{N - 1}(1 + \epsilon)^{-N/2}.\]
    Similarly, 
    \begin{align}
        &I_{\exp{s_N^K(\cdot)}}(\{\opnorm{\sigma}_2 \leq \delta, \opnorm{\sigma}_{2q_2 - 2} \leq M\}) \notag \\
        \nonumber&\leq e^{N(s_N^K(\underline{0}) + \epsilon)} \int_{\{\opnorm{\sigma}_2 \leq \delta\}} \frac{|\lan{\sigma, v(\sigma)}|}{N\opnorm{\sigma}_2^{N + p}} d\sigma \\
        \\
        &\leq \oldconstant{Constant:Bound}\oldconstant{Constant:A_1}e^{N(s_N^K(\underline{0}) + \epsilon)} \int_{\{\opnorm{\sigma}_2 \leq \delta\}} \frac{\opnorm{\sigma}_{q_1}^{q_1} + \opnorm{\sigma}_{q_2}^{q_2}}{\opnorm{\sigma}_2^{N + p}} d\sigma, \label{Equation:Large_Truncation_2} \\
        &= \oldconstant{Constant:Bound}\oldconstant{Constant:A_1}e^{N(s_N^K(\underline{0}) + \epsilon)} N^{N/2}S_{N - 1}\int_0^\delta \mathbb{E}\big[\opnorm{\sqrt{N}\omega}_{q_1}^{q_1}t^{q_1 - p - 1} + \opnorm{\sqrt{N}\omega}_{q_2}^{q_2}t^{q_2 - p - 1}\big] dt, \label{Equation:Large_Truncation_3}
    \end{align}
    where \eqref{Equation:Large_Truncation_2} holds by condition \ref{Condition:V} and \eqref{Equation:Large_Truncation_3} uses change of variables $\sigma = \sqrt{N}t\omega$ for $\omega \in \mathbb{S}^{N - 1}$. Since $q_2 - p > q_1 - p > 0$, we have that for $N$ large enough, 
    \[I_{\exp{s_N^K(\cdot)}}(\{\opnorm{\sigma}_2 \leq \delta, \opnorm{\sigma}_{2q_2 - 2} \leq M\}) \leq e^{N(s_N^K(\underline{0}) + \epsilon)} N^{N/2}S_{N - 1}(1 + \epsilon)^{N/2}.\]
    Combining the upper and lower bound yields that for large $N,$
    \[\frac{I_{\exp{s_N^K(\cdot)}}(\{\opnorm{\sigma}_2 \leq \delta, \opnorm{\sigma}_{2q_2 - 2} \leq M\})}{I_{\exp{s_N^K(\cdot)}}(\{\opnorm{\sigma}_2 \geq \delta, \opnorm{\sigma}_{2q_2 - 2} \leq M\})} \leq e^{N\epsilon} (1 + \epsilon)^N.\]
This completes the proof of our second assertion. 
\end{proof}

\begin{proposition}\label{Proposition:Main_Theorem_Upper}
    If $V$ satisfies Assumption \ref{ass1}, then for all $u \geq 0$, we have the upper bound
    \[\limsup_{N \to \infty}\dfrac{1}{N}\log \mathbb{E}[\text{Crt}_N((Nu, \infty))] \leq \sup\{\,\mathcal{I}(\mu) \mid \mu \in \mathcal{P}_{2q_2 - 2}(\mathbb{R}) \cap \mathfrak{D}(u)\}.\]
\end{proposition}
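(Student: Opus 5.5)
The upper bound mirrors the lower bound in Proposition \ref{Proposition:Main_Theorem_Lower}, with truncations inserted in the reverse order. Note first that $\text{Crt}_N((Nu,\infty))\le \text{Crt}_N([Nu,\infty))$ and that $\Omega(u)$ is defined with $\ge u$, so working with the closed set $\mathfrak{F}(u)$ is natural.

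\textbf{Step 1: reduce to a bounded region with the truncated matrix.} Starting from Proposition \ref{Kac-Rice-Final-Formula}, Proposition \ref{Proposition:q_Truncation} restricts the integral to $\{\opnorm{\sigma}_{2q_2-2}\le M\}$ at the cost of $\epsilon$. On this set $\opnorm{\sigma}_2\le M$. Proposition \ref{Proposition:Main_Truncation} then replaces $\mathbb{E}|\det M_{N-1}(\sigma)|$ by $\mathbb{E}|\det Q_N^K(\sigma)|$, and Proposition \ref{Proposition:Application_of_Ben} replaces the latter by $e^{Ns_N^K(\sigma)}$. Both replacements are uniform in $\sigma$, with errors $e^{o(N)+N\epsilon}$ for $K$ large.

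\textbf{Step 2: cut out small norms.} Next I remove $\{\opnorm{\sigma}_2<\delta\}$ using Lemma \ref{Lemma:Upper_Truncation}. For $u>0$ part (i) shows this region contributes nothing. For $u=0$ part (ii) shows it costs only $\epsilon$.

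\textbf{Step 3: polar coordinates and the prefactor.} On the remaining region $\delta\le\opnorm{\sigma}_2\le M$, I pass to polar coordinates $\sigma=\sqrt N t\omega$ with $t\in[\delta,M]$, exactly as in \eqref{Equation:Main_Lower_Bound_5}. The prefactor $t^{-p-1}|\langle\sigma,v(\sigma)\rangle|/N$ is now bounded above polynomially in $M$ by \ref{Condition:V}, since $m_{q_2}(L)\le M^{q_2}$, so it is negligible at exponential scale. This yields
\[
\limsup \frac1N\log\mathbb{E}\,\text{Crt}_N \le 3\epsilon+\tfrac12\log(p-1)+\tfrac12+\limsup\frac1N\log\mathbb{E}\bigl[e^{N\varphi_K(T,\nu_{g,N})}\mathbbm{1}_{\{(T,\nu_{g,N})\in\mathfrak{C}\}}\bigr],
\]
where $T$ is uniform on $[\delta,M]$ and $\mathfrak{C}=\mathfrak{F}(u)\cap\{m_{2q_2-2}(\mu)\le M^{2q_2-2}\}$ (after rescaling).

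\textbf{Step 4: Varadhan upper bound.} I take $\Phi=\phi_1+\phi_{3,K}$ with $q_2<s<2q_2-2$. The conditions \eqref{Equation:Varadhan_Tilted_Condition_1} and \eqref{Equation:Varadhan_Tilted_Condition_2} were already checked in \eqref{Equation:Main_Upper_Bound_8}, so Theorem \ref{Theorem:Varadhan_Tilted}(ii) applies. This requires $\mathfrak{C}$ to be closed in $\mathscr{T}\otimes\mathscr{W}_s$.
\begin{itemize}
\item The moment constraint is closed by lower semicontinuity of moments under weak convergence.
\item The constraint $\mathbb{E}_\mu[p^{-1}tXV'(tX)-V(tX)]\ge u$ is continuous in $(t,\mu)$ for $s\ge q_2$, by the growth bound in \ref{Condition:V} and uniform integrability, as in Proposition \ref{Proposition:Continuity_of_Exp}(i).
\end{itemize}
The result is the bound $\sup\{\mathcal{I}_K(t,\mu): t\in[\delta,M],\ m_2(\mu)=1,\ (t,\mu)\in\mathfrak{F}(u),\ m_{2q_2-2}(\mu)<\infty\}$.

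\textbf{Step 5: let $K\to\infty$.} Proposition \ref{Proposition:Approximation_of_phi} gives $\phi_{3,K}\to\phi_{3,\infty}$ uniformly over $t\in[\delta,M]$ and $m_2(\mu)=1$. So the supremum of $\mathcal{I}_K$ is within $o_K(1)$ of the supremum of $\mathcal{I}_\infty$ over the same set. Enlarging the constraint set to all $t>0$ and applying \eqref{Equation:Reduction_t} gives the bound $\sup\{\mathcal{I}(\mu):\mu\in\mathcal{P}_{2q_2-2}\cap\mathfrak{D}(u)\}$. Since $\epsilon$ is arbitrary, this proves the proposition.

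\textbf{Main obstacle: order of limits.} The delicate point is the order in which the parameters are chosen, since they depend on each other:
\begin{itemize}
\item $M$ depends on $(\epsilon,u)$, from Proposition \ref{Proposition:q_Truncation}.
\item $K$ depends on $M$, from Proposition \ref{Proposition:Main_Truncation}.
\item $\delta$ depends on $(M,K,\epsilon)$, from Lemma \ref{Lemma:Upper_Truncation}.
\end{itemize}
Proposition \ref{Proposition:Approximation_of_phi} is only uniform on $[c,d]$ with $c>0$. Because $\delta$ is chosen after $K$, I need the final $K$-approximation to hold uniformly in $\delta$, or must avoid it. The fix I would try first is to bound $\phi_{3,K}\le\phi_{3,\infty}+o_K(1)$ uniformly over $t\in(0,M]$ with $m_2(\mu)=1$. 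The Kolmogorov–Smirnov estimate \eqref{Equation:Limit_K_3} depends only on the upper endpoint $d$, and for small $t$ both measures are close to $\mu_{\text{sc}}$. If this bound holds, the $K$-limit can be taken after fixing $\delta$, and Step 5 goes through as written.
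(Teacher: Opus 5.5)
Your proposal follows the paper's proof essentially step for step (Kac--Rice, the truncations of Propositions \ref{Proposition:q_Truncation}, \ref{Proposition:Main_Truncation} and \ref{Proposition:Application_of_Ben}, Lemma \ref{Lemma:Upper_Truncation}, polar coordinates on $[\delta,M]$, Theorem \ref{Theorem:Varadhan_Tilted}(ii) on the closed set $\mathfrak{F}(u)$, and then $K\uparrow\infty$ with \eqref{Equation:Reduction_t}). The extra moment constraint you put into $\mathfrak{C}$ is harmless but unnecessary, since dropping an indicator only enlarges a nonnegative integrand. Your order-of-limits remark is a genuine point the paper passes over, because for $u=0$ its $\delta$ depends on $K$. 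Your fix is valid: the proof of Proposition \ref{Proposition:Approximation_of_phi} uses $t$ only through the bound $t^{2-p}V''(tx)\le \oldconstant{Constant:Bound}\bigl(d^{q_1-p}|x|^{q_1-2}+d^{q_2-p}|x|^{q_2-2}\bigr)$ for $t\le d$, so the convergence is uniform over $t\in(0,d]$.
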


\begin{proof}
    Fix $\epsilon > 0$. First, observe that there exist constants $M = M(\epsilon)$, $K = K(M, \epsilon)$,  and $\delta = \delta(M, K, \epsilon)$ such that
    \begin{align}
        &\limsup_{N \to \infty} \frac{1}{N}\log \mathbb{E}[\text{Crt}_N((Nu, \infty))] \notag \\
        &= \frac{1}{2}\log\Big(\frac{p - 1}{2\pi}\Big) + \limsup_{N \to \infty} \frac{1}{N}\log I_{\mathbb{E}[|M_{N - 1}(\cdot)|]}(\mathbb{R}^N) \label{Equation:Main_Upper_Bound_1} \\
        &\leq \epsilon + \frac{1}{2}\log\Big(\frac{p - 1}{2\pi}\Big) + \limsup_{N \to \infty} \frac{1}{N}\log I_{\mathbb{E}[|\det Q_N^K(\cdot)|]}(\{\opnorm{\sigma}_{2q_2 - 2} \leq M\}) \label{Equation:Main_Upper_Bound_2} \\
        &= \epsilon + \frac{1}{2}\log\Big(\frac{p - 1}{2\pi}\Big) + \limsup_{N \to \infty} \frac{1}{N}\log I_{\exp{s_N^K(\cdot)}}(\{\opnorm{\sigma}_{2q_2 - 2} \leq M\}) \label{Equation:Main_Upper_Bound_3} \\
        &\leq 2\epsilon + \frac{1}{2}\log\Big(\frac{p - 1}{2\pi}\Big) + \limsup_{N \to \infty} \frac{1}{N}\log I_{\exp{s_N^K(\cdot)}}(\{\delta \leq \opnorm{\sigma}_2, \opnorm{\sigma}_{2q_2 - 2} \leq M\}) \label{Equation:Main_Upper_Bound_4} \\
        \nonumber&= 2\epsilon + \frac{1}{2}\log(p - 1) + \frac{1}{2} 
        \\
        &+ \limsup_{N \to \infty} \frac{1}{N}\log\Big\{\int_\delta^M t^{-p - 1} 
         \mathbb{E}\Big[\frac{|\langle\sqrt{N}t\omega, v(\sqrt{N}t\omega) \rangle|}{N}e^{N\varphi_K(t, L_{\sqrt{N}\omega, N})}\mathbbm{1}_{\{(t, L_{\sqrt{N}\omega, N}) \in \mathfrak{F}(u)\}}\Big]dt\Big\}, \label{Equation:Main_Upper_Bound_5}
    \end{align}
    where \eqref{Equation:Main_Upper_Bound_2} holds by using Propositions \ref{Proposition:Main_Truncation} and \ref{Proposition:q_Truncation}, \eqref{Equation:Main_Upper_Bound_3} uses Proposition \ref{Proposition:Application_of_Ben},  \eqref{Equation:Main_Upper_Bound_4} follows from Lemma \ref{Lemma:Upper_Truncation}, and \eqref{Equation:Main_Upper_Bound_5} is due to change of variables $\sigma = \sqrt{N}t\omega$ where $\omega \in \mathbb{S}^{N - 1}$. Observe that by condition \ref{Condition:V} and the definition of $v$ from \eqref{Equation:Vector},
    \begin{align*}
        t^{-p - 1}\frac{|\langle\sqrt{N}t\omega, v(\sqrt{N}t\omega) \rangle|}{N} &\leq \oldconstant{Constant:A_1}t^{-p-1} \opnorm{(\sqrt{N}t\omega)^2V''(\sqrt{N}t\omega)}_1 \\
        &\leq \oldconstant{Constant:Bound}\oldconstant{Constant:A_1}t^{-p-1}\big(\opnorm{\sqrt{N}t\omega}_{q_1}^{q_1} + \opnorm{\sqrt{N}t\omega}_{q_2}^{q_2}\big) \\
        &\leq \oldconstant{Constant:Bound}\oldconstant{Constant:A_1}\sum_{i = 1}^2\max\{M^{q_i - p - 1}, \delta^{q_i - p - 1}\}N^{q_i/2 - 1}.
    \end{align*}
    Hence, the left-hand side is negligible in \eqref{Equation:Main_Upper_Bound_5} so that after noting that $L_{\sqrt{N}\omega,N}$ is identically distributed as $\nu_{g,N},$
    \begin{align}
        &\limsup_{N \to \infty} \frac{1}{N}\log \mathbb{E}[\text{Crt}_N((Nu, \infty))] \notag \\
        &\leq 2\epsilon + \frac{1}{2}\log(p - 1) + \frac{1}{2} + \limsup_{N \to \infty} \frac{1}{N}\log\mathbb{E}\Big[e^{N\varphi_K(T, \nu_{g, N})}\mathbbm{1}_{\{(T, \nu_{g, N}) \in \mathfrak{F}(u)\}}\Big], \label{Equation:Main_Upper_Bound_6}
    \end{align}
    where $T$ is sampled uniformly from $[\delta, M]$. Set $q_2 < s < 2q_2 - 2$ and $\Phi \colon [\delta, M] \times \mathcal{P}(\mathbb{R}) \to \mathbb{R}$ by $\Phi = \phi_1 + \phi_{3, K}$. Recall that we have shown in Proposition \ref{Proposition:Main_Theorem_Lower} that $\Phi$ is a continuous function on $([\delta, M] \times \mathcal{P}_s(\mathbb{R}), \mathscr{T} \otimes \mathscr{W}_s)$ satisfying \eqref{Equation:Varadhan_Tilted_Condition_1} and \eqref{Equation:Varadhan_Tilted_Condition_2}. Therefore, we can apply the upper bound in Theorem \ref{Theorem:Varadhan_Tilted} to the right-hand side of \eqref{Equation:Main_Upper_Bound_6} with closed set $\mathfrak{C} = \mathfrak{F}(u)$, which leads to
    \begin{equation}\label{Equation:Main_Upper_Bound_9}
        \limsup_{N \to \infty} \frac{1}{N}\log \mathbb{E}[\text{Crt}_N((Nu, \infty))] \leq 2\epsilon + \sup_{\delta \leq t \leq M, m_2(\mu) = 1, (t, \mu) \in \mathfrak{F}(u)} \mathcal{I}_K(t, \mu).
    \end{equation}
    In the last step, by utilizing Proposition \ref{Proposition:Approximation_of_phi}, we can pass to the limit in \eqref{Equation:Main_Upper_Bound_9} in the order  $K \uparrow \infty$ and then $\delta \downarrow 0, M \uparrow \infty$ to deduce the asserted bound again relying on \eqref{Equation:Reduction_t}.
\end{proof}

\begin{proposition}\label{Proposition:Matching}
    If $V$ satisfies conditions \ref{Condition:V} and \ref{Condition:V''}, then for all $u \geq 0$,
    \[\sup\{\,\mathcal{I}(\mu) \mid \mu \in \mathcal{P}_{2q_2 - 2}(\mathbb{R}) \cap \mathfrak{D}_+(u)\} = \sup\{\,\mathcal{I}(\mu) \mid \mu \in \mathcal{P}_{2q_2 - 2}(\mathbb{R}) \cap \mathfrak{D}(u)\}.\]
\end{proposition}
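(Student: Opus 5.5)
The inequality ``$\le$'' is trivial because $\mathfrak{D}_+(u)\subseteq\mathfrak{D}(u)$. For ``$\ge$'', I would approximate any $\mu\in\mathcal{P}_{2q_2-2}(\mathbb{R})\cap\mathfrak{D}(u)$ with $\mathcal{I}(\mu)>-\infty$ by the dilations $\mu_\lambda:=S_\lambda(\mu)$ with $\lambda\downarrow 1$. Then I would show that $\mu_\lambda\in\mathfrak{D}_+(u)$ for $\lambda>1$ and that $\liminf_{\lambda\downarrow1}\mathcal{I}(\mu_\lambda)\ge\mathcal{I}(\mu)$. We may assume $\mu\neq\delta_0$; indeed, if $u>0$ then $\delta_0\notin\mathfrak{D}(u)$, and if $u=0$ the $\delta_0$ case is handled by the same approximation argument once one checks that $\mathcal{I}(\delta_0)=-\infty$ because of the KL term.

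\textbf{Strict membership.} Put $h(x)=p^{-1}xV'(x)-V(x)$. Then
\[
\frac{d}{d\lambda}h(\lambda x)=x\bigl(p^{-1}V'(\lambda x)+p^{-1}\lambda xV''(\lambda x)-V'(\lambda x)\bigr).
\]
By \ref{Condition:V''} and evenness, $yV''(y)\ge(q-1)V'(y)$ and $yV'(y)\ge0$, so this derivative is at least $\lambda^{-1}p^{-1}(q-p)\,\lambda xV'(\lambda x)\ge0$. The inequality is strict whenever $x\neq0$, since $yV'(y)\ge c_1^{-1}|y|^{q_1}$. It follows that $\mathbb{E}_{\mu_\lambda}[h]=\mathbb{E}_\mu[h(\lambda X)]>\mathbb{E}_\mu[h(X)]\ge u$ for $\lambda>1$ and $\mu\ne\delta_0$. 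The moment bounds from \ref{Condition:V} keep all these expectations finite because $\mu\in\mathcal{P}_{2q_2-2}$.

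\textbf{Continuity of $\mathcal{I}$ along dilations.} Write $t_\lambda=\lambda t_1$ with $t_1=\sqrt{m_2(\mu)}>0$. The elementary terms $\tfrac12(1-t^2+2\log t)$ and $\mathbb{E}_{\mu_\lambda}[XV'(X)]$, $\mathbb{E}_{\mu_\lambda}[V'(X)^2]$ are continuous in $\lambda$ by dominated convergence. Domination comes from \ref{Condition:V}: the integrands are bounded by $C(|x|^{q_1}+|x|^{2q_2-2})$ for $\lambda\in[1,2]$. For the KL term, use $\mathrm{KL}(S_\lambda\mu\|\mu_{\rm Norm})=\mathrm{KL}(\mu\|\mu_{\rm Norm})-\log\lambda+\tfrac12(\lambda^2-1)m_2(\mu)$, which is continuous in $\lambda$.

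The remaining term, and the one I expect to be the main obstacle, is the free-convolution log-potential $\phi_{3,\infty}(t_\lambda,\mu_\lambda)=\int\log|x|\,\bigl((g_{t_\lambda})_*\mu_\lambda\boxplus\mu_{\rm sc}\bigr)(dx)$. Here $(g_{t_\lambda})_*\mu_\lambda$ is the law of $c\,(\lambda t_1)^{2-p}V''(\lambda X)$ with $X\sim\mu$, which converges to $(g_{t_1})_*\mu$ in $W_1$, and even in $W_r$ for $r=(2q_2-2)/(q_2-2)>1$, by dominated convergence. To handle it I would reuse the argument of Proposition~\ref{Proposition:Approximation_of_phi} and of Proposition~\ref{Proposition:Continuity_of_Exp}\ref{Item:Continuity_4}:
\begin{enumerate}
\item Near the origin, the densities of the free convolutions with $\mu_{\rm sc}$ are uniformly bounded (Theorem~\ref{Theorem:Free_Convolution_Semi_Circle}), so the contribution of $|x|<\eta$ is $O(\eta\log(1/\eta))$ uniformly in $\lambda$.
\item Away from the origin, split the integral into the range $\eta\le|x|\le A$, which is Lipschitz-controlled by the Lévy distance via Theorem~\ref{Theorem:Triangle_Equation_Free_Convolution}, and the tail $|x|>A$.
\item The tail is controlled uniformly by the first moments, via \eqref{Equation:Free_Addition_m_2} and the uniform $W_1$ bound.
\end{enumerate}
This gives $\phi_{3,\infty}(t_\lambda,\mu_\lambda)\to\phi_{3,\infty}(t_1,\mu)$.

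Combining the two parts, $\mathcal{I}(\mu_\lambda)\to\mathcal{I}(\mu)$ with each $\mu_\lambda\in\mathcal{P}_{2q_2-2}\cap\mathfrak{D}_+(u)$. Taking the supremum over such $\mu$ yields the claimed equality.
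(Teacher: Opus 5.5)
Your proposal is correct and follows essentially the same route as the paper. The paper also dilates by $S_{1+\epsilon}$, uses \ref{Condition:V''} to show that $x\mapsto p^{-1}xV'(x)-V(x)$ is strictly increasing on $x>0$ (so the dilates lie in $\mathfrak{D}_+(u)$), and proves continuity of $\epsilon\mapsto\mathcal{I}(S_{1+\epsilon}\mu)$ at $0$ by dominated convergence plus the bounded-density, weak-convergence and uniform-integrability argument for the free-convolution log-potential.

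You are slightly more careful than the paper in tracking the KL term explicitly through the scaling identity. Your aside that $\mathcal{I}(\delta_0)=-\infty$ "because of the KL term", however, is only a convention: at $t=0$ the term $-\log t$ is $+\infty$ as well, so the expression is indeterminate. The paper simply sidesteps this case by assuming $m_{2q_2-2}(\mu)>0$.
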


\begin{proof}
    Suppose $\mu \in \mathfrak{D}(u) \setminus \mathfrak{D}_+(u)$ with $0 < m_{2q_2 - 2}(\mu) < \infty$. For $\epsilon > -1$, consider the measure $\mu_\epsilon = S_{1 + \epsilon}(\mu)$. Observe that from \ref{Condition:V} and \ref{Condition:V''}, the function in the set $\mathfrak{D}(u)$ is strictly increasing since by \eqref{Equation:V},
    \[\frac{d}{dx}\big[p^{-1}xV'(x) - V(x)\big] = p^{-1}\big[xV''(x) - (p - 1)V'(x)\big] \geq p^{-1}(q - p)V'(x) > 0\]
    for all $x > 0$. Therefore, $\mu_\epsilon \in \mathfrak{D}_+(u)$ for all $\epsilon > 0$. Now, we claim that the mapping $\epsilon \mapsto \mathcal{I}(\mu_\epsilon)$ is continuous at $\epsilon = 0$. Observe that for the first two terms in $\mathcal{I}(\mu_\epsilon)$, namely, 
    \[\epsilon \mapsto \frac{m_2(\mu)^{-p}}{2(1 + \epsilon)^{2p - 2}p^2}\Big[(p - 1)\mathbb{E}_{\mu}\big[XV'((1 + \epsilon)X)\big]^2 - pm_2(\mu)\mathbb{E}_{\mu}\big[V'((1 + \epsilon)X)^2\big]\Big]\]
    is continuous by dominate convergence theorem. For the third term, observe that
    \[\big(g_{m_2(\mu_\epsilon)^{1/2}}\big)_\ast\mu_\epsilon = \Big((1 + \epsilon)^{2 - p}m_2(\mu)^{1 - p/2}V''((1 + \epsilon)|\cdot|)\Big)_\ast \mu\]
    converge to $(g_{m_2(\mu)^{1/2}})_\ast\mu$ weakly by dominated convergence theorem. Therefore, the convolution $\nu_\epsilon = (g_{m_2(\mu_\epsilon)^{1/2}})_\ast\mu_\epsilon \boxplus \mu_{\text{sc}}$ also converge weakly by Theorem \ref{Theorem:Triangle_Equation_Free_Convolution}. Note that
    \begin{align*}
        \int_{\mathbb{R}} \log |\lambda| \nu_\epsilon(d\lambda) \leq \int_{\{|\lambda| < \delta\}} \log|\lambda| \nu_\epsilon(d\lambda) + \int_{\{\delta \leq |\lambda| \leq M\}} \log|\lambda| \nu_\epsilon(d\lambda) + \int_{\{|\lambda| > M\}} |\lambda| \nu_\epsilon(d\lambda).
    \end{align*}
    Observe that the first term vanishes as $\delta \to 0^+$ since $\nu_\epsilon$ has bounded density by Theorem \ref{Theorem:Free_Convolution_Semi_Circle}; the third term vanishes as $M \to \infty$ by uniform integrability (to be precise, $\nu_\epsilon$ has uniformly bounded $2q_2 - 2$ moment); and the second term vanishes as $\epsilon \to 0^+$ by weak convergence. Therefore, for all $\delta > 0$, we may choose $\epsilon > 0$ small enough so that
    \[\mathcal{I}(\mu_\epsilon) > \mathcal{I}(\mu) - \delta.\]
    We may then conclude the equality for the supremum.
\end{proof}

\begin{proof}[\bf Proof of Theorem \ref{Theorem:Main_Theorem}]
    Note that Theorem \ref{Theorem:Main_Theorem} follows directly from Proposition \ref{Proposition:Main_Theorem_Lower}, Proposition \ref{Proposition:Main_Theorem_Upper}, and Proposition \ref{Proposition:Matching}.
\end{proof}

\section{Analysis of the Variational Formula}\label{Section:Analysis}

\subsection{Proof of Proposition \ref{Proposition:Finite_of_I}}

Note that the supremum is never $-\infty$ since for all $u \geq 0$, we may always choose $\mu = S_t(\mu_{\text{Norm}})$ and $t > 0$ large enough so that $\mu \in \mathfrak{D}(u)$.

To show that the supremum is never $\infty$, it suffices to prove it is finite when $u = 0$. Observe that for all $\mu \in \mathcal{P}_{2q_2 - 2}(\mathbb{R})$, by setting $m_2(\mu) = t^2$,
\begin{align}
    \varphi(t, \mu) &\leq -\frac{t^{2-2p}}{2p^2}\mathbb{E}_\mu\big[V'(X)^2\big] + \int_{\mathbb{R}} \log|\lambda| \bigl((g_t)_\ast\mu \boxplus \mu_{\text{sc}}\bigr)(\text{d}\lambda) \notag \\
    &\leq -\frac{t^{2-2p}}{2p^2}\mathbb{E}_\mu\big[V'(X)^2\big] + \dfrac{1}{2}\log m_2 \bigl((g_t)_\ast\mu \boxplus \mu_{\text{sc}}\bigr) \label{Equation:Rough_Upper_Bound_Var_1} \\
    &\leq -\frac{t^{2-2p}}{2p^2} \mathbb{E}_\mu\big[V'(X)^2\big]  + \dfrac{1}{2} \log \bigl[4 (m_2((g_t)_\ast\mu) + 1)\bigr] \label{Equation:Rough_Upper_Bound_Var_2} \\
    &\leq -\frac{t^{2-2p}}{2c_1^2p^2} \big( m_{2q_1 -2}(\mu) + m_{2q_2 -2}(\mu) \big)  + \dfrac{1}{2} \log \Big[\frac{8\oldconstant{Constant:Bound}^2t^{4 - 2p}}{p(p - 1)} \big(m_{2q_1 - 4}(\mu) + m_{2q_2 - 4}(\mu)\big) + 4\Big], \label{Equation:Rough_Upper_Bound_Var_3}
\end{align}
where \eqref{Equation:Rough_Upper_Bound_Var_1} holds by Jensen's inequality; \eqref{Equation:Rough_Upper_Bound_Var_2} holds by \eqref{Equation:Free_Addition_m_2} with $s = 2$; \eqref{Equation:Rough_Upper_Bound_Var_3} holds by the definition of $g_t$, the condition \ref{Condition:V} and the elementary inequality $(a + b)^2 \leq 2(a^2 + b^2)$.

Let $X \sim \mu$ and $Y = X^2.$ By FKG inequality, $\mathbb{E}[Y^{q_i - 1}] \ge \mathbb{E}[Y^{q_i - 2}] \mathbb{E}[Y]$ for $i=1, 2,$ which yields
\[ t^2 m_{2q_i-4}(\mu) \le   m_{2q_i-2}(\mu), \quad i=1, 2.   \]
Therefore, \eqref{Equation:Rough_Upper_Bound_Var_3} can be bounded above by 
\begin{equation}\label{Equation:Rough_Upper_Bound_Var_40}
   -\frac{t^{2-2p}}{2\oldconstant{Constant:Bound}^2p^2} \big(m_{2q_1 -2}(\mu) + m_{2q_2 -2}(\mu) \big) + \dfrac{1}{2} \log \Big[\frac{8\oldconstant{Constant:Bound}^2t^{2 - 2p}}{p(p - 1)} \big(m_{2q_1 - 2}(\mu) + m_{2q_2 - 2}(\mu)\big) + 4\Big]
\end{equation}
If we define
\[w(x) = -\frac{x}{2\oldconstant{Constant:Bound}^2p^2} + \dfrac{1}{2}\log \Big(\frac{8 \oldconstant{Constant:Bound}^2}{p(p - 1)}x + 4\Big), \quad x \ge 0\]
then \eqref{Equation:Rough_Upper_Bound_Var_40} is same  as $w$ evaluated at $x= t^{2 - 2p} \big(m_{2q_1 - 2}(\mu) + m_{2q_2 - 2}(\mu) \big).$ Hence, we  have 
\[\varphi(t, \mu)  \le \sup_{x \geq 0} w(x) < \infty, \]
for an arbitrary choice of $\mu$, as desired.

\subsection{Proof of Proposition \ref{Proposition:Critical_Level}} 

First, we show \eqref{Equation:Critical_Level_2}. Observe that the probability
\begin{align*}
    \mathbb{P}(u_N > u_c + \epsilon) &\leq \mathbb{P}(\text{Crt}_N([N(u_c + \epsilon), \infty)) \geq 1) \\
    &\leq \mathbb{E}\bsq{\text{Crt}_N([N(u_c + \epsilon), \infty))} \\
    &= \exp\bsq{N\bpar{\Sigma(u_c + \epsilon) + o_{N \to \infty}(1)}}.
\end{align*}
Since the $\Sigma(u_c + \epsilon) < 0$ by definition of $u_c$, we see that $\mathbb{P}(u_N > u_c + \epsilon) \to 0$ as $N \to \infty$ and therefore \eqref{Equation:Critical_Level_2} holds.

Now, we turn to \eqref{Equation:Critical_Level_1}. For the lower bound, we see by plugging $\mu = S_t(\mu_{\text{Norm}})$, we have
\begin{align*}
    \mathcal{I}(S_t(\mu_{\text{Norm}})) &= \frac{1}{2}\log(p - 1) + \frac{1}{2} + \frac{t^{2-2p}}{2p^2}\Big[(p - 1)\mathbb{E}_{\mu_{\text{Norm}}}\big[XV'(tX)\big]^2 - p\mathbb{E}_{\mu_{\text{Norm}}}\big[V'(tX)^2\big]\Big] \\
    &+ \int_{\mathbb{R}} \log|\lambda| \big((g_t)_\ast S_t(\mu_{\text{Norm}}) \boxplus \mu_{\text{sc}}\big)(d\lambda) - \text{KL}(S_t(\mu_{\text{Norm}}) \,\|\, \mu_{\text{Norm}}) - \frac{1}{2}(1 - t^2 + 2\log t).
\end{align*}
Note that by the Cauchy-Schwarz inequality and condition \ref{Condition:V}, we have
\begin{align*}
    &\frac{t^{2-2p}}{2p^2}\Big[(p - 1)\mathbb{E}_{\mu_{\text{Norm}}}\big[XV'(tX)\big]^2 - p\mathbb{E}_{\mu_{\text{Norm}}}\big[V'(tX)^2\big]\Big] \\
    &\geq p^{-1}t^{2(q_1 - p)}(m_{2q_1 - 2}\big(\mu_{\text{Norm}}) + t^{2(q_2 - q_1)}m_{2q_2 - 2}(\mu_{\text{Norm}})\big) = o_{t \to 0^+}(1)
\end{align*}
Moreover, one can again argue as in Proposition \ref{Proposition:Matching} that by weak convergence and uniform integrability,
\[\lim_{t \to 0^+}\int_{\mathbb{R}} \log|\lambda| \big((g_t)_\ast S_t(\mu_{\text{Norm}}) \boxplus \mu_{\text{sc}}\big)(d\lambda) = \int_{\mathbb{R}} \log|\lambda| \mu_{\text{sc}}(d\lambda) = -\frac{1}{2}.\]
Last, note that the divergence term cancels out with the $\frac{1}{2}(1 - t^2 + 2\log t)$ and we may conclude
\[\lim_{t \to 0^+} \mathcal{I}(S_t(\mu_{\text{Norm}})) = \frac{1}{2}\log(p - 1) > 0\]
provided $p \geq 3$. This shows that there exist $t > 0$ such that $\mathcal{I}(S_t(\mu_{\text{Norm}})) > 0$ and therefore $u > 0$ so that $\mathbb{E}[\text{Crt}_N((-\infty, Nu])]$ has positive log-asymptote, giving $u_c > 0$ whenever $p \geq 3$.

Now, for the case $p = 2$, we see that by \eqref{Equation:Critical_Level_2}, it suffices to show that $u_N$ is lower bounded by some positive constant for large $N$. Observe that is we set $G_N$ to be a GOE, then
\[u_N = \sup_{\sigma \in \mathbb{R}^N} \frac{1}{N}\Bpar{\frac{1}{\sqrt{N}}\sum_{i, j = 1}^N g_{ij}\sigma_i\sigma_j - \sum_{i = 1}^N V(\sigma_i)} = \sup_{\sigma \in \mathbb{R}^N} \Big(\dfrac{1}{\sqrt{2}N}\sigma^\top G_N\sigma - \opnorm{V(\sigma)}_1\Big).\]
Set $\sigma_{\max}$ to be the unit eigenvector associated to the maximum eigenvalue of $G_N$ (which is uniformly distributed on $\mathbb{S}^{N - 1}$ \cite[Corollary 2.5.4]{anderson2010introduction}), then by plugging $\sigma = \sqrt{N}\epsilon\sigma_{\max}$, we have 
\[u_N \geq \frac{\epsilon^2}{\sqrt{2}}\lambda_N(G_N) - \opnorm{V(\sqrt{N}\epsilon\sigma_{\text{max}})}_1 \geq \frac{\epsilon^2}{\sqrt{2}}\lambda_N(G_N) - \oldconstant{Constant:Bound}\big(\opnorm{\sqrt{N}\epsilon\sigma_{\text{max}}}_{q_1}^{q_1} + \opnorm{\sqrt{N}\epsilon\sigma_{\text{max}}}_{q_2}^{q_2}\big).\]
For the first term, we have by \cite{geman1980limit} that $\lambda_N(G_N) \to 2$ almost surely. Moreover, by setting the standard normal vector $g \in \mathbb{R}^N$, we see that
\[\opnorm{\sqrt{N}\epsilon\sigma_{\text{max}}}_{q_1}^{q_1} + \opnorm{\sqrt{N}\epsilon\sigma_{\text{max}}}_{q_2}^{q_2} \overset{d}{=} \epsilon^{q_1} \dfrac{\opnorm{g}_{q_1}^{q_1}}{\opnorm{g}_2^{q_1}} + \epsilon^{q_2} \dfrac{\opnorm{g}_{q_2}^{q_2}}{\opnorm{g}_2^{q_2}} \to \epsilon^{q_1} m_{q_1}(\mu_{\text{Norm}}) + \epsilon^{q_2} m_{q_2}(\mu_{\text{Norm}})\]
almost surely by the strong law of large numbers. Therefore, we have
\[\limsup_{N \to \infty} u_N \geq \sqrt{2}\epsilon^2 - \big( \epsilon^{q_1}m_{q_1}(\mu_{\text{Norm}}) + \epsilon^{q_2 }m_{q_2}(\mu_{\text{Norm}})\big)\]
almost surely. Since $q_2 \ge q_1 > 2$, by picking $\epsilon > 0$ appropriately, we may obtain a positive uniform lower bound and thus $u_c > 0$ for $p = 2$.

For the upper bound, we observe that if $\sigma \in \Omega(u)$, then
\[u \leq \frac{1}{N}\sum_{i = 1}^N \big(p^{-1}\sigma_iV'(\sigma_i) - V(\sigma_i)\big) \leq \oldconstant{Constant:Bound} p^{-1}(\opnorm{\sigma}_{q_1}^{q_1} + \opnorm{\sigma}_{q_2}^{q_2}) \leq \oldconstant{Constant:Bound}p^{-1}\big(\opnorm{\sigma}_{2q_2 - 2}^{q_1} + \opnorm{\sigma}_{2q_2 - 2}^{q_2}\big).\]
Therefore, for all $M > 0$, there exist $u > 0$ so that $\Omega(u) \subseteq \{\opnorm{\sigma}_{2q_2 - 2} \geq M\}$. We may then apply \eqref{Equation:Rough_Upper_Bound} with $w_N(\sigma) = \mathbb{E}[|\det M_{N - 1}(\sigma)|]$ and see that there exist $u > 0$ (independent of $N$) so that $N^{-1}\log \mathbb{E}[\text{Crt}_N([Nu, \infty))] \leq -M$ for $N$ large enough. Hence, we have $u_c < \infty$. \hfill $\square$

\begin{appendices}

\appendix

\section{Computation of Covariance Structure}\label{Section:Covariance}

\begin{proof}[\bf Proof of Proposition \ref{Proposition:Cov_Structure}]    The computation for the means is straightforward; we omit the details. For $x, y \in \mathbb{R}^N$, define the covariance function of $H_N$ as 
\begin{align*}
    C(x, y) &= \mathbb{E}\left[\big(H_N(x) - \mathbb{E}\big[H_N(x)\big]\big)\big(H_N(y) - \mathbb{E}\big[H_N(y)\big]\big)\right]= N^{1 - p}\lan{x, y}^p.
\end{align*}
Recall a useful formula (see \cite[Eq (5.5.4)]{adler2009random}),
\begin{equation}\label{Equation:covariance_Formula}
    \text{Cov}\left(\frac{\partial^k H_N(x)}{\partial x_{i_1} \cdots \partial x_{i_k}}, \frac{\partial^\ell H_N(y)}{\partial y_{j_1} \cdots \partial y_{j_\ell}}\right) = \frac{\partial^{k + \ell} C(x, y)}{\partial x_{i_1} \cdots \partial x_{i_k} \partial y_{j_1} \cdots \partial y_{j_\ell}}.
\end{equation}
    For \eqref{Equation:Cov_H}, we have by \eqref{Equation:covariance_Formula} that the variance is
    \[\text{Var}(H_N(\sigma)) = C(\sigma, \sigma) = N^{1 - p}\|\sigma\|_2^{2p}.\]
    For \eqref{Equation:Cov_H_Grad}, we have again by \eqref{Equation:covariance_Formula} that
    \[\text{Cov}\bpar{H_N(\sigma), \nabla H_N(\sigma)} = \left.\nabla_{x} C(x, y)\right|_{x, y = \sigma} = \left.N^{1 - p} \cdot p\lan{x, y}^{p - 1}x \right|_{x, y = \sigma} = N^{1 - p}p\|\sigma\|_2^{2(p - 1)}\sigma.\]
    For \eqref{Equation:Cov_H_Hess}, we have by \eqref{Equation:covariance_Formula} that
    \begin{align*}
        &\text{Cov}\bpar{H_N(\sigma), \nabla^2H_N(\sigma)} \\
        &= \nabla_{y}^2C(x, y)\big|_{x, y = \sigma} = \Bpar{\partial_{y_i}\bpar{N^{1 - p}p\lan{x, y}^{p - 1}x_j} \big|_{x, y = \sigma}}_{1 \leq i, j \leq N} = N^{1 - p}p(p - 1)\|\sigma\|_2^{2(p - 2)}\sigma\sigma^\top.
    \end{align*}
    Now, for \eqref{Equation:Cov_Grad}, we have by \eqref{Equation:covariance_Formula} that
    \begin{align*}
        \text{Cov}\bpar{\partial_{\sigma_i}H_N(\sigma), \partial_{\sigma_j}H_N(\sigma)} &= \partial_{x_i}\partial_{y_j}C(x, y)\big|_{x, y = \sigma} \notag \\
        &= \partial_{x_i}\left(N^{1 - p}p\lan{x, y}^{p - 1}x_j\right) \big|_{x, y = \sigma} \notag \\
        &= N^{1 - p}p\|\sigma\|_2^{2(p - 2)}\left[(p - 1)\sigma_i\sigma_j + \|\sigma\|_2^2\delta_{ij}\right].
    \end{align*}
    For \eqref{Equation:Cov_Grad_Hess}, we have by \eqref{Equation:covariance_Formula} that 
    \begin{align*}
        \text{Cov}\bpar{\partial_{\sigma_i}H_N(\sigma), \partial_{\sigma_j\sigma_k}H_N(\sigma)} &= \left.\partial_{x_i}\partial_{y_jy_k}^2C(x, y)\right|_{x, y = \sigma} \notag \\
        &= \left.\partial_{y_k}\left(N^{1 - p}p\left[(p - 1)\lan{x, y}^{p - 2}x_jy_i + \lan{x, y}^{p - 1}\delta_{ij}\right]\right)\right|_{x, y = \sigma} \notag \\
        &= N^{1 - p}p(p - 1)\|\sigma\|_2^{2(p - 3)}\left[(p - 2)\sigma_i\sigma_j\sigma_k + \|\sigma\|_2^2(\sigma_j\delta_{ik} + \sigma_k\delta_{ij})\right]. 
    \end{align*}
    Last but not least, for \eqref{Equation:Cov_Hess}, we see by \eqref{Equation:covariance_Formula} that
    \begin{align*}
        &\text{Cov}\bpar{\partial_{\sigma_i\sigma_j}H_N(\sigma), \partial_{\sigma_k\sigma_\ell}H_N(\sigma)} \\ 
        &= \partial_{x_ix_j}^2\partial_{y_ky_\ell}^2C(x, y)\big|_{x, y = \sigma} \\
        &= \left.\partial_{x_i}\bpar{N^{1 - p}p(p - 1)\bsq{(p - 2)\lan{x, y}^{p - 3}x_kx_\ell y_j + \lan{x, y}^{p - 2}(x_\ell\delta_{jk} + x_k\delta_{j\ell})}}\right|_{x, y = \sigma} \\
        &= N^{1 - p}p(p - 1)\|\sigma\|_2^{2(p - 4)}\big[(p - 2)(p - 3)\sigma_i\sigma_j\sigma_k\sigma_\ell \\
        &+ \|\sigma\|_2^2(p - 2)\big(\delta_{ik}\sigma_j\sigma_\ell + \delta_{i\ell}\sigma_j\sigma_k + \delta_{jk}\sigma_i\sigma_\ell + \delta_{j\ell}\sigma_i\sigma_k\big) + \|\sigma\|_2^4(\delta_{ik}\delta_{j\ell} + \delta_{i\ell}\delta_{jk})\big].
    \end{align*}
    We have then shown our assertion in Proposition \ref{Proposition:Cov_Structure}.
\end{proof}

\begin{proof}[\bf Proof of Eq \eqref{Equation:Condition_Cov_Hess}] Note that by \eqref{Equation:Gaussian_Condition}, the covariance
    \begin{align*}
        & \text{Cov}\bpar{\partial_{\sigma_i\sigma_j}^2H_N(\sigma), \partial_{\sigma_k\sigma_\ell}^2H_N(\sigma) \,\big\vert\,\nabla H_N(\sigma)} \\
        &= \text{Cov}\bpar{\partial_{\sigma_i\sigma_j}^2H_N(\sigma), \partial_{\sigma_k\sigma_\ell}^2H_N(\sigma)} \\
        &\hspace{1cm} - \bsq{\text{Cov}(\nabla H_N(\sigma), \nabla^2H_N(\sigma)) \cdot \text{Cov}(\nabla H_N(\sigma))^{-1} \cdot \text{Cov}(\nabla^2H_N(\sigma), \nabla H_N(\sigma))}_{ij, k\ell} \\
        &= N^{1 - p}p(p - 1)\|\sigma\|_2^{2(p - 4)}\big[(p - 2)(p - 3)\sigma_i\sigma_j\sigma_k\sigma_\ell \\
        &\hspace{1cm} + \|\sigma\|_2^2(p - 2)\big(\delta_{ik}\sigma_j\sigma_\ell + \delta_{i\ell}\sigma_j\sigma_k + \delta_{jk}\sigma_i\sigma_\ell + \delta_{j\ell}\sigma_i\sigma_k\big) + \|\sigma\|_2^4(\delta_{ik}\delta_{j\ell} + \delta_{i\ell}\delta_{jk})\big] \\
        &\hspace{1cm} - \sum_{\alpha, \beta = 1}^N b\bpar{N^{1 - p}p(p - 1)\|\sigma\|_2^{2(p - 3)}\left[(p - 2)\sigma_i\sigma_j\sigma_\alpha + \|\sigma\|_2^2\big(\delta_{i\alpha}\sigma_j + \delta_{j\alpha }\sigma_i\big)\right]} \\
        &\hspace{1cm} \cdot \bsq{N^{p - 1}p^{-2}\|\sigma\|_2^{-2p}\bpar{(1 - p)\sigma_\alpha\sigma_\beta + p\|\sigma\|_2^2\delta_{\alpha\beta}}} \\
        &\hspace{1cm} \cdot \bpar{N^{1 - p}p(p - 1)\|\sigma\|_2^{2(p - 3)}\left[(p - 2)\sigma_k\sigma_\ell\sigma_\beta + \|\sigma\|_2^2\big(\delta_{\beta k}\sigma_\ell + \delta_{\beta\ell}\sigma_k\big)\right]} \\
        &= N^{1 - p}p(p - 1)\|\sigma\|_2^{2(p - 4)}\big[2\sigma_i\sigma_j\sigma_k\sigma_\ell \\
        &\hspace{1cm} - \|\sigma\|_2^2\big(\delta_{ik}\sigma_j\sigma_\ell + \delta_{i\ell}\sigma_j\sigma_k + \delta_{jk}\sigma_i\sigma_\ell + \delta_{j\ell}\sigma_i\sigma_k\big) + \|\sigma\|_2^4(\delta_{ik}\delta_{j\ell} + \delta_{i\ell}\delta_{jk})\big] \\
        &= N^{-1}p(p - 1)\opnorm{\sigma}_2^{2(p - 2)}\Bsq{\Bpar{\delta_{ik} - \dfrac{\sigma_i\sigma_k}{\|\sigma\|_2^2}}\Bpar{\delta_{j\ell} - \dfrac{\sigma_j\sigma_\ell}{\|\sigma\|_2^2}} + \Bpar{\delta_{i\ell} - \dfrac{\sigma_i\sigma_\ell}{\|\sigma\|_2^2}}\Bpar{\delta_{jk} - \dfrac{\sigma_j\sigma_k}{\|\sigma\|_2^2}}}
    \end{align*}
    and we may conclude \eqref{Equation:Condition_Cov_Hess}.
\end{proof}

\section{Free Convolution}\label{Section:Free_Convolution}

\noindent This section collects some key facts about the free convolution of probability measures with the semi-circle law. 

\begin{theorem}[{\cite[Corollaries 2, 4, 5]{biane1997free}}]\label{Theorem:Free_Convolution_Semi_Circle}
    For all $\mu \in \mathcal{P}(\mathbb{R})$, the probability measure $\mu \boxplus \mu_{\text{sc}}$ admits continuous density $f_{\mu \boxplus \mu_{\text{sc}}}$ with respect to the Lebesgue measure. Moreover, it is analytic on where it is positive, and there exists $\newconstant\label{Constant:Free_Convolution_Infty} > 0$ such that
    \[\|f_{\mu \boxplus \mu_{\text{sc}}}\|_\infty \leq \oldconstant{Constant:Free_Convolution_Infty}.\]
    Moreover, if $m_\infty(\mu) < \infty$, then
    \[m_\infty(\mu \boxplus \mu_{\text{sc}}) \leq m_\infty(\mu) + 2.\]
\end{theorem}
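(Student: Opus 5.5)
This theorem is quoted from Biane's paper on free convolution with the semicircle law, so the plan is to reconstruct Biane's subordination argument. Let $\nu_t=\mu\boxplus\mu_{\text{sc},t}$, where $\mu_{\text{sc},t}$ is the semicircle law of variance $t$; the statement concerns $t=1$. Write $G_\mu(z)=\int(z-x)^{-1}\mu(dx)$ for the Cauchy transform. The key fact is the subordination identity
\[
G_{\nu_t}(z)=G_\mu(\omega(z)), \qquad \omega(z)=z-tG_{\nu_t}(z),
\]
for $z\in\mathbb{C}^+$. Equivalently, $\omega$ is the inverse of the map $H_t(w)=w+tG_\mu(w)$, restricted to the region
\[
\Omega_t=\Big\{w=u+iv\in\mathbb{C}^+ : \int \frac{\mu(dx)}{(u-x)^2+v^2}<\frac{1}{t}\Big\}.
\]
I would take this from the standard $R$-transform computation: the $R$-transform of the semicircle is $R(w)=tw$, so $R_{\nu_t}=R_\mu+tw$. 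This yields the functional equation, and one checks that $H_t$ maps $\Omega_t$ conformally onto $\mathbb{C}^+$.

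First, I would show that $\omega$ extends continuously to $\overline{\mathbb{C}^+}$. The boundary of $\Omega_t$ is the graph of a function $v_t(u)\ge 0$, where $v_t(u)$ is the unique $v$ solving $\int\big((u-x)^2+v^2\big)^{-1}\mu(dx)=1/t$ when that equation has a positive solution, and $v_t(u)=0$ otherwise. The map $u\mapsto u-tG_\mu(u+iv_t(u))$ is a homeomorphism from $\mathbb{R}$ onto $\mathbb{R}$, and its inverse gives the boundary values of $\omega$. This is the main obstacle: it requires proving that $v_t$ is continuous and handling points where $v_t=0$ through a monotonicity argument in $u$.

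Next come the density bounds. Stieltjes inversion gives
\[
f(x)=-\frac{1}{\pi}\operatorname{Im}G_{\nu_t}(x+i0)=\frac{1}{\pi t}\operatorname{Im}\omega(x)=\frac{v_t(u)}{\pi t},
\]
so continuity of the density follows from the previous step. For the uniform bound, note that
\[
v^2\int\frac{\mu(dx)}{(u-x)^2+v^2}\le 1,
\]
so on the boundary $v_t^2/t\le 1$, that is $v_t\le\sqrt t$. This gives $\|f\|_\infty\le 1/(\pi\sqrt t)$, which for $t=1$ is the constant $\oldconstant{Constant:Free_Convolution_Infty}=1/\pi$. Analyticity where $f>0$ follows because there $\omega(x)\in\mathbb{C}^+$ and $\omega(x)$ satisfies the analytic equation $H_t(\omega)=x$ with $H_t'(\omega)\ne 0$ on $\partial\Omega_t\cap\mathbb{C}^+$. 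By the implicit function theorem $\omega$ is real-analytic there, and hence so is $f$.

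Finally, the support bound. Suppose $\operatorname{supp}\mu\subseteq[-m,m]$ and $|u|>m+2$ (taking $t=1$). Then
\[
\int\frac{\mu(dx)}{(u-x)^2}\le\frac{1}{(|u|-m)^2}<1,
\]
and the same holds for nearby $u$, so the set of such $u$ lies outside the closure of $\{v_1>0\}$. For these $x$ the density is therefore zero, provided the image point lies on the real line with $|\cdot|>m+2$. I still need to check that $x=u-G_\mu(u)$ is real and has $|x|>m+2$ whenever $|u|$ is large. Since $|G_\mu(u)|\le 1/(|u|-m)$, for $|u|=m+1+s$ with $s>0$ we get $|x|\ge m+1+s-1/(1+s)$, and this covers all $|x|>m+2$ by monotonicity and continuity of $u\mapsto u-G_\mu(u)$ on $|u|>m+1$. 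Because $\nu_1$ has no singular part (its Cauchy transform has continuous boundary values), it follows that $\operatorname{supp}\nu_1\subseteq[-m-2,m+2]$, i.e. $m_\infty(\mu\boxplus\mu_{\text{sc}})\le m_\infty(\mu)+2$.
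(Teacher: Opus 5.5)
Your argument is correct and is essentially Biane's subordination proof, which is exactly what the paper relies on (it gives no proof of its own beyond citing Biane's Corollaries 2, 4 and 5). Your bound $v_t\le\sqrt t$ even gives the explicit constant $1/\pi$, which is sharp since it is attained for $\mu=\delta_0$.

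There is one sign slip to fix. With $G_\mu(z)=\int(z-x)^{-1}\mu(dx)$, the boundary map is $u\mapsto H_t(u+iv_t(u))=u+t\,\mathrm{Re}\,G_\mu(u+iv_t(u))$, not $u-tG_\mu(u+iv_t(u))$, which is not even real when $v_t(u)>0$. Correspondingly, the support step should use $x=u+G_\mu(u)$. Your support estimate only uses $|G_\mu(u)|\le(|u|-m)^{-1}$, so nothing else changes. That same bound also shows that $u=m+1$ is sent to a point $\le m+2$, which is the fact you actually need so that every $x>m+2$ has a preimage with $u>m+1$.
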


\begin{theorem}
    If $\mu \in \mathcal{P}_s(\mathbb{R})$, then for all $s > 0$,
    \begin{equation}\label{Equation:Free_Addition_m_2}
        m_s(\mu \boxplus \mu_{\text{sc}}) \leq 2^s(m_s(\mu) + m_s(\mu_{\text{sc}})).
    \end{equation}
\end{theorem}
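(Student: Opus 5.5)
Write $\nu=\mu\boxplus\mu_{\text{sc}}$. I would first reduce to $s\ge 1$ and then use the subordination structure of free convolution with the semicircle. The standard fact (Biane) is that $\nu$ is the law of $X+Y$ for suitable self-adjoint $X\sim\mu$ and $Y\sim\mu_{\text{sc}}$ that are free in a tracial $W^*$-probability space $(\mathcal{A},\tau)$. Then
\[
m_s(\nu)=\tau(|X+Y|^s).
\]

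For $s\ge 1$, the map $A\mapsto \tau(|A|^s)^{1/s}$ is the noncommutative $L^s$ norm, so it satisfies Minkowski's inequality. Combining Minkowski with convexity of $x\mapsto x^s$ gives
\[
\tau(|X+Y|^s)\le 2^{s-1}\bigl(\tau(|X|^s)+\tau(|Y|^s)\bigr)=2^{s-1}\bigl(m_s(\mu)+m_s(\mu_{\text{sc}})\bigr),
\]
which is better than the claimed $2^s$.

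For $0<s<1$, the function $x\mapsto x^s$ is operator monotone and concave on $[0,\infty)$. The known noncommutative inequality $\tau(|A+B|^s)\le\tau(|A|^s)+\tau(|B|^s)$ for $0<s\le1$ (Rotfel'd / Bourin–Uchiyama) then gives the bound with constant $1\le 2^s$.

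If I wanted to avoid operator-algebra machinery, I would instead argue by matrix approximation. I would take $D_N$ diagonal with $\mu_{D_N}\to\mu$ in $W_s$, and use that $\mu_{D_N+G_N}\to\nu$ weakly almost surely. Then, writing $\lambda_i(\cdot)$ for eigenvalues ordered by absolute value,
\[
\frac1N\operatorname{Tr}|D_N+G_N|^s
\]
is controlled through the Ky Fan / Thompson triangle inequality for singular values, $\sum_i f(\sigma_i(A+B))\le\sum_i f(\sigma_i(A)+\sigma_i(B))$ for increasing $f$ on $[0,\infty)$ after symmetric rearrangement. Pairing decreasing singular values of $D_N$ and $G_N$ and using $(a+b)^s\le 2^s(a^s+b^s)$ would give the same bound at finite $N$. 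I would pass to the limit with Fatou, which uses only lower semicontinuity of $m_s$ under weak convergence, so no uniform integrability is needed for the upper bound.

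The step I expect to be the main obstacle is justifying the inequality for noncommuting $A,B$. This is either the noncommutative $L^s$ triangle inequality, or the singular value majorization $\sigma(A+B)\prec_w\sigma(A)+\sigma(B)$ composed with a convex increasing function, with the concave case $s<1$ handled by Rotfel'd. For $s\ge1$ this is classical (Ky Fan majorization plus Karamata), so I would cite it. The remaining approximation details are routine.
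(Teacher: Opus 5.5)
Your main argument is essentially the paper's: realize $\mu\boxplus\mu_{\text{sc}}$ as the law of $a+b$, with $a\sim\mu$ and $b\sim\mu_{\text{sc}}$ free in a tracial $W^*$-probability space, then use Minkowski in noncommutative $L^s$ for $s\ge 1$ (which even gives the sharper constant $2^{s-1}$) and the $s$-subadditivity $\tau(|a+b|^s)\le\tau(|a|^s)+\tau(|b|^s)$ for $0<s<1$, which the paper simply asserts and you attribute to Rotfel'd-type inequalities (for affiliated operators in a von Neumann algebra the relevant reference is Fack--Kosaki). One correction to your optional matrix route: the paired inequality $\sum_i f(\sigma_i(A+B))\le\sum_i f(\sigma_i(A)+\sigma_i(B))$ holds only for convex increasing $f$ (it fails for $A=\mathrm{diag}(1,0)$, $B=\mathrm{diag}(0,1)$, $f(x)=x^s$ with $s<1$), so for $s<1$ you must apply Rotfel'd directly rather than pair singular values, which your last paragraph effectively does.
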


\begin{proof}
    By \cite[Proposition 5.3.34]{anderson2010introduction}, we can always find a $W^\ast$-probability space $(\mathscr{A}, \tau)$ where $\tau$ is a normal faithful tracial state and $a, b$ are self-adjoint operators affiliated with $\mathscr{A}$ such that $a, b$ has law $\mu$, $\mu_{\text{sc}}$, respectively, and which are free. Note that $a \in \mathcal{L}^s(\mathscr{A}, \tau)$ and $b \in \mathcal{L}^\infty(\mathscr{A}, \tau) = \mathscr{A}$ (we adapt the non-commutative $\mathcal{L}^p$-space notation from \cite{pisier2003non}). Recall that $\|\cdot\|_s = \tau(|\cdot|^s)^{1/s}$ is a quasi-norm if $0 < s < 1$ and a norm if $s \geq 1$ on $\mathcal{L}^s(\mathscr{A}, \tau)$ and hence we always have
    \[m_s(\mu \boxplus \mu_{\text{sc}}) = \tau(|a + b|^s) \leq \begin{cases}
            \tau(|a|^s) + \tau(|b|^s) &, 0 < s < 1; \\
            (\tau(|a|^s)^{1/s} + \tau(|b|^s)^{1/s})^s &, s \geq 1,
        \end{cases}\]
    which both can be dominated by $2^s(\tau(|a|^s) + \tau(|b|^s)) = 2^s(m_s(\mu) + m_s(\mu_{\text{sc}}))$.
\end{proof}

\begin{theorem}[{\cite[Proposition 4.13]{bercovici1993free}}]\label{Theorem:Triangle_Equation_Free_Convolution}
    If $\mu_1$, $\mu_2$, $\nu_1$, $\nu_2$ are probability measures on $\mathbb{R}$, we have
    \[d_{\text{KS}}(\mu_1 \boxplus \nu_1, \mu_2 \boxplus \nu_2) \leq d_{\text{KS}}(\mu_1, \mu_2) + d_{\text{KS}}(\nu_1, \nu_2)\]
    and also
    \[d_{\text{L}}(\mu_1 \boxplus \nu_1, \mu_2 \boxplus \nu_2) \leq d_{\text{L}}(\mu_1, \mu_2) + d_{\text{L}}(\nu_1, \nu_2).\]
\end{theorem}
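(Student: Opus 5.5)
By the triangle inequality it suffices to prove the one-sided bounds
\[
d_{\text{KS}}(\mu_1\boxplus\nu,\mu_2\boxplus\nu)\le d_{\text{KS}}(\mu_1,\mu_2),
\qquad
d_{\text{L}}(\mu_1\boxplus\nu,\mu_2\boxplus\nu)\le d_{\text{L}}(\mu_1,\mu_2),
\]
because $\boxplus$ is commutative. I would prove these through a finite-dimensional matrix model, where the only inputs are Weyl monotonicity and the Cauchy interlacing/rank inequality already used in Section \ref{Section:Truncation}, and then pass to the limit with Voiculescu's asymptotic freeness.

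\emph{Step 1 (reduction to compact support).} First assume $\mu_1,\mu_2,\nu$ are compactly supported. For the general case, push the measures forward by the clipping maps $x\mapsto (x\wedge R)\vee(-R)$. Clipping changes distribution functions only outside $[-R,R]$, so it does not increase $d_{\text{KS}}$ or $d_{\text{L}}$ between the clipped measures. The clipped measures converge weakly as $R\to\infty$. The free convolution is weakly continuous; this follows from the known continuity in $d_{\text{L}}$, or from convergence of Cauchy transforms and subordination. Since $d_{\text{L}}$ metrizes weak convergence, the Lévy bound passes to the limit. For $d_{\text{KS}}$, $\mu\boxplus\nu$ has no atoms once $\nu=\mu_{\text{sc}}$, but in general I would argue that $\liminf$ of $|F_{\rho_R}(x)-F_{\tau_R}(x)|$ controls $|F_\rho(x)-F_\tau(x)|$ at common continuity points, which are dense. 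Right-continuity then extends the bound to all $x$.

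\emph{Step 2 (matrix model and sandwiching).} Set $\varepsilon=d_{\text{KS}}(\mu_1,\mu_2)$. Let $A_N$ and $A'_N$ be the diagonal matrices of quantiles $F_{\mu_1}^{-1}(k/(N+1))$ and $F_{\mu_2}^{-1}(k/(N+1))$, $k=1,\dots,N$. Let $B_N$ be the diagonal matrix of quantiles of $\nu$, and let $U_N$ be Haar orthogonal. The quantile functions satisfy
\[
F_{\mu_2}^{-1}(s-\varepsilon)\le F_{\mu_1}^{-1}(s)\le F_{\mu_2}^{-1}(s+\varepsilon).
\]
Hence, after shifting indices by $\lceil\varepsilon N\rceil+1$, we get $A_N\le \widetilde A'_N$ off a set of at most $\varepsilon N+2$ coordinates, where $\widetilde A'_N$ is a coordinate permutation of $A'_N$ and so is unitarily equivalent to it. Modifying $A_N$ on those coordinates is a perturbation of rank at most $\varepsilon N+2$. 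The rank inequality (Lemma \ref{Lemma:Rank_Inequality}) therefore costs at most $(\varepsilon N+2)/N$ in $d_{\text{KS}}$. Weyl monotonicity gives $F_{\mu_{A_N+U_NB_NU_N^*}}\ge F_{\mu_{\widetilde A'_N+U_NB_NU_N^*}}$ pointwise, up to that error. The symmetric argument gives the reverse inequality. Since Haar conjugation absorbs the permutation, $\widetilde A'_N+U_NB_NU_N^*$ has the same spectral law as $A'_N+U_NB_NU_N^*$. Together these give
\[
d_{\text{KS}}\bigl(\mu_{A_N+U_NB_NU_N^*},\mu_{A'_N+U_NB_NU_N^*}\bigr)\le \varepsilon+\tfrac{3}{N}.
\]
For $d_{\text{L}}$ I would use the same construction, but now with the Strassen-type quantile sandwich
\[
F_{\mu_2}^{-1}(s-\varepsilon)-\varepsilon\le F_{\mu_1}^{-1}(s)\le F_{\mu_2}^{-1}(s+\varepsilon)+\varepsilon,
\]
where $\varepsilon=d_{\text{L}}(\mu_1,\mu_2)$. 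The extra additive $\pm\varepsilon I$ shifts all eigenvalues by $\varepsilon$, which is exactly what the Lévy metric permits.

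\emph{Step 3 (limit).} By Voiculescu's asymptotic freeness of $A_N$ and $U_NB_NU_N^*$, together with almost sure concentration, $\mu_{A_N+U_NB_NU_N^*}\to\mu_1\boxplus\nu$ weakly almost surely, and similarly for $\mu_2$. Passing to the limit in the Lévy bound is immediate. For the KS bound, I would pass to the limit at common continuity points as in Step 1.

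\emph{Main obstacle.} I expect the main obstacle to be the bookkeeping in Step 2 for measures with atoms or flat parts of the distribution function. There the quantile sandwich holds only up to boundary indices, and the KS limit in Step 3 is not continuous in general. My first attempt would be to use generalized right-continuous inverses and to absorb every boundary defect into the $O(1/N)$ rank error.
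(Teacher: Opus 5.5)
The paper gives no proof of this statement; it only cites \cite[Proposition 4.13]{bercovici1993free}. Your argument is correct in substance, but it takes a genuinely different route from the one behind that citation, which works directly with (possibly unbounded) self-adjoint operators affiliated with a tracial von Neumann algebra.

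\emph{Comparison of the two routes.} In the operator setting your two matrix ingredients have exact analogues. First, $X\le Y$ implies $\tau(E_X(-\infty,t])\ge\tau(E_Y(-\infty,t])$; this plays the role of your Weyl step. Second, if two operators differ only on a projection of trace $\delta$, their distribution functions are within $\delta$ of each other; this plays the role of your rank step. The quantile coupling $F_{\mu_1}^{-1}(s)\le F_{\mu_2}^{-1}(s+\varepsilon)$ can also be realized inside a single abelian algebra free from the operator with law $\nu$: shift the quantile function of $\mu_2$ by a measure-preserving rotation of $[0,1)$. So no permutation, Haar averaging, truncation or limit is needed, and the inequality holds exactly for arbitrary measures. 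Your finite-dimensional version needs only Weyl monotonicity and Lemma \ref{Lemma:Rank_Inequality}, both already in the paper's toolkit. The price is importing Voiculescu's asymptotic freeness, a compact-support reduction, and two limit passages. Those limit passages rest on lower semicontinuity of $d_{\text{KS}}$ under weak convergence, and your argument via common continuity points plus right-continuity is correct.

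\emph{Three points to tighten.}
\begin{enumerate}
\item In Step 1 you cannot justify weak continuity of $\boxplus$ by ``the known continuity in $d_{\text{L}}$'': that continuity is exactly the second inequality you are proving. You need an independent source. For example, in the affiliated-operator model the clipped operators $X^R,Y^R$ converge to $X,Y$ in measure, so $X^R+Y^R\to X+Y$ in measure and the laws converge weakly. If you go through Voiculescu transforms or subordination instead, check that this route does not itself rely on this proposition for unbounded measures.
\item The displayed bound $d_{\text{KS}}(\mu_{A_N+U_NB_NU_N^*},\mu_{A'_N+U_NB_NU_N^*})\le\varepsilon+3/N$ is not justified for a fixed $U_N$. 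The matrices $\widetilde A'_N+U_NB_NU_N^*$ and $A'_N+U_NB_NU_N^*$ have the same spectral measure only in law, not pathwise. The cleanest fix is to take expectations. Then $\mathbb{E}\mu_{\widetilde A'_N+U_NB_NU_N^*}=\mathbb{E}\mu_{A'_N+U_NB_NU_N^*}$ exactly, and your pathwise one-sided inequalities survive averaging. This gives $d_{\text{KS}}(\mathbb{E}\mu_{A_N+U_NB_NU_N^*},\mathbb{E}\mu_{A'_N+U_NB_NU_N^*})\le\varepsilon+2/N$ at finite $N$. Voiculescu's theorem in expectation (uniformly bounded norms, convergence of averaged moments) then suffices, with no concentration input. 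Alternatively, pass each one-sided inequality to the limit separately, since both random measures converge almost surely to $\mu_2\boxplus\nu$.
\item The obstacle you anticipate does not arise. With $F^{-1}(s)=\inf\{x:F(x)\ge s\}$, the sandwich $F_{\mu_2}^{-1}(s-\varepsilon)\le F_{\mu_1}^{-1}(s)\le F_{\mu_2}^{-1}(s+\varepsilon)$ holds for every $s$ with $s\pm\varepsilon\in(0,1)$, atoms and flat pieces included. Its Lévy analogue holds in the same range. So the only defect is the $\lceil\varepsilon N\rceil+1$ boundary indices, which you already absorb into the rank error.
\end{enumerate}
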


\section{Random Matrix Results}

\noindent In this section, we gather some important facts from the random matrix theory.

\subsection{Concentration Inequality}

\begin{lemma}[{\cite[Lemma 6.3]{arous2001aging}}]\label{Lemma:Tail_of_Norm}
    For $t \geq 8$ and all $N \ge 1$,
    \begin{equation}\label{Equation:Tail_of_GOE}
        \mathbb{P}(\|\text{GOE}_N\|_{\text{op}} \geq t) \leq e^{-Nt^2/9}.
    \end{equation}
\end{lemma}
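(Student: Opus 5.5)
This is a tail bound for the operator norm of a GOE matrix, $\mathbb{P}(\|\text{GOE}_N\|_{\text{op}} \geq t) \leq e^{-Nt^2/9}$ for $t \geq 8$. I would prove it with a net argument combined with Gaussian concentration of each quadratic form.

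\textbf{Step 1: reduce to a finite net.} The paper's normalization has off-diagonal variance $1/N$ (and diagonal variance $2/N$), as used in \eqref{Equation:Condition_Cov_Hess_Guess}. Since $G=\text{GOE}_N$ is symmetric, $\|G\|_{\text{op}}=\sup_{\|x\|_2=1}|x^\top Gx|$. Let $\mathcal{N}$ be a $1/4$-net of $\mathbb{S}^{N-1}$; it can be taken with $|\mathcal{N}|\leq 9^N$. For any unit $x$ there is $y\in\mathcal{N}$ with $\|x-y\|_2\le 1/4$, and
\[|x^\top Gx - y^\top Gy| \le |(x-y)^\top Gx| + |y^\top G(x-y)| \le \tfrac12\|G\|_{\text{op}}.\]
Hence $\|G\|_{\text{op}}\le 2\max_{y\in\mathcal{N}}|y^\top Gy|$.

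\textbf{Step 2: bound a single quadratic form.} For a fixed unit $y$, the variable $y^\top Gy$ is a centered Gaussian. Its variance is
\[\sum_{i,j,k,l} y_iy_jy_ky_l\,\mathbb{E}[G_{ij}G_{kl}] = \tfrac{2}{N}\|y\|_2^4 = \tfrac{2}{N},\]
by the covariance formula $\mathbb{E}[G_{ij}G_{kl}]=(\delta_{ik}\delta_{jl}+\delta_{il}\delta_{jk})/N$. The Gaussian tail bound therefore gives
\[\mathbb{P}(|y^\top Gy|\ge s)\le 2e^{-Ns^2/4}.\]

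\textbf{Step 3: union bound.} Applying Step 2 with $s=t/2$ on every net point,
\[\mathbb{P}(\|G\|_{\text{op}}\ge t)\le 2\cdot 9^N e^{-Nt^2/16}=2\exp\!\big(-N(t^2/16-\log 9)\big).\]

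\textbf{Main obstacle: the constants.} This is the only delicate point. Steps 1--3 give exponent $t^2/16$, while the target is $t^2/9$, and the prefactor $2$ also has to be absorbed. I would first sharpen the reduction with a finer $\varepsilon$-net, where $\|G\|_{\text{op}}\le(1-2\varepsilon)^{-1}\max_{\mathcal{N}}|y^\top Gy|$ and $|\mathcal{N}|\le(1+2/\varepsilon)^N$. That pushes the exponent toward $t^2/4$ at the cost of an entropy term $\log(1+2/\varepsilon)$. Taking, say, $\varepsilon=1/20$ gives exponent $\ge (0.9)^2t^2/4 - \log 41 \ge 0.2t^2-3.72$. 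For $t\ge 8$ the entropy term is at most $0.06t^2$ and the $\log 2$ is negligible, so $0.2t^2-3.72-\log 2/N \ge 0.14t^2-0.7 > t^2/9$.

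Alternatively, one can simply cite \cite[Lemma 6.3]{arous2001aging}, as the paper does; the argument above is the self-contained justification.
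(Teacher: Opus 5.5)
Your proof is correct, and it takes a different route from the paper. The paper gives no argument of its own and simply imports the bound from \cite[Lemma 6.3]{arous2001aging}.

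You instead give a self-contained $\varepsilon$-net proof. You use $\|G\|_{\text{op}}\le(1-2\varepsilon)^{-1}\max_{y\in\mathcal{N}}|y^\top Gy|$ with $|\mathcal{N}|\le(1+2/\varepsilon)^N$. Each $y^\top Gy$ is exactly centered Gaussian with variance $2/N$, by the covariance identity $\mathbb{E}[G_{ij}G_{kl}]=(\delta_{ik}\delta_{jl}+\delta_{il}\delta_{jk})/N$. With $\varepsilon=1/20$ the union bound gives $\mathbb{P}(\|G\|_{\text{op}}\ge t)\le \exp\bigl(\log 2+N\log 41-\tfrac{81}{400}Nt^2\bigr)$. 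This is at most $e^{-Nt^2/9}$ for every $N\ge 1$ as soon as $\bigl(\tfrac{81}{400}-\tfrac19\bigr)t^2\ge\log 82$, which holds e.g.\ for all $t\ge 7$. So the range $t\ge 8$ is covered with room to spare, and your arithmetic checks out.

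What your route buys:
\begin{itemize}
\item It is elementary and is carried out directly in the paper's own GOE normalization (off-diagonal variance $1/N$, diagonal variance $2/N$). So you do not have to check that the scaling in the cited source matches the one used here.
\item It uses nothing beyond the Gaussian tail of one-dimensional quadratic forms. It therefore applies to any centered symmetric Gaussian matrix whose quadratic forms have variance at most $2/N$.
\end{itemize}
What it gives up is sharpness: the net costs a constant factor in the exponent (about $0.2\,t^2$ instead of the leading-order rate $t^2/4$). This is harmless because the target $t^2/9$ is weak.

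In a write-up, drop the $1/4$-net version of Steps 1--3 entirely. Its exponent $t^2/16-\log 9$ never exceeds $t^2/9$, as you noticed. Run the argument directly with $\varepsilon=1/20$.
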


\begin{corollary}
    There exist $\newconstant\label{Constant:UB_GOE} > 0$ such that for all $N \ge 1$,
    \begin{equation}\label{Equation:Moment_of_GOE}
        \mathbb{E}[\|\text{GOE}_N\|_{\text{op}}^N] \leq \oldconstant{Constant:UB_GOE}^N.
    \end{equation}
\end{corollary}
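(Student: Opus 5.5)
The estimate \eqref{Equation:Moment_of_GOE} will follow by integrating the tail bound \eqref{Equation:Tail_of_GOE} of Lemma \ref{Lemma:Tail_of_Norm}. Write $X = \|\text{GOE}_N\|_{\text{op}}\ge 0$. I will use the layer-cake formula
\[
\mathbb{E}[X^N] = \int_0^\infty N t^{N-1}\,\mathbb{P}(X\ge t)\,dt ,
\]
and split the integral at $t=8$, the threshold from which Lemma \ref{Lemma:Tail_of_Norm} applies.

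On $[0,8]$ I bound the probability by $1$, which gives $\int_0^8 N t^{N-1}\,dt = 8^N$.

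On $[8,\infty)$ I insert \eqref{Equation:Tail_of_GOE} to get
\[
\int_8^\infty N t^{N-1} e^{-Nt^2/9}\,dt .
\]
The only point requiring care is to see that this term is at most exponential in $N$ and not, say, factorial. I will handle it pointwise. Since $t\ge 8$ and $N\ge1$,
\[
t^{N-1}\le t^N = e^{N\log t},
\]
and $\log t \le t^2/18$ for $t\ge 8$: indeed $t^2/18-\log t$ is increasing on $[3,\infty)$ and positive at $t=8$. Hence the integrand is at most
\[
N e^{N\log t - Nt^2/9}\le N e^{-Nt^2/18}\le N e^{-t^2/18},
\]
so the integral is bounded by $N\int_0^\infty e^{-t^2/18}\,dt = \tfrac{3\sqrt{2\pi}}{2}\,N$.

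Combining the two pieces,
\[
\mathbb{E}[X^N]\le 8^N + C N \le (8+C)^N
\]
for a universal constant $C$, since $N \le 2^N$ absorbs the polynomial factor. This gives \eqref{Equation:Moment_of_GOE} with, for instance, $\oldconstant{Constant:UB_GOE}=8+C$.

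Everything here is routine. The single substantive step is the comparison $\log t\le t^2/18$ on $[8,\infty)$, which is where the Gaussian-type tail $e^{-Nt^2/9}$ dominates the polynomial growth $t^{N-1}$ uniformly in $N$.
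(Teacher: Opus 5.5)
Your proof is correct and takes essentially the paper's route: integrate the tail bound \eqref{Equation:Tail_of_GOE} against the layer-cake formula, bounding the probability by $1$ on $[0,8]$. Your version is in fact the more careful one, since you use the correct weight $Nt^{N-1}$ (the paper's displayed identity writes $t^N$, which is not the right formula) and you check explicitly that the tail piece is only $O(N)$. For the last step $8^N + CN \le (8+C)^N$, the binomial bound $(8+C)^N \ge 8^N + NC\,8^{N-1}$ is a cleaner justification than appealing to $N \le 2^N$.
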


\begin{proof}
    This is a direct consequence of \eqref{Equation:Tail_of_GOE} and the expression
    \[\mathbb{E}\big[\|\text{GOE}_N\|_{\text{op}}^N\big] = \int_0^\infty t^N \cdot \mathbb{P}(\|\text{GOE}_N\|_{\text{op}} \geq t)\,dt.\]
\end{proof}

\begin{definition}[Concentration]
    A sequence of $N \times N$ random symmetric matrices $H_N$ is said to have \textbf{concentration over Lipschitz function} if there exist $\newconstant\label{Constant:General_Concentration} > 0$ such that for all Lipschitz function $f$,
    \begin{equation}\label{Equation:General_Concentration}
        \mathbb{P}\Big(\Big|\int_{\mathbb{R}} f(\lambda)(\mu_{H_N} - \mathbb{E}[\mu_{H_N}])(d\lambda)\Big| \geq t\Big) \leq \oldconstant{Constant:General_Concentration}\exp\Big(-\frac{N^2t^2}{\oldconstant{Constant:General_Concentration}\|f\|_{\text{Lip}}^2}\Big).
    \end{equation}
\end{definition}

\begin{theorem}[{\protect\cite[Theorem 1.1]{guionnet2000concentration}}]\label{Theorem:GZ_Concentration}
    If $H_N = (h_{ij})_{1 \leq i, j \leq N}$ is an $N \times N$ symmetric random matrix such that the family $\{h_{ij}\}_{1\le i\le j\le N}$ is independent and each $h_{ij}$ satisfies a logarithmic Sobolev inequality with uniform constant $C_{\text{LSI}} > 0$, then $H_N$ satisfies \eqref{Equation:General_Concentration} with $\oldconstant{Constant:General_Concentration} = 8C_{\text{LSI}}$.
\end{theorem}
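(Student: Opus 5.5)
The statement is the Guionnet–Zeitouni concentration result (Theorem \ref{Theorem:GZ_Concentration}). The plan is to view $\int f\,d\mu_{H_N}$ as a Lipschitz function of the independent entries and combine the tensorization of the logarithmic Sobolev inequality with the Herbst argument.

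\emph{Step 1: product LSI.} Parametrize $H_N$ by the vector $h=(h_{ij})_{i\le j}\in\mathbb{R}^{N(N+1)/2}$. Each coordinate satisfies an LSI with constant $C_{\text{LSI}}$, and the coordinates are independent. By tensorization, the joint law of $h$ satisfies an LSI with the same constant $C_{\text{LSI}}$.

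\emph{Step 2: the Lipschitz estimate.} First take $f$ convex and Lipschitz. Set
\[
F(h)=\frac1N\operatorname{Tr} f(H_N)=\int f\,d\mu_{H_N}.
\]
For general Lipschitz $f$, I will first assume $f$ is $C^1$ and bound the gradient directly. We have $\partial_{h_{ij}}F=\frac{2}{N}f'(H_N)_{ij}$ for $i<j$, and $\partial_{h_{ii}}F=\frac1N f'(H_N)_{ii}$. Hence
\[
\|\nabla F\|_2^2\le \frac{4}{N^2}\operatorname{Tr}\bigl(f'(H_N)^2\bigr)\le \frac{4}{N}\|f\|_{\text{Lip}}^2 .
\]
So $F$ has Lipschitz constant at most $2\|f\|_{\text{Lip}}/\sqrt N$ in the Euclidean norm on $h$. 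Alternatively, Hoffman–Wielandt gives the same bound without smoothness:
\[
\Bigl|\frac1N\sum_i \bigl(f(\lambda_i)-f(\lambda_i')\bigr)\Bigr|\le \frac{\|f\|_{\text{Lip}}}{\sqrt N}\,\|H-H'\|_{HS},
\]
and $\|H-H'\|_{HS}\le\sqrt2\,\|h-h'\|_2$. I will use this route, which needs neither convexity nor smoothness. The caveat is that if the entries are normalized with variance of order $1/N$, the LSI constant itself carries a factor $1/N$. I will track this scaling, since it is what turns $N t^2$ into $N^2t^2$.

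\emph{Step 3: Herbst.} The Herbst argument applied to the product LSI and the Lipschitz function $F$ gives
\[
\mathbb{P}\bigl(|F-\mathbb{E}F|\ge t\bigr)\le 2\exp\Bigl(-\frac{t^2}{2C_{\text{LSI}}L^2}\Bigr),\qquad L^2\le \frac{2\|f\|_{\text{Lip}}^2}{N}.
\]
With the scaling $C_{\text{LSI}}\sim c/N$ for GOE-normalized entries, this becomes $\exp\bigl(-N^2t^2/(4c\|f\|_{\text{Lip}}^2)\bigr)$. That is the form \eqref{Equation:General_Concentration} with a constant of order $8C_{\text{LSI}}$ after adjusting the normalization.

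The main obstacle is the bookkeeping of the normalization. The constant $8C_{\text{LSI}}$ and the $N^2$ speed both rely on interpreting $C_{\text{LSI}}$ as the LSI constant of $\sqrt N h_{ij}$ rather than of $h_{ij}$. I would state this convention explicitly and check it against the GOE normalization $\mathbb{E}h_{ij}^2=(1+\delta_{ij})/N$. The analytic steps are standard.
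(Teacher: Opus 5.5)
Your argument is correct and is essentially the proof of the cited Guionnet--Zeitouni theorem; the paper itself gives no proof and only cites it. That proof tensorizes the LSI over the independent entries, uses Hoffman--Wielandt to show that $h\mapsto \frac1N\mathrm{Tr}\,f(H_N)$ is $\sqrt{2}\|f\|_{\text{Lip}}/\sqrt N$-Lipschitz, and concludes by Herbst's argument, exactly as you propose.

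Your normalization caveat is also well-founded. The $N^2$ speed and the constant $8C_{\text{LSI}}$ are only correct if $C_{\text{LSI}}$ is the LSI constant of $\sqrt N h_{ij}$; for the GOE this gives $C_{\text{LSI}}=2$, not the $2/N$ the paper quotes when it applies the theorem. Under the literal reading, with $C_{\text{LSI}}$ the constant of $h_{ij}$ itself, your computation yields only $2\exp\bigl(-Nt^2/(4C_{\text{LSI}}\|f\|_{\text{Lip}}^2)\bigr)$.
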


\subsection{Strong Wegner Estimate}

\begin{definition}[Strong Wegner Estimation]\label{Definition:Wegner}
    A sequence of random symmetric matrices $H_N$ is said to have \textbf{strong Wegner estimation} if there exist $\newconstant\label{Constant:General_Wegner} > 0$ such that for any interval $I \subseteq \mathbb{R}$,
    \begin{equation}\label{Equation:General_Wegner}
        \mathbb{E}(|\text{Spec}(H_N) \cap I|) \leq \oldconstant{Constant:General_Wegner}N|I|.
    \end{equation}
\end{definition}

\begin{proposition}[Moment Estimate]
    If $H_N$ is a sequence of random symmetric matrix satisfying \eqref{Equation:General_Wegner}, then for all $\epsilon > 0$, there exist $\newconstant\label{Constant:Wegner_Corollary} > 0$ such that
    \begin{equation}\label{Equation:Wegner_Corollary}
        \int_{\{|\lambda| < \eta\}} |\lambda|^{-\epsilon}\,\mathbb{E}\left[\mu_{H_N}\right](d\lambda) \leq \oldconstant{Constant:Wegner_Corollary}\eta^{1 - \epsilon}
    \end{equation}
\end{proposition}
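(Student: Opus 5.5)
Write $\nu_N := \mathbb{E}[\mu_{H_N}]$. The plan is a layer-cake decomposition of the singular weight $|\lambda|^{-\epsilon}$ over dyadic shells, with the strong Wegner estimate \eqref{Equation:General_Wegner} supplying the mass of each shell. The bound is only meaningful for $0<\epsilon<1$, which we assume; if $\epsilon\ge 1$ the weight is not integrable against Lebesgue measure near $0$ and the claim would fail, so that regime is excluded. Since $\mathbb{E}(|\mathrm{Spec}(H_N)\cap I|)=N\,\nu_N(I)$, estimate \eqref{Equation:General_Wegner} says exactly
\[
\nu_N(I)\le \oldconstant{Constant:General_Wegner}\,|I|
\qquad\text{for every interval } I\subseteq\mathbb{R}.
\]
In other words, $\nu_N$ is absolutely continuous with density bounded by $\oldconstant{Constant:General_Wegner}$, uniformly in $N$.

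Decompose $\{0<|\lambda|<\eta\}$ into the shells
\[
A_k=\{2^{-k-1}\eta\le|\lambda|<2^{-k}\eta\},\qquad k\ge 0 .
\]
Each $A_k$ is a union of two intervals of total length $2^{-k}\eta$, so $\nu_N(A_k)\le \oldconstant{Constant:General_Wegner}\,2^{-k}\eta$. On $A_k$ we have $|\lambda|^{-\epsilon}\le (2^{-k-1}\eta)^{-\epsilon}$. The point $\lambda=0$ carries no $\nu_N$-mass: it lies in intervals of arbitrarily small length, so its mass is at most $\oldconstant{Constant:General_Wegner}\,|I|$ for every such interval, hence zero.

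Summing the shell contributions gives
\[
\int_{\{|\lambda|<\eta\}}|\lambda|^{-\epsilon}\,\nu_N(d\lambda)\le \sum_{k\ge0}\oldconstant{Constant:General_Wegner}\,2^{-k}\eta\,(2^{-k-1}\eta)^{-\epsilon}
=\oldconstant{Constant:General_Wegner}\,2^{\epsilon}\,\eta^{1-\epsilon}\sum_{k\ge0}2^{-k(1-\epsilon)} .
\]
The geometric series converges because $\epsilon<1$. We may therefore take
\[
\oldconstant{Constant:Wegner_Corollary}=\frac{2^{\epsilon}\,\oldconstant{Constant:General_Wegner}}{1-2^{-(1-\epsilon)}} ,
\]
which depends only on $\epsilon$ and the Wegner constant, and in particular not on $N$ or $\eta$.

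There is an equivalent route avoiding the dyadic bookkeeping. Since $\nu_N$ has density at most $\oldconstant{Constant:General_Wegner}$,
\[
\int_{\{|\lambda|<\eta\}}|\lambda|^{-\epsilon}\,\nu_N(d\lambda)\le \oldconstant{Constant:General_Wegner}\int_{-\eta}^{\eta}|\lambda|^{-\epsilon}\,d\lambda=\frac{2\,\oldconstant{Constant:General_Wegner}}{1-\epsilon}\,\eta^{1-\epsilon} .
\]
This is slightly cleaner, and I would present it in this form. The only real subtlety is the restriction $\epsilon\in(0,1)$ (and the null mass at the origin); there is no hard step otherwise.
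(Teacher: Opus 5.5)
Your dyadic-shell argument is the paper's proof; the paper bounds each shell by the full interval $[-2^{-n}\eta,2^{-n}\eta]$ rather than the annulus, which only changes the constant. Your restriction to $0<\epsilon<1$ is also right: the paper's geometric series needs it too, and it is exactly the regime used in Corollary~\ref{corollary:C.8}, while the direct density route you prefer is an equivalent repackaging, valid because the interval bound $\nu_N(I)\le \oldconstant{Constant:General_Wegner}|I|$ extends via countable interval covers to $\nu_N\le \oldconstant{Constant:General_Wegner}\,\mathrm{Leb}$ on all Borel sets.
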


\begin{proof}
    Observe that
    \begin{align}
        \int_{\{|\lambda| < \eta\}} |\lambda|^{-\epsilon}\,\mathbb{E}[\mu_{H_N}](d\lambda) &= \sum_{n = 0}^\infty \int_{\{2^{-(n + 1)}\eta \leq |\lambda| < 2^{-n}\eta\}} |\lambda|^{-\epsilon}\,\mathbb{E}[\mu_{H_N}](d\lambda) \notag \\
        &\leq \sum_{n = 0}^\infty \Bpar{\frac{\eta}{2^{n + 1}}}^{-\epsilon} \mathbb{E}\Bsq{\mu_{H_N}\Bpar{\Bsq{-\frac{\eta}{2^n}, \frac{\eta}{2^n}}}} \notag \\
        &\leq \sum_{n = 0}^\infty \Bpar{\frac{\eta}{2^{n + 1}}}^{-\epsilon} \cdot \frac{\oldconstant{Constant:General_Wegner}N \cdot 2^{-n}(2\eta)}{N} = \eta^{1 - \epsilon} \cdot 2\oldconstant{Constant:General_Wegner}\sum_{n = 0}^\infty \big(2^{-(1 - \epsilon)}\big)^n, \label{Equation:Wegner_Corollary_1}
    \end{align}
    where \eqref{Equation:Wegner_Corollary_1} holds by (strong) Wegner's estimate \eqref{Equation:General_Wegner}. Note that the latter term in \eqref{Equation:Wegner_Corollary_1} converges to a constant and therefore the corollary holds.
\end{proof}

\begin{corollary}[Truncation of Logarithm]\label{corollary:C.8}
    Let $H_N$ be a sequence of random symmetric matrices satisfying \eqref{Equation:General_Wegner}. Fix $0 < \epsilon < 1$, there exist $0 < \eta < 1$ small enough such that
    \begin{equation}\label{Equation:Condition_Wegner}
        -\log|\lambda| \leq |\lambda|^{-\epsilon} \quad \text{for all} \quad |\lambda| < \eta^{1/2}.
    \end{equation}
    Moreover, there exist universal constant $\newconstant\label{Constant:Truncation_Log} > 0$ such that
    \begin{equation}\label{Equation:Truncation_Log}
        \int_{\mathbb{R}} \left(\log_{\eta}(\lambda) - \log|\lambda|\right)\,\mathbb{E}\left[\mu_{H_N}\right](d\lambda) \leq \oldconstant{Constant:Truncation_Log}\big(\log(1 + \eta) + \eta^{(1 - \epsilon)/2}\big).
    \end{equation}
\end{corollary}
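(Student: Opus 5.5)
This is a truncation estimate for the logarithmic singularity, and I would prove it by splitting the integral at $|\lambda|=\eta^{1/2}$.

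\emph{Pointwise inequality.} The first claim \eqref{Equation:Condition_Wegner} is elementary. Since $|\lambda|^{\epsilon}\log(1/|\lambda|)\to 0$ as $\lambda\to 0$, there is $r_\epsilon\in(0,1)$ with $-\log|\lambda|\le|\lambda|^{-\epsilon}$ for $|\lambda|<r_\epsilon$. Choose $\eta<r_\epsilon^2$.

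\emph{Bound on the integrand.} Note that
\[
\log_\eta(\lambda)-\log|\lambda|=\tfrac12\log\bigl(1+\eta^2/\lambda^2\bigr)\ge 0.
\]
On the region $|\lambda|\ge\eta^{1/2}$ this is at most $\tfrac12\log(1+\eta)$, because $\eta^2/\lambda^2\le\eta$ there. Since $\mathbb{E}[\mu_{H_N}]$ is a probability measure, this region contributes at most $\tfrac12\log(1+\eta)$.

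\emph{Region $|\lambda|<\eta^{1/2}$.} Here write
\[
\tfrac12\log(1+\eta^2/\lambda^2)=\tfrac12\log(\lambda^2+\eta^2)-\log|\lambda|.
\]
Since $\eta<1$ and $\lambda^2+\eta^2\le \eta+\eta^2\le 2$, the first term is at most $\tfrac12\log 2$. I would not keep this as a constant, however. The region has small expected mass by Wegner: taking $I=(-\eta^{1/2},\eta^{1/2})$ in \eqref{Equation:General_Wegner} gives
\[
\mathbb{E}[\mu_{H_N}](I)\le 2\oldconstant{Constant:General_Wegner}\eta^{1/2}\le 2\oldconstant{Constant:General_Wegner}\eta^{(1-\epsilon)/2}.
\]
So the first term contributes at most a constant times $\eta^{(1-\epsilon)/2}$.

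For the second term, $-\log|\lambda|$, I would use \eqref{Equation:Condition_Wegner} to bound it by $|\lambda|^{-\epsilon}$. The moment estimate \eqref{Equation:Wegner_Corollary}, applied with $\eta^{1/2}$ in place of $\eta$, then gives
\[
\int_{\{|\lambda|<\eta^{1/2}\}}|\lambda|^{-\epsilon}\,\mathbb{E}[\mu_{H_N}](d\lambda)\le\oldconstant{Constant:Wegner_Corollary}\eta^{(1-\epsilon)/2}.
\]

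\emph{Conclusion and main point.} Summing the three contributions yields \eqref{Equation:Truncation_Log} with $\oldconstant{Constant:Truncation_Log}=\max\{1,\ \log 2\cdot\oldconstant{Constant:General_Wegner}+\oldconstant{Constant:Wegner_Corollary}\}$. Both $\oldconstant{Constant:General_Wegner}$ and $\oldconstant{Constant:Wegner_Corollary}$ are independent of $N$ and $\eta$. The only delicate step is the near-origin region, since the logarithm is unbounded there. It is handled by combining the polynomial domination in \eqref{Equation:Condition_Wegner} with the Wegner moment bound, which is exactly why the splitting point is taken to be $\eta^{1/2}$ rather than $\eta$.
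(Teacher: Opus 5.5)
Your proof is correct and follows essentially the same route as the paper. Both split the integral at $|\lambda|=\eta^{1/2}$, bound the outer region by $\tfrac12\log(1+\eta)$, and on the inner region dominate $-\log|\lambda|$ by $|\lambda|^{-\epsilon}$ before applying the Wegner moment estimate at scale $\eta^{1/2}$. The only difference is that the paper notes $\log_\eta(\lambda)\le\tfrac12\log(\eta+\eta^2)<0$ on the inner region and simply discards that term, whereas you bound it by $\tfrac12\log 2$ times the Wegner mass of the region; this also works, and as in the paper your constant depends on $\epsilon$ through the moment-estimate constant.
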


\begin{proof}
    Observe that
    \begin{align*}
        \int_{\{|\lambda| \geq \eta^{1/2}\}} \left(\log_\eta(\lambda) - \log|\lambda|\right)\mathbb{E}\left[\mu_{H_N}\right](d\lambda) = \dfrac{1}{2}\int_{\{|\lambda| \geq \eta^{1/2}\}} \log\Bpar{1 + \dfrac{\eta^2}{\lambda^2}} \mathbb{E}\left[\mu_{H_N}\right](d\lambda) \leq \frac{1}{2}\log(1 + \eta).
    \end{align*}
    Moreover, since \eqref{Equation:Condition_Wegner} is satisfied, we have
    \begin{align*}
        &\int_{\{|\lambda| < \eta^{1/2}\}} \left(\log_\eta(\lambda) - \log|\lambda|\right)\mathbb{E}\left[\mu_{H_N}\right](d\lambda) \\
        &\leq \int_{\{|\lambda| < \eta^{1/2}\}} \log_\eta(\lambda) \,\mathbb{E}\left[\mu_{H_N}\right](d\lambda) + \int_{\{|\lambda| < \eta^{1/2}\}} \dfrac{1}{|\lambda|^\epsilon} \,\mathbb{E}\left[\mu_{H_N}\right](d\lambda) \leq \dfrac{1}{2}\log\left(\eta + \eta^2\right) + \oldconstant{Constant:Wegner_Corollary}\eta^{(1 - \epsilon)/2},
    \end{align*}
    where the second inequality holds by \eqref{Equation:Wegner_Corollary}.
\end{proof}

\begin{theorem}[{\cite[Theorem 1]{aizenman2017matrix}}]\label{Theorem:GOE_Wegner_Estimate}
    Let $G_N$ and $A_N$ be a $N \times N$ GOE and a deterministic symmetric matrix, respectively. Then, the matrix $H_N = A_N + G_N$ satisfies \eqref{Equation:General_Wegner}, where  the constant $\oldconstant{Constant:General_Wegner}$ is independent of $A_N$.
\end{theorem}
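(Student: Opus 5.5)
The plan is to reduce the Wegner bound to a uniform bound on the diagonal resolvent, and then prove that bound by averaging over the Gaussian randomness in one row of $G_N$ at a time. Fix an interval $I=[E-\eta,E+\eta]$ and set $z=E+i\eta$. Since $\mathbbm{1}_I(\lambda)\le 2\eta\,\mathrm{Im}\,(\lambda-z)^{-1}$, we get
\[
\mathbb{E}\bigl|\mathrm{Spec}(H_N)\cap I\bigr|\le 2\eta\sum_{i=1}^N \mathbb{E}\,\mathrm{Im}\,\bigl((H_N-z)^{-1}\bigr)_{ii}.
\]
It therefore suffices to prove
\[
\sup_{i\le N}\;\sup_{A_N}\;\sup_{z\in\mathbb{C}_+}\mathbb{E}\,\mathrm{Im}\,\bigl((A_N+G_N-z)^{-1}\bigr)_{ii}\le C,
\]
with $C$ independent of $N$, $A_N$ and $z$. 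This is a "bounded density of states" statement, uniform in the deterministic part.

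To bound the $i$-th diagonal entry, I would condition on the minor $H^{(i)}$, obtained by deleting row and column $i$; it is independent of the $i$-th row and column of $G_N$. Write $H^{(i)}=\sum_k\lambda_k u_ku_k^\top$, and let $a_k=\sqrt N\langle g^{(i)},u_k\rangle$, where $g^{(i)}$ is the off-diagonal part of the $i$-th column of $G_N$. By rotational invariance the $a_k$ are i.i.d.\ standard Gaussians, independent of $H^{(i)}$. The $i$-th entry of the relevant off-diagonal column of $H_N$ is $b_k=\langle A_{\cdot i},u_k\rangle+a_k/\sqrt N$, a Gaussian with deterministic mean. The Schur complement formula then gives
\[
\bigl((H_N-z)^{-1}\bigr)_{ii}=\Bigl(h_{ii}-z-\sum_k\frac{b_k^2}{\lambda_k-z}\Bigr)^{-1},
\]
where $h_{ii}=(A_N)_{ii}+g_{ii}$ and $g_{ii}\sim N(0,2/N)$ is independent of everything else.

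The key step is to bound the expectation of the imaginary part of this expression over $(g_{ii},a_1,\dots,a_{N-1})$, uniformly in the $\lambda_k$, the shifts and $z$. Averaging over $g_{ii}$ alone only gives $\|\rho_{g_{ii}}\|_\infty\cdot\pi\sim\sqrt N$, which is too weak. A rank-one spectral averaging (Simon--Wolff) argument in one coordinate, or a common scalar shift of all eigenvalues, also loses exactly this $\sqrt N$. So the argument must exploit the collective randomness of the whole row, roughly $N$ Gaussians each of size $N^{-1/2}$, which together produce an $O(1)$ fluctuation of $\Sigma(z)=\sum_k b_k^2/(\lambda_k-z)$ in the real direction. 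Concretely, I would use that the map $v\mapsto\sum_k v_k^2/(\lambda_k-z)$ is a quadratic form with Herglotz coefficients. Its $\mathrm{Re}$ and $\mathrm{Im}$ parts are jointly controlled by $\sum_k v_k^2(\lambda_k-E)/|\lambda_k-z|^2$ and $\eta\sum_k v_k^2/|\lambda_k-z|^2$. For the first I would show an anti-concentration bound: for every $x$,
\[
\mathbb{P}\bigl(|x-\mathrm{Re}\,\Sigma|\le t\,\big|\,H^{(i)}\bigr)\lesssim t+\text{(terms absorbed by }\mathrm{Im}\,\Sigma),
\]
via a decomposition of $v$ into radial and angular parts, or via a second-derivative (Gaussian integration by parts) argument in the direction $v$. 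I expect this to be the main obstacle: the bound must hold uniformly even when $\mathrm{Re}\,\Sigma$ is dominated by a few eigenvalues near $E$ or when the deterministic shifts are huge.

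Once this anti-concentration is in hand, a layer-cake argument finishes the proof. From $\mathrm{Im}\,\bigl((h_{ii}-z-\Sigma)^{-1}\bigr)\le\min\bigl\{(\mathrm{Im}\,\Sigma+\eta)^{-1},\,|h_{ii}-E-\mathrm{Re}\,\Sigma|^{-1}\bigr\}$ we get
\[
\mathbb{E}\,\mathrm{Im}\,\bigl((H_N-z)^{-1}\bigr)_{ii}\le\int_0^\infty\mathbb{P}\bigl(|h_{ii}-E-\mathrm{Re}\,\Sigma|\le 1/s\bigr)\,ds\le C,
\]
handling the large-$s$ tail with the $\mathrm{Im}$ bound. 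Summing over $i$ then yields $\mathbb{E}|\mathrm{Spec}(H_N)\cap I|\le 2\eta\, CN=CN|I|$ with $C$ independent of $A_N$, as claimed. If the direct anti-concentration proves unwieldy, my fallback is the route of \cite{aizenman2017matrix}: bound $\mathbb{E}\|\mathbbm{1}_I(H_N)\|_{\mathrm{HS}}^2$ through the Gaussian regularization of $\mathbb{E}|\det(A_N+G_N-z)|^{-\cdot}$-type quantities.
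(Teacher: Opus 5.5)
You have a genuine gap at your first step. Note first that the paper gives no proof of this statement; it imports it from \cite[Theorem 1]{aizenman2017matrix}.

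Your reduction is to a statement that is false. The bound $\sup_i\sup_{A_N}\sup_{z}\mathbb{E}\,\mathrm{Im}\,((A_N+G_N-z)^{-1})_{ii}\le C$ says that every individual averaged spectral measure $\mathbb{E}\langle e_i,\mathbbm{1}_{(\cdot)}(H_N)e_i\rangle$ has density $O(1)$, uniformly in $A_N$. By rotation invariance of $G_N$ this is equivalent to the same bound for every deterministic unit vector. The Wegner estimate, by contrast, controls only the average of these measures over $i$.

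Take $A_N=Me_1e_1^\top$ with $M$ a large constant, and use your own Schur formula at $i=1$. Here $b=g^{(1)}$ has no deterministic part. For real $\lambda$ near $M$,
\[
\Sigma(\lambda)=\langle g^{(1)},(G^{(1)}-\lambda)^{-1}g^{(1)}\rangle=m_{\mathrm{sc}}(\lambda)+O(N^{-1/2}M^{-1}),
\]
where $m_{\mathrm{sc}}$ is the Stieltjes transform of $\mu_{\mathrm{sc}}$, so that $m_{\mathrm{sc}}(M+M^{-1})=-M^{-1}$.

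The outlier eigenvalue $\lambda_{\max}$ solves $\lambda+\Sigma(\lambda)=M+g_{11}$. The map $\lambda\mapsto\lambda+\Sigma(\lambda)$ has derivative $1+O(M^{-2})$. Hence, conditionally on $(G^{(1)},g^{(1)})$, $\lambda_{\max}$ has density at least $c\sqrt N$ on an $N^{-1/2}$-window around $M+M^{-1}$. Its eigenvector satisfies $|\langle v,e_1\rangle|^2=(1+\|(G^{(1)}-\lambda_{\max})^{-1}g^{(1)}\|_2^2)^{-1}=1-O(M^{-2})$. Therefore
\[
\mathbb{E}\,\mathrm{Im}\,((H_N-z)^{-1})_{11}\ge c\sqrt N\quad\text{for } z=M+M^{-1}+i\eta,\ \eta\le N^{-1/2}.
\]

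This is consistent with the Wegner bound, since the window contains a single eigenvalue with density $\sqrt N\le N$. But it shows that the anti-concentration of $\mathrm{Re}\,\Sigma$ you flag as the main obstacle is not merely hard; it is false. When $E$ lies far from $\mathrm{Spec}(H^{(i)})$, $\Sigma$ concentrates on scale $N^{-1/2}/M$ and has no collective $O(1)$ fluctuation. The only randomness on scale $N^{-1/2}$ is $g_{ii}$, and the $\sqrt N$ you get from averaging over $g_{ii}$ is sharp for a single index. Diagonalizing $A_N$ first does not rescue this. It is free by rotation invariance and would remove the deterministic shifts in your $b_k$, but the counterexample is already diagonal.

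So the estimate cannot be proved index by index. What is true is only $\sum_i\mathbb{E}\,\mathrm{Im}\,((H_N-z)^{-1})_{ii}\le CN$. In the example this holds because the other $N-1$ indices carry essentially no spectral weight near the outlier. Any proof has to capture this trade-off between indices at the level of the trace (or of an average over directions), rather than bounding each diagonal entry separately. That is precisely the nontrivial content the paper delegates to \cite{aizenman2017matrix}. Your fallback sentence, as written, is too vague to supply such an argument.
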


\subsection{Matrix Dyson Equation}

\begin{theorem}[\protect{\cite[Proposition 2.1]{ajanki2019stability}}]\label{Theorem:Support_MDE}
    If $\mathcal{S} \colon \mathbb{C}^{N \times N} \to \mathbb{C}^{N \times N}$ is self-adjoint and positive preserving, then the solution $M_N \colon \mathbb{H} \to \mathbb{C}^{N \times N}$ to matrix Dyson equation (MDE)
    \begin{equation}\label{Equation:MDE}
        -M_N(z)^{-1} = -z \cdot I_N - A_N + \mathcal{S}[M_N(z)] \quad \text{subject to} \quad \Im M_N(z) > 0
    \end{equation}
    admits a Stieltjes transform representation
    \[\dfrac{1}{N}\text{Tr}(M_N(z)) = \int_{\mathbb{R}}\dfrac{\mu(d\lambda)}{\lambda - z}\]
    for some probability measure $\mu$, where $\text{supp}(\mu) \subseteq [-\theta, \theta]$, $\theta = \|A_N\|_{\text{op}} + 2\|\mathcal{S}\|_{\text{op} \to \text{op}}^{1/2}$.
\end{theorem}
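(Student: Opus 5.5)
This statement is quoted from \cite[Proposition 2.1]{ajanki2019stability}, so I would recall the structure of that argument rather than invoke earlier results of the paper. The plan has three steps: uniqueness and analyticity of the solution of \eqref{Equation:MDE}, a Nevanlinna representation of its normalized trace, and support control via an a priori bound on $M_N$ away from $[-\theta,\theta]$.

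\emph{Step 1 (existence, uniqueness, analyticity).} For $z\in\mathbb{H}$, I would write \eqref{Equation:MDE} as the fixed-point problem $M=\Phi_z(M):=-(z I_N+A_N+\mathcal{S}[M])^{-1}$ on the cone $\{\Im M>0\}$. If $\Im M>0$, positivity preservation of $\mathcal{S}$ gives $\Im(zI_N+A_N+\mathcal{S}[M])\ge \Im z>0$. Hence the inverse exists, $\Phi_z(M)$ again has positive imaginary part, and $\|\Phi_z(M)\|_{\rm op}\le 1/\Im z$. Following Helton--Rashidi Far--Speicher, $\Phi_z$ is a strict contraction in the Carathéodory-type metric on this cone, so there is a unique fixed point $M_N(z)$. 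The fixed point depends analytically on $z$ by the implicit function theorem.

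\emph{Step 2 (Stieltjes representation).} The function $m(z)=N^{-1}\mathrm{Tr}\,M_N(z)$ maps $\mathbb{H}$ to $\mathbb{H}$. From $\|M_N\|\le 1/\Im z$ and the equation, $M_N(z)=-z^{-1}I_N+O(|z|^{-2})$ as $z=i\eta$ with $\eta\to\infty$. Therefore $i\eta\, m(i\eta)\to -1$, and the Nevanlinna characterization gives $m(z)=\int \mu(d\lambda)/(\lambda-z)$ for a probability measure $\mu$. Self-adjointness of $\mathcal{S}$ makes the matrix-valued measure symmetric, but only the trace is needed here.

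\emph{Step 3 (support bound).} This is the main obstacle. I would show that $M_N$ extends analytically across $\mathbb{R}\setminus[-\theta,\theta]$ with a self-adjoint boundary value, so $\Im m\downarrow 0$ there and Stieltjes inversion gives $\mu(\mathbb{R}\setminus[-\theta,\theta])=0$. Concretely, for real $|E|>\theta$ I would seek a self-adjoint solution of the equation via a continuity/contraction argument on the ball $\|M\|\le r$. There one has $\|(E+A_N+\mathcal{S}[M])^{-1}\|\le (|E|-\|A_N\|-\|\mathcal{S}\| r)^{-1}$. Closing the bound requires $r(|E|-a-sr)\ge 1$ with $a=\|A_N\|$ and $s=\|\mathcal{S}\|$, which is solvable exactly when $(|E|-a)^2\ge 4s$, that is, $|E|\ge a+2\sqrt{s}=\theta$.

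I would then perturb to $E+i\eta$ and use uniqueness from Step 1 to identify this solution with the boundary value of $M_N$. Continuity in $\eta\downarrow0$ then gives $\Im m(E+i0)=0$ for $|E|>\theta$. The hard parts are the contraction estimate in the real ball, where one must check that the derivative $\|\mathcal{S}\|r^2<1$ in the regime $|E|>\theta$, and matching the two solutions.
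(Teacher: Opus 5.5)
The paper gives no argument for this statement. It is imported from \cite[Proposition 2.1]{ajanki2019stability}, which itself takes existence and uniqueness from Helton--Rashidi Far--Speicher. Your outline is therefore a reconstruction rather than a variant of something in the text, and it is correct.

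Two side remarks. First, the display in the statement has a sign typo: $-z$ should be $+z$, otherwise $\Im M_N<0$ (compare the MDEs written later in the paper). Your map $-(zI_N+A_N+\mathcal{S}[M])^{-1}$ also flips the sign of $A_N$, which is harmless since only $\|A_N\|_{\text{op}}$ enters $\theta$. Second, in Step 1 analyticity is more safely obtained as a locally uniform limit of the analytic iterates $\Phi_z^n(M_0)$. The implicit function theorem would require you to check invertibility of $R\mapsto R-M\mathcal{S}[R]M$.

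The two steps you flag as hard close at once. Put $d=|\mathrm{Re}\,z|-\|A_N\|_{\text{op}}$, $s=\|\mathcal{S}\|$ and $r_\pm=(d\pm\sqrt{d^2-4s})/(2s)$.
\begin{itemize}
\item \emph{Contraction.} Since $r_-r_+=1/s$ and $r_-<r_+$ for $d>2\sqrt{s}$, you get $sr_-^2<1$. The identity $\Phi_z(M_1)-\Phi_z(M_2)=W_1^{-1}\mathcal{S}[M_1-M_2]W_2^{-1}$, with $\|W_i^{-1}\|\le (d-sr_-)^{-1}=r_-$, makes $\Phi_z$ an $sr_-^2$-contraction of the ball $\{\|M\|\le r_-\}$.
\item \emph{Complex $z$.} This holds for every $z$ with $|\mathrm{Re}\,z|>\theta$, not only real $E$, because $\|(z+A_N)^{-1}\|\le d^{-1}$. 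You therefore get a single analytic $\widetilde M$ on that region, self-adjoint on the real axis.
\item \emph{Matching.} If $\Im z>0$, then $\Im\widetilde M=\Im z\,\widetilde M\widetilde M^*+\widetilde M\mathcal{S}[\Im\widetilde M]\widetilde M^*$. The map $X\mapsto\widetilde M\mathcal{S}[X]\widetilde M^*$ is positivity preserving with norm at most $sr_-^2<1$. A Neumann series then gives $\Im\widetilde M\ge\Im z\,\widetilde M\widetilde M^*>0$, and Step 1's uniqueness yields $\widetilde M=M_N$.
\end{itemize}

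The more common direct route stays on $\mathbb{H}$ throughout. One derives $\|M_N\|(d-s\|M_N\|)\le 1$, uses continuity from $\Im z=\infty$ to stay in $\|M_N\|\le r_-$, and then bounds $\|\Im M_N\|\le \Im z\,\|M_N\|^2/(1-s\|M_N\|^2)$. That route avoids constructing a second solution and matching it. Yours costs the matching step but hands you the analytic continuation of $M_N$ across $\mathbb{R}\setminus[-\theta,\theta]$ explicitly, which is exactly what Stieltjes inversion needs.
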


\begin{theorem}[\protect{\cite[Proposition 2.2]{ajanki2019stability}}]\label{Theorem:Density_MDE}
    Let $\mathcal{S} \colon \mathbb{C}^{N \times N} \to \mathbb{C}^{N \times N}$ is self-adjoint, positive preserving. Moreover, assume $\mathcal{S}$ satisfies the following property
    \begin{enumerate}[label = (F)]
        \item\label{Condition:Flat} (Flatness) there exist universal $\newconstant\label{Constant:Flat} > 0$ such that
        \[\oldconstant{Constant:Flat}^{-1}N^{-1}\text{Tr}(M_N) \leq \mathcal{S}[M_N] \leq \oldconstant{Constant:Flat}N^{-1}\text{Tr}(M_N).\]
    \end{enumerate}
    Then there exist a universal constant $\beta > 0$ such that the induced probability measure given by Theorem \ref{Theorem:Support_MDE} from \eqref{Equation:MDE} admits $\beta$-H\"{o}lder continuous density with respect to the Lebesgue measure.
\end{theorem}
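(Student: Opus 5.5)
We need to prove the last stated theorem: Theorem \ref{Theorem:Density_MDE}, i.e.\ that under flatness the self-consistent density of states given by the MDE is $\beta$-H\"older continuous with a universal $\beta$. This is a cited result, but a self-contained plan would follow the stability analysis of the MDE.

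The starting point is the Stieltjes representation of Theorem \ref{Theorem:Support_MDE}. We write $M=M_N(z)$ and $v(z)=\Im\frac1N\mathrm{Tr}\,M(z)=\int\frac{\eta\,\mu(d\lambda)}{(\lambda-E)^2+\eta^2}$ for $z=E+i\eta$. By Stieltjes inversion, the density exists and is $\beta$-H\"older as soon as two facts hold: $v$ is uniformly bounded on $\mathbb H$, and $z\mapsto \frac1N\mathrm{Tr}\,M(z)$ is uniformly $\beta$-H\"older on $\mathbb H$. The latter extends continuously to the real line, and the boundary value of $v/\pi$ is then the density.

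The first step is a priori bounds. Taking imaginary parts of \eqref{Equation:MDE} gives
\[
\Im M=\eta\, M^*M+M^*\mathcal S[\Im M]M .
\]
Flatness \ref{Condition:Flat} makes $\mathcal S[\Im M]$ comparable to $v\,I_N$. Taking normalized traces and using Cauchy--Schwarz then gives $v\gtrsim v\cdot\frac1N\mathrm{Tr}(M^*M)$, which bounds $\frac1N\mathrm{Tr}(M^*M)$. A further argument via the representation $M=(-z-A_N+\mathcal S[M])^{-1}$ gives $\|M\|\lesssim 1/\mathrm{dist}$ and finally $\|M\|\le C$ and $v\le C$ uniformly, following Ajanki--Erd\H{o}s--Kr\"uger. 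Flatness also gives $\Im M\asymp v$ as matrices.

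The second step differentiates the MDE in $z$:
\[
(\mathrm{Id}-\mathcal C_M\mathcal S)[\partial_z M]=M^2,\qquad \mathcal C_M[R]=MRM .
\]
We need to control the inverse of the stability operator $\mathcal L=\mathrm{Id}-\mathcal C_M\mathcal S$. Using the balanced polar decomposition $M=W^{1/2}UW^{1/2}$, with $W$ positive and comparable to the identity and $U$ unitary by the step-one bounds, one shows $\|\mathcal L^{-1}\|\lesssim v^{-k}$ for a universal $k$. This uses a Perron--Frobenius spectral gap for the symmetrized operator $\mathcal F=\mathcal C_{W}\mathcal S\mathcal C_{W}$, whose gap is bounded below by flatness. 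The estimate yields $|\partial_z v|\lesssim v^{-k}$. Integrating $\partial_z (v^{k+1})\lesssim 1$ along vertical and horizontal segments gives H\"older continuity with $\beta=1/(k+1)$.

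The main obstacle is this inverse bound on $\mathcal L$ near points where $v\to0$, i.e.\ spectral edges and cusps. There the operator $\mathrm{Id}-U\mathcal F$ has an eigenvalue approaching zero, and one must show that its distance to zero is at least a power of $v$. I would first reduce to the rank-one critical direction via the Perron--Frobenius eigenvector of $\mathcal F$, and use the identity $\|\mathcal F\|=1-\eta\cdot(\text{positive})/v$ to quantify this. Failing a clean argument, I would simply invoke \cite{ajanki2019stability}.
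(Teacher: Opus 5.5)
The paper gives no proof of this statement; it is quoted from \cite{ajanki2019stability}. Your outline matches the route taken there: Stieltjes inversion, the trace bound from $\Im M=M^*(\eta+\mathcal S[\Im M])M$, differentiating the MDE, a bound $\|\mathcal L^{-1}\|\lesssim v^{-k}$, and Lipschitz continuity of $v^{k+1}$. Your fallback of invoking that reference is exactly what the paper does. As a self-contained argument, however, two steps fail.

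\textbf{First: the a priori bounds are overstated.} Flatness does not give $\|M\|\le C$, $\Im M\asymp v$, or $W\asymp I$ uniformly in $N$.
\begin{itemize}
\item Counterexample: take $\mathcal S[R]=\frac1N\mathrm{Tr}(R)\,I_N$ and $A_N=\mathrm{diag}(10,0,\dots,0)$. Then $\mu_{A_N}\boxplus\mu_{\mathrm{sc}}$ has a component of mass $1/N$ and width $\asymp N^{-1/2}$ near $10.1$.
\item On that component the subordination point $\omega=z+\langle M\rangle$ lies within $O(N^{-1/2})$ of $10$. Hence $|M_{11}|=|10-\omega|^{-1}\gtrsim N^{1/2}$ and $\Im M_{11}\asymp N^{1/2}$, while $v\asymp N^{-1/2}$.
\item This case is relevant to the paper, since $D_N^K(\sigma)$ may have a single large diagonal entry.
\item What flatness actually gives is one-sided. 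From $\Im M\ge c^{-1}v\,M^*M$ you get $\|M\|^2\le c\|\Im M\|/v\le c\|M\|/v$, so $\|M\|\le c/v$. You also get $\Im M\ge c^{-1}v\|M^{-1}\|^{-2}$ with $\|M^{-1}\|\lesssim 1+|z|$.
\end{itemize}
So the bounds on $W^{\pm1}$ and on the spectral gap of $\mathcal F$ are only polynomial in $1/v$, and must be tracked that way. This is repairable, but the uniform statements you wrote are false.

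\textbf{Second, and more serious: the mechanism for the key step does not work.} Write $M=W^{1/2}UW^{1/2}$ and $\mathcal F=\mathcal C_{W^{1/2}}\mathcal S\,\mathcal C_{W^{1/2}}$. Then $\Im U=\eta W+\mathcal F[\Im U]$, which gives your identity for $\|\mathcal F\|$. But that identity only shows $\|\mathcal F\|\le 1$. Quantitatively, $1-\|\mathcal F\|\to0$ as $\eta\downarrow0$, even in the bulk where $v\asymp 1$. So any bound built on it degenerates at the real axis. It cannot give $\|\mathcal L^{-1}\|\lesssim v^{-k}$ uniformly in $\eta$, which is exactly what integrating down to $\eta=0$ requires.

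What keeps $\mathrm{Id}-\mathcal C_U\mathcal F=\mathcal C_U(\mathcal C_{U^*}-\mathcal F)$ invertible is the phase of $U$:
\begin{itemize}
\item $\Im U=W^{-1/2}(\Im M)W^{-1/2}\ge \lambda_{\min}(\Im M)/\|W\|$.
\item Use the inner product $\langle X,Y\rangle=\frac1N\mathrm{Tr}(X^*Y)$ and the normalized Perron--Frobenius eigenvector $F\ge0$ of $\mathcal F$. Split $U=\Re U+i\Im U$ and use $(\Re U)^2+(\Im U)^2=I$. A short trace computation gives $1-\Re\langle F,U^*FU^*\rangle\ge\langle F,(\Im U)F(\Im U)\rangle\ge\lambda_{\min}(\Im U)^2$.
\item Feed this into a rotation--inversion lemma. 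For $\mathcal U$ unitary and $\mathcal T$ self-adjoint with $\|\mathcal T\|\le 1$, spectral gap $\vartheta$ and normalized top eigenvector $t$: $\|(\mathcal U-\mathcal T)^{-1}\|\lesssim\vartheta^{-1}\,|1-\|\mathcal T\|\langle t,\mathcal U t\rangle|^{-1}$.
\item Since $\|\mathcal F\|\le1$, the last factor's denominator is at least $\lambda_{\min}(\Im U)^2$. Combined with the polynomially degraded gap of $\mathcal F$, this yields $\|\mathcal L^{-1}\|\lesssim v^{-k}$.
\end{itemize}
Your identity for $\|\mathcal F\|$ is needed only to supply $\|\mathcal F\|\le 1$ in this lemma. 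Without the phase argument and the rotation--inversion lemma, your plan reduces to the citation.
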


\subsection{Miscellaneous}

\begin{proposition}[Cauchy Interlacing Inequality {\cite[Corollary 7.3.6]{horn2012matrix}}]
    If $A$ is a $N \times N$ symmetric matrix and $B$ is a $N \times r$ ($r \leq N$) semi-orthogonal matrix (meaning $B^\top B = I_r$), then for all $i \in [r]$,
    \begin{equation}\label{Equation:Cauchy_Interlacing}
        \lambda_i(A) \leq \lambda_i(B^\top AB) \leq \lambda_{N - r + i}(A).
    \end{equation}
\end{proposition}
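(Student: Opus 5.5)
The statement is the Cauchy interlacing inequality for compressions: for symmetric $A\in\mathbb{R}^{N\times N}$ and $B\in\mathbb{R}^{N\times r}$ with $B^\top B=I_r$, we want $\lambda_i(A)\le\lambda_i(B^\top AB)\le\lambda_{N-r+i}(A)$ for every $i\in[r]$. I will prove it with the Courant--Fischer min-max characterization, using eigenvalues in increasing order as in the paper:
\[\lambda_i(A)=\min_{\substack{U\subseteq\mathbb{R}^N\\ \dim U=i}}\ \max_{\substack{x\in U\\ \|x\|_2=1}} x^\top A x=\max_{\substack{U\subseteq\mathbb{R}^N\\ \dim U=N-i+1}}\ \min_{\substack{x\in U\\ \|x\|_2=1}} x^\top Ax .\]
The key observation is that $y\mapsto By$ is an isometry from $\mathbb{R}^r$ onto $W:=\mathrm{range}(B)$, since $B^\top B=I_r$. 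Moreover $y^\top(B^\top AB)y=(By)^\top A(By)$. So subspaces of $\mathbb{R}^r$ of dimension $k$ correspond bijectively to $k$-dimensional subspaces of $W$, and unit vectors correspond to unit vectors.

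\emph{Lower bound.} By the min form in $\mathbb{R}^r$,
\[\lambda_i(B^\top AB)=\min_{\substack{U'\subseteq W\\ \dim U'=i}}\ \max_{\substack{x\in U'\\ \|x\|_2=1}} x^\top Ax.\]
Here the minimum runs only over $i$-dimensional subspaces of $W$. Every such subspace is also an $i$-dimensional subspace of $\mathbb{R}^N$, so this minimum is at least the minimum over all $i$-dimensional subspaces of $\mathbb{R}^N$, which is $\lambda_i(A)$.

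\emph{Upper bound.} Use the max form in $\mathbb{R}^r$:
\[\lambda_i(B^\top AB)=\max_{\substack{U'\subseteq W\\ \dim U'=r-i+1}}\ \min_{\substack{x\in U'\\ \|x\|_2=1}} x^\top Ax .\]
Set $j=N-r+i$, so that $N-j+1=r-i+1$. Each admissible $U'$ is then an $(N-j+1)$-dimensional subspace of $\mathbb{R}^N$. Enlarging the class of subspaces in the maximum can only increase it, so the right-hand side is at most
\[\max_{\substack{U\subseteq\mathbb{R}^N\\ \dim U=N-j+1}}\ \min_{\substack{x\in U\\ \|x\|_2=1}} x^\top Ax=\lambda_{N-r+i}(A).\]

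The only delicate step is the index bookkeeping in the upper bound: one must pick the max--min form so that the dimension $r-i+1$ matches $N-j+1$ with $j=N-r+i$. Otherwise the argument is just the restriction of the variational problem to the subspace $W$. One could alternatively extend $B$ to an orthogonal matrix and apply the classical principal-submatrix interlacing iteratively, but the min-max route above is shorter.
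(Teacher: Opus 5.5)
Your proof is correct. Transporting both Courant--Fischer characterizations to $\mathrm{range}(B)$ through the isometry $y\mapsto By$ gives the lower bound from the min--max form and the upper bound from the max--min form, and the index matching $N-j+1=r-i+1$ for $j=N-r+i$ is exactly right.

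The paper gives no proof of its own; it only cites Horn--Johnson. The standard textbook derivation of this compression form (the Poincar\'e separation theorem) is the alternative you mention at the end. One completes $B$ to an orthogonal matrix $\overline{B}=(B\;\,C)$ and observes that $B^\top AB$ is the leading $r\times r$ principal submatrix of $\overline{B}^\top A\overline{B}$, which has the same spectrum as $A$. Interlacing for principal submatrices then finishes the argument.

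Your route applies the same Courant--Fischer mechanism directly, so it is shorter and avoids the basis completion. The reduction route instead isolates the principal-submatrix case, which can also be obtained by iterating the one-row bordering interlacing $N-r$ times. That is also the form the paper invokes when comparing the spectra of $M^K_{N-1}(\sigma)$ and $\overline{Q}^K_N(\sigma)$.
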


\begin{proposition}[{\cite[Theorem 3.1]{shebrawi2013trace}, \cite[Eq (2.1)]{Rohde2011}}]
    For square matrices $A_1, \dots, A_m \in \mathbb{R}^{n \times n}$ and $s > 0$, we have
    \begin{equation}\label{Equation:Subadditive_Trace}
        \text{Tr}\Big(\Big|\sum_{i = 1}^m A_i\Big|^s\Big) \leq m^s\sum_{i = 1}^m \text{Tr}\big(|A_i|^s\big),
    \end{equation}
    where $|A| = \sqrt{A^\top A}$.
\end{proposition}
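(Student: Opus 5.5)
The plan is to reduce everything to positive semidefinite matrices and then split into the two regimes $s\ge 1$ and $0<s<1$, exactly as the constant $m^s$ suggests. Write $S=\sum_{i=1}^m A_i$. The first step is \emph{Thompson's triangle inequality} for the matrix absolute value. For any $n\times n$ matrices $X,Y$ there are orthogonal (unitary) matrices $U,V$ with
\[
|X+Y|\le U|X|U^\top+V|Y|V^\top
\]
in the Loewner order. Iterating it over the $m$ summands produces orthogonal matrices $U_1,\dots,U_m$ with
\[
|S|\le \sum_{i=1}^m X_i,\qquad X_i:=U_i|A_i|U_i^\top\succeq 0 .
\]
Each $X_i$ has the same spectrum as $|A_i|$, so $\operatorname{Tr}(X_i^s)=\operatorname{Tr}(|A_i|^s)$ for every $s>0$.

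The second step passes the Loewner bound through the power $t\mapsto t^s$. By Weyl's monotonicity principle, $P\le Q$ with $P,Q\succeq0$ gives $\lambda_k(P)\le\lambda_k(Q)$ for every $k$. Since $t\mapsto t^s$ is increasing on $[0,\infty)$ for every $s>0$ (operator monotonicity is not needed), we get
\[
\operatorname{Tr}(|S|^s)=\sum_k\lambda_k(|S|)^s\le\sum_k\lambda_k\Big(\textstyle\sum_i X_i\Big)^s=\operatorname{Tr}\Big[\Big(\textstyle\sum_i X_i\Big)^s\Big].
\]

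The third step estimates $\operatorname{Tr}[(\sum_i X_i)^s]$ by a trace-function inequality on PSD matrices.
\begin{itemize}
\item For $s\ge1$, the map $X\mapsto\operatorname{Tr}(X^s)$ is convex on PSD matrices, because $t\mapsto t^s$ is convex. Hence
\[
\operatorname{Tr}\Big[\Big(\textstyle\sum_i X_i\Big)^s\Big]=m^s\operatorname{Tr}\Big[\Big(\tfrac1m\textstyle\sum_i X_i\Big)^s\Big]\le m^{s-1}\sum_i\operatorname{Tr}(X_i^s),
\]
which is even better than $m^s\sum_i\operatorname{Tr}(X_i^s)$.
\item For $0<s<1$, I would use the Rotfel'd (McCarthy) inequality: for concave $f\ge0$ on $[0,\infty)$ with $f(0)=0$ and PSD $P,Q$, one has $\operatorname{Tr}f(P+Q)\le\operatorname{Tr}f(P)+\operatorname{Tr}f(Q)$. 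Applied with $f(t)=t^s$ and iterated over the $m$ summands, it gives $\operatorname{Tr}[(\sum_iX_i)^s]\le\sum_i\operatorname{Tr}(X_i^s)\le m^s\sum_i\operatorname{Tr}(X_i^s)$ when $m^s\ge 1$. This is the case for every $m\ge1$, so the case is settled.
\end{itemize}
Combining the three steps with $\operatorname{Tr}(X_i^s)=\operatorname{Tr}(|A_i|^s)$ yields \eqref{Equation:Subadditive_Trace}.

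I expect the main obstacle to be the first step. The matrices $A_i$ are arbitrary square matrices, not necessarily normal, so $|S|\le\sum_i|A_i|$ is false in general and the orthogonal conjugations are genuinely needed. I would take Thompson's theorem from the literature rather than reprove it. If a self-contained route is wanted for $s\ge1$, I would instead use the Ky Fan singular value inequality $\sigma_{j+k-1}(X+Y)\le\sigma_j(X)+\sigma_k(Y)$, iterated to $\sigma_{m(j-1)+1}(S)\le\sum_i\sigma_j(A_i)$. Grouping the singular values of $S$ into consecutive blocks of length $m$ then gives
\[
\operatorname{Tr}|S|^s\le m\sum_j\Big(\textstyle\sum_i\sigma_j(A_i)\Big)^s\le m\cdot m^{s-1}\sum_{i,j}\sigma_j(A_i)^s=m^s\sum_i\operatorname{Tr}(|A_i|^s),
\]
where the middle inequality is the power-mean bound for $s\ge1$. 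For $s<1$ this block argument only gives a factor $m$ rather than $m^s$, so there I would still rely on Thompson plus Rotfel'd.
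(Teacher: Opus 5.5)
Your proof is correct, but it does not follow the paper's route, because the paper gives no argument of its own and simply imports the inequality from Shebrawi--Albadawi and Rohde. The standard reasoning behind those citations is the Schatten-class dichotomy that the paper itself uses when proving \eqref{Equation:Free_Addition_m_2} in Appendix B:
\begin{itemize}
\item $\|X\|_s=(\operatorname{Tr}|X|^s)^{1/s}$ is a norm for $s\ge 1$;
\item $X\mapsto\operatorname{Tr}|X|^s$ is subadditive for $0<s<1$.
\end{itemize}
With these taken as black boxes, the statement is one line. For $s\ge1$, $\operatorname{Tr}|\sum_i A_i|^s\le(\sum_i\|A_i\|_s)^s\le m^{s-1}\sum_i\operatorname{Tr}|A_i|^s$ by the power-mean inequality. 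For $0<s<1$ the constant is $1$.

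You instead derive both facts from more primitive tools. Thompson's Loewner-order domination reduces the problem to a sum of positive semidefinite matrices, each unitarily equivalent to some $|A_i|$. Weyl monotonicity then carries the order through $t\mapsto t^s$. Finally, convexity of $X\mapsto\operatorname{Tr}X^s$ (for $s\ge1$) or the Rotfel'd--McCarthy inequality (for $s<1$) finishes.

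What your route buys:
\begin{itemize}
\item It is self-contained and handles both regimes through a single reduction.
\item It exhibits the sharp constant $\max\{1,m^{s-1}\}$ instead of the lossy $m^s$ in the statement. Equality holds when all $A_i$ equal one rank-one projection (for $s\ge1$), and when the $A_i$ are mutually orthogonal rank-one projections (for $s<1$).
\end{itemize}
The citation route is shorter but rests on essentially the same ingredients. The Schatten triangle inequality and $s$-subadditivity are themselves typically derived from Ky Fan dominance or Thompson's inequality.

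Your alternative Ky Fan block argument for $s\ge1$ is also valid. You are right that it only gives the factor $m$ when $s<1$. That weaker constant would still suffice for the paper's only use of the proposition, in \eqref{Equation:KSD_4} with $m=2$, where only finiteness of the resulting bound matters.
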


\begin{proposition}[Determinant Breaking]\
    \begin{enumerate}[label = (\roman*)]
        \item {\cite[Eq (0.8.5.1)]{horn2012matrix}} Let $A \in \mathbb{R}^{n \times n}$, $B, C^\top \in \mathbb{R}^{n \times m}$, $D \in \mathbb{R}^{m \times m}$ and $D$ is invertible, then the determinant
        \begin{equation}\label{Equation:Det_Block_Matrix}
            \det\begin{pmatrix}
                A & B \\
                C & D
            \end{pmatrix} = \det(D)\det(A - BD^{-1}C)
        \end{equation}
        \item {\cite[Eq (0.8.5.11)]{horn2012matrix}} Let $A \in \mathbb{R}^{n \times n}$ be invertible, $v, w \in \mathbb{R}^n$, then
        \begin{equation}\label{Equation:Det_Rank_1}
            \det(A - vw^\top) = \det(A)(1 - w^\top A^{-1}v).
        \end{equation}
    \end{enumerate}
\end{proposition}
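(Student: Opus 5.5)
Both identities are standard consequences of block Gaussian elimination. The plan is to prove (i) by an explicit block $LDU$-type factorization and then obtain (ii) from (i) by computing the determinant of one auxiliary bordered matrix in two ways.

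For (i), assuming $D$ is invertible, I will verify by direct block multiplication that
\[
\begin{pmatrix} A & B \\ C & D \end{pmatrix}
= \begin{pmatrix} I_n & BD^{-1} \\ 0 & I_m \end{pmatrix}
\begin{pmatrix} A - BD^{-1}C & 0 \\ 0 & D \end{pmatrix}
\begin{pmatrix} I_n & 0 \\ D^{-1}C & I_m \end{pmatrix}.
\]
Multiplying out the right-hand side, the upper-left block is $(A - BD^{-1}C) + BD^{-1}\cdot D\cdot D^{-1}C = A$. The upper-right block is $BD^{-1}D = B$, the lower-left block is $DD^{-1}C = C$, and the lower-right block is $D$. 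The outer two factors are block unitriangular, so their determinants equal $1$ (for instance, expand along the identity blocks, or note each is a product of elementary row operations). The middle factor is block diagonal, so its determinant is $\det(A - BD^{-1}C)\det(D)$. Multiplicativity of the determinant then gives \eqref{Equation:Det_Block_Matrix}.

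For (ii), I will take $A$ invertible and $v,w\in\mathbb{R}^n$, and consider the $(n+1)\times(n+1)$ matrix
\[
E = \begin{pmatrix} 1 & w^\top \\ v & A \end{pmatrix}.
\]
First apply (i) with the invertible lower-right block $A$: the Schur complement is the scalar $1 - w^\top A^{-1}v$, so $\det E = \det(A)\,(1 - w^\top A^{-1} v)$.

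Second, eliminate using the invertible scalar $1$ in the upper-left corner. The factorization is
\[
E = \begin{pmatrix} 1 & 0 \\ v & I_n \end{pmatrix}\begin{pmatrix} 1 & w^\top \\ 0 & A - vw^\top \end{pmatrix},
\]
which is checked by multiplication: the lower-right block comes out as $vw^\top + A - vw^\top = A$. The first factor has determinant $1$, and the second is block upper triangular with determinant $\det(A - vw^\top)$, so $\det E = \det(A - vw^\top)$. Equating the two expressions for $\det E$ yields \eqref{Equation:Det_Rank_1}.

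I expect no real obstacle here. The only points needing care are the invertibility hypotheses, namely $D$ in (i) and $A$ in (ii), which the statement already assumes, and the routine fact that block triangular matrices with identity diagonal blocks have unit determinant.
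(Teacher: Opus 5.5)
Your proof is correct: the block $LDU$ factorization gives the Schur complement identity in (i). Evaluating $\det\begin{pmatrix} 1 & w^\top \\ v & A\end{pmatrix}$ once via (i) and once via elimination on the scalar corner gives the rank-one formula in (ii). The paper gives no argument of its own and simply cites equations (0.8.5.1) and (0.8.5.11) of Horn and Johnson, and your derivation is essentially the standard one behind those cited identities.
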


\begin{lemma}[\protect{\cite[Theorem A.43]{bai2010spectral}}]\label{Lemma:Rank_Inequality}
    Let $A$, $B$ be two $N \times N$ symmetric matrices, then
    \[d_{\text{KS}}(\mu_{A}, \mu_{B}) \leq \dfrac{1}{N}\,\text{rank}(A - B).\]
\end{lemma}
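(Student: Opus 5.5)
The plan is a direct counting argument based on the min–max (Courant–Fischer) characterization of eigenvalue counting functions. No perturbation theory or interlacing for successive rank-one updates is needed.

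For a symmetric $N\times N$ matrix $A$ and $x\in\mathbb{R}$, let $\mathcal{N}_A(x)=|\{i:\lambda_i(A)\le x\}|=N F_{\mu_A}(x)$. I will use the variational fact that $\mathcal{N}_A(x)$ is the maximal dimension of a subspace $U\subseteq\mathbb{R}^N$ on which $y^\top A y\le x\|y\|_2^2$ for all $y\in U$. For one direction, the span of the eigenvectors with eigenvalues $\le x$ is such a subspace. For the other, if $\dim U>\mathcal{N}_A(x)$, then $U$ meets the span of the eigenvectors with eigenvalues $>x$ nontrivially, and a nonzero vector in that intersection violates the inequality.

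Set $k=\operatorname{rank}(A-B)$ and $W=\ker(A-B)$, so that $\dim W= N-k$. For every $y\in W$ we have $y^\top A y=y^\top B y$. Fix $x$ and take a subspace $U$ of dimension $\mathcal{N}_A(x)$ on which $y^\top Ay\le x\|y\|_2^2$. Then
\[
\dim(U\cap W)\ge \dim U+\dim W-N=\mathcal{N}_A(x)-k .
\]
On $U\cap W$ the quadratic form of $B$ coincides with that of $A$, so $y^\top By\le x\|y\|_2^2$ there. By the variational characterization applied to $B$, this gives $\mathcal{N}_B(x)\ge \mathcal{N}_A(x)-k$. Exchanging the roles of $A$ and $B$ (the kernel $W$ is the same) gives $\mathcal{N}_A(x)\ge\mathcal{N}_B(x)-k$.

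Dividing by $N$ yields $|F_{\mu_A}(x)-F_{\mu_B}(x)|\le k/N$ for every $x\in\mathbb{R}$. Taking the supremum over $x$ gives $d_{\text{KS}}(\mu_A,\mu_B)\le \operatorname{rank}(A-B)/N$.

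The only step that needs care is the variational characterization of $\mathcal{N}_A(x)$, which I will justify by the dimension-counting argument above. Everything else is the elementary inequality $\dim(U\cap W)\ge\dim U+\dim W-N$, so I do not expect a genuine obstacle.
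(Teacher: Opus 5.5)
Your argument is correct. Both directions of the variational characterization of $\mathcal{N}_A(x)=NF_{\mu_A}(x)$ are properly justified. Rank--nullity gives $\dim\ker(A-B)=N-k$. Intersecting a witnessing subspace for $A$ with $\ker(A-B)$ produces a witnessing subspace for $B$ of dimension at least $\mathcal{N}_A(x)-k$. This yields $|F_{\mu_A}(x)-F_{\mu_B}(x)|\le k/N$ for every $x$.

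The paper does not prove this lemma; it only cites Bai--Silverstein, Theorem A.43. There the argument is organized differently:
\begin{enumerate}
\item Conjugate $A$ and $B$ by a common orthogonal matrix diagonalizing $A-B$, so that they share the same $(N-k)\times(N-k)$ principal block $C$.
\item Apply Cauchy interlacing to $A$ and to $B$ separately. This gives the one-sided bounds $0\le NF_{\mu_A}(x)-(N-k)F_{\mu_C}(x)\le k$, and the same for $B$.
\item Subtract the two.
\end{enumerate}

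The underlying idea is the same as yours: the quadratic forms of $A$ and $B$ agree on the codimension-$k$ subspace $\ker(A-B)$. Your version works directly with the min--max characterization and a single dimension count. It therefore needs neither the change of basis nor the interlacing theorem. It also avoids the bookkeeping the interlacing route needs to get $k$ rather than $2k$, since there one must use the one-sided interlacing bounds, not the two-sided ones. The textbook route has the advantage of reusing Cauchy interlacing, which the paper already records in its appendix and uses elsewhere.
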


\begin{lemma}
    If $\mu$ and $\nu$ are random measures on $\mathbb{R}$, then
    \begin{equation}\label{Equation:E[KS]}
        d_{\text{KS}}(\mathbb{E}[\mu], \mathbb{E}[\nu]) \leq \mathbb{E}[d_{\text{KS}}(\mu, \nu)].
    \end{equation}
\end{lemma}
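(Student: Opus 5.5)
The plan is to prove the inequality pointwise in $x$ and then take the supremum. Here $\mathbb{E}[\mu]$ denotes the intensity (mean) measure, $\mathbb{E}[\mu](A) = \mathbb{E}[\mu(A)]$ for Borel $A$. It is a probability measure whenever $\mu$ is almost surely a probability measure, by linearity and monotone convergence. Its distribution function therefore satisfies
\[
F_{\mathbb{E}[\mu]}(x) = \mathbb{E}\bigl[\mu((-\infty,x])\bigr] = \mathbb{E}\bigl[F_\mu(x)\bigr],
\]
and likewise for $\nu$.

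Fix $x \in \mathbb{R}$. By linearity of expectation and Jensen's inequality for $|\cdot|$,
\[
\bigl|F_{\mathbb{E}[\mu]}(x) - F_{\mathbb{E}[\nu]}(x)\bigr| = \bigl|\mathbb{E}\bigl[F_\mu(x) - F_\nu(x)\bigr]\bigr| \leq \mathbb{E}\bigl|F_\mu(x) - F_\nu(x)\bigr|.
\]
Pointwise on the probability space, $|F_\mu(x) - F_\nu(x)| \leq d_{\text{KS}}(\mu,\nu)$. Taking expectations gives $|F_{\mathbb{E}[\mu]}(x) - F_{\mathbb{E}[\nu]}(x)| \leq \mathbb{E}[d_{\text{KS}}(\mu,\nu)]$. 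The right-hand side does not depend on $x$, so taking the supremum over $x \in \mathbb{R}$ yields \eqref{Equation:E[KS]}.

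The only point needing care, and the main (mild) obstacle, is that $d_{\text{KS}}(\mu,\nu)$ is a measurable random variable, so that its expectation makes sense. For this I would use right-continuity of distribution functions to write
\[
d_{\text{KS}}(\mu,\nu) = \sup_{x \in \mathbb{Q}} |F_\mu(x) - F_\nu(x)|,
\]
which is a countable supremum of measurable functions, hence measurable. Each $F_\mu(x) = \mu((-\infty,x])$ is measurable by the definition of a random measure. The variable is also bounded by $1$, so its expectation is finite.

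In the applications in Section~\ref{Section:Truncation}, $\mu$ and $\nu$ are empirical spectral measures of random symmetric matrices. These are measurable because the ordered eigenvalues are continuous functions of the matrix entries, so the lemma applies there directly.
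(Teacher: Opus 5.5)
Your proof is correct and is essentially the paper's argument. The paper writes $F_{\mathbb{E}[\mu]}(x)=\mathbb{E}[\mu((-\infty,x])]$ and bounds $\sup_x|\mathbb{E}[\cdot]|$ by $\mathbb{E}[\sup_x|\cdot|]$, which is exactly your pointwise Jensen step followed by the supremum over $x$; your measurability remark (reducing the supremum to rational $x$ via right-continuity) is a small addition that the paper leaves implicit.
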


\begin{proof}
    This follows directly from the definition, 
    \begin{align*}
        d_{\text{KS}}(\mathbb{E}[\mu], \mathbb{E}[\nu]) &= \sup_{x \in \mathbb{R}} \big|\mathbb{E}\bsq{\mu((-\infty, x]) - \nu((-\infty, x])}\big| \\
        &\leq \mathbb{E}\Bsq{\sup_{x \in \mathbb{R}}|\mu((-\infty, x]) - \nu((-\infty, x])|} = \mathbb{E}[d_{\text{KS}}(\mu, \nu)],
    \end{align*}
    and we are done.
\end{proof}

\section{Asymptote of \texorpdfstring{$\mathbb{E}|\det H_N|$}{TEXT}}\label{Section:Asymptote_of_H_N}

\subsection{General Theory}

\noindent The following is a modified version of \cite[Theorem 1.2]{ben2023exponential}. It is not a direct corollary, since in their case one needs to assume that $\mathbb{E}\left[\mu_{H_N}\right]$ admits a bounded density near the origin and that the measures are supported on a common compact set. Instead, we assume the Wegner condition \eqref{Equation:General_Wegner} to establish the result. The proof is almost identical to that in \cite{ben2023exponential}, so we include it in the appendix for completeness.

\begin{theorem}\label{Theorem:Main_Asymptote_Unbounded}
    Let $H_N$ be a sequence of symmetric random matrix satisfying \eqref{Equation:General_Concentration} and \eqref{Equation:General_Wegner}, then
    \[\lim_{N \to \infty}\Big(\dfrac{1}{N}\log\mathbb{E}|\det H_N| - \int_{\mathbb{R}} \log|\lambda|\,\mathbb{E}[\mu_{H_N}](d\lambda)\Big) = 0.\]
\end{theorem}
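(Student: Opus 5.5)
We prove the two inequalities separately, writing $L_N := \int_{\mathbb{R}} \log|\lambda|\,\mathbb{E}[\mu_{H_N}](d\lambda)$. Since the concentration \eqref{Equation:General_Concentration} only controls Lipschitz test functions, we work throughout with the regularized logarithm $\log_\eta(\lambda) = \log|\lambda + i\eta|$, whose Lipschitz norm is at most $(2\eta)^{-1}$. We also cut it off at large values, using $\log_{\eta}^{(A)}(\lambda) := \log_\eta(\lambda) \wedge \log A$. The Wegner estimate \eqref{Equation:General_Wegner} is used only through Corollary \ref{corollary:C.8}, which controls the gap between $\log$ and $\log_\eta$ near the origin.

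\textbf{Lower bound.} Jensen's inequality gives
\[
\tfrac1N\log\mathbb{E}|\det H_N| \ \ge\ \tfrac1N\mathbb{E}\log|\det H_N| = \int \log|\lambda|\,\mathbb{E}[\mu_{H_N}](d\lambda),
\]
so $\liminf_N \big(\tfrac1N\log\mathbb{E}|\det H_N| - L_N\big) \ge 0$. This uses only that the right-hand side is finite, which holds because the Wegner estimate makes the negative part of $\log|\lambda|$ integrable via \eqref{Equation:Wegner_Corollary}. Alternatively, the lower bound can be obtained from concentration by restricting to the event where $\int \log_\eta\, d\mu_{H_N}$ is close to its mean, but that route still has to handle the small eigenvalues. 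Jensen avoids this, so the lower bound is immediate.

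\textbf{Upper bound.} We use $\log|\lambda| \le \log_\eta(\lambda)$ to get $|\det H_N| \le \exp\big(N\int \log_\eta\,d\mu_{H_N}\big)$. Set $X_N = \int \log_\eta\,d(\mu_{H_N} - \mathbb{E}\mu_{H_N})$. By \eqref{Equation:General_Concentration}, $X_N$ has sub-Gaussian tails at scale $\|\log_\eta\|_{\mathrm{Lip}}/N \le (2\eta N)^{-1}$, hence
\[
\mathbb{E}e^{NX_N} \le C\exp\big(C/\eta^2\big),
\]
which is $e^{o(N)}$ for fixed $\eta$. It follows that
\[
\tfrac1N\log\mathbb{E}|\det H_N| \le \int\log_\eta\,d\mathbb{E}\mu_{H_N} + o(1).
\]
Corollary \ref{corollary:C.8} then gives
\[
\int(\log_\eta - \log|\cdot|)\,d\mathbb{E}\mu_{H_N} \le C\big(\log(1+\eta) + \eta^{(1-\epsilon)/2}\big),
\]
uniformly in $N$. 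Letting $N\to\infty$ and then $\eta \to 0$ yields the upper bound.

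\textbf{Main obstacle.} The only delicate point is whether $\int \log_\eta\, d\mathbb{E}\mu_{H_N}$ is finite, since $\log_\eta$ is unbounded at infinity. It is handled as follows. The concentration bound applies to the Lipschitz function $\log_\eta$ directly (boundedness is not needed in \eqref{Equation:General_Concentration}), so the exponential moment of $X_N$ is controlled as long as $\mathbb{E}\mu_{H_N}$ integrates $\log_\eta$. This integrability follows from \eqref{Equation:General_Concentration} applied to $f(\lambda) = |\lambda|$, which gives a finite first moment of the centered spectral measure. If the two sides happen to be $+\infty$, the statement is interpreted trivially, but in every application in this paper the matrices have finite moments. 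Apart from this, the argument is that of \cite[Theorem 1.2]{ben2023exponential}, with the density assumption replaced by the Wegner input through Corollary \ref{corollary:C.8}.
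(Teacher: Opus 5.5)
Your proof is correct. The upper bound is the paper's argument (Lemma \ref{Lemma:Unbounded_Upper_Bound}). The paper sets $\eta = N^{-\epsilon}$, whereas you fix $\eta$, let $N\to\infty$, then let $\eta\to 0$; that difference is cosmetic.

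The lower bound is where you genuinely diverge. The paper's Lemma \ref{Lemma:Unbounded_Lower_Bound} follows \cite{ben2023exponential}:
\begin{itemize}
\item it restricts to the intersection of the events $\mathcal{E}_{\text{Gap}}$, $\mathcal{E}_{\text{Lip}}$ and $\mathcal{E}_\varphi$;
\item it controls $\log|\lambda| - \log_\eta(\lambda)$ separately on the window where $\varphi$ is supported and on $\{|\lambda| \ge N^{-3\epsilon}\}$;
\item it uses concentration and the Wegner estimate to show that these events have probability tending to one.
\end{itemize}
You instead use concavity of $\log$, together with Tonelli applied separately to $\log^{+}|\lambda|$ and $\log^{-}|\lambda|$. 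This gives $\frac{1}{N}\log\mathbb{E}|\det H_N| \ge \frac{1}{N}\mathbb{E}\log|\det H_N| = L_N$ for every $N$. That step needs no concentration at all. It uses the Wegner estimate only to make $\int \log^{-}|\lambda|\,\mathbb{E}[\mu_{H_N}](d\lambda)$ finite, which the paper's proof also needs in order to write its opening decomposition. So in the theorem as stated, your route is strictly simpler and even yields a non-asymptotic inequality.

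The event-based argument is built for settings where small eigenvalues are controlled only with high probability, or where the comparison is with a deterministic reference measure rather than $\mathbb{E}[\mu_{H_N}]$ itself, as in \cite{ben2023exponential}. That extra robustness is not used here.

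Two small points:
\begin{itemize}
\item The truncation $\log_\eta^{(A)}$ you introduce is never used.
\item Your integrability justification in the last paragraph is circular. \eqref{Equation:General_Concentration} does not produce a finite first moment of $\mathbb{E}[\mu_{H_N}]$; it presupposes one, since otherwise $\int f\,d\mathbb{E}[\mu_{H_N}]$ is undefined for Lipschitz $f$. That finiteness should simply be read as part of the hypothesis.
\end{itemize}
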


\begin{proof}
    This is a combination of Lemma \ref{Lemma:Unbounded_Upper_Bound} and Lemma \ref{Lemma:Unbounded_Lower_Bound}.
\end{proof}

\begin{lemma}\label{Lemma:Unbounded_Upper_Bound}
    Let $H_N$ be a sequence of symmetric random matrices satisfying \eqref{Equation:General_Concentration} and \eqref{Equation:General_Wegner}, then
    \begin{equation}\label{Equation:Unbounded_Upper_Bound}
        \limsup_{N \to \infty}\Big(\dfrac{1}{N}\log\mathbb{E}|\det H_N| - \int_{\mathbb{R}} \log|\lambda|\,\mathbb{E}[\mu_{H_N}](d\lambda)\Big) \leq 0.
    \end{equation}
\end{lemma}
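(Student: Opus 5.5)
We prove the upper bound \eqref{Equation:Unbounded_Upper_Bound} in the style of \cite{ben2023exponential}: regularize the logarithm near the origin, use Lipschitz concentration for the regularized log-determinant, and use the Wegner estimate to control the cost of the regularization. Fix $\eta\in(0,1)$ and write
\[
\frac1N\log|\det H_N|=\int_{\mathbb{R}}\log|\lambda|\,\mu_{H_N}(d\lambda)\le \int_{\mathbb{R}}\log_\eta(\lambda)\,\mu_{H_N}(d\lambda),
\]
which holds pointwise since $\log|\lambda|\le\log_\eta(\lambda)$. The difficulty is that $\log_\eta$ is Lipschitz (constant at most $(2\eta)^{-1}$) but unbounded. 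Concentration \eqref{Equation:General_Concentration} does not require boundedness, so we apply it directly to $f=\log_\eta$. Setting $X_N=\int\log_\eta\,d\mu_{H_N}-\int\log_\eta\,d\mathbb{E}[\mu_{H_N}]$, we get sub-Gaussian tails $\mathbb{P}(|X_N|\ge t)\le c\,e^{-N^2t^2\eta^2/C}$.

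The steps are as follows. First, integrate the tail bound to obtain
\[
\mathbb{E}[e^{NX_N}]\le 1+\int_0^\infty N e^{Nt}\,\mathbb{P}(X_N\ge t)\,dt\le 1+C'\exp\bigl(C''\eta^{-2}\bigr),
\]
since $Nt-N^2t^2\eta^2/C$ is maximized at a value of order $\eta^{-2}$. Consequently $\frac1N\log\mathbb{E}[e^{NX_N}]\le O(\eta^{-2}/N)\to0$ for each fixed $\eta$. Second, combine this with the pointwise bound to get
\[
\frac1N\log\mathbb{E}|\det H_N|\le \int\log_\eta\,d\mathbb{E}[\mu_{H_N}]+o_N(1).
\]
Third, replace $\log_\eta$ by $\log|\cdot|$ inside the expectation using Corollary \ref{corollary:C.8}. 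That corollary relies only on the Wegner estimate \eqref{Equation:General_Wegner} and bounds $\int(\log_\eta-\log|\cdot|)\,d\mathbb{E}[\mu_{H_N}]$ by $c(\log(1+\eta)+\eta^{(1-\epsilon)/2})$ uniformly in $N$. Fourth, take $\limsup_N$ and then let $\eta\downarrow0$, which yields \eqref{Equation:Unbounded_Upper_Bound}.

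The main obstacle is the interchange of $\log$ and $\mathbb{E}$ in the first step: the exponential moment of $NX_N$ must grow only subexponentially in $N$. This is exactly where the $N^2$ speed in \eqref{Equation:General_Concentration} is needed, since speed $N$ would give only an $O(1)$ error that does not vanish. A second subtlety is that $\int\log_\eta\,d\mathbb{E}[\mu_{H_N}]$ must be finite. Lipschitz concentration implies $\int|\lambda|\,d\mu_{H_N}$ has finite expectation whenever the mean measure has a first moment. If that is not automatic, we would first truncate on the event $\{\|H_N\|_{\mathrm{op}}\le R\}$ and handle its complement crudely. In the applications here, namely GOE plus a deterministic matrix with controlled moments, this is harmless.
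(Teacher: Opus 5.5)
Your proposal is correct and follows the paper's proof essentially step for step. Both arguments use the pointwise bound $\log|\lambda|\le\log_\eta(\lambda)$, then control the exponential moment of $N X_N$ via the $N^2$-speed concentration applied to $\log_\eta$ (Lipschitz constant $(2\eta)^{-1}$), and finally remove the regularization with Corollary~\ref{corollary:C.8} and the Wegner estimate.

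The only difference is cosmetic. The paper couples $\eta=N^{-\epsilon}$ with $N$, so the factor $e^{\eta^{-2}}=e^{N^{2\epsilon}}$ stays subexponential. You instead fix $\eta$, let $N\to\infty$, and then send $\eta\downarrow0$; this works equally well because the bound in Corollary~\ref{corollary:C.8} is uniform in $N$.
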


\begin{proof}
    Observe that we have directly
    \begin{align}
        &\dfrac{1}{N}\log\mathbb{E}|\det H_N| - \int_{\mathbb{R}} \log|\lambda| \,\mathbb{E}[\mu_{H_N}](d\lambda) \notag \\
        &\leq \dfrac{1}{N}\log\mathbb{E}\bsq{e^{N\int_{\mathbb{R}} \log_\eta(\lambda)\,\mu_{H_N}(d\lambda)}} - \int_{\mathbb{R}} \log|\lambda| \,\mathbb{E}[\mu_{H_N}](d\lambda) \notag \\
        &= \dfrac{1}{N}\log\mathbb{E}\bsq{e^{N\int_{\mathbb{R}} \log_\eta(\lambda)(\mu_{H_N} - \mathbb{E}[\mu_{H_N}])(d\lambda)}} + \int_{\mathbb{R}} \bpar{\log_\eta(\lambda) - \log|\lambda|}\mathbb{E}[\mu_{H_N}](d\lambda). \label{Equation:Unbounded_Upper_Bound_1}
    \end{align}
    Fix $0 < \epsilon < \frac{1}{2}$ and set the parameter $\eta = N^{-\epsilon}$. For the first term in \eqref{Equation:Unbounded_Upper_Bound_1}, observe that
    \begin{align*}
        \mathbb{E}\Big[e^{N\int_{\mathbb{R}} \log_\eta(\lambda)(\mu_{H_N} - \mathbb{E}[\mu_{H_N}])(d\lambda)}\Big] &= \int_0^\infty \mathbb{P}\Big(e^{N\int_{\mathbb{R}} \log_\eta(\lambda)(\mu_{H_N} - \mathbb{E}[\mu_{H_N}])(d\lambda)} \geq t\Big)\,dt \\
        &\leq 1 + \int_1^\infty \mathbb{P}\Big(\Big|\int_{\mathbb{R}} \log_\eta(\lambda)(\mu_{H_N} - \mathbb{E}[\mu_{H_N}])(d\lambda)\Big| \geq \frac{\log t}{N}\Big) \,dt \\
        &\leq 1 + \oldconstant{Constant:General_Concentration}\int_1^\infty \exp\Big(-\frac{\eta^2(\log t)^2}{4\oldconstant{Constant:General_Concentration}}\Big) \,dt \leq 1 + 2\sqrt{\frac{4\pi}{\eta^2}} \cdot \exp\Big(\frac{1}{\eta^2}\Big),
    \end{align*}
    where the second inequality holds by \eqref{Equation:General_Concentration}. Taking logarithm, dividing by $N$, and replace $\eta$ by $N^{-\epsilon}$, we obtain the inequality
    \begin{equation}\label{Equation:Unbounded_Upper_Bound_2}
        \dfrac{1}{N}\log\mathbb{E}\bsq{e^{N\int_{\mathbb{R}} \log_\eta(\lambda)(\mu_{H_N} - \mathbb{E}[\mu_{H_N}])(d\lambda)}} \leq \dfrac{1}{N}\log\bpar{1 + 2\sqrt{4\pi N^{2\epsilon}} \cdot \exp(N^{2\epsilon})} = o_{N \to \infty}(1). 
    \end{equation}
    For the second term in \eqref{Equation:Unbounded_Upper_Bound_1}, note that we may choose $N$ large enough so that \eqref{Equation:Condition_Wegner} holds. Then, we have by \eqref{Equation:Truncation_Log} that
    \begin{equation}\label{Equation:Unbounded_Upper_Bound_3}
        \int_{\mathbb{R}} \bpar{\log_{\eta}(\lambda) - \log|\lambda|}\,\mathbb{E}[\mu_{H_N}](d\lambda) \leq \oldconstant{Constant:Truncation_Log}\bpar{\log(1 + \eta) + \eta^{(1 - \epsilon)/2}} = o_{N \to \infty}(1).
    \end{equation}
    Combining \eqref{Equation:Unbounded_Upper_Bound_2} and \eqref{Equation:Unbounded_Upper_Bound_3}, we may then deduce \eqref{Equation:Unbounded_Upper_Bound}.
\end{proof}

For $\epsilon > 0$, set $\varphi = \varphi_N \colon \mathbb{R} \to \mathbb{R}$ be an even, smooth function such that
\begin{enumerate}[label = (\roman*)]
    \item $0 \leq \varphi(\lambda) \leq 1$ for all $\lambda \in \mathbb{R}$,
    \item $\varphi(\lambda) = 1$ if $\lambda \in \bsq{-N^{-3\epsilon}, N^{-3\epsilon}}$,
    \item $\text{supp}(\varphi) \subseteq \bsq{-2N^{-3\epsilon}, 2N^{-3\epsilon}}$.
\end{enumerate}

\begin{lemma}\label{Lemma:Unbounded_Lower_Bound}
    Let $H_N$ be a sequence of symmetric random matrices satisfying \eqref{Equation:General_Concentration} and \eqref{Equation:General_Wegner}, then
    \begin{equation}\label{Equation:Unbounded_Lower_Bound}
        \liminf_{N \to \infty} \Big(\dfrac{1}{N}\log\mathbb{E}|\det H_N| - \int_{\mathbb{R}} \log|\lambda|\,\mathbb{E}[\mu_{H_N}](d\lambda)\Big) \geq 0.
    \end{equation}
\end{lemma}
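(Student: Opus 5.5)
We need the lower bound $\frac1N\log\mathbb{E}|\det H_N| \ge \int\log|\lambda|\,\mathbb{E}[\mu_{H_N}](d\lambda) - o(1)$. By Jensen, $\log\mathbb{E}|\det H_N| \ge \log\mathbb{E}\bigl[|\det H_N|\mathbbm{1}_{\mathcal{G}}\bigr]$ for a high-probability event $\mathcal{G}$. On $\mathcal{G}$ we will show that $\frac1N\log|\det H_N| = \int\log|\lambda|\,\mu_{H_N}(d\lambda)$ is at least $\int\log|\lambda|\,\mathbb{E}[\mu_{H_N}](d\lambda) - o(1)$. Then
\[
\frac1N\log\mathbb{E}|\det H_N| \ge \int\log|\lambda|\,\mathbb{E}[\mu_{H_N}](d\lambda) - o(1) + \frac1N\log\mathbb{P}(\mathcal{G}),
\]
and the last term is $o(1)$ once $\mathbb{P}(\mathcal{G})\ge 1/2$.

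To handle the logarithmic singularity, split $\log|\lambda| = \log|\lambda|\,\varphi(\lambda) + \log|\lambda|\,(1-\varphi(\lambda))$, using the cutoff $\varphi=\varphi_N$ defined above, which lives on scale $N^{-3\epsilon}$.

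\textbf{Smooth part.} The function $f_N(\lambda) := \log|\lambda|(1-\varphi(\lambda))$ is bounded below by $-3\epsilon\log N - C$ but is unbounded above, so we truncate it from above at a large level $A$ via $\lambda\mapsto \min(f_N(\lambda),\log A)$.
\begin{itemize}
\item Truncating only decreases the integral, so this is harmless for a lower bound on $\mu_{H_N}$.
\item The expected tail $\int_{|\lambda|>A}\log|\lambda|\,\mathbb{E}[\mu_{H_N}](d\lambda)$ must be small. This needs some moment control of $\mathbb{E}[\mu_{H_N}]$. If the general hypotheses do not supply it, I would add a uniform moment condition; in our applications it holds via \eqref{Equation:KSD_7}. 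Alternatively, handle the high tail through $f_N$ itself and use only concentration.
\item The truncated function has Lipschitz constant $O(N^{3\epsilon})$. By \eqref{Equation:General_Concentration}, its integral against $\mu_{H_N}-\mathbb{E}\mu_{H_N}$ exceeds $\delta$ with probability at most $\oldconstant{Constant:General_Concentration}\exp(-N^{2-6\epsilon}\delta^2/\oldconstant{Constant:General_Concentration})$, which is tiny for $\epsilon<1/3$.
\end{itemize}

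\textbf{Singular part, expectation.} Bound $\int|\log|\lambda||\,\varphi\,\mathbb{E}[\mu_{H_N}](d\lambda)$ by \eqref{Equation:Wegner_Corollary} with $\eta=2N^{-3\epsilon}$, using $-\log|\lambda|\le|\lambda|^{-\epsilon'}$ near $0$. This gives $O(N^{-3\epsilon(1-\epsilon')})=o(1)$.

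\textbf{Singular part, random measure.} This is the main obstacle. We need, on $\mathcal{G}$, that $\int\log|\lambda|\,\varphi\,\mu_{H_N}(d\lambda)\ge -o(1)$.
\begin{itemize}
\item The number of eigenvalues in $[-2N^{-3\epsilon},2N^{-3\epsilon}]$ is controlled in expectation by Wegner: it is $O(N^{1-3\epsilon})$. By Markov, with probability at least $3/4$ it is at most $O(N^{1-2\epsilon})$.
\item The danger is a single eigenvalue extremely close to $0$. To control it, use Wegner on $[-e^{-N^{\epsilon}},e^{-N^{\epsilon}}]$: with probability $1-O(Ne^{-N^{\epsilon}})$ there are no eigenvalues there.
\item On the intersection of these events, each eigenvalue $\lambda_i$ in the window satisfies $|\log|\lambda_i||\le N^{\epsilon}$. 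Hence $\frac1N\sum_{\text{window}}|\log|\lambda_i||\le O(N^{1-2\epsilon})\,N^{\epsilon}/N=O(N^{-\epsilon})$.
\end{itemize}
A finer dyadic-scale version of the same Markov argument is available if needed.

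\textbf{Conclusion.} Intersecting the events above gives $\mathcal{G}$ with $\mathbb{P}(\mathcal{G})\ge1/2$ for large $N$. Combining the three estimates yields \eqref{Equation:Unbounded_Lower_Bound}.
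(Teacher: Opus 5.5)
Your argument is correct once you take the ``alternative'' for the high tail, and it is essentially the paper's proof. Both use the same gap event $\mathcal{E}_{\text{Gap}}$ at scale $e^{-N^{\epsilon}}$, the same Markov--Wegner event $\int\varphi\,d\mu_{H_N}<N^{-2\epsilon}$ on the window of scale $N^{-3\epsilon}$, and concentration for a Lipschitz surrogate of $\log|\lambda|$.

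The only real difference is the surrogate. The paper takes $\log_\eta(\lambda)=\log|\lambda+i\eta|$ with $\eta=N^{-4\epsilon}$, which dominates $\log|\lambda|$ everywhere. So the expected correction $\int(\log_\eta-\log|\lambda|)\,\mathbb{E}[\mu_{H_N}](d\lambda)\ge 0$ is simply dropped, and only the random defect $\int(\log|\lambda|-\log_\eta)\,d\mu_{H_N}$ has to be bounded on the events. Your cutoff $f_N=\log|\lambda|(1-\varphi)$ instead agrees with $\log|\lambda|$ off the window. This spares the $\log(1+\eta^2/\lambda^2)$ bookkeeping in the bulk, but you must choose $\varphi$ with $\|\varphi'\|_\infty=O(N^{3\epsilon})$, or simply use $\max\{\log|\lambda|,-3\epsilon\log N\}$.

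Two remarks on your write-up:
\begin{enumerate}
\item Discard the truncation at level $A$ and the extra moment hypothesis. $f_N$ is already globally Lipschitz with constant $O(N^{3\epsilon}\log N)$, and \eqref{Equation:General_Concentration} is stated for arbitrary, possibly unbounded, Lipschitz $f$. A moment assumption would change the lemma. Worse, it would break the uniformity over all symmetric $A_N$ in Theorem~\ref{Theorem:Main_Asymptote_Unbounded_GOE}, which relies on the error depending only on the concentration and Wegner constants.
\item Your Wegner estimate for the expected singular part is harmless but unnecessary. Once $2N^{-3\epsilon}<1$ we have $\log|\lambda|\,\varphi(\lambda)\le 0$, so that term enters with a favorable sign, exactly as the $\log_\eta$ correction does in the paper.
\end{enumerate}
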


\begin{proof}
    Fix $\epsilon > 0$ and set the parameter $\eta = N^{-4\epsilon}$. Define the events
    \begin{align*}
        \mathcal{E}_{\text{Gap}} &= \big\{\text{Spec}(H_N) \cap \big[-e^{-N^\epsilon}, e^{-N^\epsilon}\big] = \varnothing\big\}, \\
        \mathcal{E}_{\text{Lip}} &= \Big\{\Big|\int_{\mathbb{R}} \log_\eta(\lambda)\left(\mu_{H_N} - \mathbb{E}\left[\mu_{H_N}\right]\right)(d\lambda)\Big| < N^{-\epsilon}\Big\}, \\
        \mathcal{E}_{\varphi} &= \Big\{\int_{\mathbb{R}} \varphi(\lambda) \mu_{H_N}(d\lambda) < N^{-2\epsilon}\Big\}.
    \end{align*}
    Observe that we have directly
    \begin{align*}
        &\dfrac{1}{N}\log\mathbb{E} |\det H_N| - \int_{\mathbb{R}} \log|\lambda|\,\mathbb{E}[\mu_{H_N}](d\lambda) \\
        &= \dfrac{1}{N}\log \mathbb{E}\bsq{e^{N\int_{\mathbb{R}} (\log|\lambda| - \log_\eta(\lambda))\mu_{H_N}(d\lambda) + N\int_{\mathbb{R}} \log_\eta(\lambda) (\mu_{H_N} - \mathbb{E}[\mu_{H_N}])(d\lambda)}} \\
        &\hspace{1cm} + \int_{\mathbb{R}} \bpar{\log_\eta(\lambda) - \log|\lambda|}\,\mathbb{E}[\mu_{H_N}](d\lambda) \\
        &\geq \dfrac{1}{N}\log\mathbb{E}\bsq{e^{N\int_{\mathbb{R}} (\log|\lambda| - \log_\eta(\lambda))\mu_{H_N}(d\lambda)}\mathbbm{1}_{\mathcal{E}_{\text{Gap}} \cap \mathcal{E}_{\text{Lip}} \cap \mathcal{E}_{\varphi}}} - N^{-\epsilon}.
    \end{align*}
    since $\log_\eta(\lambda) - \log|\lambda| \geq 0$. Now, observe that if $|\lambda| \geq A$, then
    \begin{equation}\label{Equation:Unbounded_Lower_Bound_1}
        \log|\lambda| - \log_\eta(\lambda) = -\frac{1}{2}\log\Bpar{1 + \frac{\eta^2}{\lambda^2}} \geq -\frac{1}{2}\log\left(1 + A^{-2}N^{-8\epsilon}\right).
    \end{equation}
    We may then deduce
    \begin{align}
        &\mathbb{E}\bsq{e^{N\int_{\mathbb{R}} (\log|\lambda| - \log_\eta(\lambda))\mu_{H_N}(d\lambda)}\mathbbm{1}_{\mathcal{E}_{\text{Gap}} \cap \mathcal{E}_{\text{Lip}} \cap \mathcal{E}_{\varphi}}} \notag \\
        &= \mathbb{E}\bsq{e^{N\int_{\mathbb{R}} \varphi(\lambda)(\log|\lambda| - \log_\eta(\lambda))\mu_{H_N}(d\lambda)} \cdot e^{N\int_{\mathbb{R}} (1 - \varphi(\lambda))(\log|\lambda| - \log_\eta(\lambda))\mu_{H_N}(d\lambda)}\mathbbm{1}_{\mathcal{E}_{\text{Gap}} \cap \mathcal{E}_{\text{Lip}} \cap \mathcal{E}_{\varphi}}} \notag \\
        &\geq \mathbb{E}\bsq{e^{-\frac{N}{2}\log(1 + e^{2N^\epsilon}N^{-8\epsilon})\int_{\mathbb{R}} \varphi(\lambda) \mu_{H_N}(d\lambda)} \cdot e^{N\int_{\{|\lambda| \geq N^{-3\epsilon}\}} (\log|\lambda| - \log_\eta(\lambda))\mu_{H_N}(d\lambda)}\mathbbm{1}_{\mathcal{E}_{\text{Gap}} \cap \mathcal{E}_{\text{Lip}} \cap \mathcal{E}_{\varphi}}} \label{Equation:Unbounded_Lower_Bound_2} \\
        &\geq e^{-N^{1 - 2\epsilon}\log(1 + e^{2N^\epsilon}N^{-8\epsilon})/2} \cdot e^{-N\log(1 + N^{-2\epsilon})/2} \cdot \mathbb{P}\left(\mathcal{E}_{\text{Gap}} \cap \mathcal{E}_{\text{Lip}} \cap \mathcal{E}_{\varphi}\right), \label{Equation:Unbounded_Lower_Bound_3}
    \end{align}
    where \eqref{Equation:Unbounded_Lower_Bound_2} holds by setting $A = e^{-N^\epsilon}$ in \eqref{Equation:Unbounded_Lower_Bound_1} (which is valid on $\mathcal{E}_{\text{Gap}}$) and $1 - \varphi(\lambda) \leq \mathbbm{1}_{\left\{|\lambda| \geq N^{-3\epsilon}\right\}}$; \eqref{Equation:Unbounded_Lower_Bound_3} holds by setting $A = N^{-3\epsilon}$ in \eqref{Equation:Unbounded_Lower_Bound_1} and $\mathcal{E}_\varphi$. Taking logarithm and dividing by $N$, we obtain the following inequality
    \begin{align}
        &\dfrac{1}{N}\log\mathbb{E}\left|\det H_N\right| - \int_{\mathbb{R}} \log_{\eta}(\lambda)\,\mu_{H_N}(d\lambda) \notag\\
        &\hspace{1cm} \geq -\frac{N^{-2\epsilon}}{2}\log\left(1 + e^{2N^\epsilon}N^{-8\epsilon}\right) - \frac{1}{2}\log\left(1 + N^{-2\epsilon}\right) + \frac{1}{N}\log\mathbb{P}\left(\mathcal{E}_{\text{Gap}} \cap \mathcal{E}_{\text{Lip}} \cap \mathcal{E}_{\varphi}\right) - N^{-\epsilon} \notag \\
        &\hspace{1cm} \geq o_{N \to \infty}(1) + \frac{1}{N}\log\Big[1 - \mathbb{P}\big(\mathcal{E}_{\text{Gap}}^\complement\big) - \mathbb{P}\big(\mathcal{E}_{\text{Lip}}^\complement\big) - \mathbb{P}\big(\mathcal{E}_{\varphi}^\complement\big)\Big] \label{Equation:Unbounded_Lower_Bound_4}
    \end{align}
    Next, we show that the probability of these events converges to $0$. By (strong) Wegner's estimate \eqref{Equation:General_Wegner}, we have
    \begin{equation}\label{Equation:Unbounded_Lower_Bound_5}
        \mathbb{P}\big(\mathcal{E}_{\text{Gap}}^\complement\big) = \mathbb{P}\bpar{\text{Spec}(H_N) \cap \bsq{-e^{-N^\epsilon}, e^{-N^\epsilon}} \neq \varnothing} \leq \oldconstant{Constant:General_Wegner}N \cdot e^{-N^\epsilon} = o_{N \to \infty}(1).
    \end{equation}
    By concentration \eqref{Equation:General_Concentration}, we have
    \begin{equation}
        \mathbb{P}\big(\mathcal{E}_{\text{Lip}}^\complement\big) = \mathbb{P}\Big(\Big|\int_{\mathbb{R}} \log_\eta(\lambda)(\mu_{H_N} - \mathbb{E}[\mu_{H_N}])(d\lambda)\Big| \geq N^{-\epsilon} \Big) \leq \oldconstant{Constant:General_Concentration}e^{-N^{2 - 10\epsilon}/(4\oldconstant{Constant:General_Concentration})} = o_{N \to \infty}(1). \label{Equation:Unbounded_Lower_Bound_6}
    \end{equation}
    By Markov's inequality and (strong) Wegner's estimate \eqref{Equation:General_Wegner}, we have
    \begin{align}
        \mathbb{P}\big(\mathcal{E}_\varphi^\complement\big) &= \mathbb{P}\Big(\int_{\mathbb{R}} \varphi(\lambda) \mu_{H_N}(d\lambda) \geq N^{-2\epsilon}\Big) \notag \\
        &\leq N^{2\epsilon}\mathbb{E}\Big[\int_{\mathbb{R}} \varphi(\lambda) \mu_{H_N}(d\lambda)\Big] \notag \\
        &\leq N^{2\epsilon}\mathbb{E}\Big[\mu_{H_N}\left(\left[-2N^{-3\epsilon}, 2N^{-3\epsilon}\right]\right)\Big] \leq N^{2\epsilon} \cdot \frac{\oldconstant{Constant:General_Wegner}N \cdot 2N^{-3\epsilon}}{N} = 2\oldconstant{Constant:General_Wegner}N^{-\epsilon} = o_{N \to \infty}(1). \label{Equation:Unbounded_Lower_Bound_7}
    \end{align}
    By plugging \eqref{Equation:Unbounded_Lower_Bound_5}, \eqref{Equation:Unbounded_Lower_Bound_6}, and \eqref{Equation:Unbounded_Lower_Bound_7} through \eqref{Equation:Unbounded_Lower_Bound_4}, we may derive \eqref{Equation:Unbounded_Lower_Bound}.
\end{proof}

\begin{theorem}[\protect{\cite[Corollary 1.9A]{ben2023exponential}}]\label{Theorem:Main_Asymptote_Bounded}
    Let $H_N$ be a sequence of symmetric Gaussian matrices with independent entries up to symmetry. Set $A_N = \mathbb{E}[H_N]$ and $H_N = A_N + W_N$. Moreover, let $\mu_N$ be the solution of the MDE \eqref{Equation:MDE} with
    \[\mathcal{S}(M_N) = \mathbb{E}[W_NM_NW_N].\]
    Assume $\mathcal{S}$ satisfies \ref{Condition:Flat}. Then, for all $K > 0$, we have
    \begin{equation}\label{Equation:Main_Asymptote_Bounded}
        \lim_{N \to \infty} \sup_{\|A_N\|_{\text{op}} \leq K} \Big(\frac{1}{N}\log\mathbb{E}\left|\det H_N\right| - \int_{\mathbb{R}} \log|\lambda|\,\mu_N(d\lambda)\Big) = 0.
    \end{equation}
\end{theorem}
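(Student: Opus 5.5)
We would deduce this from the general machinery for Gaussian matrices with a variance profile in \cite{ben2023exponential}: approximate $\mathbb{E}|\det H_N|$ by $\exp\bigl(N\int \log|\lambda|\,\mathbb{E}[\mu_{H_N}](d\lambda)\bigr)$, then compare $\mathbb{E}[\mu_{H_N}]$ with the MDE solution $\mu_N$. Since the statement is quoted from \cite[Corollary 1.9A]{ben2023exponential}, the plan is to verify its hypotheses in a form we can make uniform over $\|A_N\|_{\text{op}}\le K$. The argument splits into three steps.

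\textbf{Step 1 (concentration step).} Gaussian entries satisfy a logarithmic Sobolev inequality with a uniform constant. Flatness \ref{Condition:Flat} bounds the entry variances by $\oldconstant{Constant:Flat}/N$, so Theorem~\ref{Theorem:GZ_Concentration} gives \eqref{Equation:General_Concentration} with constants independent of $A_N$. For the small-eigenvalue control we need a Wegner estimate \eqref{Equation:General_Wegner} uniform in $A_N$. For the GOE case this is Theorem~\ref{Theorem:GOE_Wegner_Estimate}. For a general flat profile we would either invoke the analogous result of \cite{aizenman2017matrix} or fall back on the bounded-density local law of \cite{ajanki2019stability}. Once both inputs are in place, Theorem~\ref{Theorem:Main_Asymptote_Unbounded} shows that $\frac1N\log\mathbb{E}|\det H_N|-\int\log|\lambda|\,\mathbb{E}[\mu_{H_N}](d\lambda)\to0$. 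The convergence is uniform because the proofs of Lemmas~\ref{Lemma:Unbounded_Upper_Bound} and~\ref{Lemma:Unbounded_Lower_Bound} use only these constants.

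\textbf{Step 2 (replace $\mathbb{E}[\mu_{H_N}]$ by $\mu_N$).} By Theorem~\ref{Theorem:Support_MDE}, $\mu_N$ is supported in $[-K-2\oldconstant{Constant:Flat}^{1/2},\,K+2\oldconstant{Constant:Flat}^{1/2}]$. By Theorem~\ref{Theorem:Density_MDE}, it has a $\beta$-H\"older density with constants uniform in $A_N$, in particular a bounded one. The stability of the MDE in \cite{ajanki2019stability} gives a local law, namely that $\frac1N\mathrm{Tr}\,\mathbb{E}(H_N-z)^{-1}$ is close to $\frac1N\mathrm{Tr}\,M_N(z)$ down to $\Im z\ge N^{-c}$, uniformly over $\|A_N\|_{\text{op}}\le K$. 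From this we would obtain $d_{\text{KS}}(\mathbb{E}\mu_{H_N},\mu_N)\to0$ uniformly; this is the standard route via the Helffer--Sj\"ostrand formula or Bai's inequality.

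\textbf{Step 3 (pass to the limit in the logarithmic integral).} We split $\log|\lambda|$ into three regions, as in Lemma~\ref{Lemma:KSD}. Near $0$ we use Corollary~\ref{corollary:C.8} for $\mathbb{E}\mu_{H_N}$ and the bounded density for $\mu_N$. The tails are controlled by the operator-norm tail bound \eqref{Equation:Tail_of_GOE} (or its flat-profile analogue), since $\|A_N\|_{\text{op}}\le K$. On the bulk we integrate by parts against $d_{\text{KS}}$.

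The main obstacle is Step~2: obtaining a quantitative, $A_N$-uniform local law for general flat $\mathcal S$. We would cite \cite{ajanki2019stability} and \cite[Sec.~1]{ben2023exponential}, where uniformity in $\|A_N\|_{\text{op}}\le K$ is exactly what their stability constants provide.
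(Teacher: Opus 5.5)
There is a genuine gap in Step~1, and it is not where you locate the main difficulty. Theorem~\ref{Theorem:Main_Asymptote_Unbounded} needs the strong Wegner bound \eqref{Equation:General_Wegner} on \emph{every} interval, including very short ones:
the proof of Lemma~\ref{Lemma:Unbounded_Lower_Bound} applies it to $[-e^{-N^\epsilon},e^{-N^\epsilon}]$, and the dyadic sum behind \eqref{Equation:Wegner_Corollary}, hence Corollary~\ref{corollary:C.8}, uses it at all scales down to $0$.

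Neither source you name supplies this for a general flat profile.
\begin{itemize}
\item Theorem~\ref{Theorem:GOE_Wegner_Estimate} is a statement about GOE perturbations only, and you do not identify any ``analogous result'' for arbitrary variance profiles.
\item A local law from \cite{ajanki2019stability} controls eigenvalue counts only on intervals of length at least $N^{-1+\gamma}$. It therefore cannot yield \eqref{Equation:General_Wegner} at exponentially small scales.
\end{itemize}

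The missing step is short. Testing flatness \ref{Condition:Flat} on $M=e_ke_k^\top$ gives, by independence of the entries, $\mathcal{S}[e_ke_k^\top]=\mathrm{diag}\bigl(\mathbb{E}[W_{1k}^2],\dots,\mathbb{E}[W_{Nk}^2]\bigr)$. Hence $(\oldconstant{Constant:Flat}N)^{-1}\le\mathbb{E}[W_{ik}^2]\le\oldconstant{Constant:Flat}N^{-1}$ for all $i,k$. Take $\kappa=(2\oldconstant{Constant:Flat})^{-1}$, and recall $\mathbb{E}[(G_N)_{ii}^2]=2/N$ and $\mathbb{E}[(G_N)_{ij}^2]=1/N$. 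Then $W_N\overset{d}{=}\sqrt{\kappa}\,G_N+W_N'$, where $G_N$ is a GOE independent of a centered Gaussian symmetric matrix $W_N'$ with independent entries. Condition on $W_N'$ and apply Theorem~\ref{Theorem:GOE_Wegner_Estimate} to $\kappa^{-1/2}(A_N+W_N')+G_N$. This gives \eqref{Equation:General_Wegner} for $H_N$ with constant $\kappa^{-1/2}\oldconstant{Constant:General_Wegner}$, uniformly in $A_N$. In the only case the paper actually uses (Theorem~\ref{Theorem:Asymptote_Bounded}, where $W_N=G_N$), Theorem~\ref{Theorem:GOE_Wegner_Estimate} applies directly.

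With this repair your argument is a legitimate alternative to the paper. The paper does not prove the statement at all: it quotes \cite[Corollary 1.9A]{ben2023exponential} and asserts uniformity over $\|A_N\|_{\text{op}}\le K$ ``by inspection'' of that proof. Going through Theorem~\ref{Theorem:Main_Asymptote_Unbounded} buys you two things.
\begin{itemize}
\item Uniformity becomes explicit. The only constants are the log-Sobolev constant, the Wegner constant and the MDE bounds, all depending only on $K$ and $\oldconstant{Constant:Flat}$.
\item Step~2 has much less to deliver than you think. The Wegner estimate already controls the logarithmic singularity of $\mathbb{E}[\mu_{H_N}]$, so Step~3 needs only $\sup_{\|A_N\|_{\text{op}}\le K}d_{\text{KS}}(\mathbb{E}[\mu_{H_N}],\mu_N)\to0$, with no rate.
\end{itemize}
That uniform KS convergence needs no local law. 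It follows from four ingredients:
\begin{itemize}
\item the uniform density and support bounds on $\mu_N$;
\item uniform tightness of $\mathbb{E}[\mu_{H_N}]$;
\item a global law on $\{\Im z\ge\eta_0\}$ for a large $\eta_0$, where the MDE is a contraction and Gaussian integration by parts gives an $o(1)$ error uniformly in $A_N$;
\item a subsequence argument using the identity theorem for Stieltjes transforms and P\'olya's theorem.
\end{itemize}
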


\begin{remark}
    In the original theorem of \cite{ben2023exponential}, the authors do not explicitly specify that the convergence is uniform over matrices $A_N$ with uniformly bounded operator norm. However, upon closer inspection of their proof, one finds that the convergence rate depends only on the supremum of the operator norm.
\end{remark}

\subsection{Asymptote for \texorpdfstring{$A_N + \text{GOE}_N$}{TEXT}}

\noindent Throughout this entire section, we fix $G_N$ to be a sequence of GOE.

\begin{theorem}\label{Theorem:Main_Asymptote_Unbounded_GOE}
    We have
    \[\lim_{N \to \infty}\sup_{A_N \in \mathbb{R}^{N \times N}, \text{ symmetric}}\Big(\dfrac{1}{N}\log\mathbb{E} |\det (A_N + G_N)| - \int_{\mathbb{R}} \log|\lambda|\,\mathbb{E}[\mu_{A_N + G_N}](d\lambda)\Big) = 0.\]
\end{theorem}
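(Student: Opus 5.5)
The plan is to deduce Theorem \ref{Theorem:Main_Asymptote_Unbounded_GOE} from Theorem \ref{Theorem:Main_Asymptote_Unbounded}, with one extra step: we must check that every constant in that argument is independent of the deterministic matrix $A_N$, so that the convergence holds uniformly in $A_N$. Put $H_N = A_N + G_N$.

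First, concentration \eqref{Equation:General_Concentration}. The entries $h_{ij}$ for $i\le j$ are independent Gaussians, each equal to a deterministic shift of a centered normal with variance $1/N$ or $2/N$. A Gaussian law satisfies a logarithmic Sobolev inequality whose constant is its variance, and translating the law does not change the constant. The natural scaling for Theorem \ref{Theorem:GZ_Concentration} is to apply it to $\sqrt N H_N$, whose entries have LSI constant at most $2$. This gives \eqref{Equation:General_Concentration} with $\oldconstant{Constant:General_Concentration}$ depending only on this bound. The Lipschitz constant is then rescaled, which produces exactly the $N^2$ factor in the exponent. The mean shift $A_N$ never enters.

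Second, the Wegner estimate \eqref{Equation:General_Wegner}. This comes directly from Theorem \ref{Theorem:GOE_Wegner_Estimate}, whose constant $\oldconstant{Constant:General_Wegner}$ is independent of $A_N$.

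Next, we rerun Lemmas \ref{Lemma:Unbounded_Upper_Bound} and \ref{Lemma:Unbounded_Lower_Bound} and record their error terms explicitly.

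For the upper bound, the error is the sum of two terms:
\begin{itemize}
\item[] the term $\frac1N\log\bigl(1+2\sqrt{4\pi N^{2\epsilon}}e^{N^{2\epsilon}}\bigr)$, which depends only on $\oldconstant{Constant:General_Concentration}$;
\item[] the term $\oldconstant{Constant:Truncation_Log}(\log(1+\eta)+\eta^{(1-\epsilon)/2})$ from Corollary \ref{corollary:C.8}, where $\eta=N^{-\epsilon}$.
\end{itemize}
The constant $\oldconstant{Constant:Truncation_Log}$ in the second term comes from \eqref{Equation:Wegner_Corollary}, and so depends only on $\oldconstant{Constant:General_Wegner}$ and $\epsilon$.

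For the lower bound, the failure probabilities \eqref{Equation:Unbounded_Lower_Bound_5}--\eqref{Equation:Unbounded_Lower_Bound_7} are bounded by $\oldconstant{Constant:General_Wegner}Ne^{-N^\epsilon}$, $\oldconstant{Constant:General_Concentration}e^{-N^{2-10\epsilon}/(4\oldconstant{Constant:General_Concentration})}$ and $2\oldconstant{Constant:General_Wegner}N^{-\epsilon}$. The remaining deterministic errors are explicit functions of $N$ and $\epsilon$.

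Every bound in both lemmas is therefore a function of $N$, $\epsilon$ and the two universal constants only. Taking the supremum over symmetric $A_N$ before letting $N\to\infty$ costs nothing, and sending $\epsilon\to0$ at the end gives the claim.

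The main obstacle is the lower bound, specifically the step that uses $\int\log_\eta(\lambda)\,\mathbb E[\mu_{H_N}](d\lambda)$ and $\int\log|\lambda|\,\mathbb E[\mu_{H_N}](d\lambda)$. Both integrals must be finite, and their difference must be controlled uniformly. Near $0$ this is handled by Wegner. At infinity $\log_\eta-\log|\cdot|\le \eta^2/(2\lambda^2)$ is integrable regardless of $A_N$, so $\|A_N\|_{\mathrm{op}}$ never appears in any error term.

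A second point needs checking: the probability estimates use $\log_\eta$ with $\eta=N^{-4\epsilon}$. This function has Lipschitz constant $(2\eta)^{-1}$ but is unbounded. That is harmless, because \eqref{Equation:General_Concentration} only requires Lipschitz test functions, and the tail contribution of $\log_\eta$ is a deterministic shift that cancels in $\mu_{H_N}-\mathbb E\mu_{H_N}$. Both points also need verifying in the uniform setting.
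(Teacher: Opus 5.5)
Your proposal is correct and is the paper's own argument. You check the concentration hypothesis \eqref{Equation:General_Concentration} via Theorem~\ref{Theorem:GZ_Concentration}, noting that the deterministic shift $A_N$ does not change the log-Sobolev constants, and the Wegner hypothesis \eqref{Equation:General_Wegner} via Theorem~\ref{Theorem:GOE_Wegner_Estimate}; every error term in Theorem~\ref{Theorem:Main_Asymptote_Unbounded} then depends only on those two constants, so the convergence is uniform in $A_N$. Your explicit tracking of the error terms in Lemmas~\ref{Lemma:Unbounded_Upper_Bound} and~\ref{Lemma:Unbounded_Lower_Bound}, and your normalization of the log-Sobolev constant through $\sqrt N H_N$, spell out what the paper asserts in one line.
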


\begin{proof}
    It suffices to verify the conditions of Theorem \ref{Theorem:Main_Asymptote_Unbounded}. Note that the concentration inequality \eqref{Equation:General_Concentration} holds with by Theorem \ref{Theorem:GZ_Concentration} (the entries of $G_N$ satisfy the logarithmic Sobolev inequality with uniform constant $2/N$). Moreover, the Wegner estimate \eqref{Equation:General_Wegner} holds by Theorem \ref{Theorem:GOE_Wegner_Estimate}. Since these estimations do not depend on the mean $A_N$, the convergence is uniform.
\end{proof}

\vspace{0.3cm}

From now on, we assume the deterministic symmetric matrices $A_N$ have uniformly bounded operator norm
\[\sup_{N \in \mathbb{N}} \|A_N\|_{\text{op}} < \infty.\]

\begin{definition}[Solution of MDE]
    Let $\mu_N$ be the measure obtained from the MDE
    \[-M_N^{-1}(z) = zI_N - A_N +  \frac{M_N(z)^\top + (\text{Tr}\,M_N(z))I_N}{N} \quad \text{subject to} \quad \Im M_N(z) > 0\]
    and $\overline{\mu}_N$ be the measure obtained from the MDE
    \begin{equation}\label{Equation:Nice_MDE}
        -M_N^{-1}(z) = zI_N - A_N + \frac{\text{Tr}\,M_{N}(z)}{N}I_N \quad \text{subject to} \quad \Im M_N(z) > 0.
    \end{equation}
\end{definition}

\begin{proposition}[Stability of MDE]\label{Proposition:Stability_GOE}\
    \begin{enumerate}[label = (\roman*)]
        \item (\cite[Lemma 5.1]{ben2024landscape}) The measure 
        \[\overline{\mu}_N = \mu_{A_N} \boxplus \mu_{\text{sc}}.\]
        Moreover, it admits a bounded and compactly supported density.
        \item (\cite[Proof of Proposition 5.3]{ben2024landscape}) For all $K > 0$, there exist constant $\epsilon > 0$ such that the Wasserstein distance 
        \[\sup_{\|A_N\|_{\text{op}} \leq K} W_1\left(\mu_N, \overline{\mu}_N\right) < N^{-\epsilon}.\]
    \end{enumerate}
\end{proposition}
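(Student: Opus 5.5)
The plan is to treat the two parts separately: part (i) is an identification of a scalar fixed-point equation with free subordination, and part (ii) is a perturbative comparison of two MDEs at mesoscopic distance from the real axis, followed by a smoothing argument that converts Stieltjes-transform bounds into a $W_1$ bound.

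\textbf{Part (i).} For the MDE \eqref{Equation:Nice_MDE}, write $m(z)=N^{-1}\mathrm{Tr}\,M_N(z)$. The equation gives $M_N(z)=(A_N-(z+m(z))I_N)^{-1}$. Taking normalized traces,
\[
m(z)=\int_{\mathbb{R}}\frac{\mu_{A_N}(da)}{a-z-m(z)}.
\]
With $G(z)=-m(z)$ and $G_\mu(z)=\int(z-x)^{-1}\mu(dx)$, this reads $G(z)=G_{\mu_{A_N}}(z-G(z))$. This is exactly the subordination equation characterizing $\mu_{A_N}\boxplus\mu_{\text{sc}}$, since the $R$-transform of $\mu_{\text{sc}}$ is $R(w)=w$. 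The constraint $\Im M_N>0$ selects the unique analytic solution mapping $\mathbb{H}$ to $\mathbb{H}$, and it has the correct decay at infinity. Uniqueness of that solution therefore yields $\overline{\mu}_N=\mu_{A_N}\boxplus\mu_{\text{sc}}$. The density bound and the support bound $[-\|A_N\|_{\text{op}}-2,\|A_N\|_{\text{op}}+2]$ then follow from Theorem \ref{Theorem:Free_Convolution_Semi_Circle}.

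\textbf{Part (ii), comparison of Stieltjes transforms.} Fix $z=E+i\eta$ with $|E|\le K+4$ and $\eta\ge N^{-\delta}$. Both solutions satisfy $\|M\|\le \eta^{-1}$, and the extra term $M_N^\top/N$ has norm at most $(N\eta)^{-1}$. I would take the difference of the inverses: $M_N^{-1}-\overline M_N^{-1}$ equals $(m_N-\overline m_N)I$ plus a term of norm $(N\eta)^{-1}$. Multiplying by $M_N$ and $\overline M_N$ on the two sides and taking normalized traces gives
\[
|m_N-\overline m_N|\le \eta^{-2}|m_N-\overline m_N|\,\cdot\,c(z)+\eta^{-3}N^{-1}.
\]
This self-consistent inequality closes only if the linear stability operator $1-N^{-1}\mathrm{Tr}(M_N\,\cdot\,\overline M_N)$ is invertible with norm at most $\eta^{-C}$. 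I expect this step to be the main obstacle. I would first try the standard trick: take imaginary parts (a Ward identity) to get $|N^{-1}\mathrm{Tr}(M_N\overline M_N)|\le \sqrt{\Im m_N\,\Im\overline m_N}/\eta$ together with $\Im m\ge c\,\eta$ on the bounded region. This yields $|1-N^{-1}\mathrm{Tr}(M_N\overline M_N)|\gtrsim \eta^{C}$, uniformly over $\|A_N\|_{\text{op}}\le K$. If that falls short, I would fall back on the flatness-based stability bounds of \cite{ajanki2019stability}. Either route gives $|m_N-\overline m_N|(z)\le C\eta^{-C'}N^{-1}$.

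\textbf{Part (ii), conversion to $W_1$.} Both measures are supported in a fixed compact interval $[-L,L]$ with $L=L(K)$: use Theorem \ref{Theorem:Support_MDE} for $\mu_N$ and part (i) for $\overline{\mu}_N$. For any $1$-Lipschitz $f$, replace $\mu$ by its Poisson smoothing $P_\eta*\mu$. The resulting error in $\int f\,d\mu$ is $O(\eta\log\eta^{-1})$, which uses compact support and a truncation of $f$ to the interval. The remaining term is
\[
\Bigl|\int f\,(P_\eta*\mu_N-P_\eta*\overline{\mu}_N)\Bigr|\le \frac1\pi\int_{|E|\le L+1}|f(E)|\,|\Im(m_N-\overline m_N)(E+i\eta)|\,dE+\text{tails},
\]
and this is at most $C\eta^{-C'}N^{-1}$. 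Choosing $\eta=N^{-\delta}$ with $\delta(C'+1)<1$ gives $W_1(\mu_N,\overline{\mu}_N)\le N^{-\epsilon}$ for some $\epsilon>0$, uniformly over $\|A_N\|_{\text{op}}\le K$.
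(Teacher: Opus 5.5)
The paper does not prove this proposition itself. It cites \cite{ben2024landscape} and describes the argument as going via stability of the MDE. Your outline follows that route: for (i), identify the flat MDE with the Pastur/subordination equation; for (ii), compare the two MDEs at mesoscopic $\eta$ and then pass to $W_1$. Your part (i) is fine, and so is the Poisson-smoothing conversion to $W_1$ (with $f$ clamped outside the common support).

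\textbf{The gap is in the step you yourself flag as the main obstacle.} The bound $|N^{-1}\mathrm{Tr}(M_N\overline M_N)|\le\sqrt{\Im m_N\,\Im\overline m_N}/\eta$ cannot give a lower bound on $|1-N^{-1}\mathrm{Tr}(M_N\overline M_N)|$. In the bulk $\Im m\asymp 1$, so the right-hand side is of order $\eta^{-1}\gg 1$. What you need is $|N^{-1}\mathrm{Tr}(M_N\overline M_N)|$ strictly below $1$ with a quantitative margin. A lower bound $\Im m\ge c\eta$ pushes in the wrong direction and plays no role.

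\textbf{How to close it.} The exact form of the Ward identity does the job.
\begin{itemize}
\item For the flat MDE, $\overline M_N=(A_N-z-\overline m_N)^{-1}$ gives $\Im\overline M_N=(\eta+\Im\overline m_N)\,\overline M_N\overline M_N^{*}$. Hence $N^{-1}\mathrm{Tr}\,\overline M_N\overline M_N^{*}=\Im\overline m_N/(\eta+\Im\overline m_N)$.
\item For the full MDE, $\Im M_N=M_N(\eta+\Im\mathcal{S}[M_N])M_N^{*}$. Moreover $\Im\mathcal{S}[M_N]=N^{-1}\bigl((\Im M_N)^{\top}+\Im\mathrm{Tr}\,M_N\bigr)\ge\Im m_N$. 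Hence $N^{-1}\mathrm{Tr}\,M_NM_N^{*}\le\Im m_N/(\eta+\Im m_N)$.
\item These give $|m_N|,|\overline m_N|<1$. By Cauchy--Schwarz,
\[
\bigl|1-N^{-1}\mathrm{Tr}(\overline M_NM_N)\bigr|\ge 1-\frac{1}{1+\eta}=\frac{\eta}{1+\eta},
\]
uniformly in $E$ and in $A_N$.
\item From $M_N-\overline M_N=\overline M_N\bigl(\overline M_N^{-1}-M_N^{-1}\bigr)M_N$ you get the exact identity
\[
\bigl(1-N^{-1}\mathrm{Tr}(\overline M_NM_N)\bigr)(m_N-\overline m_N)=N^{-2}\,\mathrm{Tr}\bigl(\overline M_NM_N^{\top}M_N\bigr).
\]
Combined with the previous bound, this gives $|m_N-\overline m_N|\le(1+\eta)/(N\eta^{4})$.
\item Your smoothing step with $\eta=N^{-\delta}$, $\delta<1/4$, then yields $W_1(\mu_N,\overline{\mu}_N)\lesssim N^{-\delta}\log N+N^{4\delta-1}$, uniformly over $\|A_N\|_{\text{op}}\le K$.
\end{itemize}

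Your fallback to the flatness-based bounds of \cite{ajanki2019stability} could also be made to work. However, those bounds control the stability operator of a single MDE linearized at its own solution. Using them to compare two different MDEs requires an extra continuity (bootstrap) argument in $\eta$, which the proposal does not supply.
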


\begin{remark}
    Again, in the original theorem of \cite{ben2024landscape}, the authors do not explicitly specify that the convergence of the Wasserstein-$1$ distance is uniform over matrices $A_N$ with uniformly bounded operator norm. Similarly, upon closer inspection of their proof, one finds that the convergence rate depends only on the supremum of the operator norm.
\end{remark}

\begin{theorem}[Asymptote for Bounded Mean]\label{Theorem:Asymptote_Bounded}
    Set $H_N = A_N + G_N$. Then for all $K > 0$,
    \[\lim_{N \to \infty}\sup_{\|A_N\|_{\text{op}} \leq K} \Bpar{\dfrac{1}{N}\log\mathbb{E}\left|\det H_N\right| - \int_{\mathbb{R}} \log|\lambda|\,\left(\mu_{A_N} \boxplus \mu_{\text{sc}}\right)(d\lambda)} = 0.\]
\end{theorem}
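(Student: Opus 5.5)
We prove Theorem~\ref{Theorem:Asymptote_Bounded} by chaining three approximations of $\frac1N\log\mathbb{E}|\det H_N|$, each uniform over the class $\{\|A_N\|_{\text{op}}\le K\}$:
\[
\frac1N\log\mathbb{E}|\det H_N|\;\approx\;\int\log|\lambda|\,\mu_N(d\lambda)\;\approx\;\int\log|\lambda|\,\overline{\mu}_N(d\lambda)=\int\log|\lambda|\,(\mu_{A_N}\boxplus\mu_{\text{sc}})(d\lambda).
\]
Here $\mu_N$ is the solution of the full MDE with $\mathcal{S}(M)=(M^\top+(\text{Tr} M)I_N)/N$, and $\overline{\mu}_N$ is the solution of \eqref{Equation:Nice_MDE}.

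The first step applies Theorem~\ref{Theorem:Main_Asymptote_Bounded}. For the GOE, $\mathbb{E}[G_N M G_N]=(M^\top+(\text{Tr}M)I_N)/N$, which is the operator defining $\mu_N$. To check flatness \ref{Condition:Flat}, take $M\ge 0$. Then $M^\top/N\ge 0$, which gives the lower bound with constant $1$. For the upper bound, $M^\top\le \|M\|_{\text{op}} I_N\le (\text{Tr}M)I_N$, so $\mathcal{S}[M]\le 2N^{-1}(\text{Tr}M)I_N$. Flatness therefore holds with $\oldconstant{Constant:Flat}=2$. Combined with the remark following Theorem~\ref{Theorem:Main_Asymptote_Bounded}, which says the rate depends only on $K$, this gives the first $\approx$ uniformly over $\|A_N\|_{\text{op}}\le K$. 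The equality on the right is Proposition~\ref{Proposition:Stability_GOE}(i).

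The middle step is the main obstacle: passing from $W_1(\mu_N,\overline{\mu}_N)<N^{-\epsilon}$ (Proposition~\ref{Proposition:Stability_GOE}(ii), uniform in $\|A_N\|_{\text{op}}\le K$) to closeness of the log-integrals, since $\log|\cdot|$ is not Lipschitz. We regularize with $\log_\eta$ and split the difference into three terms:
\[
\Big|\int\log|\lambda|(\mu_N-\overline{\mu}_N)\Big|\le\int(\log_\eta-\log|\cdot|)\,d\mu_N+\int(\log_\eta-\log|\cdot|)\,d\overline{\mu}_N+\frac{1}{2\eta}W_1(\mu_N,\overline{\mu}_N).
\]
\begin{itemize}
\item The last term is at most $N^{-\epsilon}/(2\eta)$.
\item For the $\overline{\mu}_N$ term, Theorem~\ref{Theorem:Free_Convolution_Semi_Circle} gives a density bounded by $\oldconstant{Constant:Free_Convolution_Infty}$ with support in $[-K-2,K+2]$. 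The term is therefore at most $\oldconstant{Constant:Free_Convolution_Infty}\int_{|\lambda|\le K+2}\frac12\log(1+\eta^2/\lambda^2)\,d\lambda=O(\eta)$, uniformly.
\item For the $\mu_N$ term we need a uniform bound on the mass $\mu_N$ puts near $0$. The route is to transfer the Wegner estimate to $\mu_N$. Theorem~\ref{Theorem:GOE_Wegner_Estimate} gives $\mathbb{E}\mu_{H_N}(I)\le \oldconstant{Constant:General_Wegner}|I|$ uniformly in $A_N$. The local law / stability results of \cite{ajanki2019stability, erdHos2019random} under flatness give $W_1(\mathbb{E}\mu_{H_N},\mu_N)\to0$ uniformly for $\|A_N\|_{\text{op}}\le K$. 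Alternatively, Proposition~\ref{Proposition:Stability_GOE}(ii) gives $\mu_N\approx\overline{\mu}_N$, which has bounded density. Combining either estimate with the Wegner bound at mesoscopic scales yields $\mu_N([-r,r])\le C(r+N^{-\epsilon'})$ for all $r>0$.
\end{itemize}
Summing dyadically as in Corollary~\ref{corollary:C.8}, the $\mu_N$ term is then $\le C(\eta^{(1-\epsilon)/2}+\log(1+\eta))$ plus a contribution from scales below $N^{-\epsilon'}$. The small-scale contribution is handled by noting that $\mu_N$ itself has a Hölder density by Theorem~\ref{Theorem:Density_MDE}. To control it uniformly, we would first try to show the density bound depends only on $K$ and the flatness constant, which is exactly what the cited Hölder statement provides.

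With all three terms controlled, we choose $\eta=N^{-\epsilon/2}$. Then $N^{-\epsilon}/(2\eta)=\tfrac12 N^{-\epsilon/2}\to0$, the $\overline{\mu}_N$ term is $O(N^{-\epsilon/2})$, and the $\mu_N$ term vanishes uniformly over $\|A_N\|_{\text{op}}\le K$. Combining the three steps gives the uniform limit asserted in Theorem~\ref{Theorem:Asymptote_Bounded}.
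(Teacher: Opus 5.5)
Your proposal is correct and follows the paper's proof. You pass to $\mu_N$ via Theorem~\ref{Theorem:Main_Asymptote_Bounded}, for which your flatness check with constant $2$ is right. You then regularize with $\log_\eta$, use uniformly bounded, compactly supported densities for both $\mu_N$ and $\overline{\mu}_N$, and close with the $W_1$ bound of Proposition~\ref{Proposition:Stability_GOE}.

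The Wegner-transfer detour for the $\mu_N$ term is unnecessary. The uniform H\"older, hence uniformly bounded, density of $\mu_N$ from Theorem~\ref{Theorem:Density_MDE}, combined with its support in $[-K-2\sqrt{2},K+2\sqrt{2}]$ from Theorem~\ref{Theorem:Support_MDE}, already bounds that term by $O(\eta)$, exactly as for $\overline{\mu}_N$. This is what the paper does directly, and you end up relying on the same fact anyway.
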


\begin{proof}
    First, note that the operator
    \[\mathcal{S}(M_N) = \mathbb{E}[G_NM_NG_N] = \frac{M_N^\top + (\text{Tr}M_N)I_N}{N}\]
    satisfies the flatness condition \ref{Condition:Flat} and $\|A_N\|_{\text{op}}$ is uniformly bounded by assumption. Therefore, by Theorem \ref{Theorem:Main_Asymptote_Bounded}, we have
    \begin{equation}\label{Equation:Asymptote_Bounded_1}
        \lim_{N \to \infty}\sup_{\|A_N\|_{\text{op}} \leq K} \Bpar{\dfrac{1}{N}\log\mathbb{E}\left|\det H_N\right| - \int_{\mathbb{R}} \log|\lambda|\,\mu_N(d\lambda)} = 0.
    \end{equation}
    Now, note that the difference
    \begin{align}
        \left|\int_{\mathbb{R}} \log|\lambda| (\overline{\mu}_N - \mu_N)(d\lambda)\right| &\leq \int_{\mathbb{R}} (\log_\eta(\lambda) - \log|\lambda|) (\overline{\mu}_N + \mu_N)(d\lambda) + \left|\int_{\mathbb{R}} \log_\eta(\lambda) (\overline{\mu}_N - \mu_N)(d\lambda)\right| \notag \\
        &\leq \int_{\mathbb{R}} \dfrac{1}{2}\log\Bpar{1 + \frac{\eta^2}{\lambda^2}}(f_{\overline{\mu}_N}(\lambda) + f_{\mu_N}(\lambda))\,d\lambda + \dfrac{1}{2\eta}W_1(\overline{\mu}_N, \mu_N), \label{Equation:Asymptote_Bounded_2}
    \end{align}
    where $f_{\overline{\mu}_N}$ and $f_{\mu_N}$ are the densities of $\overline{\mu}_N$ and $\mu_N$, respectively. For the first term in \eqref{Equation:Asymptote_Bounded_1}, we ought to show that $\overline{\mu}_N$ and $\mu_N$ admit uniformly bounded density. By Theorem \ref{Theorem:Density_MDE}, $\overline{\mu}_N$ admits $\beta$-H\"{o}lder continuous density with respect to Lebesgue measure by for some universal constant $\beta > 0$. Moreover, by Theorem \ref{Theorem:Support_MDE}, we know that 
    \[m_\infty(\overline{\mu}_N) \leq \|A_N\|_{\text{op}} + 2\sqrt{2} \leq K + 2\sqrt{2}.\]
    Combining these results, we see that $\overline{\mu}_N$ admits a uniformly bounded density that is supported on a common compact set (where everything only depends on $K$). We have the same result for $\mu_N$ by Theorem \ref{Theorem:Free_Convolution_Semi_Circle}. Therefore, there exist $\newconstant\label{Constant:Alpha} = \oldconstant{Constant:Alpha}(K) > 0$ such that
    \[\int_{\mathbb{R}} \log\Big(1 + \frac{\eta^2}{\lambda^2}\Big)(f_{\nu_n}(\lambda) + f_{\nu_\infty}(\lambda))\,d\lambda \leq 2\oldconstant{Constant:Alpha}\int_{-\oldconstant{Constant:Alpha}}^{\oldconstant{Constant:Alpha}} \log\Big(1 + \frac{\eta^2}{\lambda^2}\Big)\,d\lambda,\]
    which converges to $0$ as $\eta \to 0$ by the dominated convergence theorem. Therefore, by Proposition \ref{Proposition:Stability_GOE},
    \[\sup_{\|A_N\|_{\text{op}} \leq K}\Big|\int_{\mathbb{R}} \log|\lambda| (\overline{\mu}_N - \mu_N)(d\lambda)\Big| \leq o_{\eta \to 0}(1) + \frac{o_{N \to \infty}(1)}{\eta}.\]
    By picking $\eta > 0$ small enough so the first term is less than $\epsilon/2$, then choosing $N \in \mathbb{N}$ large enough so that the second term is less than $\epsilon/2$, the whole thing is less than $\epsilon$. We see then
    \begin{equation}\label{Equation:Asymptote_Bounded_3}
        \lim_{N \to \infty} \sup_{\|A_N\|_{\text{op}} \leq K}\Big|\int_{\mathbb{R}} \log|\lambda| (\overline{\mu}_N - \mu_N)(d\lambda)\Big| = 0.
    \end{equation}
    Combining \eqref{Equation:Asymptote_Bounded_1} and \eqref{Equation:Asymptote_Bounded_3}, we see that our assertion holds.
\end{proof}

\section{Varadhan's Lemma}

\noindent In the following, we will denote $\mathcal{X}$ to be a regular topological space and $\mu_N$ be a sequence of probability measures on $\mathcal{X}$.

\begin{theorem}
    Suppose $\mu_N$ satisfies the LDP with a good rate function $J \colon \mathcal{X} \to \mathbb{R}$ and let $\Phi \colon \mathcal{X} \to \mathbb{R}$ be any continuous function, then
    \begin{enumerate}[label = (\roman*)]
        \item For all open sets $U \subseteq \mathcal{X}$, we have
        \begin{equation}\label{Equation:Varadhan_Original_Lower}
            \liminf_{N \to \infty} \frac{1}{N}\log \mathbb{E}_{\mu_N}\bigl[e^{N\Phi(X_N)}\mathbbm{1}_{\{X_N \in U\}}\bigr] \geq \sup_{x \in U} [\Phi(x) - J(x)].
        \end{equation}
        \item Denote $S = \bigcup_{N \in \mathbb{N}} \text{supp}(\mu_N) \subseteq \mathcal{X}$. For all closed sets $C \subseteq \mathcal{X}$, if we assume the additional condition that
        \begin{equation}\label{Equation:Varadhan_Original_Condition}
            M = \sup_{x \in S \cap C} \Phi(x) < \infty,
        \end{equation}
        then we have
        \begin{equation}\label{Equation:Varadhan_Original_Upper}
            \limsup_{N \to \infty} \frac{1}{N}\log \mathbb{E}_{\mu_N}\bigl[e^{N\Phi(X_N)}\mathbbm{1}_{\{X_N \in C\}}\bigr] \leq \sup_{x \in C} [\Phi(x) - J(x)].
        \end{equation}
    \end{enumerate}
\end{theorem}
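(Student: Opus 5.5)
This is the standard Varadhan lemma; I would follow the proof in Dembo--Zeitouni (Lemmas 4.3.4 and 4.3.6), restricted to the open or closed set.

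\emph{Lower bound.} Fix $x\in U$ with $J(x)<\infty$ (otherwise there is nothing to prove) and fix $\delta>0$. Since $\Phi$ is continuous, the set
\[
G=\{y\in U:\Phi(y)>\Phi(x)-\delta\}
\]
is an open neighbourhood of $x$. Then
\[
\mathbb{E}_{\mu_N}\bigl[e^{N\Phi}\mathbbm{1}_U\bigr]\ \ge\ e^{N(\Phi(x)-\delta)}\,\mu_N(G).
\]
The LDP lower bound gives $\liminf_{N\to\infty} \frac{1}{N}\log\mu_N(G)\ge -\inf_G J\ge -J(x)$. Hence the left side of \eqref{Equation:Varadhan_Original_Lower} is at least $\Phi(x)-J(x)-\delta$. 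Letting $\delta\downarrow0$ and taking the supremum over $x\in U$ gives the claim.

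\emph{Upper bound.} Since each $\mu_N$ is supported in $S$, we may bound $\Phi\le M$ on the relevant region $S\cap C$. Fix $\alpha<\infty$ and $\delta>0$, and let $L=\{J\le\alpha\}\cap C$, which is compact because $J$ is good and $C$ is closed.

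For each $x\in L$, regularity of $\mathcal{X}$, lower semicontinuity of $J$ and continuity of $\Phi$ give an open neighbourhood $A_x$ of $x$ with
\[
\inf_{\overline{A_x}}J\ge J(x)-\delta
\quad\text{and}\quad
\sup_{\overline{A_x}}\Phi\le\Phi(x)+\delta .
\]
Cover $L$ by finitely many $A_{x_1},\dots,A_{x_k}$ and set $A=\bigcup_i A_{x_i}$. Split
\[
\mathbb{E}_{\mu_N}\bigl[e^{N\Phi}\mathbbm{1}_C\bigr]
\le \sum_{i=1}^k e^{N(\Phi(x_i)+\delta)}\,\mu_N\bigl(\overline{A_{x_i}}\cap C\bigr)
+ e^{NM}\,\mu_N\bigl(C\setminus A\bigr).
\]
For the first terms, the LDP upper bound on the closed sets $\overline{A_{x_i}}\cap C$ gives the exponential rate
\[
\Phi(x_i)+\delta-(J(x_i)-\delta)\le \sup_C(\Phi-J)+2\delta .
\]
For the last term, $C\setminus A$ is closed and disjoint from $L$, so $\inf_{C\setminus A}J>\alpha$, and this term has rate at most $M-\alpha$.

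Since a finite sum has exponential rate equal to the maximum of the individual rates, the left side of \eqref{Equation:Varadhan_Original_Upper} is at most
\[
\max\Bigl\{\sup_C(\Phi-J)+2\delta,\ M-\alpha\Bigr\}.
\]
Send $\alpha\to\infty$ and then $\delta\to0$.

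\emph{Main obstacle.} The delicate step is the case where $\sup_C(\Phi-J)=-\infty$, or $J(x_i)=\infty$ near points of $C$. This is handled by the standard replacement of $J(x)-\delta$ by $\min\{J(x)-\delta,1/\delta\}$ in the choice of the $A_x$. It is also where the use of $S$ in \eqref{Equation:Varadhan_Original_Condition} matters: the bound $\Phi\le M$ is only needed $\mu_N$-almost surely.
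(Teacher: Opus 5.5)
Your proof is correct and follows the paper's approach. The lower bound is the same neighbourhood argument. For the upper bound, the paper replaces $\Phi$ by $\Phi\wedge M$, which agrees with $\Phi$ $\mu_N$-a.s.\ on $C$, and cites Dembo--Zeitouni Exercise~4.3.11; you instead write out that result's covering argument over the compact set $\{J\le\alpha\}\cap C$ and use $\Phi\le M$ only on the leftover closed set $C\setminus A$. The ``main obstacle'' you flag does not actually arise. Every centre $x_i$ lies in $\{J\le\alpha\}$, so $J(x_i)<\infty$, and the case $\sup_C(\Phi-J)=-\infty$ corresponds to $L=\varnothing$, which your bound $M-\alpha\to-\infty$ already covers.
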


\begin{proof}
    We prove the lower bound \eqref{Equation:Varadhan_Original_Lower} first. Note that for all $x \in U$ and $\delta > 0$, there exists a neighborhood $U_x \subseteq U$ of $x$ such that $\inf_{y \in U_x} \Phi(y) \geq \Phi(x) - \delta$ by continuity of $\Phi$. Then,
    \begin{align*}
        \liminf_{N \to \infty} \frac{1}{N}\log \mathbb{E}_{\mu_N}\bigl[e^{N\Phi(X_N)}\mathbbm{1}_{\{X_N \in U\}}\bigr] &\geq \liminf_{N \to \infty} \frac{1}{N}\log \mathbb{E}_{\mu_N}\bigl[e^{N\Phi(X_N)}\mathbbm{1}_{\{X_N \in U_x\}}\bigr] \\
        &\geq \Phi(x) - \delta + \liminf_{N \to \infty} \mu_N(U_x) \\
        &\geq \Phi(x) - \delta - \inf_{y \in U_x} J(y) \\
        &\geq \Phi(x)  - \delta - J(x),
    \end{align*}
    where the third inequality follows from LDP. \eqref{Equation:Varadhan_Original_Lower} then follows since $x \in U$ and $\delta > 0$ are arbitrary.
    
    For the upper bound \eqref{Equation:Varadhan_Original_Upper}, we define the continuous bounded function $\Phi_M(x) = \Phi(x) \wedge M$ so that from \eqref{Equation:Varadhan_Original_Condition},
    \[\mathbb{E}_{\mu_N}\bigl[e^{N\Phi(X_N)}\mathbbm{1}_{\{X_N \in C\}}\bigr] = \mathbb{E}_{\mu_N}\bigl[e^{N\Phi_M(X_N)}\mathbbm{1}_{\{X_N \in C\}}\bigr].\]
    By \cite[Exercise 4.3.11]{dembo2010large}, we may apply the Varadhan upper bound to the latter term to get
    \begin{align*}
        \limsup_{N \to \infty} \frac{1}{N}\log \mathbb{E}_{\mu_N}\bigl[e^{N\Phi(X_N)}\mathbbm{1}_{\{X_N \in C\}}\bigr] &= \limsup_{N \to \infty} \frac{1}{N}\log \mathbb{E}_{\mu_N}\bigl[e^{N\Phi_M(X_N)}\mathbbm{1}_{\{X_N \in C\}}\bigr] \\
        &\leq \sup_{x \in C} [\Phi_M(x) - J(x)] \\
        &\leq \sup_{x \in C} [\Phi(x) - J(x)].
    \end{align*}
    This completes our proof.
\end{proof}

\end{appendices}

\bibliographystyle{acm}

{\footnotesize\bibliography{reference}}

\end{document}